\newtheorem{theorem}{Theorem}[section]
\newtheorem{definition}{Definition}[section]
\newtheorem{lemma}{Lemma}[section]
\newtheorem{remark}{Remark}[section]
\newenvironment{proof}[1][Proof]{\noindent\textbf{#1.} }{\ \hfill\rule{0.3em}{0.5em}}
\newcommand*{\R}{\mathbb{R}}
\def\beq#1{\begin{equation}\label{#1}}
\def\eeq{\end{equation}}
\def\bep{\begin{proof}}
\def\ep{\end{proof}}
\def\reff#1{(\ref{#1})}
\def\M{\mathcal M}
\def\A{{\mathcal A}}
\def\B{\mathcal B}
\def\C{\mathcal C}
\def\ignore#1{}
\def\X{\mathcal X}
\def\Y{\mathcal Y}
\def\G{\mathcal G}
\def\H{\mathcal H}
\def\F{\mathcal F}
\def\U{\mathcal U}
\def\bea#1{\begin{array}{#1}}
\def\ea{\end{array}}
\newcommand{\re}{\mathbb{R}}
\title{Multi-Tubal Rank of Third Order Tensor and Related Low Rank Tensor Completion Problem}
\author{Quan Yu\thanks{School of Mathematics, Tianjin University, Tianjin 300354, P.R. China. E-mail: \emph{QuanYu527@163.com}.},\quad Xinzhen Zhang\thanks{School of Mathematics, Tianjin University, Tianjin 300354, P.R. China. E-mail: \emph{xzzhang@tju.edu.cn}. Her work is supported by NSFC
	(11871369).
},\quad and Zheng-Hai Huang \thanks{School of Mathematics, Tianjin University, Tianjin 300354, P.R. China.
E-mail: \emph{huangzhenghai@tju.edu.cn}. His work is supported by NSFC (11871051).}}
\begin{document}

\maketitle

\textbf {Abstract.}   % Abstract of not more than 200 words.
Recently, a tensor factorization based method for a low  tubal rank tensor completion problem of a third order tensor was proposed, which performed better than some existing methods. Tubal rank is only defined on one mode of third order tensor without low rank structure in the other two modes. That is, low rank structures on the other two modes are missing. Motivated by this, we first introduce multi-tubal rank,
 and then establish a relationship between multi-tubal rank and Tucker rank. Based on the multi-tubal rank, we propose a novel low rank tensor completion model. For this model, a tensor factorization based method is applied and the corresponding convergence analysis is established. In addition, spatio-temporal characteristics are intrinsic features in video and internet traffic tensor data. To get better performance, we make full use of such features and improve the established tensor completion model. Then we apply tensor factorization based method for the improved model. Finally, numerical results are reported on the completion of image, video and internet traffic data to show the efficiency of our proposed methods. From the reported numerical results, we can assert that our methods outperform the existing methods.

 \textbf {Key words.}
{Tensor factorization, tensor completion, tubal rank, spatio-temporal characteristics.}

\textbf {AMS subject classifications.} {15A69,46B28}

\section{Introduction}

A tensor is a multidimensional array, and an Nth-order tensor is an element of the tensor product space of N vectors,  which have their own dimensions \cite{KB09}. Tensors, as higher order generalizations of vectors and matrices, have wide applications in various fields \cite{BZNC16,CCQY20,GRY11,K16,DM98, LV04,LMWY13, LFCLLY16, MB05,VT02}. Tensor decompositions, various generalizations of matrix singular value decomposition,  have attracted more and more attentions, including CANDECOMP/PARAFAC
(CP) decomposition \cite{CC70,K00},  Tucker decomposition \cite{T66} and tensor singular value decomposition (SVD) \cite{HKBH13,HTZXY17,KBHH13,KM11,MSL13}. Corresponding to such tensor decompositions, tensor ranks are called the CP rank, Tucker rank and tubal rank, respectively.

Third order tensors
  are widely used  in chemometrics \cite{MB94, SBG04}, psychometrics \cite{K83} and  image inpainting \cite{BPTD17,LZZ19,ZLCZ19,ZLLZ18}.
 Unless otherwise specialized, tensors in this paper are of third order.
For a third order $(n_1,n_2, n_3)$-dimensional tensor  $\X\in \mathbb R^{n_1\times n_2\times n_3}$, the CP decomposition is to decompose $\X$ as a sum of some outer products of three vectors:
$$ \X = \sum\limits_{i = 1}^r {a_1^{(i)} \circ a_2^{(i)}  \circ a_3^{(i)}}, $$
where the symbol ``$\circ$'' denotes the outer product and $a_j^{(i)}\in \mathbb R^{n_j}$ is a vector ($i\in \{1,2,\ldots,r\}$ and $j\in \{1,2,3\}$). The smallest $r$ in CP decomposition is called  CP rank of $\X$. From \cite{HL13}, it is NP-hard to determine the CP rank.
Compared with CP rank,
Tucker rank is easy to compute, and hence most of low rank tensor completion and recovery models are based on Tucker rank. Precisely, Tucker rank is a vector of the matrix ranks
$$\operatorname{rank}_{TC}(\mathcal{C}) = \left( \operatorname{rank}(C_{(1)}),\operatorname{rank}(C_{(2)}), \operatorname{rank}(C_{(3)}) \right),$$
where $ C_{(1)}\in \re^{n_1\times (n_2n_3)}$  ($C_{(2)}\in \re^{n_2\times (n_1n_3)}$ and $C_{(3)}\in \re^{n_3\times (n_1n_2)}$)
is mode-$1$ (mode-$2$ and mode-$3$, respectively) matricization of tensor.
More recently, Kilmer \emph{et al.} \cite{KBHH13} introduced tensor-tensor product (t-product) and tensor singular value decomposition (t-SVD). Based on these definitions, tubal rank was introduced and studied in \cite{KBHH13,KM11,MSL13}.

The low rank tensor completion problem is  to find a low rank tensor from observed incomplete data, which arises from various fields including internet traffic recovery \cite{ACRUW06,AMDJ16,TWSJR16,ZZXC15}, image and video inpainting \cite{JLLS18, LL17, LMWY13, ZLLZ18}.
Low rank tensor completion is modeled as
\begin{equation}\label{tensor-completion}
\mathop {\min }\limits_{\mathcal C}  \operatorname{rank}(\mathcal C),\quad \mbox{\rm s.t.}\quad  {P_\Omega }({\mathcal C}) = {P_\Omega }(\mathcal M),
\end{equation}
where $\operatorname{rank}(\cdot)$ is a tensor rank and $\Omega$ is an index set locating the observed data.  $P_\Omega $ is a linear operator that extracts the entries in $ \Omega $ and fills the entries not in $ \Omega $ with zeros, and $\mathcal{M}$ is a given tensor.

Different  tensor ranks lead to different low rank tensor completion models of (\ref{tensor-completion}) with different methods. The following low Tucker rank  tensor completion is considered
\begin{equation*}
	\mathop {\min }\limits_{\mathcal{C}}\, \left( \operatorname{rank}(C_{(1)}),\, \operatorname{rank}(C_{(2)}),\, \operatorname{rank}(C_{(3)})\right) ,\quad \mbox{\rm s.t.}\quad {P_\Omega }({\mathcal C}) = {P_\Omega }(\mathcal M).
\end{equation*}
To keep things simple, the weighted Tucker rank minimization problems is formulated as
\begin{equation}\label{Tucker-rank}
\mathop {\min }\limits_{\mathcal{C}} \sum\limits_{i = 1}^3 \operatorname{rank}(C_{(i)}),\quad \mbox{\rm s.t.}\quad {P_\Omega }({\mathcal C}) = {P_\Omega }(\mathcal M).
\end{equation}
Note that problem \eqref{Tucker-rank} is  non-convex  since matrix rank function is nonconvex. To solve \eqref{Tucker-rank}, the convex optimization problem is considered as
\begin{equation}\label{nuclear-norm}
\mathop {\min }\limits_{\mathcal{C}} \sum\limits_{i = 1}^3 {{{\left\| {{C_{(i)}}} \right\|}_*}},\quad \mbox{\rm s.t.}\quad {P_\Omega }({\mathcal C}) = {P_\Omega }(\mathcal M).
\end{equation}
In general,  SVD is needed in each iteration of numerical methods for \eqref{nuclear-norm}, which leads to high computational cost. To lower the computational cost, a matrix factorization method was considered by Xu \emph{et al}. \cite{XHYS13}, which preserves the low rank structure of  matrix. Precisely, \eqref{Tucker-rank} is modeled as
\begin{equation}\label{matrix-decomposition}
\mathop {\min }\limits_{{X^i},{Y^i},{\mathcal C}} \sum\limits_{i = 1}^3 {{\alpha _i}{{\left\| {X^i}{Y^i} - C_{(i)} \right\|}_F^2}},\quad \mbox{\rm s.t.}\quad {P_\Omega }({\mathcal C}) = {P_\Omega }(\mathcal M).
\end{equation}
This method has been widely used in various areas \cite{LS13}.  As pointed in \cite{KBHH13,KM11,MSL13},  unfolding a tensor directly will destroy the original multi-way structure of the data, which leads to vital information loss and degraded performance. Note that
the sizes of $C_{(i)},\, i=1,2,3$ in \eqref{matrix-decomposition} are  the same as $\C$ in principle, which  makes it difficult to lower the computational efforts.

Based on tubal rank, the following model was considered in \cite{ZLLZ18} based on tensor factorization,
\begin{equation}\label{model-tubal}
\mathop {\min }\limits_{\mathcal{X},\mathcal{Y},\mathcal{C}} \frac{1}{2}\left\| {\mathcal{X}*\mathcal{Y} - \mathcal{C}} \right\|_F^2,\quad \mbox{\rm s.t.} \quad  {P_\Omega }(\mathcal{C} - \mathcal{M}) = 0,
\end{equation}
where  ``$*$'' denotes the t-product. By analysis in \cite{KBHH13,KM11,MSL13,ZLLZ18}, the t-product
can be computed by some block diagonal matrices of smaller sizes, which makes  a significant reduction of computational cost.
Later, a corrected tensor nuclear norm minimization method was proposed in \cite{ZN19} for noisy observations.

It is valuable to mention that only one mode is considered in tubal rank and the other two modes are ignored. That is, low rank structure on the other two modes is missing. Motivated by this, we introduce a vector of tubal ranks on three different modes, called multi-tubal rank, which is similar to Tucker rank. Then a relationship between multi-tubal rank and Tucker rank is established.
Based on the new introduced multi-tubal rank,  a new tensor completion model is proposed. Similar to TCTF in \cite{ZLLZ18}, a tensor factorization based method is applied to solve the  proposed model.
 In video and internet traffic tensor completion, spatio-temporal characteristics are intrinsic features.
To make full use of such features, we improve the proposed low multi-tubal rank tensor completion model, and then apply tensor factorization based method for the improved model.
To the best of authors' knowledge, this paper is the first one to introduce multi-tubal rank, to present the relationship between tubal rank and Tucker rank and to introduce the spatio-temporal characteristics to recover video data. The reported numerical examples show that our results have less relative error and higher peak signal-to-noise ratio (PSNR) within less computational time than those of some existing methods. That is, our models and methods outperform the existing methods.

The paper is organized as follows. Section 2 introduces the multi-tubal rank of a third order tensor with motivation in both theory and application. In Section 3, a new model of tensor completion based on the multi-tubal rank is proposed and tensor factorization based method is applied  with its corresponding convergence analysis. In Section 4, the tensor completion model is modified to tensor data with some characteristics when the involved data have spatio-temporal characteristics. For this improvement, tensor factorization based method is also modified.
Finally, some numerical results on  colorful image recovery, gray video recovery and internet traffic data recovery  are reported, which show the efficiency of the proposed methods.

\section{Multi-tubal rank: definition and motivation}
Before proceeding, we  present some notations here. For a positive integer $n$,
$ {\bf [n]}:=\{1,2,\ldots, n\}$. Scalars, vectors and matrices are denoted as lowercase letters ($a,b,c,\ldots$), boldface lowercase letters ($\bm{a} ,\bm{b},\bm{c},\ldots$) and uppercase letters ($A,B,C,\ldots $), respectively.
Third order tensors are denoted as $\mathcal{A},\mathcal{B},\mathcal{C},\ldots$, and the set of all the third order real tensors is denoted as $\R^{n_1\times n_2\times n_3}$. For a third order tensor $\mathcal{A}$, we use the Matlab notations $ \mathcal{A}(i,:, :),\, \mathcal{A}(:, j, :) $ and $  \mathcal{A}(:, :, k)  $ to denote its $ i $-th horizontal, $ j $-th lateral and $ k $-th frontal slice, respectively. Let $\A=(\A_{ijk})\in \R^{n_1\times n_2\times n_3}$, then $(A_1^{(i)})_{jk} = (A_2^{(j)})_{ik}=(A_3^{(k)})_{ij}=\A_{ijk}$ for all $i\in {\bf [n_1]}$, $j\in {\bf [n_2]}$ and $k\in {\bf [n_3]}$. The inner product of two tensors  $ \mathcal{A},\,\mathcal{B} \in {\R^{{n_1} \times {n_2} \times {n_3}}}$ is the sum of products of their entries, i.e.
$$\left\langle {\mathcal{A},\mathcal{B}} \right\rangle  = \sum\limits_{i = 1}^{{n_1}} {\sum\limits_{j = 1}^{{n_2}} {\sum\limits_{k = 1}^{{n_3}} {{\mathcal{A}_{ijk}}{\mathcal{B}_{ijk}}} } }. $$
The Frobenius norm is ${\left\| \mathcal{A} \right\|_F} = \sqrt {\left\langle {\mathcal{A},\mathcal{A}} \right\rangle } $.
%$ {\left\| \mathcal{A} \right\|_{\infty}} $  represents the maximum absolute value of $\mathcal{A}  $.
For a martix $A$, $ A^* $ and $ A^{-1} $ represent the conjugate transpose and the inverse of $ A $, respectively.
$I_n$ represents the identity matrix of size $n \times n$.
For any $u\in {\bf {\bf [3]}}$, the $u$-mode matrix product of a tensor $\A=(\A_{ijk})\in \R^{n_1\times n_2\times n_3}$ with a matrix $M_u\in \R^{J\times n_u}$ is denoted by $\A\times_u M_u$  with its entries
$$
\begin{array}{rl}
&(\A\times_1 M_1)_{ii_2i_3}=\sum_{i_1=1}^{n_1}\A_{i_1i_2i_3}(M_1)_{ii_1},\\ &(\A\times_2 M_2)_{i_1ji_3}=\sum_{i_2=1}^{n_2}\A_{i_1i_2i_3}(M_2)_{ji_2},\\
&(\A\times_3 M_3)_{i_1i_2k}=\sum_{i_3=1}^{n_3}\A_{i_1i_2i_3}(M_3)_{ki_3}.
\end{array}
$$

\subsection{Generalized $T_u$-product and multi-tubal rank}

In this subsection, we will introduce multi-tubal rank, which is a generalization of tubal rank in \cite{KBHH13}.
Before proceeding,  we review the Discrete Fourier Transformation
(DFT), which plays a key role in tensor-tensor product (t-product). For $\A\in \mathbb{R}^{n_1 \times n_2 \times n_3}$ and $u\in{\bf {\bf [3]}}$,
let ${{\bar \A}_u} \in {{\mathbb C}^{{n_1} \times {n_2} \times {n_3}}}$ be the result of
Discrete Fourier transformation (DFT) of ${{ \A}} \in {{\mathbb R}^{{n_1} \times {n_2} \times {n_3}}}$
along the $ u $-th mode. Specifically, let  $F_{n_u}=[f_1,\dots, f_{n_u}]\in \mathbb C^{n_u\times n_u}$, where
$$f_i=\left[ \omega^{0\times (i-1)}; \omega^{1\times (i-1)};\dots; \omega^{(n_u-1)\times (i-1)}\right] \in \mathbb C^{n_u}$$
with $\omega=e^{-\frac{2\pi i}{n_u}}$ and $i=\sqrt{-1}$. Then
\begin{equation*}
\bar \A_1(:,j,k)=F_{n_1}\A(:,j,k),\quad
\bar \A_2(i,:,k)=F_{n_2}\A(i,:,k),\quad
\bar \A_3(i,j,:)=F_{n_3}\A(i,j,:),
\end{equation*}
which can be computed by Matlab command ``$\bar \A_u=fft(\A,[\; ],u)$''. Furthermore, $\A$ can be computed by $\bar \A_u$ with the inverse DFT $ \A=ifft({\bar \A}_u,[\; ],u) $.

 For $\A\in \mathbb{R}^{n_1\times n_2\times n_3}$, we define matrices ${\bar A}_1 \in {{\mathbb C}^{{n_1}{n_2} \times {n_1}{n_3}}}$, $\bar A_2 \in {{\mathbb C}^{{n_1}{n_2} \times {n_2}{n_3}}}$ and ${{\bar A}_3} \in {{\mathbb C}^{{n_1}{n_3} \times {n_2}{n_3}}}$ as
\begin{equation}\label{bdiag}
{{\bar A}_u} = bdia{g_u}(\bar {{\mathcal{A}_u}} ) \hfill \\
= \left[ {\begin{array}{*{20}{c}}
	{\bar A_u^{(1)}}&{}&{}&{} \\
	{}&{\bar A_u^{(2)}}&{}&{} \\
	{}&{}& \ddots &{} \\
	{}&{}&{}&{\bar A_u^{({n_u})}}
	\end{array}} \right],~ \forall u\in  {\bf [3]}. \end{equation}
Here, $ bdiag_{u}(\cdot) $ is an operator which maps the tensor $ {{\bar {\mathcal A_u}}} $ to the block diagonal matrix $ \bar A_u$. The block circulant matrices $bcirc_{1}({\A}) \in {{\mathbb R}^{{n_1}{n_2} \times {n_1}{n_3}}}$, $bcirc_{2}({\A}) \in {{\mathbb R}^{{n_1}{n_2} \times {n_2}{n_3}}}$ and  $bcirc_{3}({\A}) \in {{\mathbb R}^{{n_1}{n_3} \times {n_2}{n_3}}}$ of $\A$ are defined  as
$$bcirc_{u}({\A}) = \left[ {\begin{array}{*{20}{c}}
	{A_u^{(1)}}&{A_u^{(n_u)}}& \cdots &{A_u^{(2)}}\\
	{A_u^{(2)}}&{A_u^{(1)}}& \cdots &{A_u^{(3)}}\\
	\vdots & \vdots & \ddots & \vdots \\
	{A_u^{(n_u)}}&{A_u^{({n_u} - 1)}}& \cdots &{A_u^{(1)}}
	\end{array}} \right],~ \forall u \in {\bf {\bf [3]}}.$$

Based on these notations, the generalized $T_u$-product and multi-tubal rank are introduced as follows.

\begin{definition}\label{def:T-pro}\textbf{(Generalized $T_u$-product)}
For $\A_1\in \mathbb{R}^{n_1\times n_2\times r_1}$ and $\B_1\in \mathbb{R}^{n_1\times r_1\times n_3}$, define
$$\A_1\ast_1\B_1:=fold_1(bcirc_1(\A_1)\ \cdot unfold_1(\B_1)) \in \mathbb{R}^{n_1\times n_2\times n_3}.$$
For $\A_2\in \mathbb{R}^{n_1\times n_2\times r_2}$ and $\B_2\in \mathbb R^{r_2\times n_2\times n_3}$, define
$$\A_2\ast_2\B_2:=fold_2(bcirc_2(\A_2)\ \cdot unfold_2(\B_2)) \in \mathbb{R}^{n_1\times n_2\times n_3}.$$
For $\A_3\in \mathbb{R}^{n_1\times r_3\times n_3}$ and $\B_3\in \mathbb R^{r_3\times n_2\times n_3}$, define
$$\A_3\ast_3\B_3:=fold_3(bcirc_3(\A_3)\ \cdot unfold_3(\B_3)) \in \mathbb{R}^{n_1\times n_2\times n_3}.$$
Here
$$ unfold_{u}(\B_u) = [B_u^{(1)};B_u^{(2)}; \cdots ;B_u^{(n_u)}], $$
	and its inverse operator ``fold$ _{u} $" is defined by $fold_{u}(unfold_{u}(\B_u)) = \B_u$.
\end{definition}

\begin{definition}\label{definition2.6}{\bfseries (Multi-tubal rank)} For any tensor $\A \in {{\mathbb R}^{{n_1} \times {n_2} \times {n_3}}}$ and $u\in {\bf [3]}$, let $r_u^l=rank(\bar A_u^{(l)})$ and $l\in {\bf [n_u]}$.
	Then multi-tubal rank of $\A$ is defined as $$rank_{mt}(\A)=(r_1(\A),r_2(\A),r_3(\A)),$$ where $r_u(\A)=\max\{r_u^1,r_u^2,\dots, r_u^{n_u}\}$  for $u \in  {\bf [3]}$.
\end{definition}

In fact, the $T_3$-product is the classical $t$-product and $r_3(\A)$ is
tubal rank \cite{KBHH13} of tensor $\A$, respectively.

\begin{lemma}\label{lem:equ} \cite{KM11}
		Suppose that $\A,\, \B$ are tensors such that $\F:=\A\ast_u \B$ ($u\in [\bf 3]$) is well defined as in Definition \ref{def:T-pro}. Let $ \bar A_u,\bar B_u, \bar F_u $ be defined as in \eqref{bdiag} and $ r_u(\cdot) $ be defined as in Definition \ref{definition2.6}. Then
	\begin{enumerate}
		\item [(1).]$\left\| \A \right\|_F^2 = \frac{1}{n_u}\left\| {{{\bar A}_u}} \right\|_F^2$;
		\item [(2).]${\mathcal F} = \A{ \ast _u}\B$ and $\bar F_u= {\bar A_u}{\bar B_u}$ are equivalent;	
		\item [(3).]$r_u(\F)\leq \min\{r_u(\A), r_u(\B)\}$.
	\end{enumerate}		
\end{lemma}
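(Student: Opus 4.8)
The plan is to reduce everything to the Fourier domain, where the tensor products become ordinary block-diagonal matrix products, and then invoke standard facts about the DFT and matrix rank. The whole point of the operator $bdiag_u$ together with the DFT is that $T_u$-product is diagonalized block-by-block, so each of the three claims should follow from an elementary matrix statement applied to each Fourier slice $\bar A_u^{(l)}$.

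For part (1), I would start from Parseval's theorem for the DFT along mode $u$: applying $F_{n_u}$ (a scaled unitary matrix, since $F_{n_u}^* F_{n_u} = n_u I_{n_u}$) to each tube preserves the squared norm up to the factor $n_u$. Concretely, $\|\bar{\mathcal A}_u\|_F^2 = n_u \|\A\|_F^2$, and since $bdiag_u$ merely rearranges the frontal slices of $\bar{\mathcal A}_u$ onto the diagonal blocks without repeating or dropping entries, $\|\bar A_u\|_F^2 = \|\bar{\mathcal A}_u\|_F^2 = n_u\|\A\|_F^2$. Rearranging gives the claimed identity $\|\A\|_F^2 = \frac{1}{n_u}\|\bar A_u\|_F^2$. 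I would make sure to state the Parseval constant carefully, since the normalization convention for the unnormalized DFT matrix $F_{n_u}$ is exactly what produces the $\frac{1}{n_u}$ factor.

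For part (2), the key structural fact is that the block circulant matrix $bcirc_u(\A)$ is block-diagonalized by the (block) DFT: there is a fixed unitary-up-to-scaling matrix, namely $(F_{n_u}\otimes I)$ conjugation, such that $bcirc_u(\A)$ is similar to $\bar A_u = bdiag_u(\bar{\mathcal A}_u)$. Under this simultaneous diagonalization, the defining relation $\F = \A \ast_u \B = fold_u(bcirc_u(\A)\cdot unfold_u(\B))$ transforms exactly into $\bar F_u = \bar A_u \bar B_u$, because matrix multiplication is preserved under the change of basis and the block-diagonal structure lets the product act slice-wise: $\bar F_u^{(l)} = \bar A_u^{(l)} \bar B_u^{(l)}$ for each $l \in [n_u]$. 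This is the standard equivalence underlying t-SVD, so I would cite the diagonalization of block circulants by the DFT and translate the $fold/unfold$ bookkeeping; the only care needed is matching the index conventions of $bcirc_u$, $unfold_u$ and the chosen $u$.

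Part (3) then follows immediately from part (2): since $\bar F_u^{(l)} = \bar A_u^{(l)}\bar B_u^{(l)}$ for every $l$, the elementary inequality $\operatorname{rank}(PQ)\le \min\{\operatorname{rank}(P),\operatorname{rank}(Q)\}$ gives $r_u^l(\F)\le \min\{r_u^l(\A), r_u^l(\B)\}$ for each slice. Taking the maximum over $l\in[n_u]$, and using that the max of a minimum is bounded by the minimum of the two maxima, yields $r_u(\F)=\max_l r_u^l(\F)\le \min\{r_u(\A), r_u(\B)\}$. I expect the main (though still modest) obstacle to be part (2): carefully verifying that the $fold_u/unfold_u$ reshaping combined with conjugation by the DFT really does send $bcirc_u(\A)\cdot unfold_u(\B)$ to the clean slice-wise product $\bar A_u\bar B_u$, rather than to some permuted or transposed variant. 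Parts (1) and (3) are then short consequences of Parseval and basic rank inequalities.
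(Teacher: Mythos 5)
Your proof is correct: the paper itself offers no proof of this lemma, deferring entirely to the citation \cite{KM11}, and your argument --- Parseval for the unnormalized DFT matrix ($F_{n_u}^*F_{n_u}=n_uI$), block-diagonalization of $bcirc_u(\cdot)$ by $(F_{n_u}\otimes I)$-conjugation to turn the $T_u$-product into the slice-wise products $\bar F_u^{(l)}=\bar A_u^{(l)}\bar B_u^{(l)}$, and then the elementary rank inequality combined with $\max_l\min\{a_l,b_l\}\le\min\{\max_l a_l,\max_l b_l\}$ --- is precisely the standard argument from that reference on which the whole $T_u$-product framework rests. Nothing further is needed.
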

From Lemma \ref{lem:equ}, we can assert that the generalized tensor factorization can be computed by matrix factorization, which is computable.

\subsection{Motivation of multi-tubal rank}
We first discuss the relationship between Tucker rank and multi-tubal rank.  To this end, we need the following lemma.
\begin{lemma}\label{lem:martix-to-tensor}
	Suppose that $\mathcal{C} \in {\mathbb{R}^{{n_1} \times {n_2} \times {n_3}}}$, $ F \in {\mathbb{R}^{{n_1} \times {n_1}}}, \,G \in {\mathbb{R}^{{n_2} \times {n_2}}}$ and $H \in {\mathbb{R}^{   {{n_3} }  \times {n_3}}} $. Let $\F\in \mathbb{R}^{n_1\times n_2\times n_1},\, \tilde\F\in \mathbb{R}^{n_1\times n_1\times n_3},\, \G\in \mathbb{R}^{n_2\times n_2\times n_3}, \,\tilde\G\in \mathbb{R}^{n_1\times n_2\times n_2}, \, \H\in \mathbb{R}^{n_1\times n_3\times n_3},\, \tilde\H\in \mathbb{R}^{n_3\times n_2\times n_3}$ be the tensors with their slices
	$$\begin{gathered}
		F_2^{(1)} = F,\: F_2^{(2)} =  \cdots  = F_2^{({n_2})} = 0,
		\quad \tilde F_3^{(1)} = F,\: \tilde F_3^{(2)} =  \cdots  = \tilde F_3^{({n_3})} = 0, \hfill \\
		G_3^{(1)} = G^T,\: G_3^{(2)} =  \cdots  = G_3^{({n_3})} = 0, \quad \tilde G_1^{(1)} = G,\: \tilde G_1^{(2)} =  \cdots  = \tilde G_1^{({n_1})} = 0, \hfill \\
		H_1^{(1)} = H^T,\: H_1^{(2)} =  \cdots  = H_1^{({n_1})} = 0, \quad \tilde H_2^{(1)} = H^T,\: \tilde H_2^{(2)} =  \cdots  = \tilde H_2^{({n_2})} = 0. \hfill \\
	\end{gathered}$$	
	Then
	\begin{equation*}
		\begin{aligned}
\left\{ \begin{array}{l}
	{\F}{*_2}{\C} = {\C}{ \times _1}F,\\
	{\tilde\F}{*_{3}}{\C} = {\C}{ \times _1}{ F},
\end{array} \right.\quad \left\{ \begin{array}{l}
	{\C}{*_3}{\G} = {\C}{ \times _2}G,\\
	{\tilde\G}{*_1}{\C} = {\C}{ \times _2}G,
\end{array} \right.\quad \left\{ \begin{array}{l}
	{\C}{*_1}{\H} = {\C}{ \times _3}H,\\
	{\C}{*_2}{\tilde\H} = {\C}{ \times _3}{H}.
\end{array} \right.
		\end{aligned}
	\end{equation*}
\end{lemma}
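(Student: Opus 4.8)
The plan is to unfold each of the six identities through its definition in terms of $bcirc_u$, $unfold_u$ and $fold_u$ (Definition~\ref{def:T-pro}), and then to exploit the fact that each auxiliary tensor $\F,\tilde\F,\G,\tilde\G,\H,\tilde\H$ has exactly one nonzero slice. First I would record the dimension matchings that make each $\ast_u$ well defined: in $\F\ast_2\C$ the contracted dimension is $r_2=n_1$, in $\C\ast_3\G$ it is $r_3=n_2$, and the remaining four are analogous. I would then split the identities into two structurally identical families according to whether the special tensor is the \emph{left} factor ($\F\ast_2\C,\ \tilde\F\ast_3\C,\ \tilde\G\ast_1\C$) or the \emph{right} factor ($\C\ast_3\G,\ \C\ast_1\H,\ \C\ast_2\tilde\H$), and prove one representative of each family in full.

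For the left-factor family, the key point is that a block circulant matrix built from a tensor whose only nonzero slice is the first one is block diagonal. Thus $bcirc_2(\F)=I_{n_2}\otimes F$ (a Kronecker product), and likewise $bcirc_3(\tilde\F)=I_{n_3}\otimes F$ and $bcirc_1(\tilde\G)=I_{n_1}\otimes G$. Since $unfold_u(\C)$ merely stacks the slices of $\C$, multiplying by such a block-diagonal matrix left-multiplies each slice of $\C$ by the generating matrix, so after $fold_u$ every slice of the product equals $F$ (resp.\ $G$) times the corresponding slice of $\C$. For the representative $\F\ast_2\C$ this gives lateral slices $F\,C_2^{(j)}$, and an entrywise comparison closes the case: $(\C\times_1 F)_{ijk}=\sum_{i_1}\C_{i_1jk}F_{ii_1}$ is exactly the $(i,k)$ entry of $F\,C_2^{(j)}$.

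For the right-factor family, the special tensor has only its first stacked block nonzero, so $unfold_u$ of it equals $[M;0;\cdots;0]$ with $M$ the generating matrix. Multiplying $bcirc_u(\C)$ against this block vector selects the first block-column of $bcirc_u(\C)$, which is exactly the ordered stack of the slices of $\C$; hence after $fold_u$ each slice of the product is the corresponding slice of $\C$ right-multiplied by $M$. This is precisely where the transposes in the definitions of $\G,\H,\tilde\H$ are needed: choosing $M=G^{\top}$ (resp.\ $H^{\top}$) converts right-multiplication into the desired mode product. For the representative $\C\ast_3\G=\C\times_2 G$ one has frontal slices $C_3^{(k)}G^{\top}$, and the entrywise identity $(\C\times_2 G)_{ijk}=\sum_{i_2}\C_{ii_2k}G_{ji_2}=\big(C_3^{(k)}G^{\top}\big)_{ij}$ closes the case; $\C\times_3 H$ is identical with $H^{\top}$.

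I expect the only real obstacle to be bookkeeping rather than mathematics: one must track which slice convention ($A_1^{(i)}$ horizontal, $A_2^{(j)}$ lateral, $A_3^{(k)}$ frontal) each of $bcirc_u$, $unfold_u$, $fold_u$ produces, and place each transpose on the correct side so that multiplication by $M$ reproduces the index contraction in $\times_u$. Because the six cases differ only in the mode index and in which side the one-slice tensor occupies, once the two representatives above are settled the remaining four follow verbatim by permuting indices, so no new idea is required. Alternatively, one could pass to the Fourier domain via Lemma~\ref{lem:equ}(2), but the direct block-circulant computation is more transparent here since the auxiliary tensors are already sparse in the spatial domain.
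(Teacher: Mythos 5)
Your proposal is correct and follows essentially the same route as the paper's proof: unfolding each $\ast_u$ via Definition~\ref{def:T-pro}, observing that the one-nonzero-slice structure makes $bcirc_u$ of the auxiliary tensor block diagonal (left-factor cases) or makes its $unfold_u$ select the first block column of $bcirc_u(\C)$ (right-factor cases), and closing each case by entrywise comparison with $\times_u$. Your grouping into two symmetric families with one representative each, and the Kronecker-product notation $I_{n_u}\otimes F$, are only organizational refinements of what the paper writes out explicitly.
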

\begin{proof}
	It clear to see that
	\begin{equation*}
		\begin{aligned}
			&unfol{d_2}\left( \mathcal{F}{ * _2}\mathcal{C} \right) = bcir{c_2}\left( \F \right) \cdot unfol{d_2}\left( {\C} \right)\\
			=& \left[ {\begin{array}{*{20}{c}}
					{F_2^{(1)}}&{F_2^{({n_2})}}& \cdots &{F_2^{(2)}}\\
					{F_2^{(2)}}&{F_2^{(1)}}& \cdots &{F_2^{(3)}}\\
					\vdots & \vdots & \ddots & \vdots \\
					{F_2^{({n_2})}}&{F_2^{({n_2} - 1)}}& \cdots &{F_2^{(1)}}
			\end{array}} \right]\left[ {\begin{array}{*{20}{c}}
					{C_2^{(1)}}\\
					{C_2^{(2)}}\\
					\vdots \\
					{C_2^{({n_2})}}
			\end{array}} \right]\\
			=& \left[ {\begin{array}{*{20}{c}}
					{F}&0& \cdots &0\\
					0&F& \cdots &0\\
					\vdots & \vdots & \ddots & \vdots \\
					0&0& \cdots &F
			\end{array}} \right]\left[ {\begin{array}{*{20}{c}}
					{C_2^{(1)}}\\
					{C_2^{(2)}}\\
					\vdots \\
					{C_2^{({n_2})}}
			\end{array}} \right] = \left[ {\begin{array}{*{20}{c}}
					F{C_2^{(1)}}\\
					F{C_2^{(2)}}\\
					\vdots \\
					F{C_2^{({n_2})}}
			\end{array}} \right].	
		\end{aligned}
	\end{equation*}
	Then
	\begin{equation*}
		\begin{aligned}
			\left( \mathcal{F}{ * _2}\mathcal{C} \right)_{ijk} = {\left( F{C_2^{(j)}} \right)_{ik}} = \sum\limits_{p = 1}^{{n_1}} {{F_{ip}}{{\left( {C_2^{(j)}} \right)}_{pk}}}= \sum\limits_{p = 1}^{{n_1}} {{{\C}_{pjk}}{F_{ip}} = {{\left( {{\C}{ \times _1}F} \right)}_{ijk}}}. 	
		\end{aligned}
	\end{equation*}	
Similarly,
\begin{equation*}
	\begin{aligned}
		\left( \mathcal{\tilde F}{ * _3}\mathcal{C} \right)_{ijk} = {\left( F{C_3^{(k)}} \right)_{ij}} = \sum\limits_{p = 1}^{{n_1}} {{F_{ip}}{{\left( {C_3^{(k)}} \right)}_{pj}}}= \sum\limits_{p = 1}^{{n_1}} {{{\C}_{pjk}}{F_{ip}} = {{\left( {{\C}{ \times _1}F} \right)}_{ijk}}}. 	
	\end{aligned}
\end{equation*}	
Now we can assert that  $ {\mathcal{F}{ * _2}\mathcal{C}}  =  {\mathcal{C}{ \times _1}F} $ and $ {\tilde\F}{*_{3}}{\C} = {\C}{ \times _1}{ F} $.			
	
	Furthermore,
	\begin{equation*}
		\begin{aligned}
			&unfol{d_3}\left( {{\C}{ * _3}{\G}} \right) = bcir{c_3}\left( {\C} \right) \cdot unfol{d_3}\left( {\G} \right)\\
			=& \left[ {\begin{array}{*{20}{c}}
					{C_3^{(1)}}&{C_3^{({n_3})}}& \cdots &{C_3^{(2)}}\\
					{C_3^{(2)}}&{C_3^{(1)}}& \cdots &{C_3^{(3)}}\\
					\vdots & \vdots & \ddots & \vdots \\
					{C_3^{({n_3})}}&{C_3^{({n_3} - 1)}}& \cdots &{C_3^{(1)}}
			\end{array}} \right]\left[ {\begin{array}{*{20}{c}}
					{G_3^{(1)}}\\
					{G_3^{(2)}}\\
					\vdots \\
					{G_3^{({n_3})}}
			\end{array}} \right]\\
			=& \left[ {\begin{array}{*{20}{c}}
					{C_3^{(1)}}&{C_3^{({n_3})}}& \cdots &{C_3^{(2)}}\\
					{C_3^{(2)}}&{C_3^{(1)}}& \cdots &{C_3^{(3)}}\\
					\vdots & \vdots & \ddots & \vdots \\
					{C_3^{({n_3})}}&{C_3^{({n_3} - 1)}}& \cdots &{C_3^{(1)}}
			\end{array}} \right]\left[ {\begin{array}{*{20}{c}}
					{{G^T}}\\
					0\\
					\vdots \\
					0
			\end{array}} \right] = \left[ {\begin{array}{*{20}{c}}
					{C_3^{(1)}{G^T}}\\
					{C_3^{(2)}{G^T}}\\
					\vdots \\
					{C_3^{({n_3})}{G^T}}
			\end{array}} \right].	
		\end{aligned}
	\end{equation*}
	Then
	\begin{equation*}
		\begin{aligned}
			\left( {{\C}{ * _3}{\G}} \right)_{ijk} = {\left( {C_3^{(k)}{G^T}} \right)_{ij}} = \sum\limits_{p = 1}^{{n_2}} {{{\left( {C_3^{(k)}} \right)}_{ip}}{{\left( {{G^T}} \right)}_{pj}}} = \sum\limits_{p = 1}^{{n_2}} {{{\C}_{ipk}}{G_{jp}} = {{\left( {{\C}{ \times _2}G} \right)}_{ijk}}}. 	
		\end{aligned}
	\end{equation*}
Similarly,
	\begin{equation*}
	\begin{aligned}
		\left( {\tilde\G}{ * _1}{{\C}} \right)_{ijk} = {\left( {G}{C_1^{(i)}} \right)_{jk}} = \sum\limits_{p = 1}^{{n_2}} {{{\left( {{G}} \right)}_{jp}}{{\left( {C_1^{(i)}} \right)}_{pk}}} = \sum\limits_{p = 1}^{{n_2}} {{{\C}_{ipk}}{G_{jp}} = {{\left( {{\C}{ \times _2}G} \right)}_{ijk}}}. 	
	\end{aligned}
\end{equation*}
Then $\mathcal{C}{*_3}\mathcal{G} = \mathcal{C}{ \times _2}G $ and $ {\tilde\G}{*_1}{\C} = {\C}{ \times _2}G $. Similarly,  $ {\mathcal{C}{ * _1}\mathcal{H}} =  {\mathcal{C}{ \times _3}H} $ and $ 	{\C}{*_2}{\tilde\H} = {\C}{ \times _3}{H} $. Hence the desired results are arrived.	
\end{proof}
\begin{theorem}
	For any tensor $\A=(\A_{ijk}) \in {{\mathbb R}^{{n_1} \times {n_2} \times {n_3}}}$, the following properties hold:
	\[\begin{array}{rl}&{r_1}(\A) \le \min\left\lbrace r\left( {{A_{(2)}}} \right),r\left( {{A_{(3)}}} \right)\right\rbrace ,~ {r_2}(\A) \le \min\left\lbrace r\left( {{A_{(1)}}} \right),r\left( {{A_{(3)}}} \right)\right\rbrace,\\~ &{r_3}(\A)\le \min\left\lbrace  r\left( {{A_{(1)}}} \right),r\left( {{A_{(2)}}} \right)\right\rbrace .\end{array}\]
\end{theorem}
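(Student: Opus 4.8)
The plan is to prove all six inequalities from a single linear-algebra observation together with a symmetry argument, so it suffices to establish the two bounds for $r_3(\A)$ in detail. Recall from Definition \ref{definition2.6} that $r_3(\A)=\max_{l\in[n_3]}\operatorname{rank}(\bar A_3^{(l)})$, where $\bar A_3^{(l)}$ is the $l$-th frontal slice of the mode-$3$ DFT $\bar\A_3=fft(\A,[\,],3)$. The starting point is that the DFT acts linearly along mode $3$: since $\bar\A_3(i,j,:)=F_{n_3}\A(i,j,:)$, each transformed frontal slice is a fixed linear combination of the original frontal slices, namely $\bar A_3^{(l)}=\sum_{k=1}^{n_3}(F_{n_3})_{lk}A_3^{(k)}$ for every $l\in[n_3]$.

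First I would relate the frontal slices to the matricizations. Reading off the $(i,(j,k))$ entry of the mode-$1$ matricization gives $\A_{ijk}=(A_3^{(k)})_{ij}$, so $A_{(1)}$ coincides, up to a permutation of its columns, with the horizontal concatenation $[A_3^{(1)},\dots,A_3^{(n_3)}]$; hence its column space is $\sum_{k}\mathrm{Col}(A_3^{(k)})$ and the column permutation does not affect the rank. Because $\bar A_3^{(l)}$ is a linear combination of the $A_3^{(k)}$, every column of $\bar A_3^{(l)}$ lies in $\mathrm{Col}(A_{(1)})$, and therefore $\operatorname{rank}(\bar A_3^{(l)})\le\operatorname{rank}(A_{(1)})$ for each $l$. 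Taking the maximum over $l$ yields $r_3(\A)\le r(A_{(1)})$.

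For the second bound I would repeat the argument with transposes. The mode-$2$ matricization satisfies $(A_{(2)})_{j,(i,k)}=\A_{ijk}=((A_3^{(k)})\T)_{ji}$, so $A_{(2)}=[(A_3^{(1)})\T,\dots,(A_3^{(n_3)})\T]$ up to a column permutation, and $\mathrm{Col}(A_{(2)})=\sum_k\mathrm{Col}((A_3^{(k)})\T)$. Transposing the linear-combination identity gives $(\bar A_3^{(l)})\T=\sum_k(F_{n_3})_{lk}(A_3^{(k)})\T$, whose columns again lie in $\mathrm{Col}(A_{(2)})$; thus $\operatorname{rank}(\bar A_3^{(l)})=\operatorname{rank}((\bar A_3^{(l)})\T)\le r(A_{(2)})$ and, after the maximum, $r_3(\A)\le r(A_{(2)})$. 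Combining the two estimates gives $r_3(\A)\le\min\{r(A_{(1)}),r(A_{(2)})\}$.

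Finally, the remaining two lines follow by permuting the roles of the three modes: the same computation with $F_{n_1}$ acting on the horizontal slices $A_1^{(i)}$ (which concatenate into $A_{(2)}$ and, transposed, into $A_{(3)}$) gives $r_1(\A)\le\min\{r(A_{(2)}),r(A_{(3)})\}$, and with $F_{n_2}$ acting on the lateral slices $A_2^{(j)}$ gives $r_2(\A)\le\min\{r(A_{(1)}),r(A_{(3)})\}$. I do not expect a genuine obstacle here; the only delicate point is the index bookkeeping, namely keeping straight which matricization a given family of slices concatenates into, and remembering to transpose so that the ``other'' matricization captures the row spaces rather than the column spaces of the slices. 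Note that the invertibility of $F_{n_u}$ plays no role in the inequalities; all that is used is that each transformed slice is a linear combination of the untransformed ones.
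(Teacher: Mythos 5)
Your proof is correct, and it takes a genuinely different route from the paper's. The paper argues through a Tucker decomposition: it writes $\A=\tilde\B\times_3 U^{(3)}$ with $r(U^{(3)})=r(A_{(3)})$, uses Lemma \ref{lem:martix-to-tensor} to re-express this mode product as a generalized $T_1$-product $\A=\tilde\B*_1\U$, where $\U$ has $(U^{(3)})^T$ as its first horizontal slice and zeros elsewhere, computes $r_1(\U)=r(U^{(3)})$ by applying the DFT to $\U$, and then invokes the rank inequality $r_1(\F)\le\min\{r_1(\A),r_1(\B)\}$ of Lemma \ref{lem:equ}(3) to get $r_1(\A)\le r_1(\U)=r(A_{(3)})$, repeating this for each of the six bounds. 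Your argument bypasses all of this machinery: you use only that each transformed slice $\bar A_3^{(l)}=\sum_{k}(F_{n_3})_{lk}A_3^{(k)}$ is a linear combination of the original slices, and that the original slices concatenate (up to a column permutation) into $A_{(1)}$, respectively transpose-concatenate into $A_{(2)}$, so a column-space inclusion immediately bounds $\operatorname{rank}(\bar A_3^{(l)})$ by $r(A_{(1)})$ and by $r(A_{(2)})$. This is more elementary and self-contained --- no Tucker decomposition, no $T_u$-product lemmas --- and, as you observe, it proves something slightly stronger: the inequalities hold with $F_{n_u}$ replaced by an arbitrary matrix, since neither invertibility nor any special structure of the DFT is used. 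What the paper's route buys in exchange is that it exercises the interplay between mode products, generalized $T_u$-products, and multi-tubal rank (Lemmas \ref{lem:martix-to-tensor} and \ref{lem:equ}), which is precisely the machinery the rest of the paper runs on. One small point you should make explicit in a final write-up: $F_{n_3}$ has complex entries, so the columns of $\bar A_3^{(l)}$ lie in the \emph{complex} span of the columns of $A_{(1)}$; the conclusion $\operatorname{rank}(\bar A_3^{(l)})\le\operatorname{rank}(A_{(1)})$ then uses the standard fact that the rank of a real matrix is the same over $\mathbb{R}$ and over $\mathbb{C}$.
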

\begin{proof}
Let $ \A=\B\times_1U^{(1)}\times_2U^{(2)}\times_3U^{(3)} $ be a Tucker rank decomposition of $ \A $, then $ r\left( {{A_{(3)}}} \right)=r\left( {{U^{(3)}}} \right) $ and $ \A=\tilde\B\times_3U^{(3)} $, where $\tilde\B=\B\times_1U^{(1)}\times_2U^{(2)}$. By Lemma \ref{lem:martix-to-tensor}, we have $ \A=\tilde\B\times_3U^{(3)}=\tilde\B*_1\U $, where $ \U \in \mathbb R^{n_1\times n_3\times n_3} $ with its slices	
\[ U_1^{(1)} = \left( U^{(3)}\right)^T,\: U_1^{(2)} =  \cdots  = U_1^{({n_1})} = 0. \]
Denote $ \bar\U_1=fft(\U,[~],1) $, then $ \bar\U_1(:,j,k)=F_{n_1}\U(:,j,k) $, then
\[ \bar\U_1(i,:,:)=\sum\limits_{l=1}^{n_1} F_{n_1}\left(i,l \right) \U(l,:,:)=F_{n_1}\left(i,1 \right) \left( U^{(3)}\right)^T,\quad \forall i \in [\bf n_1]. \]
From the definition of $ F_{n_1} $, $ F_{n_1}\left(1,i \right)\ne 0 $.
Thus $ r_1\left( \U\right)=r\left( {{U^{(3)}}} \right)$. From Lemma \ref{lem:equ} and $ \A=\tilde\B*_1\U $, we have
\[ r_1\left(\A \right)\leq r_1\left( \U\right)=r\left( {{U^{(3)}}} \right)=r\left({{A_{(3)}}}\right).  \]
Similarly, $ r_1\left(\A \right)\leq r\left({{A_{(2)}}}\right)$.
Now we can assert that $ {r_1}(\A) \le \min\left\lbrace r\left( {{A_{(2)}}} \right),r\left( {{A_{(3)}}} \right)\right\rbrace $. Similarly, $$ {r_2}(\A) \le \min\left\lbrace r\left( {{A_{(1)}}} \right),r\left( {{A_{(3)}}} \right)\right\rbrace ,~ {r_3}(\A)\le \min\left\lbrace  r\left( {{A_{(1)}}} \right),r\left( {{A_{(2)}}} \right)\right\rbrace,$$
which show the desired results.
\end{proof}

Low Tucker rank tensor completion model were considered in various references. Note that Tucker rank considers low rank structures on all modes of tensor, while
only one low rank structure in tubal rank is considered, which leads to low rank structures on the other two modes missed.
To consider low rank structures on all the three modes of tensor, it is necessary to consider multi-tubal rank in tensor completion problem.

Now we take the video tensor data in real world for example to see the low rank structures of tensors.
In video tensor\footnote{\url{http://trace.eas.asu.edu/yuv/}}, there are two spatial dimensions and one temporal dimension. We take the first 30 frames of size $ 144 \times 176 $ as a video tensor $ \A $, that is $ \A \in \mathbb{R}^{144 \times 176 \times 30} $. Figure \ref{fig:video} (a) shows the sampled frames in the video. Figure \ref{fig:video} (b) shows the first 30 singular values of the matrix $ \bar A_3^{(1)} $. %and we can see these singular values decreased slowly.
Apply SVD to $ \bar A_1^{(1)} $ and $ \bar A_2^{(1)} $ to obtain their singular values, shown in Figure \ref{fig:video} (c) and Figure \ref{fig:video} (d), respectively. From Figure \ref{fig:video} (c) and (d),   the low rank structures of tensor $\A$ on mode 1 and mode 2 are presented.

\begin{figure}[htbp]
	\centering
	\begin{subfigure}[t]{0.4\linewidth}
		\centering
		\includegraphics[width=1.8in]{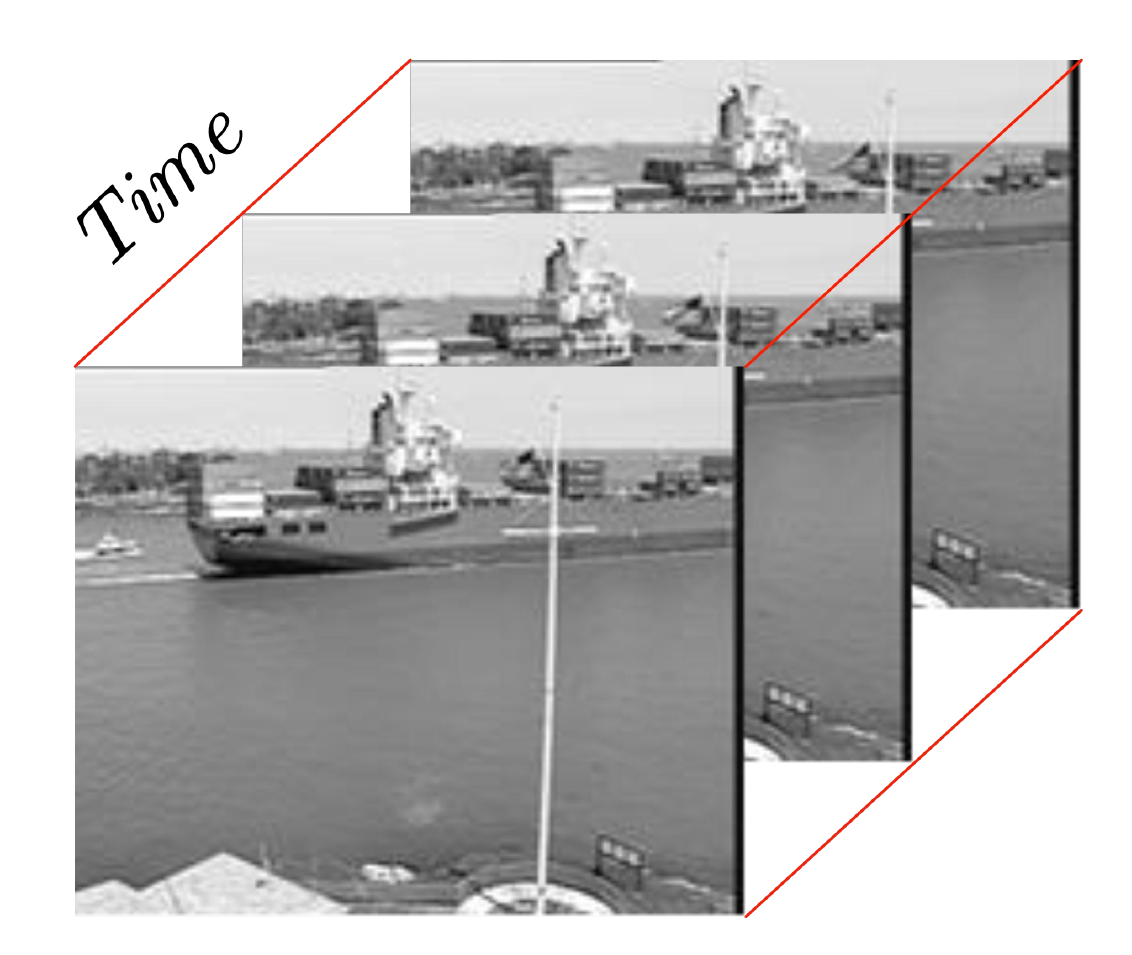}
		\caption{Sampled frames in video}
	\end{subfigure}
	\begin{subfigure}[t]{0.4\linewidth}
		\centering
		\includegraphics[width=2in]{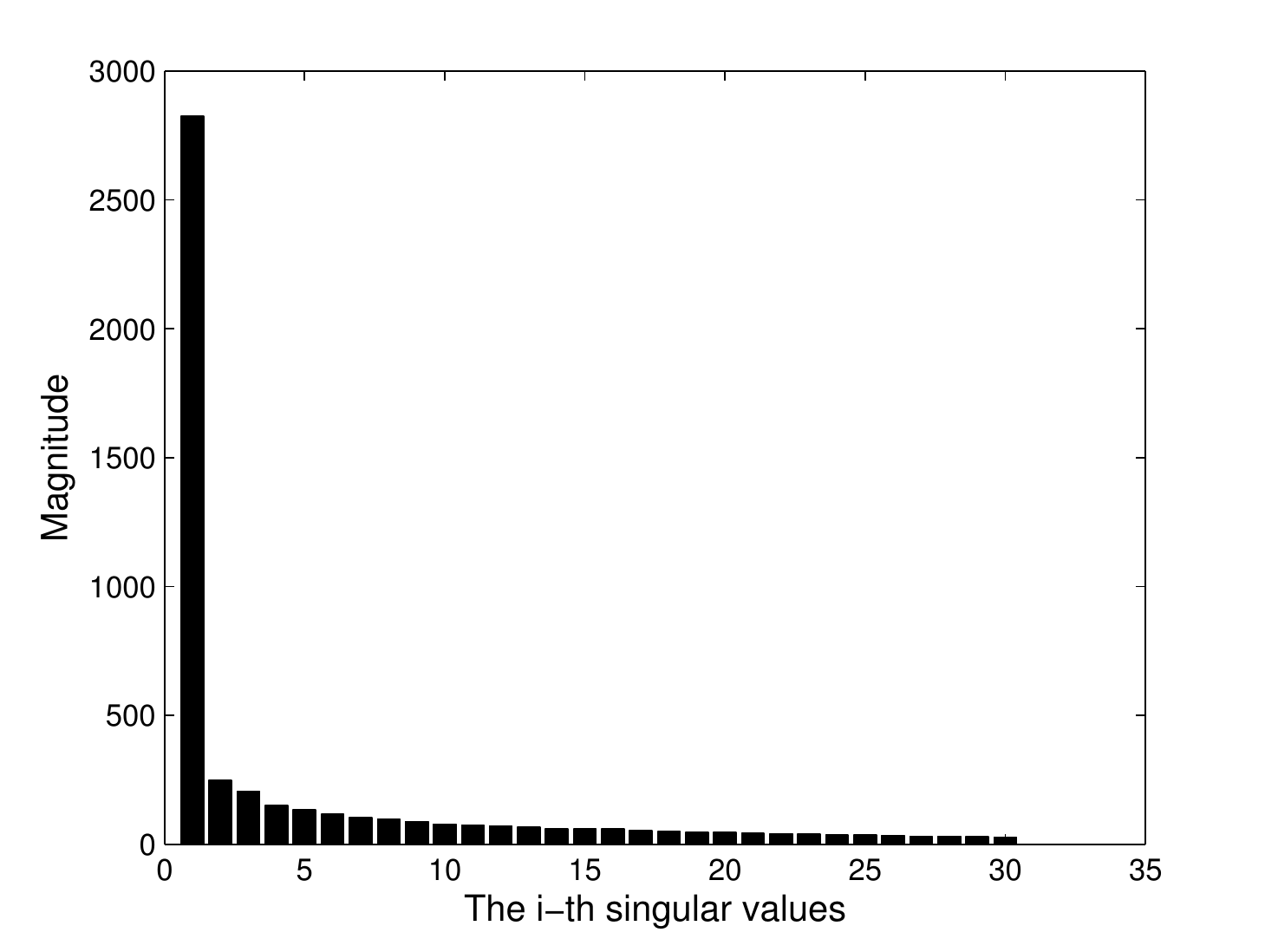}
		\caption{The first $30$ singular values of $\bar A_3^{(1)}$}
	\end{subfigure}
	
	\begin{subfigure}[t]{0.4\linewidth}
		\centering
		\includegraphics[width=2in]{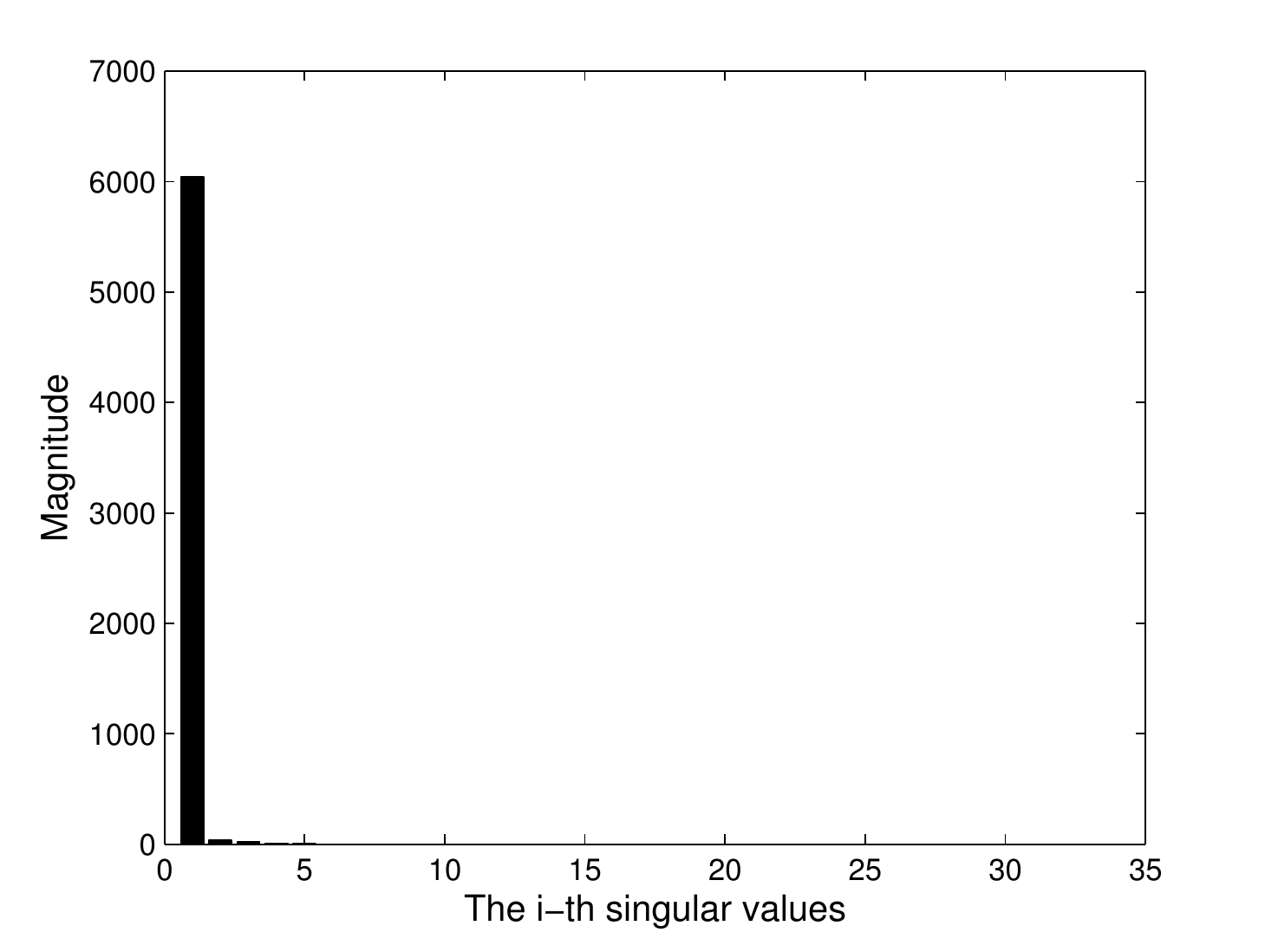}
		\caption{The singular values of $\bar A_1^{(1)}$}
	\end{subfigure}
	\begin{subfigure}[t]{0.4\linewidth}
		\centering
		\includegraphics[width=2in]{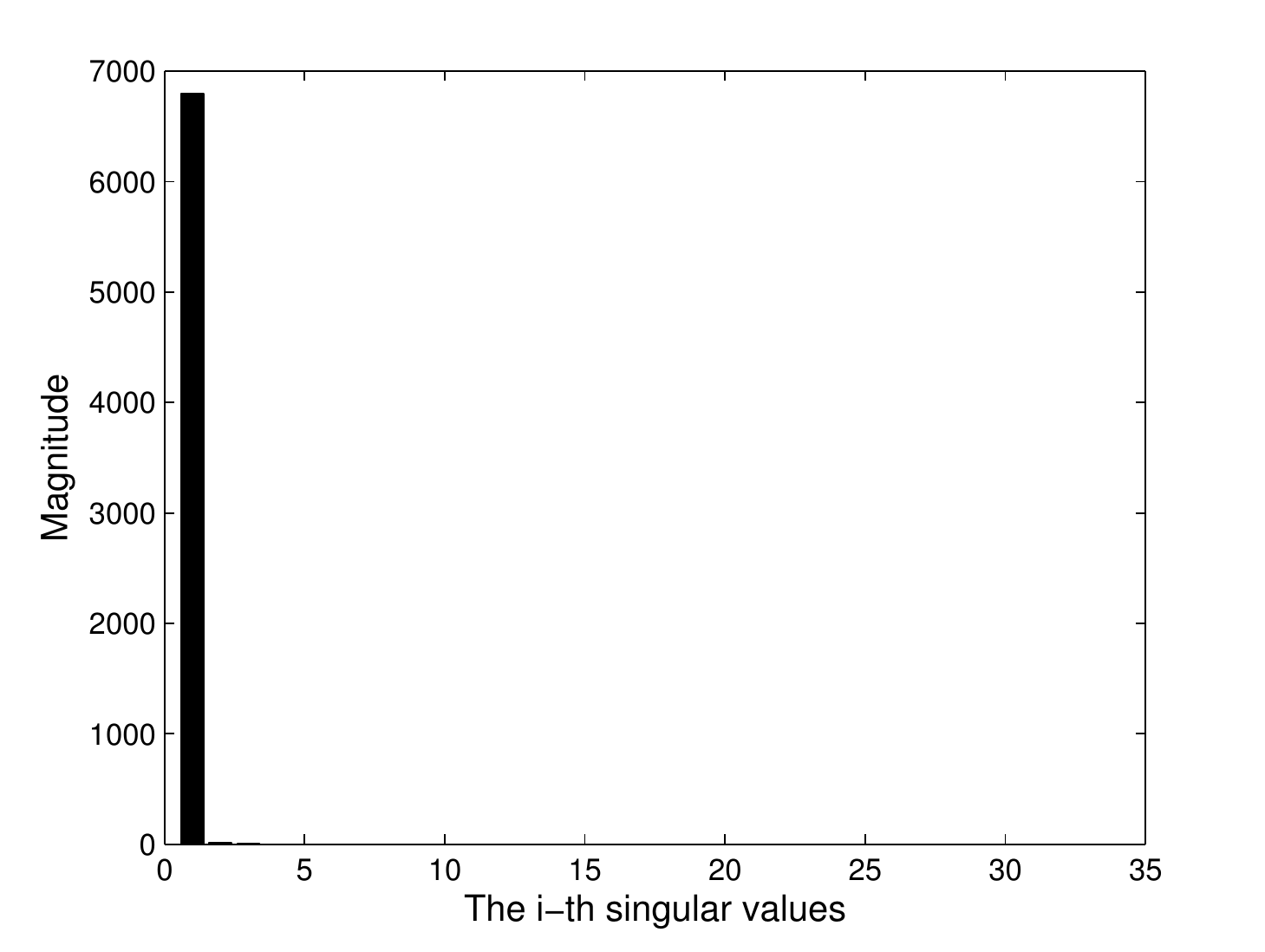}
		\caption{The singular values of $\bar A_2^{(1)}$}
	\end{subfigure}
\caption{The sampled frames in video and singular values of $\bar A_u^{(1)}$ for $u\in {\bf [3]}$ }
	\centering
	\label{fig:video}
\end{figure}

Motivated by this, we introduce multi-tubal rank, which is similar to Tucker rank. The introduced multi-tubal rank includes low rank structures on all three modes of third order tensor, which take full considerations of all low rank structures and will lead to promising performance for solving tensor completion problem.
%

%

%Based on multi-tubal rank, a new model is proposed for tensor completion problem in this section. For this new model, a rank-decreasing method is proposed \textbf{with global convergence.}

\section{Tensor completion problem based on multi-tubal rank}
In this section, we establish a low rank tensor completion based on multi-tubal rank and then apply a tensor factorization based method for solving it. For the method,  the convergence analysis will be presented.
\subsection{Tensor completion model based on multi-tubal rank and its tensor factorization based method}
Based on the introduced multi-tubal rank, the tensor completion problem can be modeled as
\begin{equation}\label{mt-vector-com}
	\mathop {\min }\limits_{{\mathcal C} \in {{\mathbb R}^{{n_1} \times {n_2} \times {n_3}}}} \operatorname{rank}_{mt}({\mathcal C}),\quad \mbox{\rm s.t.} \quad {P_\Omega }\left( {{\mathcal C} - {\mathcal M}} \right) = 0,
\end{equation}
which  is a vector optimization problem. To keep things simple, we consider the weighted multi-tubal rank minimization problem as
\[
\min \limits_{\mathcal{C} \in {\mathbb{R}^{{n_1} \times {n_2} \times {n_3}}}} \sum\limits_{u = 1}^3 {{\alpha _u}{r_u}(\mathcal{C})},\quad \mbox{\rm s.t.}\quad {P_\Omega }(\mathcal{C} - \mathcal{M}) = 0,
\]
where $\alpha_1,\alpha_2,\alpha_3\geq 0$ and $\sum\nolimits_{u = 1}^3 \alpha_u=1 $.
Note that $\C$ can be factorized as $\C=\X_u\ast_{u}\Y_u$ with $r_u(\C)\leq \min (r_u(\X_u), r_u(\Y_u))$ for $u\in {\bf [3]}$. Hence we consider the following  tensor factorization model
\begin{equation}\label{mt-tensor}
	\min\limits_{\C,\X_u,\Y_u} \sum\limits_{u = 1}^3
	\frac{\alpha_u}{2}\left\| \X_u \ast_u\Y_u - \C
	\right\|_F^2, \quad \mbox{\rm s.t.} \quad {P_\Omega }(\mathcal{C} - \mathcal{M}) = 0.
\end{equation}
To solve problem \eqref{mt-tensor} more conveniently, we introduce its regularized model as follows:
\begin{equation}\label{mt-tensor-xy}
	\min\limits_{\C,\X_u,\Y_u}  f(\C,\X_1,\X_2,\X_3,\Y_1,\Y_2,\Y_3),\quad \mbox{\rm s.t.} \quad {P_\Omega }(\mathcal{C} - \mathcal{M}) = 0.
\end{equation}
Here,
\begin{equation}\label{f-opt}
	f(\C,\X_1,\X_2,\X_3,\Y_1,\Y_2,\Y_3)
	=\sum\limits_{u = 1}^3 \left(\dfrac{\alpha_u}{2}\left\| \X_u \ast_u\Y_u - \C
	\right\|_F^2+\dfrac{\lambda}{2}\left(\|\X_u\|_F^2+\|\Y_u\|_F^2\right)\right).
\end{equation}
Now, we are ready to update $\C,\, \X_u$ and $\Y_u$ for all $u\in  {\bf [3]}$. %First of all, we update $\C$ at iteration $ t+1 $.
Note that
\begin{equation}\label{qeu:c}
	\begin{aligned}
		&\quad\sum\limits_{u = 1}^3 {{\alpha _u}\left\| {{\mathcal{X}_u}{ \ast _u}{\mathcal{Y}_u} - \mathcal{C}} \right\|_F^2}
		= \sum\limits_{u = 1}^3 {{\alpha _u}\left\langle {{\mathcal{X}_u}{ \ast _u}{\mathcal{Y}_u} - \mathcal{C},{\mathcal{X}_u}{ \ast _u}{\mathcal{Y}_u} - \mathcal{C}} \right\rangle }   \\
		&= \sum\limits_{u = 1}^3 {{\alpha _u}\left\langle {\mathcal{C},\mathcal{C}} \right\rangle }  - 2\sum\limits_{u = 1}^3
		{{\alpha _u}\left\langle {{\mathcal{X}_u}{ \ast _u}{\mathcal{Y}_u},\mathcal{C}} \right\rangle }+ \sum\limits_{u = 1}^3 {{\alpha _u}\left\langle {{\mathcal{X}_u}{ \ast _u}{\mathcal{Y}_u},{\mathcal{X}_u}
				\ast _u{\Y_u}} \right\rangle }   \\
		&= \left\langle {\mathcal{C},\mathcal{C}} \right\rangle  - 2\left\langle {\sum\limits_{u = 1}^3 {{\alpha _u}{\mathcal{X}_u}{ \ast _u}{\mathcal{Y}_u}} ,\mathcal{C}} \right\rangle  + \sum\limits_{u = 1}^3 {{\alpha _u}\left\| {{\mathcal{X}_u}{ \ast _u}\Y_u} \right\|_F^2}   \\
		&= \left\langle {\sum\limits_{u = 1}^3 {{\alpha _u}{\mathcal{X}_u}{ \ast _u}{\mathcal{Y}_u}}  - \mathcal{C},\sum\limits_{u = 1}^3 {{\alpha _u}{\mathcal{X}_u}{ \ast _u}{\mathcal{Y}_u}}  - \mathcal{C}} \right\rangle
		+ \sum\limits_{u = 1}^3 {{\alpha _u}\left\| {{\mathcal{X}_u}{ \ast _u}\mathcal{Y}_u} \right\|_F^2}  - \left\| {\sum\limits_{u = 1}^3 {{\alpha _u}{\mathcal{X}_u}{ \ast _u}{\mathcal{Y}_u}} } \right\|_F^2  \\
		&= \left\| {\sum\limits_{u = 1}^3 {{\alpha _u}{\mathcal{X}_u}{ \ast _u}{\mathcal{Y}_u}}  - \mathcal{C}} \right\|_F^2 + \sum\limits_{u = 1}^3 {{\alpha _u}\left\| {{\mathcal{X}_u}{ \ast _u}\mathcal{Y}_u} \right\|_F^2}- \left\| {\sum\limits_{u = 1}^3 {{\alpha _u}{\mathcal{X}_u}{ \ast _u}{\mathcal{Y}_u}} } \right\|_F^2.
	\end{aligned}
\end{equation}
Then $\C^{t+1}$ can be updated by
\begin{equation}\label{C-comp-com}
	\C^{t+1}=\mathop {\operatorname{argmin}} \limits_{{P_\Omega }(\C - \mathcal M) = 0} \frac{1}{2}\left\|
	\sum\limits_{u = 1}^3 \alpha _u {\X_u^t} { \ast _u} {\Y}_u^t  - \C \right\|_F^2\\
	=\sum\limits_{u = 1}^3 {{\alpha _u}{\X_u^t}{ \ast _u}{\Y_u^t}}  + {P_\Omega }\left( \mathcal{M} - \sum\limits_{u = 1}^3 {\alpha _u}{\X_u^t}{ \ast _u}{\Y_u^t} \right).
\end{equation}

Before we present how to update $\X_u^{t+1}$ and $\Y_u^{t+1}$, we rewrite (\ref{mt-tensor-xy}) as a corresponding matrix version.
Denote $r_u:=r_u(\C)$, $r_u^l:=r_u^l(\bar C_u^{(l)})$ with $\bar C_u^{(l)}\in {\mathbb{C}}^{n_{u_1}\times n_{u_2}}$, $u_1< u_2$ and $u_1, u_2\neq u$. Clearly, $r_u^l\leq r_u$ for all $l\in {\bf [n_u]}$. For each $u$ and $l$,  $\bar C_u^{(l)}$ can be factorized as a product of two matrices $\hat X_u^{(l)}$ and $\hat Y_u^{(l)}$ of smaller sizes,
where $\hat X_u^{(l)}\in {\mathbb{C}}^{n_{u_1}\times r_u^l}$ and $\hat Y_u^{(l)}\in {\mathbb{C}}^{r_u^l\times n_{u_2}}$ are the $l$th block diagonal matrices of
$\hat X_u\in {\mathbb{C}}^{n_{u_1}n_{u}\times \left(\sum\limits_{l=1}^{n_u}r_u^l\right)}$ and $\hat Y_u\in {\mathbb{C}}^{\left(\sum\limits_{l=1}^{n_u}r_u^l\right)\times n_{u}n_{u_2}}$. Let $\bar X_u^{(l)}=[\hat X_u^{(l)}, 0]\in \mathbb{C}^{n_{u_1}\times  r_u}$, $\bar Y_u^{(l)}=[\hat Y_u^{(l)};0]\in \mathbb{C}^{r_u\times n_{u_2}}$  and $\bar X_u, \bar Y_u$ be the block diagonal matrices with the $l$th block diagonal matrices $\bar X_u^{(l)},\bar Y_u^{(l)}$, respectively. Then $
\hat X_u\hat Y_u=\bar X_u\bar Y_u$. Together with Lemma \ref{lem:equ}, we have
\begin{equation*}
	\|\X_u*_u\Y_u-\C\|_F^2=\frac{1}{n_u}\|\bar X_u\bar Y_u-\bar C_u\|_F^2= \frac{1}{n_u}\|\hat X_u\hat Y_u-\bar C_u\|_F^2 =\frac{1}{n_u}\sum\limits_{l=1}^{n_u}\|\hat X_u^{(l)}\hat Y_u^{(l)}-\bar C^{(l)}_u\|_F^2,~u\in  {\bf [3]}.
\end{equation*}
Therefore, (\ref{mt-tensor-xy}) can be rewritten as
\begin{equation}\label{mt-finally}
	\begin{array}{rcl}
		&\min\limits_{\C,\X_u,\Y_u}&\sum\limits_{u = 1}^3 \sum\limits_{l = 1}^{n_u} \left(\dfrac{\alpha_u}{2n_u}\left\|\hat X_u^{(l)}\hat Y_u^{(l)} - \bar C_u^{(l)}\right\|_F^2\right)+\sum\limits_{u = 1}^3 \sum\limits_{l = 1}^{n_u} \left(\dfrac{\lambda }{2n_u}\left\|\hat X_u^{(l)} \right\|_F^2+\dfrac{\lambda }{2n_u}\left\| \hat Y_u^{(l)} \right\|_F^2\right) \\
		& \mbox{\rm s.t.} &\: {P_\Omega }(\C - \mathcal{M}) = 0.
	\end{array}
\end{equation}

To update $\hat X_u^{(l,t)}$, we consider its regularized version and have $\hat X_u^{(l,t+1)}$ as follows.
\begin{equation}\label{xl-comp}
	\begin{aligned}
		\hat X_u^{(l,t+1)} &= \mathop {\operatorname{argmin} }\limits_{\hat X_u^{(l)}} \frac{\alpha_{u}}{{2{n_u}}}\left\| {\hat X_u^{(l)}\hat Y_u^{(l,t)} - \bar C_u^{(l,t+1)}} \right\|_F^2 + \frac{\lambda }{{2{n_u}}}\left( \left\| {\hat X_u^{(l)}} \right\|_F^2 +\left\| {\hat X_u^{(l)}}-{\hat X_u^{(l,t)}} \right\|_F^2  \right) \\
		&= \left( \lambda{\hat X_u^{(l,t)}}+\alpha_u\bar C_u^{(l,t+1)}{\left( {\hat Y_u^{(l,t)}} \right)^*}\right) {\left( \alpha_u{\hat Y_u^{(l,t)}{{\left( {\hat Y_u^{(l,t)}} \right)}^*} + 2\lambda I} \right)^{ - 1}},~ \forall u\in  {\bf [3]},~ \forall l \in {\bf [n_u]}.
	\end{aligned}
\end{equation}

Similarly, $\hat Y_u^{(l,t+1)}$ can be updated by
\begin{equation}\label{yl-comp}
	\begin{aligned}
		\hat Y_u^{(l,t+1)} &= \mathop {\operatorname{argmin} }\limits_{\hat Y_u^{(l)}} \frac{\alpha_{u}}{{2{n_u}}}\left\| {\hat X_u^{(l,t+1)}\hat Y_u^{(l)} - \bar C_u^{(l,t+1)}} \right\|_F^2 + \frac{\lambda }{{2{n_u}}}\left( \left\| {\hat Y_u^{(l)}} \right\|_F^2+\left\| {\hat Y_u^{(l)}}-{\hat Y_u^{(l,t)}} \right\|_F^2 \right)
		\\&= {\left( {{{\alpha_{u}\left( {\hat X_u^{(l,t+1)}} \right)}^*}\hat X_u^{(l,t+1)} + 2\lambda I} \right)^{ - 1}}\left( \lambda{\hat Y_u^{(l,t)}}+\alpha_{u}{\left({\hat X_u^{(l,t+1)}} \right)^*}\bar C_u^{(l,t+1)}\right) ,~ \forall u\in  {\bf [3]},~ \forall l \in {\bf [n_u]}.
	\end{aligned}
\end{equation}

Based on above discussions, a tensor factorization algorithm can be outlined as Algorithm 3.1, denoted by  MTRTC.
\begin{table}[htbp]
	\centering
	% \begin{center}
	\begin{tabular}{l}
		\toprule
		\toprule
		{\bfseries Algorithm 3.1} Multi-Tubal Rank Tensor Completion (MTRTC)       \\
		\midrule
		{\bfseries Input:} The tensor data ${\mathcal M} \in {{\mathbb R}^{{n_1} \times {n_2} \times {n_3}}}$, the observed set
		$\Omega $,the initialized rank\\\qquad \quad \,\!  $ R^0$, parameters \,$\lambda$,\, $ \varepsilon$ and $\alpha_u,$\, $u\in [\bf 3]$.                                  \\
		{\bfseries Initialization:} $\hat X_u^0,\,\hat Y_u^0,\,u\in[\bf 3] $.                                           \\
		{\bfseries While not converge do}                \\
		\qquad  $\bm {1.}$ Fix $\hat X_u^t$ and $ \hat Y_u^t $ to compute $\mathcal C^{t+1} $ by \eqref{C-comp-com}.\\		
		\qquad  $ \bm{2.} $ Fix $\hat Y_u^t$ and $\mathcal C^{t+1} $ to update $\hat X_u^{t+1}$ by \eqref{xl-comp}.     \\
		\qquad  $ \bm{3.} $ Fix $\hat X_u^{t+1}$ and $\mathcal C^{t+1} $ to update $\hat Y_u^{t+1}$ by \eqref{yl-comp}.     \\
		\qquad  $ \bm{4.} $ Adopt the rank decreasing scheme  to adjust $ rank_{mt}(\C) $ and  adjust the sizes of \\
 \qquad \quad \,\!
 $\hat X_u^{t+1}$ and $ \hat Y_u^{t+1} $. \\
		%\qquad  $ \bm{5.} $ Adjust $ \alpha_{u}^{t+1} $ by \eqref{alpha}.\\
		\qquad  $ \bm{5.} $ Check the stop criterion: ${\left\| {\C_\Omega ^{t + 1} - {\M_\Omega }} \right\|_F}/{\left\| {{\M_\Omega }} \right\|_F} < \varepsilon $.  \\
		\qquad  $ \bm{6.} $ $t \leftarrow t + 1$.            \\
		{\bfseries end while}                             \\
		{\bfseries Output:}  $\mathcal C^{t + 1}  $.      \\		
		\bottomrule
		\bottomrule
	\end{tabular}
	% \end{center}
\end{table}
\begin{remark}In general, we do not know the true multi-tubal rank of optimal tensor $\C$ in advance. Thus, it is necessary to  estimate the multi-tubal rank of tensor $\C$. In this paper,  we adopt the same rank  estimation and rank decreasing strategy proposed in \cite{WYZ12,XHYS13, ZLLZ18}.
\iffalse	
	The parameters $ \alpha_1,\,\alpha_2 $ and $ \alpha_3 $ in \eqref{mt-tensor-xy}  were uniformly set to $ 1/3 $ at the beginning of algorithm. During the iterations, we either fixed them or dynamically updated them according to the fitting error
	\[{\pi_u ^{t+1}} = {\left\| {{P_\Omega }\left( {\M - \X_u^{t+1}*\Y_u^{t+1}} \right)} \right\|_F}.\]
	The smaller $ \pi_u ^{t+1} $ is, the larger $ \alpha_{u}^{t+1} $ should be. Specifically, we set
	\begin{equation}\label{alpha}
		\alpha _u^{t+1} = \frac{{{{\left( {\pi _u^{t+1}} \right)}^{ - 1}}}}{{\sum\nolimits_{u = 1}^3 {{{\left( {\pi _u^{t+1}} \right)}^{ - 1}}} }},\quad u \in [\bf 3].
	\end{equation}
\fi
	%Similar to complexity analysis in \cite{ZLLZ18}, the cost of each iteration in MTRTC is at most $\mathcal{O}(r(n_1+n_2)n_3\log n_3+	r(n_1+n_3)n_2\log r(n_1+n_3)n_2\log n_2+r(n_2+n_3)n_1\log n_1+ 3rn_1n_2n_3)$. We can see that MTRTC has lower computational complexity than some existing methods, other than TCTF. However, from experimental results reported in Section 4, the proposed method MTRTC performs better than TCTF, see Section 4 for details.
\end{remark}
\subsection{Convergence analysis}
In this subsection, we present the convergence of MTRTC. The following notation will be used in our analysis. In problem (\ref{mt-tensor-xy}), $\Omega$ is an index set which locates the observed data. We use $\Omega^c$ to denote the complement of the set $\Omega$ with respect to the set $\{(i,j,k): i\in {\bf [n_1]},j\in{\bf [n_2]},k\in {\bf [n_3]}\}$. To simply the notation, we denote $z^t=\left( \C^t, \X_1^t,\X_2^t,\X_3^t,\Y_1^t,\Y_2^t,\Y_3^t \right)$ in this subsection.

Before proceeding, we present the Kurdyka-Lojasiewicz (KL) property \cite{ABS13} with constraint defined as below.

\begin{definition}\label{KL}{\bf (Kurdyka-Lojasiewicz (KL) property)} Let $Z$ be an open set and $f:Z\to \re$ be a semi-algebra function. For every critical point $z^\star\in Z$ of $f$, there are a neighborhood of $z^\star$, denoted by $Z'\subset Z$, an exponent $\theta\in [0,1)$ and a positive constant $\mu$ such that
\begin{equation}\label{kl}
|f(z)-f(z^\star)|^\theta\leq \mu \left\| \prod\nolimits_\Omega \left(\nabla f(z)\right)\right\|_F	
\end{equation}	
for all $z\in Z'$, where $\prod_\Omega (\nabla f(z))$ denotes the projective gradient of $f$.
\end{definition}

Recall that $f(z)$ defined as in \reff{f-opt}, $f(z)$ is a quadratic function on $z$, and hence is a semi-algebra function. From Definition \reff{KL}, for any critical point $z^\star$, there exist $\theta$ and $\mu$ such that \reff{kl} is satisfied.
\begin{theorem}\label{th:ca}
	Suppose that $\{z^t\}$ is an infinite sequence generated by MTRTC. Then we have the following statements.
	\begin{itemize}
		\item [(1).] The sequence $\{z^t\}$ is bounded and any accumulation point of $\{z^t\}$ is a stationary point of problem \eqref{f-opt}.
		\item [(2).] There is a constant $\eta>0$ such that $\eta\|z^t-z^{t+1}\|_F\geq \|\prod_\Omega (\nabla f(z^t))\|_F$.
	\end{itemize}
\end{theorem}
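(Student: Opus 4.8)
The plan is to view MTRTC as a proximally regularized Gauss--Seidel (block alternating) minimization of the objective $f$ in \reff{f-opt}, and then to run the standard sufficient-decrease plus bounded-gradient argument that underlies the KL machinery of \cite{ABS13}. I would first record, via Lemma \ref{lem:equ}(1)--(2), that optimizing over the tensor variables $(\C,\X_u,\Y_u)$ is equivalent to optimizing over the frequency-domain block matrices $(\bar C_u,\hat X_u^{(l)},\hat Y_u^{(l)})$, since $\|\X_u*_u\Y_u-\C\|_F^2=\frac{1}{n_u}\sum_l\|\hat X_u^{(l)}\hat Y_u^{(l)}-\bar C_u^{(l)}\|_F^2$ and the regularizers transform correspondingly; in these coordinates $f$ is quadratic in each single block while being a polynomial jointly. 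In particular, by the identity \eqref{qeu:c}, the $\C$-step \eqref{C-comp-com} is exactly the minimization of $f$ in the $\C$-block over the affine set $P_\Omega(\C-\M)=0$, and the steps \eqref{xl-comp}, \eqref{yl-comp} are exact minimizations of the corresponding matrix blocks augmented by the proximal terms $\frac{\lambda}{2n_u}\|\cdot-(\cdot)^t\|_F^2$.

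The core estimate is a sufficient-decrease inequality. Each block subproblem is strongly convex: the $\C$-block with modulus $1$ (its Hessian is the identity), and each $\hat X_u^{(l)}$- and $\hat Y_u^{(l)}$-block with modulus at least $\lambda/n_u$ coming from the proximal term. For a generic strongly convex block update $w^{t+1}=\operatorname{argmin}_w h(w)+\frac{\rho}{2}\|w-w^t\|_F^2$ with $h$ convex, one has $h(w^t)-h(w^{t+1})\geq\rho\|w^t-w^{t+1}\|_F^2$; applying this to the $\C$-step and to every $(u,l)$ matrix step and summing over the Gauss--Seidel sweep yields a constant $c>0$ with $f(z^t)-f(z^{t+1})\geq c\|z^t-z^{t+1}\|_F^2$. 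Hence $\{f(z^t)\}$ is nonincreasing, and since $f\geq 0$ it converges; telescoping also gives $\sum_t\|z^t-z^{t+1}\|_F^2<\infty$, so $\|z^t-z^{t+1}\|_F\to 0$.

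Boundedness (and hence the existence of accumulation points) follows because $f(z^t)\leq f(z^0)$ forces $\frac{\lambda}{2}\sum_u(\|\X_u^t\|_F^2+\|\Y_u^t\|_F^2)\leq f(z^0)$, bounding all $\X_u^t,\Y_u^t$, and then the closed form \eqref{C-comp-com} bounds $\C^{t+1}$ by $\sum_u\alpha_u\|\X_u^t*_u\Y_u^t\|_F+\|\M\|_F$. For the stationarity claim in (1), I would take a convergent subsequence $z^{t_k}\to z^\star$; since $\|z^t-z^{t+1}\|_F\to 0$ we also get $z^{t_k+1}\to z^\star$. Writing the first-order optimality condition of each block update (for $\C$, that the projection of $\nabla_\C f$ onto $\Omega^c$ vanishes at $\C^{t+1}$; for the matrix blocks, that the partial gradient plus the proximal correction vanishes), and passing to the limit along $t_k$ using continuity of $\nabla f$ and the fact that the proximal corrections are proportional to successive differences tending to $0$, yields $\prod_\Omega(\nabla f(z^\star))=0$, i.e. $z^\star$ is stationary.

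Finally, for (2) I would bound $\prod_\Omega(\nabla f(z^t))$ block by block. The $\C$-component equals $P_{\Omega^c}(\C^t-\C^{t+1})$ after using the optimality of \eqref{C-comp-com} together with $\sum_u\alpha_u=1$, hence is $\leq\|z^t-z^{t+1}\|_F$. For each matrix block the optimality condition gives $\nabla_{\hat X}f$ (evaluated at the partially updated iterate) $=-\frac{\lambda}{n_u}(\hat X_u^{(l,t+1)}-\hat X_u^{(l,t)})$, and similarly for $\hat Y$; comparing this partially updated point with $z^t$ and invoking Lipschitz continuity of $\nabla f$ on the bounded set containing the iterates (valid because $f$ is a polynomial), each such gradient is controlled by the successive differences $\|\hat X_u^{(l,t)}-\hat X_u^{(l,t+1)}\|_F$, $\|\hat Y_u^{(l,t)}-\hat Y_u^{(l,t+1)}\|_F$ and $\|\bar C_u^t-\bar C_u^{t+1}\|_F$, the last being $\sqrt{n_u}\|\C^t-\C^{t+1}\|_F$ by Lemma \ref{lem:equ}(1). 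Collecting the finitely many constants produces $\eta>0$ with $\|\prod_\Omega(\nabla f(z^t))\|_F\leq\eta\|z^t-z^{t+1}\|_F$. I expect this last step to be the main obstacle: one must track the Gauss--Seidel update order so that every term of $\nabla f(z^t)$ is matched against a successive difference, handle the constrained $\C$-block separately from the unconstrained matrix blocks, and ensure the Lipschitz constant is uniform over the bounded sublevel set where all iterates live.
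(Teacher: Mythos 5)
Your proposal follows essentially the same route as the paper's own proof: the same sufficient-decrease inequality (the paper's \eqref{fk-fk+1}, obtained there by explicit telescoping over the $\C$-, $\X_u$- and $\Y_u$-updates rather than by your generic strong-convexity lemma, but with identical substance), the same coercivity bound $f^t\geq\frac{\lambda}{2}\sum_{u}\left(\|\X_u^t\|_F^2+\|\Y_u^t\|_F^2\right)$ for boundedness, the same passage to the limit in the block optimality conditions along a subsequence for stationarity, and for part (2) the same combination of exact optimality identities at partially updated iterates (the $\C$-identity $P_{\Omega^c}(\nabla_\C f)=\C^t-\C^{t+1}$ and the proximal identities $\nabla_{\X_u}f=-\lambda(\X_u^{t+1}-\X_u^t)$) with Lipschitz continuity of $\nabla f$ on the bounded iterate set, which the paper collects into $\eta=7L_f+6\lambda+1$. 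The one ingredient you omit is the paper's opening observation that the rank-decreasing scheme (Step 4 of MTRTC) can fire only finitely often---the estimated multi-tubal rank is a non-increasing nonnegative integer vector---so that for all sufficiently large $t$ the dimensions of $\hat X_u^t,\hat Y_u^t$ are fixed and MTRTC really is the proximal Gauss--Seidel scheme your argument presupposes; without this remark, quantities such as $\|z^t-z^{t+1}\|_F$ are not even well defined at an iteration where the factor sizes change.
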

\begin{proof} Since rank $r\geq 0$ in Algorithm MTRTC is non-increasing, we can assume that
 the rank $r$ is fixed for all $z^t$ when $t$ is sufficiently large. That is, the rank decreasing scheme is not adopted for all such big enough $t$. For simplicity, we assume that $t$ is big enough such that $r$ is fixed and denote  ${f^t} = f(z^t)$ in the following.

(1). By (\ref{C-comp-com}), it follows
$$\begin{array}{rl} \|\C^{t+1}-\C^t\|_F^2
	&=\left\|\sum\limits_{u = 1}^3 {{\alpha _u}\mathcal{X}_u^t}{ \ast _u}{\mathcal{Y}_u^t}  + {P_\Omega }\left( {\mathcal{M} - \sum\limits_{u = 1}^3 {{\alpha _u}{\mathcal{X}_u^t}{ \ast _u}{\mathcal{Y}_u^t}} } \right)-\C^t\right\|_F^2\\
	&=\left\|\sum\limits_{u = 1}^3 {\alpha _u}{\mathcal{X}_u^t}{ \ast _u}{\mathcal{Y}_u^t}-\C^t  + {P_\Omega }\left( {\mathcal{M} - \sum\limits_{u = 1}^3 {{\alpha _u}{\mathcal{X}_u^t}{ \ast _u}{\mathcal{Y}_u^t}} } \right)\right\|_F^2\\
	&=\left\|\left( {\sum\limits_{u = 1}^3 {{\alpha _u}\mathcal{X}_u^t{ * _u}\mathcal{Y}_u^t}  - {\mathcal{C}^t}} \right)_{\Omega ^c} \right\|_F^2.
\end{array}$$

According to Algorithm MTRTC, we have that
\begin{equation}\label{fk-fk+1}
	\begin{aligned}	
		f^t-f^{t+1}
		&= \sum\limits_{u=1}^3 \left(\frac{\alpha_u}{2}\left\| \X^t_u\ast_u \Y^t_u-\C^t\right\|_F^2+\frac{\lambda}{2}\left(\left\| \X^t_u\right\|_F^2+\left\| \Y^t_u\right\|_F^2\right)\right)\\
		&\quad-\sum\limits_{u=1}^3 \left(\frac{\alpha_u}{2} \left\| \X^{t+1}_u\ast_u \Y^{t+1}_u-\C^{t+1}\right\|_F^2+\frac{\lambda}{2}\left(\left\| \X^{t+1}_u\right\|_F^2+\left\|\Y^{t+1}_u\right\|_F^2\right)\right)\\
		&=\sum\limits_{u=1}^3 \frac{\alpha_u}{2}\left( \left\| \X^t_u\ast_u \Y^t_u-\C^t\right\|_F^2-\left\| \X^t_u\ast_u \Y^t_u-\C^{t+1}\right\|_F^2\right)\\
		&\quad +\sum\limits_{u=1}^3 \frac{\alpha_u}{2} \left(\left\| \X^t_u\ast_u \Y^t_u-\C^{t+1}\right\|_F^2-\left\| \X^{t+1}_u\ast_u \Y^t_u-\C^{t+1}\right\|_F^2\right) +\sum\limits_{u=1}^3\frac{\lambda}{2}\left(\left\| \X^{t}_u\right\|_F^2-\left\| \X^{t+1}_u\right\|_F^2\right) \\
		&\quad +\sum\limits_{u=1}^3\frac{\alpha_u}{2}\left(\left\|\X^{t+1}_u\ast_u \Y^t_u-\C^{t+1}\right\|_F^2-\left\| \X^{t+1}_u\ast_u \Y^{t+1}_u-\C^{t+1}\right\|_F^2\right) +\sum\limits_{u=1}^3\frac{\lambda}{2}\left(\left\| \Y^{t}_u\right\|_F^2-\left\|\Y^{t+1}_u\right\|_F^2\right) \\
		&\geqslant  \frac{1}{2}\left( \left\| \sum\limits_{u=1}^3\alpha_u\X^t_u\ast_u \Y^t_u-\C^t\right\|_F^2-\left\| \sum\limits_{u=1}^3\alpha_u\X^t_u\ast_u \Y^t_u-\C^{t+1}\right\|_F^2\right)\\&\quad+\sum\limits_{u=1}^3\sum\limits_{l=1}^{n_u}\frac{\lambda}{2n_u}\left(\left\|\hat X_u^{(l,t)}-\hat X_u^{(l,t+1)}\right\|_F^2+\left\|\hat Y_u^{(l,t)}-\hat Y_u^{(l,t+1)}\right\|_F^2\right)\\
		&=\frac{1}{2}\left\| \left(\sum\limits_{u=1}^3 \alpha_u \X_u^t*_u\Y_u^t-\C^t\right)_{\Omega^c}\right\| _F^2+\frac{\lambda}{2}\sum\limits_{u=1}^3\left(\left\|\X_u^t-\X_u^{t+1}\right\|_F^2+\left\| \Y_u^t-\Y_u^{t+1}\right\|_F^2\right)\\
		&=\frac{1}{2}\left\|\C^{t+1}-\C^t\right\|_F^2+\frac{\lambda}{2}\sum\limits_{u=1}^3\left(\left\|\X_u^t-\X_u^{t+1}\right\|_F^2+\left\|\Y_u^t-\Y_u^{t+1}\right\|_F^2\right)\\&\geq \frac{\min\{1,\lambda\}}{2}\left\|z^{t+1}-z^t\right\|_F^2,	
	\end{aligned}
\end{equation}
%	\end{small}
where the first inequality holds from (\ref{C-comp-com}), (\ref{xl-comp}) and (\ref{yl-comp}).
Therefore, $ \{f^{t}\} $ is monotonically decreasing. Together with the fact that $f\geq 0$,  the series $ \sum\limits_{t=1}^\infty (f^t- f^{t + 1})=f^1-\lim\limits_{t\to \infty}{f^t}$ converges. Hence, $$\sum\limits_{t=1}^\infty({f^t}- {f^{t + 1}})<\infty,\quad \sum\limits_{t=1}^\infty(\C^{t+1}-\C^t)<\infty, \quad \sum\limits_{t=1}^\infty\|z^t-z^{t+1}\|_F^2< \infty.$$

Since
$f^t \geqslant \sum\limits_{u = 1}^3 {\dfrac{\lambda}{{2{n_u}}}\left( {\left\| {{{\hat X}^t_u}} \right\|_F^2 + \left\| {\hat Y_u}^t \right\|_F^2}\right)}=\dfrac{\lambda}{2}\sum\limits_{u=1}^3 \left(\left\|\X_u^t\|_F^2+\|\Y_u^t\right\|_F^2\right)$, \,
$\{\X_u^t\}$, $\{\Y_u^t\}$ are bounded. Together with the expression of $\C^{t}$,
it is asserted that
$ \{\C^t\} $ is also bounded,  and hence $\{z^t\}$ is bounded.

Clearly, there exists a
convergent subsequence of $\{z^t\}$.
Without loss of generality, we assume that $\lim\limits_{k\to\infty}z^{t_k}=z^\star$. From $\sum\limits_{t=1}^\infty\|z^t-z^{t+1}\|_F^2< \infty$,
$\lim\limits_{t\to \infty }z^{t+1}-z^t=0$, and hence $\lim\limits_{k\to \infty} z^{t_k+1}=z^\star$.

Together with (\ref{C-comp-com}), (\ref{xl-comp}) and (\ref{yl-comp}), we have that
$$\left\{\begin{array}{rl}\C^\star&=\sum\limits_{u=1}^3 \alpha_u \X_u^\star*\Y_u^\star+P_{\Omega}\left(\M-\sum\limits_{u=1}^3\alpha_u \X_u^\star *_u\Y_u^\star\right) ,\\
	\hat X_u^{(l,\star)}&=\alpha_u\bar C_u^{(l,\star)}\left(Y^{(l,\star)}\right) ^*\left(\alpha_uY^{(l,\star)} \left(Y^{(l,\star)}\right)^*+\lambda I\right)^{-1},\,{u\in [\bf 3]}, \, l\in [\bf n_u],\\
	\hat Y_u^{(l,\star)}&=\alpha_u\left(\alpha_uX^{(l,\star)} \left(X^{(l,\star)}\right)^*+\lambda I\right)^{-1}\left( X_u^{(l,\star)}\right)^*\bar C_u^{(l,\star)}, \,{u\in [\bf 3]}, \, l\in [\bf n_u].
\end{array}\right.$$

By direct computation, the following system is asserted
$$\left\{\begin{array}{rl}&\alpha_u(\hat X_u^\star\hat Y_u^\star-\bar C_u^\star)(\hat Y^\star)^*+\lambda \hat X^\star=0, \quad \forall u\in  {\bf [3]}\\
	&\alpha_u(\hat X_u^\star)^*(\hat X_u^\star\hat Y_u^\star-\bar C_u^\star)+\lambda \hat Y_u^\star=0,\quad \forall u\in  {\bf [3]}\\
	&P_{\Omega}(\C^\star-\M)=0,\\
	&P_{\Omega ^c}\left( \sum\limits_{u = 1}^3 {{\alpha _u}\mathcal{X}_u^\star{ \ast_u}\mathcal{Y}_u^\star}  - \mathcal{C}^\star\right)=0.
	%P_\Omega(\C^\star-\sum\limits_{u=1}^3\alpha_u \X_u^\star \ast_u \Y_u^\star)+\mathcal{Q}=0,
\end{array}\right.$$
%where  $\mathcal{Q}=-P_\Omega(\C^\star-\sum\limits_{u=1}^3\alpha_u \X_u^\star \ast_u \Y_u^\star)$.
Therefore,  $z^\star$ is a stationary point  of problem (\ref{f-opt}).

(2). Since $\{z^t\}$ is bounded, there exists a compact convex set $Z$ such that $\{z^t\}\subset Z$.
Since $f$ is a quadratic polynomial in $z$, the gradient $\nabla f$ is Lipschitz in $Z$ with a Lipschitz
constant $L_f$, that is,  $$\|\nabla f(z)-\nabla f(z')\|_F\leq L_f\|z-z'\|_F, \quad \forall \, z,z'\in Z.$$

Clearly, \[\begin{array}{rl}&\left\| \prod_\Omega\left( \nabla_{\C} f\left( z^{t+1}\right)\right) \right\| _F\\\leq& \left\| \prod_\Omega\left( \nabla_{\C} f\left( \C^{t+1},\X_1^{t+1},\dots, \Y_3^{t+1}\right)\right) -\prod_\Omega\left( \nabla_{\C}f\left( \C^{t},\X_1^t,\dots, \Y_3^t\right)\right) \right\| _F\\
&+	\left\| \prod_\Omega\left( \nabla_{\C}f\left( \C^{t},\X_1^t,\dots, \Y_3^t\right)\right) \right\| _F\\
	\leq& L_f\left\| z^{t+1}-z^t\right\|_F+\left\|\left(\C^{t}-\sum\limits_{u=1}^3 \alpha_u \X_u^t*_u\Y_u^t\right)_{\Omega^c}\right\|_F\\
	=&L_f\left\| z^{t+1}-z^t\right\|_F+\left\| \C^{t+1}-\C^t\right\| _F\\
	\leq& \left( L_f+1\right)\left\| z^{t+1}-z^t\right\| _F.
\end{array}
\]
\iffalse
Clearly, \[\begin{array}{rl}&\left\| \nabla_{\C} f\left( z^{t+1}\right)\right\| _F\\\leq& \left\| \nabla_{\C} f\left( \C^{t+1},\X_1^{t+1},\dots, \Y_3^{t+1}\right)-\nabla_{\C}f\left( \C^{t+1},\X_1^t,\dots, \Y_3^t\right)\right\| _F+
	\left\| \nabla_{\C}f\left( \C^{t+1},\X_1^t,\dots, \Y_3^t\right)\right\| _F\\
	\leq& L_f\sum\limits_{u=1}^3\left(\left\|\X_u^{t+1}-\X_u^t\right\|_F+\left\|\Y_u^{t+1}-\Y_u^t\right\|_F\right)+\left\|\left(\C^{t+1}-\sum\limits_{u=1}^3 \alpha_u \X_u^t*_u\Y_u^t\right)_{\Omega^c}\right\|_F\\
	=&L_f\sum\limits_{u=1}^3\left( \left\| \X_u^{t+1}-\X_u^t\right\|_F+\left\| \Y_u^{t+1}-\Y_u^t\right\|_F\right)+\left\| \C^{t+1}-\C^t\right\| _F\\
	\leq& \left( L_f+1\right)\left\| z^{t+1}-z^t\right\| _F.
\end{array}
\]
\fi

Furthermore,
\[\begin{array}{rl}&\left\|\nabla_{\X_1} f\left(z^{t+1}\right)\right\|_F\\\leq& \left\|\nabla_{\X_1} f\left(\C^{t+1},\X_1^{t+1},\dots, \Y_3^{t+1}\right)-\nabla_{\X_1}f\left(\C^{t+1},\X_1^{t+1},\dots, \Y_3^t\right)\right\|_F+
	\left\|\nabla_{\X_1}f\left(\C^{t+1},\X_1^{t+1},\dots, \Y_3^t\right)\right\|_F\\
	\leq& L_f \|z^{t+1}-z^t\|_F+\lambda\|\X_u^{t+1}-\X_u^t\|_F\\
	\leq& \left(L_f+\lambda\right)\left\|z^{t+1}-z^t\right\|_F.
\end{array}
\]
Similarly, for any $u\in [\bf 3]$, we have
\[\begin{array}{rl}&\left\| \nabla_{\X_u} f(z^{t+1})\right\|_F
	\leq \left(L_f+\lambda\right)\left\| z^{t+1}-z^t\right\|_F,\\
	&\left\|\nabla_{\Y_u} f(z^{t+1})\right\|_F
	\leq \left(L_f+\lambda\right)\left\|z^{t+1}-z^t\right\|_F.
\end{array}
\]
Now we can assert that $\|\prod_\Omega\left(\nabla f(z^{t+1})\right)\|_F\leq (7L_f+6\lambda+1) \|z^{t+1}-z^t\|_F$ and the result (2) is arrived with $\eta:=7L_f+6\lambda+1$.
\end{proof}

\begin{theorem}\label{th33}
	Suppose that $z^\star$ is a limiting point of $ \{z^t\} $ generated by MTRTC. Assume that the starting point $z^{0}$ satisfies $z^{0} \in B\left(z^{\star}, \sigma\right):=\left\{z:\left\|z-z^\star\right\|_F<\sigma\right\} \subseteq Z',$  $\theta$ and $\mu$ are defined as in Definition \ref{KL}. Suppose that  $ \rho=\frac{\min\{1,\lambda\}}{2\eta} $ with $\eta$ and $\lambda$ being from Theorem \ref{th:ca} (2)  and
	\begin{equation*}
		\sigma>\frac{\mu}{\rho(1-\theta)}\left|f\left(z^{0}\right)-f\left(z^{\star}\right)\right|^{1-\theta}+\left\|z^{0}-z^{\star}\right\|_F.	
	\end{equation*}
Then \begin{itemize}
	\item [(1).]		$z^{t} \in B\left(z^{\star}, \sigma\right), \text { for } t=0,1,2, \cdots;$
\item[(2).]	
		$\sum_{t=0}^{\infty}\left\|z^{t+1}-z^{t}\right\|_F \leq \frac{\mu}{\rho(1-\theta)}\left|f\left(z^{0}\right)-f\left(z^{\star}\right)\right|^{1-\theta};$
	\item[(3).] The entire sequence $\{z^t\}$ converges.
	\end{itemize}	
\end{theorem}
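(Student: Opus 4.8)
The plan is to run the by-now-standard Kurdyka--{\L}ojasiewicz (KL) finite-length argument, fed by the two estimates already available for MTRTC: the sufficient-decrease inequality $f^t-f^{t+1}\ge \frac{\min\{1,\lambda\}}{2}\|z^{t+1}-z^t\|_F^2=\rho\eta\|z^{t+1}-z^t\|_F^2$ obtained inside the proof of Theorem \ref{th:ca}, and the relative-error bound $\|\prod_\Omega(\nabla f(z^t))\|_F\le \eta\|z^t-z^{t+1}\|_F$ of Theorem \ref{th:ca}(2). First I would fix the preliminaries: since $\{f^t\}$ is monotonically decreasing and bounded below it converges, and because $z^\star$ is a limit point, continuity of $f$ forces $\lim_t f^t=f(z^\star)$ and $f^t\ge f(z^\star)$ for every $t$. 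If $f^t=f(z^\star)$ for some finite $t$, the decrease inequality makes the sequence constant from that index on and all three claims are immediate, so I may assume $f^t>f(z^\star)$ for all $t$.

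The core is a one-step estimate combining these facts through the concavity of $s\mapsto s^{1-\theta}$ on $[0,\infty)$. Writing $\Delta_t:=\left(f^t-f(z^\star)\right)^{1-\theta}$, concavity gives $\Delta_t-\Delta_{t+1}\ge (1-\theta)\left(f^t-f(z^\star)\right)^{-\theta}\left(f^t-f^{t+1}\right)$. As long as $z^t\in Z'$, the KL inequality \eqref{kl} yields $\left(f^t-f(z^\star)\right)^{-\theta}\ge \left(\mu\|\prod_\Omega(\nabla f(z^t))\|_F\right)^{-1}$; inserting the relative-error bound and the sufficient-decrease bound then gives
$$\Delta_t-\Delta_{t+1}\ge \frac{(1-\theta)\,\rho\eta\|z^{t+1}-z^t\|_F^2}{\mu\,\eta\|z^{t+1}-z^t\|_F}=\frac{\rho(1-\theta)}{\mu}\|z^{t+1}-z^t\|_F,$$
that is, $\|z^{t+1}-z^t\|_F\le \frac{\mu}{\rho(1-\theta)}\left(\Delta_t-\Delta_{t+1}\right)$ whenever $z^t\in Z'$.

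The delicate point---and the reason for the hypothesis on $\sigma$---is that this per-step bound is only licensed while the iterate lies in $Z'$, so (1) and (2) must be proved together by induction. I would show by induction on $t$ that $z^0,\dots,z^{t-1}\in B(z^\star,\sigma)\subseteq Z'$ implies $z^t\in B(z^\star,\sigma)$, the base case being the standing assumption $z^0\in B(z^\star,\sigma)$. Granting the hypothesis, the per-step bound applies for the indices $0,\dots,t-1$ and telescopes to $\sum_{i=0}^{t-1}\|z^{i+1}-z^i\|_F\le \frac{\mu}{\rho(1-\theta)}\left(\Delta_0-\Delta_t\right)\le \frac{\mu}{\rho(1-\theta)}\Delta_0$, whence the triangle inequality gives $\|z^t-z^\star\|_F\le \|z^0-z^\star\|_F+\frac{\mu}{\rho(1-\theta)}|f(z^0)-f(z^\star)|^{1-\theta}<\sigma$ by the standing assumption on $\sigma$, closing the induction and establishing (1). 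Letting $t\to\infty$ in the telescoped sum yields (2), and since (2) makes $\sum_t\|z^{t+1}-z^t\|_F$ finite the sequence $\{z^t\}$ is Cauchy, hence convergent; its limit must be $z^\star$ because $z^\star$ is an accumulation point. The main obstacle is exactly this circular dependence between ``remaining in the KL neighborhood'' and ``having a summable step sequence,'' which the induction resolves by charging the entire length budget $\frac{\mu}{\rho(1-\theta)}\Delta_0$ against the slack built into the choice of $\sigma$.
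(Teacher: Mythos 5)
Your proof is correct and takes essentially the same route as the paper's: the same per-step estimate obtained by combining concavity of the desingularizing function with the KL inequality, the sufficient-decrease bound and Theorem \ref{th:ca}(2) (your $\Delta_t$ is just the paper's $\phi(f(z^t))$ up to the constant factor $\mu/(1-\theta)$), the same induction that couples membership in $B(z^\star,\sigma)$ with the telescoped length bound to prove (1), the same limit passage for (2), and the same Cauchy-plus-accumulation-point argument for (3). Your explicit handling of the degenerate case $f^t=f(z^\star)$ is a minor refinement that the paper leaves implicit.
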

\begin{proof}
	We show (1) by induction. Clearly, (1) is true for $t=0$ by assumption. Assume that (1) holds for all $t\leq \bar{t}$, then KL property holds for such $z^t$. Now
we show that (1) is true for $t=\bar{t}+1$.
	
	Let	$\theta\in (0,1)$ and $\phi(s):=\frac{\mu}{(1-\theta)}\left(s-f(z^{\star})\right)^{1-\theta}, s\geq f(z^{\star})$.
Then,  $\phi(s)$ is concave with its derivative
	$\phi'(s)=\frac{\mu}{\left|s-f(z^{\star})\right|^{\theta}}$
	for $s>f\left(z^{\star}\right)$. Since $\phi(s)$ is concave, we have
	$$
	\phi\left(f\left(z^{t}\right)\right)-\phi\left( f\left(z^{t+1}\right)\right)  \geq \phi^{\prime}\left(f\left(z^{t}\right)\right)\left[f\left(z^t\right)-f\left(z^{t+1}\right)\right]=\frac{\mu}{\left|f\left(z^{t}\right)-f\left(z^{\star}\right)\right|^{\theta}}\left[f\left(z^t\right)-f\left(z^{t+1}\right)\right].
	$$
Combining with \eqref{kl} \eqref{fk-fk+1} and Theorem \ref{th:ca} (2), we have
	\[
	\phi\left(f\left(z^{t}\right)\right)-\phi\left( f\left(z^{t+1}\right)\right) \geq \frac{1}{\left\| \prod_\Omega \left(\nabla f(z^t)\right)\right\|_F}\left[f\left(z^t\right)-f\left(z^{t+1}\right)\right]
\geq \rho\left\|z^{t+1}-z^{t}\right\|_F.
\]
	Hence,
	\begin{equation}\label{tk}
	\begin{array}{rl}	\sum_{p=0}^{t}\left\|z^{k+1}-z^{k}\right\|_F &\leq \frac{1}{\rho} \sum_{p=0}^{t}\left[\phi\left(f\left(z^{t}\right)\right)-\phi\left( f\left(z^{t+1}\right)\right)\right]=\frac{1}{\rho}\left[  \phi\left(f\left(z^{0}\right)\right)-\phi\left(f\left(z^{t+1}\right)\right)\right] \\
&\leq \frac{1}{\rho} \phi\left(f\left(z^{0}\right)\right).	\end{array}
	\end{equation}
	This implies that
	$$
	\left\|z^{t+1}-z^\star\right\|_F \leq \sum_{p=0}^{t}\left\|z_{t+1}-z_{t}\right\|_F+\left\|z^{0}-z^{\star}\right\|_F \leq \frac{1}{\rho} \phi\left(f\left(z^{0}\right)\right)+\left\|z^{0}-z^{\star}\right\|_F<\sigma.
	$$
	Then we have $z^{t+1} \in B\left(z^{\star}, \sigma\right)$, and hence (1) is asserted.

(2). Taking $t \rightarrow \infty$ in \eqref{tk},  (2) is arrived.

(3). From (2), for any $\epsilon>0$, there exists  $K_1>0$ such that for any $t\geq K_1$ such that
$\|z^{t}-z^{t_k}\|_F\leq \sum\limits_{i=1}^{t_k-t}\|z^{t+i}-z^{t+i-1}\|_F<\frac{\epsilon}{2}$. From $\lim\limits_{k\to\infty} z^{t_k}=z^\star$, there exists $K_2>0$ such that for all $k>K_2$,  $\|z^{t_k}-z^\star\|_F<\frac{\epsilon}{2}$. Hence, for any $t\geq \max\{K_1,K_2\}$,
\[\|z^t-z^\star\|_F\leq \|z^{t}-z^{t_k}\|_F+\|z^{t_k}-z^\star\|_F\leq \epsilon,\] which indicates that $z^t\to z^\star$.
\end{proof}

\begin{theorem}
Suppose that $\left\{z^t\right\}$ is an infinite sequence generated by MTRTC with an accumulating point $z^\star$ and $\theta,\mu$ are as in Definition \ref{KL}. Then
\begin{itemize}
	\item [(a).] If $\theta \in\left(0, \frac{1}{2}\right],$ then there exist $\gamma>0$ and $c \in(0,1)$ such that
	$$
	\left\|z^t-z^{\star}\right\|_F \leq \gamma c^{t};
	$$
	\item [(b).] If $\theta \in\left(\frac{1}{2}, 1\right),$ then there exists $\gamma>0$ such that
	$$
	\left\|z^t-z^\star\right\|_F \leq \gamma t^{-\frac{1-\theta}{2 \theta-1}}.
	$$
\end{itemize}
\end{theorem}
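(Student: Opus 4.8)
The plan is to distill from the analysis already in place a single scalar recursion governing the function-value gap $r_t:=f^t-f(z^\star)$ and the path-length tail $\Delta_t:=\sum_{i\geq t}\|z^{i+1}-z^i\|_F$, and then to run the standard \L ojasiewicz case-split on the exponent $\theta$. Since $z^t\to z^\star$ by Theorem \ref{th33}(3), we have $\|z^t-z^\star\|_F\leq\Delta_t$, so it suffices to bound $\Delta_t$. I may assume $t$ is large enough that the rank is frozen (as in Theorem \ref{th:ca}) and that $z^t\in Z'$, so that the sufficient-decrease estimate \eqref{fk-fk+1}, the gradient bound of Theorem \ref{th:ca}(2), and the KL inequality \eqref{kl} are all simultaneously available.

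First I would assemble these three ingredients into one inequality. Summing the concavity estimate $\phi(f^t)-\phi(f^{t+1})\geq\rho\|z^{t+1}-z^t\|_F$ established inside the proof of Theorem \ref{th33}, where $\phi(s)=\frac{\mu}{1-\theta}(s-f(z^\star))^{1-\theta}$, over all indices $i\geq t$ and using $\phi(f^i)\to 0$ (as $f^i\to f(z^\star)$) gives the telescoped bound $\Delta_t\leq\frac{1}{\rho}\phi(f^t)=\frac{\mu}{\rho(1-\theta)}r_t^{1-\theta}$. On the other hand, \eqref{kl} combined with Theorem \ref{th:ca}(2) yields $r_t^\theta\leq\mu\|\prod_\Omega(\nabla f(z^t))\|_F\leq\mu\eta\,(\Delta_t-\Delta_{t+1})$. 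Eliminating $r_t$ between the two bounds produces a closed recursion in $\Delta_t$ alone,
\[
\Delta_t-\Delta_{t+1}\ \geq\ C'\,\Delta_t^{\,\beta},\qquad \beta:=\frac{\theta}{1-\theta},
\]
for an explicit constant $C'>0$ depending only on $\mu,\eta,\rho,\theta$.

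The two regimes then follow from a case analysis on $\beta$. For $\theta\in(0,\frac12]$ we have $\beta\leq 1$; since $\Delta_t\to 0$, eventually $\Delta_t\leq 1$ and hence $\Delta_t^\beta\geq\Delta_t$, giving $\Delta_{t+1}\leq(1-C')\Delta_t$. If $C'\geq 1$ the tail terminates at zero and the bound is trivial; otherwise $c:=1-C'\in(0,1)$ yields geometric decay $\Delta_t\leq\gamma c^t$, and $\|z^t-z^\star\|_F\leq\Delta_t$ gives (a). For $\theta\in(\frac12,1)$ we have $\beta>1$, and I would invoke the standard discrete decay lemma: applying the convexity of $s\mapsto s^{1-\beta}$ to the recursion gives $\Delta_{t+1}^{1-\beta}-\Delta_t^{1-\beta}\geq C'(\beta-1)$, and summing over a tail shows $\Delta_t=O\!\big(t^{-1/(\beta-1)}\big)$. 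A direct computation gives $\frac{1}{\beta-1}=\frac{1-\theta}{2\theta-1}$, so $\|z^t-z^\star\|_F\leq\Delta_t\leq\gamma\,t^{-\frac{1-\theta}{2\theta-1}}$, which is (b).

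The main obstacle I anticipate is bookkeeping rather than conceptual: one must verify that the convexity inequality in the decay lemma is applied with the correct sign (noting $1-\beta<0$ and $\Delta_{t+1}\leq\Delta_t$) so that the telescoping constant $C'(\beta-1)$ is genuinely positive, and one must track that $\mu,\eta,\rho$ together with the frozen-rank threshold can be fixed uniformly once $t$ is large, so the finitely many early iterates do not affect the asymptotic rates. The boundary value $\theta=\frac12$, where $\beta=1$, should be checked to fall cleanly into regime (a).
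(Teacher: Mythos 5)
Your proposal is correct and takes essentially the same route as the paper: both combine the telescoped descent bound, the KL inequality, and Theorem \ref{th:ca}(2) into the single scalar recursion $\Delta_t-\Delta_{t+1}\geq C'\,\Delta_t^{\theta/(1-\theta)}$ (the paper's \eqref{c1} in rearranged form), and both then split on $\theta$, with geometric decay when $\theta\in(0,\tfrac12]$ and a telescoping of $\Delta_t^{1-\beta}$ when $\theta\in(\tfrac12,1)$. The only cosmetic difference is that in case (b) you justify the telescoping via the convexity (gradient) inequality for $s\mapsto s^{1-\beta}$ while the paper uses an integral comparison with $h(s)=s^{-\theta/(1-\theta)}$ — these give the identical bound — and your intermediate estimate $\Delta_t\leq \frac{\mu}{\rho(1-\theta)}\left|f(z^t)-f(z^\star)\right|^{1-\theta}$ is in fact the correct form of the paper's step, which writes $f(z^0)$ where $f(z^t)$ is meant.
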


\begin{proof}
Assume that $z^0 \in B\left(z^{\star}, \sigma\right)$. Denote that
$$
\Delta_{t}:=\sum_{p=t}^{\infty}\left\|z^p-z^{p+1}\right\|_F.
$$	
Then
\begin{equation}\label{tstar}
\left\|z^t-z^\star\right\|_F \leq \Delta_{t}.	
\end{equation}
From Theorem \ref{th33} (2), we have
\[
\Delta_{t} \leq \frac{\mu}{\rho(1-\theta)}\left|f\left(z^{0}\right)-f\left(z^{\star}\right)\right|^{1-\theta}=\frac{\mu}{\rho(1-\theta)}\left[\left|f\left(z^{0}\right)-f\left(z^{\star}\right)\right|^\theta \right]^{\frac{1-\theta}{{\theta}}}.
\]
Combining with the KL inequality, there holds
$$
\Delta_{t} \leq \frac{\mu}{\rho(1-\theta)}\left(\mu\left\|\prod\nolimits_\Omega (\nabla f(z))\right\|_F\right)^{\frac{1-\theta}{\theta}}.
$$
From Theorem \ref{th:ca} (2), the above inequality implies that
\begin{equation}\label{c1}
\Delta_{t} \leq \frac{\mu}{\rho(1-\theta)}\left(\mu\eta\left\|z^t-z^{t+1}\right\|_F\right)^{\frac{1-\theta}{\theta}}=c_{1}\left(\Delta_{t}-\Delta_{t+1}\right)^{\frac{1-\theta}{\theta}}.	
\end{equation}
where $c_{1}=\frac{\mu}{\rho(1-\theta)}(\mu\eta)^{\frac{1-\theta}{\theta}}$ is a positive constant.

(a). If $\theta \in\left(0, \frac{1}{2}\right],$ then $\frac{1-\theta}{\theta} \geq 1$. For sufficiently large $t$, it holds
$$
\Delta_{t} \leq c_{1}\left(\Delta_{t}-\Delta_{t+1}\right).
$$
Hence
$$
\Delta_{t+1} \leq \frac{c_{1}-1}{c_{1}} \Delta_{t}.
$$

Together with (\ref{tstar}), result (a) ia arrived with  $c=\frac{c_1-1}{c_1}$.

(b). For case of $\theta \in\left(\frac{1}{2}, 1\right),$  let $h(s)=s^{-\frac{\theta}{1-\theta}}$. The function $h(s)$ is monotonically decreasing on $s$. By \eqref{c1}, we have
$$
c_{1}^{-\frac{\theta}{1-\theta}} \leq h\left(\Delta_{t}\right)\left(\Delta_{t}-\Delta_{t+1}\right)=\int_{\Delta_{t+1}}^{\Delta_{t}} h\left(\Delta_{t}\right) d s \leq \int_{\Delta_{t+1}}^{\Delta_{t}} h(s) d s=-\frac{1-\theta}{2 \theta-1}\left(\Delta_{t}^{-\frac{2 \theta-1}{1-\theta}}-\Delta_{t+1}^{-\frac{2 \theta-1}{1-\theta}}\right).
$$
Since $\theta \in\left(\frac{1}{2}, 1\right),\, \nu:=-\frac{2\theta-1}{1-\theta}<0$
and
$
\Delta_{t+1}^{\nu}-\Delta_{t}^{\nu} \geq-\nu c_{1}^{-\frac{\theta}{1-\theta}}>0.
$
Thus, there is a $\hat{t}$ such that for all $t \geq 2\hat{t}$,
$$
\Delta_{t}^\nu \ge\Delta_{\hat{t}}^{\nu}-\nu c_{1}^{-\frac{\theta}{1-\theta}}(t-\hat{t})\geq-\nu c_{1}^{-\frac{\theta}{1-\theta}}(t-\hat{t})\geq-\frac{\nu}{2} c_{1}^{-\frac{\theta}{1-\theta}}t,
$$
then we have
$$
\Delta_{t} \leq \gamma t^{\frac{1}{\nu}},
$$
for a certain positive constant $\gamma=\left(-\frac{\nu}{2} c_{1}^{-\frac{\theta}{1-\theta}}\right)^{\frac{1}{\nu}}$. Then result (b) is obtained.
\end{proof}

\section{Improvement with spatio-temporal characteristics}\label{secST}
In practical applications, some characteristics are included. For example, both the video data between two adjacent frames and  the internet traffic data of two adjacent days are temporal stability features. To characterize such properties, some constraint matrices are considered. %$let $ F $ and $ G $ are the spatial constraint matrices and $ H $ is the temporal constraint matrix.

%As suggested in \cite{RZWQ12,ZZXC15}, $F, G$ and $H$ are adopted as the spatio-temporal constraint matrices to characterize such stability, where $ F $ and $ G $ are the spatial constraint matrices and $ H $ is the temporal constraint matrix, which expresses our knowledge about the data spatio-temporal properties. Let $ F $ and $ G $ are the spatial constraint matrices and $ H $ is the temporal constraint matrix.
As in \cite{RZWQ12,ZZXC15}, the temporal constraint matrix $H $  captures the temporal stability feature, i.e., the data is similar at adjacent time slots in the tensor. Let $ H = Toeplitz(0, 1, -1) $ be a Toeplitz matrix of size $ (n_{3}-1) \times n_{3} $  with
\begin{align*}
	H=\left[\begin{array}{cccc}
		1 & -1 & 0 & \cdots \\
		0 & 1 & -1 & \ddots \\
		0 & 0 & 1 & \ddots \\
		\vdots & \ddots & \ddots & \ddots
	\end{array}\right]_{(n_{3}-1) \times n_{3}}.
\end{align*}
%The  matrix $ \mathcal C_3^{(k)} $ is the principal components in time dimension of tensor, i.e., temporal lower-dimensional representation. By minimizing
Let $n_3$ be the time dimension. Then the time stability is expressed by minimizing
$$ \left\| \C \times_3 H \right\|_F^2=\sum\limits_{k = 1}^{{n_3} - 1} \left\| {\mathcal{C}_{\text{3}}^{^{(k)}} - \mathcal{C}_{\text{3}}^{^{(k + 1)}}} \right\|_F^2. $$
%The temporal constraint matrix $ H $ on the latent space of time dimension of tensor, which approximate the property of having similar values at adjacent time slots.
\par Let the spatial constraint matrices $ F $ and $ G $ capture spatial correlation feature. We choose $ F $ and $ G $ according to  the similarity between $ \mathcal C_1^{(i)} $ and $ \mathcal C_1^{(j)}~(j \ne i) $, $ \mathcal C_2^{(i)} $ and $ \mathcal C_2^{(j)}~(j \ne i)$, respectively. For each $ \mathcal C_1^{(i)} $, we perform linear regression to find a set
of weights $w_{i}(j)$ such that the linear
combination of $ \mathcal C_1^{(j)} $ is a best approximation of $ \mathcal C_1^{(i)} $, i.e., $ \mathcal{C}_{1}^{(i)} = \sum_{j\ne i} {{w_i}(j)\mathcal{C}_{1}^{{(j)}}}  $. Then we set $ F(i, i) = 1 $ and $ F (i, j) = -w_{i}(j) $.  Matrix $ G $ can be obtained similarly. %The matrices $ F $ and $ G $ express spatial similar relationship of data.
%As the mode-1 and mode-2 are lower dimensional representation of the spatial dimension, by minimizing
Let $n_1$ and $n_2$ be the spatial dimensions. Then the spatial correlation features  can be expressed by minimizing
\[\left\| \C \times_1 F \right\|_F^2 = \sum\limits_{i = 1}^{{n_1}} {\left\| {C_1^{(i)} - \sum\limits_{j \ne i} {{w_i}(j)C_1^{(j)}} } \right\|} _F^2\]
and
\[\left\| \C \times_2 G \right\|_F^2 = \sum\limits_{i = 1}^{{n_2}} {\left\| {C_2^{(i)} - \sum\limits_{j \ne i} {{w_i}(j)C_2^{(j)}} } \right\|} _F^2.\]
%The spatial constraint $ F $ and $ G $ function on the underlying latent structure of spatial dimension, which approximate spatial correlation feature.
\par Before we get such matrices $ F $ and $ G $, it is necessary to  %are obliged to
estimate an initial tensor $\mathcal C $ without missing data and outlier because these factors may destroy spatial features. To this end, we first recover the missing entries and remove outlier by  using the temporal constraint (i.e., $ H $).
For the estimated tensor $ \mathcal C $, we analyze the similarities  and linear regression to find spatial constraints (i.e., $  F,\, G $). Then the obtained $F,G$ are used
 together with matrix $ H $ in algorithm to recovery the data.

\par Based on the three  matrices $F,\, G$ and $H$, the tensor factorization model (\ref{f-opt}) can be modified as
\begin{equation}\label{mt-st}
\begin{array}{ccl}
&\mathop {\min }\limits_{{\mathcal{X}_u},{\mathcal{Y}_u},\mathcal{C}} &\sum\limits_{u = 1}^3 \frac{\alpha_u}{2}{\left\| {{\mathcal{X}_u}{ * _u}{\mathcal{Y}_u} - \mathcal{C}} \right\|_F^2}  + \frac{\beta _1}{2}\left\| {\left( {{\mathcal{X}_2}{ * _2}{\mathcal{Y}_2}} \right){ \times _1}{F}} \right\|_F^2 \\
&&+ \frac{\beta _2}{2}\left\| {\left( {{\mathcal{X}_3}{ * _3}{\mathcal{Y}_3}} \right){ \times _2}{G}} \right\|_F^2 + \frac{\beta _3}{2}\left\| {\left( {{\mathcal{X}_1}{ * _1}{\mathcal{Y}_1}} \right){ \times _3}{H}} \right\|_F^2 \\
& \mbox{\rm s.t.} &{P_\Omega }(\mathcal{C} - \mathcal{M}) = 0.
\end{array}
\end{equation}
Let $\beta_u=0$ if there is no additional characteristics on the $u$th dimension of data. Hence, model (\ref{mt-finally}) can be regarded as a special case of  model (\ref{mt-st}).

With Lemma \ref{lem:martix-to-tensor},  \eqref{mt-st}  can be rewritten as
\begin{equation}\label{mt-st-tensor}
\begin{array}{ccl}
&\mathop {\min }\limits_{{\mathcal{X}_u},{\mathcal{Y}_u},\mathcal{C}} &\sum\limits_{u = 1}^3 {\frac{\alpha_u}{2}\left\| {{\mathcal{X}_u}{*_u}{\mathcal{Y}_u} - \mathcal{C}} \right\|_F^2}  + \frac{{{\beta _1}}}{2}\left\| {\mathcal{F}{ * _2}\left( {{\mathcal{X}_2}{*_2}{\mathcal{Y}_2}} \right)} \right\|_F^2 \\
&&+ \frac{{{\beta _2}}}{2}\left\| {\left( {{\mathcal{X}_3}{*_3}{\mathcal{Y}_3}} \right){ * _3}\mathcal{G}} \right\|_F^2 + \frac{{{\beta _3}}}{2}\left\| {\left( {{\mathcal{X}_1}{*_1}{\mathcal{Y}_1}} \right){ * _1}\mathcal{H}} \right\|_F^2 \\
& \mbox{\rm s.t.} &{P_\Omega }(\mathcal{C} - \mathcal{M}) = 0.
\end{array}
\end{equation}
Similar to solve \eqref{mt-tensor}, we consider the regularized version of problem (\ref{mt-st-tensor}), which can be written as
\begin{equation}\label{mt-st-reg}\min\limits_{\C,\X_u,\Y_u} g(\C,\X_1,\X_2,\X_3,\Y_1,\Y_2,\Y_3),\quad \mbox{\rm s.t.}\quad P_\Omega(\C-\M)=0,
\end{equation}
where
\begin{equation*}
\begin{aligned}
&g(\C,\X_1,\X_2,\X_3,\Y_1,\Y_2,\Y_3)\\
=&\sum\limits_{u = 1}^3 {\frac{\alpha_u}{2}\left\| {{\mathcal{X}_u}{*_u}{\mathcal{Y}_u} - \mathcal{C}} \right\|_F^2}  + \frac{{{\beta _1}}}{2}\left\| {\mathcal{F}{ * _2}\left( {{\mathcal{X}_2}{*_2}{\mathcal{Y}_2}} \right)} \right\|_F^2+ \frac{{{\beta _2}}}{2}\left\| {\left( {{\mathcal{X}_3}{*_3}{\mathcal{Y}_3}} \right){ * _3}\mathcal{G}} \right\|_F^2 + \frac{{{\beta _3}}}{2}\left\| {\left( {{\mathcal{X}_1}{*_1}{\mathcal{Y}_1}} \right){ * _1}\mathcal{H}} \right\|_F^2 \\&+ \frac{\lambda}{2} \left( 2\beta_1\|\F*_2\X_2\|_F^2+\alpha_2\|\X_2\|_F^2+\|\Y_2\|_F^2\right)
+\frac{\lambda}{2}\left( \|\X_3\|_F^2+2\beta_2\|\Y_3*_3\mathcal{G}\|_F^2+\alpha_3\|\Y_3\|_F^2 \right)
\\&+\frac{\lambda}{2} \left(  \|\X_1\|_F^2+2\beta_3\|\Y_1\ast_1 \mathcal{H}\|_F^2+\alpha_1\|\Y_1\|_F^2\right).
\end{aligned}
\end{equation*}

Clearly, $\C^{t+1}$ can be updated by \eqref{C-comp-com}. Hence it suffices to  consider how to update $\X_u^{t+1}$ and $\Y_u^{t+1}$ for all $u\in  {\bf [3]}$. From the structure of $ \hat X_u $ and $ \hat Y_u $ in section 2, we have
\begin{equation*}
	\begin{aligned}
	&\left\| \F*_2\left( \X_2*_2\Y_2 \right) \right\|_F^2=\frac{1}{n_2}\|\bar F_2\overline{\left( \X_2*_2\Y_2 \right)}\|_F^2= \frac{1}{n_2}\|\bar F_2(\bar X_2\bar Y_2)\|_F^2\\ =& \frac{1}{n_2}\|\bar F_2(\hat X_2\hat Y_2)\|_F^2= \frac{1}{n_2}\|\bar F_2\hat X_2\hat Y_2\|_F^2=\frac{{{1 }}}{{{n_2}}}\sum\limits_{j = 1}^{{n_2}} {\left\| {\bar F_2^{(j)}\hat X_2^{(j)}\hat Y_2^{(j)}} \right\|_F^2}.	
	\end{aligned}
\end{equation*}
Similarly, we have
\begin{equation*}
\begin{aligned}
\left\| {\left( {{\mathcal{X}_3}{*_3}{\mathcal{Y}_3}} \right){ * _3}\mathcal{G}} \right\|_F^2=\frac{1}{n_3}\sum\limits_{k=1}^{n_3}{\left\| {\hat X_3^{(k)}\hat Y_3^{(k)}\bar G_3^{(k)}} \right\|_F^2},\quad \left\| {\left( {{\mathcal{X}_1}{*_1}{\mathcal{Y}_1}} \right){ * _1}\mathcal{H}} \right\|_F^2=\frac{{{1}}}{{{n_1}}}\sum\limits_{i = 1}^{{n_1}} {\left\| {\hat X_1^{(i)}\hat Y_1^{(i)}\bar H_1^{(i)}} \right\|_F^2}.	
\end{aligned}
\end{equation*}

Based on these results, we can rewrite (\ref{mt-st-reg}) as the following  matrix version
\begin{footnotesize}
$$\begin{gathered}
\mathop {\min }\limits_{\C, \hat X_u,\hat Y_u} \sum\limits_{l = 1}^{n_u}
{\frac{\alpha _{u}}{{2{n_u}}}\left\| {\hat X_u^{(l)}\hat Y_u^{(l)} - \bar C_u^{(l)}} \right\|_F^2}
+\frac{{{\beta _1}}}{{2{n_2}}}\sum\limits_{j = 1}^{{n_2}} {\left\| {\bar F_2^{(j)}\hat X_2^{(j)}\hat Y_2^{(j)}} \right\|_F^2}  \hfill \\
+\frac{\beta_2}{2n_3}\sum\limits_{k=1}^{n_3}{\left\| {\hat X_3^{(k)}\hat Y_3^{(k)}\bar G_3^{(k)}} \right\|_F^2}+\frac{{{\beta _3}}}{{2{n_1}}}\sum\limits_{i = 1}^{{n_1}} {\left\| {\hat X_1^{(i)}\hat Y_1^{(i)}\bar H_1^{(i)}} \right\|_F^2} \\
+ \lambda \left( {\frac{{{\beta _1}}}{{{n_2}}}\sum\limits_{j = 1}^{{n_2}} {\left\| {\bar F_2^{(j)}\hat X_2^{(j)}} \right\|_F^2}  + \frac{{{\alpha _2}}}{{{2n_2}}}\sum\limits_{j= 1}^{{n_2}} {\left\| {\hat X_2^{(j)}} \right\|_F^2}  + \frac{1}{{2{n_2}}}\sum\limits_{j = 1}^{{n_2}} {\left\| {\hat Y_2^{(j)}} \right\|_F^2} } \right)    \\
+\lambda\left( \frac{1}{2n_3}\sum\limits_{k=1}^{n_3}\|X_3^{(k)}\|_F^2+\frac{\beta_2}{n_3}\sum\limits_{k=1}^{n_3}\|{\hat Y}_3^{(k)}\hat G_3^{(k)}\|_F^2+\frac{\alpha_3}{2n_3}\sum\limits_{k=1}^{n_3}\|\hat Y_3^{(k)}\|_F^2 \right)\\
+ \lambda \left( {\frac{1}{{2{n_1}}}\sum\limits_{i = 1}^{{n_1}} {\left\| {\hat X_1^{(i)}} \right\|_F^2}  + \frac{{{\beta _3}}}{{{n_1}}}\sum\limits_{i = 1}^{{n_1}} {\left\| {\hat Y_1^{(i)}\bar H_1^{(i)}} \right\|_F^2}  + \frac{{{\alpha _{\text{1}}}}}{{2{n_1}}}\sum\limits_{i = 1}^{{n_1}} {\left\| {\hat Y_1^{(i)}} \right\|_F^2} } \right)
.
\end{gathered}$$
\end{footnotesize}

To update $\X_1^{t+1}$ and $\Y_1^{t+1}$,  we consider the following problem
\begin{footnotesize}
$$\begin{gathered}
\mathop {\min }\limits_{{{\hat X}_{\text{1}}},{{\hat Y}_{\text{1}}}} \sum\limits_{i = 1}^{{n_1}} {\frac{{{\alpha _{\text{1}}}}}{{2{n_1}}}\left\| {\hat X_1^{(i)}\hat Y_{\text{1}}^{(i)} - \bar C_{\text{1}}^{(i)}} \right\|_F^2}  + \frac{{{\beta _3}}}{{2{n_1}}}\sum\limits_{i = 1}^{{n_1}} {\left\| {\hat X_1^{(i)}\hat Y_1^{(i)}\bar H_1^{(i)}} \right\|_F^2}  \hfill \\
+ \lambda \left( {\frac{1}{{2{n_1}}}\sum\limits_{i = 1}^{{n_1}} {\left\| {\hat X_1^{(i)}} \right\|_F^2}  + \frac{{{\beta _3}}}{{{n_1}}}\sum\limits_{i = 1}^{{n_1}} {\left\| {\hat Y_1^{(i)}\bar H_1^{(i)}} \right\|_F^2}  + \frac{{{\alpha _{\text{1}}}}}{{2{n_1}}}\sum\limits_{i = 1}^{{n_1}} {\left\| {\hat Y_1^{(i)}} \right\|_F^2} } \right).
\end{gathered}$$
\end{footnotesize}
For any $i\in[\bf n_1]$, $\hat X_1^{(i,t+1)}$ and $\hat Y_1^{(i,t+1)}$ are updated by
\begin{equation}\label{mt-st-x1}
\begin{aligned}
\hat X_1^{(i,t+1)}
=&\left(\lambda\hat X_1^{(i,t)}+\alpha _1\bar C_1^{(i,t+1)}\left( \hat Y_1^{(i,t)} \right)^*\right)
\bigg[\alpha _1 \hat Y_1^{(i,t)} \left( \hat Y_1^{(i,t)} \right)^* +\\
 &{\beta _3}\left(\hat Y_1^{(i,t)}\bar H_1^{(i)} \right) \left( {\hat Y_1^{(i,t)}\bar H_1^{(i)}} \right)^*+2\lambda I \bigg]^{ - 1} \hfill
\end{aligned}
\end{equation}
and
\begin{equation}\label{mt-st-y1}
\begin{aligned}
\hat Y_1^{(i,t+1)}=& {\alpha _1}{\left[ {{{\left( {\hat X_1^{(i,t+1)}} \right)}^*}\hat X_1^{(i,t+1)}}+2\lambda I \right]^{ - 1}}\left( \lambda\hat Y_1^{(i,t)}+{\left( {\hat X_1^{(i,t+1)}} \right)^*}\bar C_1^{(i,t+1)}\right)\\
 &\left[ \alpha _1 I + {\beta _3}\bar H_1^{(i)}  \left( \bar H_1^{(i)} \right)^* \right]^{ - 1}.
\end{aligned}
\end{equation}
To update  $\X_2^{t+1}$ and $\Y_2^{t+1}$, we consider the following problem
\begin{footnotesize}
$$\begin{gathered}
\mathop {\min }\limits_{{{\hat X}_{\text{2}}},{{\hat Y}_{\text{2}}}} \sum\limits_{j = 1}^{{n_2}} {\frac{{{\alpha _{\text{2}}}}}{{2{n_2}}}\left\| {\hat X_2^{(j)}\hat Y_{\text{2}}^{(j)} - \bar C_{\text{2}}^{(j)}} \right\|_F^2}  + \frac{{{\beta _1}}}{{2{n_2}}}\sum\limits_{j = 1}^{{n_2}} {\left\| {\bar F_2^{(j)}\hat X_2^{(j)}\hat Y_2^{(j)}} \right\|_F^2}  \hfill \\
+ \lambda \left( {\frac{{{\beta _1}}}{{{n_2}}}\sum\limits_{j = 1}^{{n_2}} {\left\| {\bar F_2^{(j)}\hat X_2^{(j)}} \right\|_F^2}  + \frac{{{\alpha _{\text{2}}}}}{{2{n_2}}}\sum\limits_{j = 1}^{{n_2}} {\left\| {\hat X_2^{(j)}} \right\|_F^2}  + \frac{1}{{2{n_2}}}\sum\limits_{j = 1}^{{n_2}} {\left\| {\hat Y_2^{(j)}} \right\|_F^2} } \right). \hfill \\
\end{gathered} $$
\end{footnotesize}
Therefore, for any $j\in[\bf n_2]$, $\hat X_2^{(j,t+1)}$ and $\hat Y_2^{(j,t+1)}$ are updated by
\begin{equation}\label{mt-st-x2}
\begin{aligned}	
\hat X_2^{(j,t+1)} =& {\alpha _2}{\left[ {{\alpha _2}I + {\beta _1}{{\left( {\bar F_2^{(j)}} \right)}^*}\bar F_2^{(j)}} \right]^{ - 1}}\left(\lambda X_2^{(j,t)}+\bar C_2^{(j,t+1)}{\left( {\hat Y_2^{(j,t)}} \right)^*}\right)\\
&{\left[ {\hat Y_2^{(j,t)}{{\left( {\hat Y_2^{(j,t)}} \right)}^*}}+2\lambda I \right]^{ - 1}}
\end{aligned}
\end{equation}
and
\begin{equation}\label{mt-st-y2}
\begin{aligned}	
\hat Y_2^{(j,t+1)}=& {\left[ {{\alpha _2}{{\left( {\hat X_2^{(j,t+1)}} \right)}^*}\hat X_2^{(j,t+1)} + {\beta _1}{{\left( {\bar F_2^{(j)}\hat X_2^{(j,t+1)}} \right)}^*}\bar F_2^{(j)}\hat X_2^{(j,t+1)}}+2\lambda I \right]^{ - 1}}\\
&\left( \lambda\hat Y_2^{(j,t)}+{\alpha _2}{\left( {\hat X_2^{(j,t+1)}} \right)^*}\bar C_2^{(j,t+1)}\right) .
\end{aligned}
\end{equation}
To update $\X_3^{t+1}$ and $\Y_3^{t+1}$, we consider the following problem
\begin{footnotesize}
$$\begin{gathered}
\mathop {\min }\limits_{{{\hat X}_{\text{3}}},{{\hat Y}_{\text{3}}}} \sum\limits_{k = 1}^{n_3}
{\frac{\alpha _{3}}{{2{n_3}}}\left\| {\hat X_3^{(k)}\hat Y_3^{(k)} - \bar C_3^{(k)}} \right\|_F^2}
+\frac{\beta_2}{2n_3}\sum\limits_{k=1}^{n_3}{\left\| {\hat X_3^{(k)}\hat Y_3^{(k)}\bar G_3^{(k)}} \right\|_F^2} \\
+\lambda\left( \frac{1}{2n_3}\sum\limits_{k=1}^{n_3}\|X_3^{(k)}\|_F^2+\frac{\beta_2}{n_3}\sum\limits_{k=1}^{n_3}\|{\hat Y}_3^{(k)}\hat G_3^{(k)}\|_F^2+\frac{\alpha_3}{2n_3}\sum\limits_{k=1}^{n_3}\|\hat Y_3^{(k)}\|_F^2 \right).
\end{gathered}$$
\end{footnotesize}

Then $\hat X_3^{(k,t+1)}$ and $\hat Y_3^{(k,t+1)}$ for any $k\in[\bf n_3]$ are updated by
\begin{equation}\label{mt-st-x3}
\begin{aligned}	
\hat X_3^{(k,t+1)} =&\left(\lambda\hat X_3^{(k,t)}+  {\alpha _3}\bar C_3^{(k,t+1)}{\left( {\hat Y_3^{(k,t)}} \right)^*}\right) \bigg[ {\alpha _3}\hat Y_3^{(k,t)}{{\left( {\hat Y_3^{(k,t)}} \right)}^*}+\\
 &{\beta _2}\left( {\hat Y_3^{(k,t)}\bar G_3^{(k)}} \right){{\left( {\hat Y_3^{(k,t)}\bar G_3^{(k)}} \right)}^*}+2\lambda I \bigg]^{ - 1}
\end{aligned}
\end{equation}
and
\begin{equation}\label{mt-st-y3}
\begin{aligned}
\hat Y_3^{(k,t+1)} =& {\alpha _3}{\left[ {{{\left( {\hat X_3^{(k,t+1)}} \right)}^*}\hat X_3^{(k,t+1)}}+2\lambda I \right]^{ - 1}}\left( \lambda\hat Y_3^{(k,t)}+{\left( {\hat X_3^{(k,t+1)}} \right)^*}\bar C_3^{(k,t+1)}\right)\cdot \\
&{\left[ {{\alpha _3}I + {\beta _2}{\bar G_3^{(k)}{\left( {\bar G_3^{(k)}} \right)}^*}} \right]^{ - 1}}. \hfill \\
\end{aligned}
\end{equation}
Based on above analysis,
the alternating minimization method can be outlined as Algorithm 4.1, denoted by ST-MTRTC for convenience.
\begin{table}[htbp]
	\centering
	% \begin{center}
	\begin{tabular}{l}
		\toprule
		\toprule
		{\bfseries Algorithm 4.1} Spatio-Temporal Multi-Tubal Rank Tensor Completion (ST-MTRTC)       \\
		\midrule
		{\bfseries Input:} The tensor data $\M \in {{\mathbb C}^{{n_1} \times {n_2} \times {n_3}}}$, $\H\in \mathbb{R}^{n_1\times n_3\times n_3}$, the observed set
		$\Omega $, the initialized \\ \qquad \quad \,\,\,\! rank  $ R^0$, parameters $\lambda$, $ \varepsilon $ and $\alpha_u$, $u\in [\bf 3]$.                                  \\
		{\bfseries Initialization: } $\hat X_u^0,\,\hat Y_u^0,\, \,u\in[\bf 3] $.                                                                               \\
		{\bfseries While not converge do}                \\
		\qquad  $ \bm{1.} $ Fix $\hat X_u^t$ and $ \hat Y_u^t $, compute $\mathcal C^{t+1} $ by \eqref{C-comp-com}.             \\
		\qquad  $ \bm{2.} $ Compute  $ \mathcal F$ and $ \mathcal G $ based on $\C^1$.                                \\
		\qquad  $ \bm{3.} $ Compute $\hat X_u^{t+1}$ by \eqref{mt-st-x1},\eqref{mt-st-x2} and \eqref{mt-st-x3} by fixing $\hat Y_u^t$ and $ \C^{t+1} $.     \\
		\qquad  $ \bm{4.} $ Obtain $\hat Y_u^{t+1}$ by \eqref{mt-st-y1},\eqref{mt-st-y2} and \eqref{mt-st-y3} based on  $\hat X_u^{t+1}$ and $ \C^{t+1} $.     \\
		\qquad  $ \bm{5.} $ Adopt the rank decreasing scheme to adjust $ rank_{mt}(\A) $ and  the sizes  of\\\qquad \quad \,\!
  $\hat X_u^{t+1}$ and $ \hat Y_u^{t+1} $. \\
		%\qquad  $ \bm{6.} $ Adjust $ \alpha_{u}^{t+1} $ by \eqref{alpha}.\\
		\qquad  $ \bm{6.} $ Check the stop criterion ${\left\| {\C_\Omega ^{t + 1} - {\M_\Omega }} \right\|_F}/{\left\| {{\M_\Omega }} \right\|_F} < \varepsilon $.  \\
		\qquad  $ \bm{7.} $ $t \leftarrow t + 1$.            \\
		{\bfseries end while}                             \\
		{\bfseries Output:}  $ \C^{t + 1}  $.      \\		
		\bottomrule
		\bottomrule
	\end{tabular}
	% \end{center}
\end{table}

The convergence is similar to that of Algorithm MTRTC and hence we omit it here.
%\begin{theorem}
%	Suppose that $ \left\{\left(\C^t, \X_1^t,\X_2^t,\X_3^t,\Y_1^t,\Y_2^t,\Y_3^t\right)  \right\} $ is an infinite sequence generated by ST-MTRTC. Then $ %\left\{\left(\C^t, \X_1^t,\X_2^t,\X_3^t,\Y_1^t,\Y_2^t,\Y_3^t\right) \right\} $ is bounded and any accumulation point of \\
%$ \left\{\left(\C^t, \X_1^t,\X_2^t,\X_3^t,\Y_1^t,\Y_2^t,\Y_3^t \right)\right\} $ is a KKT point of \eqref{mt-st-reg}.
%\end{theorem}
%The proof is similar to that of Theorem \ref{th:ca}, and hence we omit it here.

\section{Numerical Experiments}

In this section, we report some numerical results of our proposed algorithms  MTRTC and ST-MTRTC  to show the validity. We adopt the relative error and the peak signal-to-noise ratio (PSNR) as evaluation metrics, which are defined by
$$\operatorname{RSE}:=\frac{\left\|\mathcal{\hat C}-\mathcal{M}\right\|_{F}}{\|\mathcal{M}\|_{F}},\quad
\operatorname{PSNR}:=10 \log _{10}\left(\frac{n_{1} n_{2} n_{3}\|\mathcal{M}\|_{\infty}^{2}}{\|\hat{\mathcal{C}}-\mathcal{M}\|_{F}^{2}}\right),
$$
where $ \M $ and $ \hat{\C} $ are the observed tensor and estimated tensor, respectively. The parameter $\lambda$ is set as $0.1 $ in both MTRTC and ST-MTRTC. We conduct extensive experiments to evaluate our methods, and then compare the results with those by some other existing methods, including TMac \cite{XHYS13} and TCTF \cite{ZLLZ18}. All the methods are implemented on the platform of Windows 10 and Matlab (R2014a) with an Intel(R) Core(TM) i7-7700 CPU at 3.60GHz and 8 GB RAM.

\subsection{Numerical Simulation}

\par In this subsection, we test MTRTC on synthetic data to evaluate the efficiency by comparing MTRTC with TCTF. In experiments, the maximum iteration number is set to be 300 and the termination precision $ \varepsilon $ is set to be 1e-5.

The tested tensor $\mathcal M \in {\mathbb R^{{100} \times {100} \times {100}}}$ is constructed in the following way. Use Matlab command $randn(r_1,r_2,r_3)$ to generate tensor $\mathcal{B}\in{\mathbb R^{{r_1} \times {r_2} \times {r_3}}}$. Generate matrices $U^i\in {\mathbb R^{100\times r_i}}$ with $i\in  {\bf [3]}$ such that the multi-rank of tensor  $\mathcal{M}:=\mathcal{B}\times_1 U^1\times_2 U^2\times_3 U^3$ is $(r_1,r_2,r_3)$. Select $ pn_{1}n_{2}n_{3} $ positions of $\M $ uniformly to construct $\Omega$, where $p$ is the sampling ratio. If $ RSE < 1e$-3, $ \mathcal{\hat C} $ is regarded as a successful recovery to $ \M $. For fairness, we run these procedures for 30 times.

First, we test TCTF and MTRTC for the problems of different sample rates. Let $r_1=r_2=r_3=20$, the initial rank $(r_{u}^{l})^0=20,\, u \in  {\bf [3]},\, l \in {\bf [n_u]}$ in MTRTC and the initial rank $ (20,20,20) $ in TCTF. We set sampling ratio $ p $ varying from $ 0.1 $ to $ 0.9 $ with increment $0.1$. The numerical results are reported in Figure \ref{fig:NS_p_RSE} (a).

\begin{figure}[htbp]
	\centering
	\begin{subfigure}[t]{0.49\linewidth}
		\centering
		\includegraphics[width=3in]{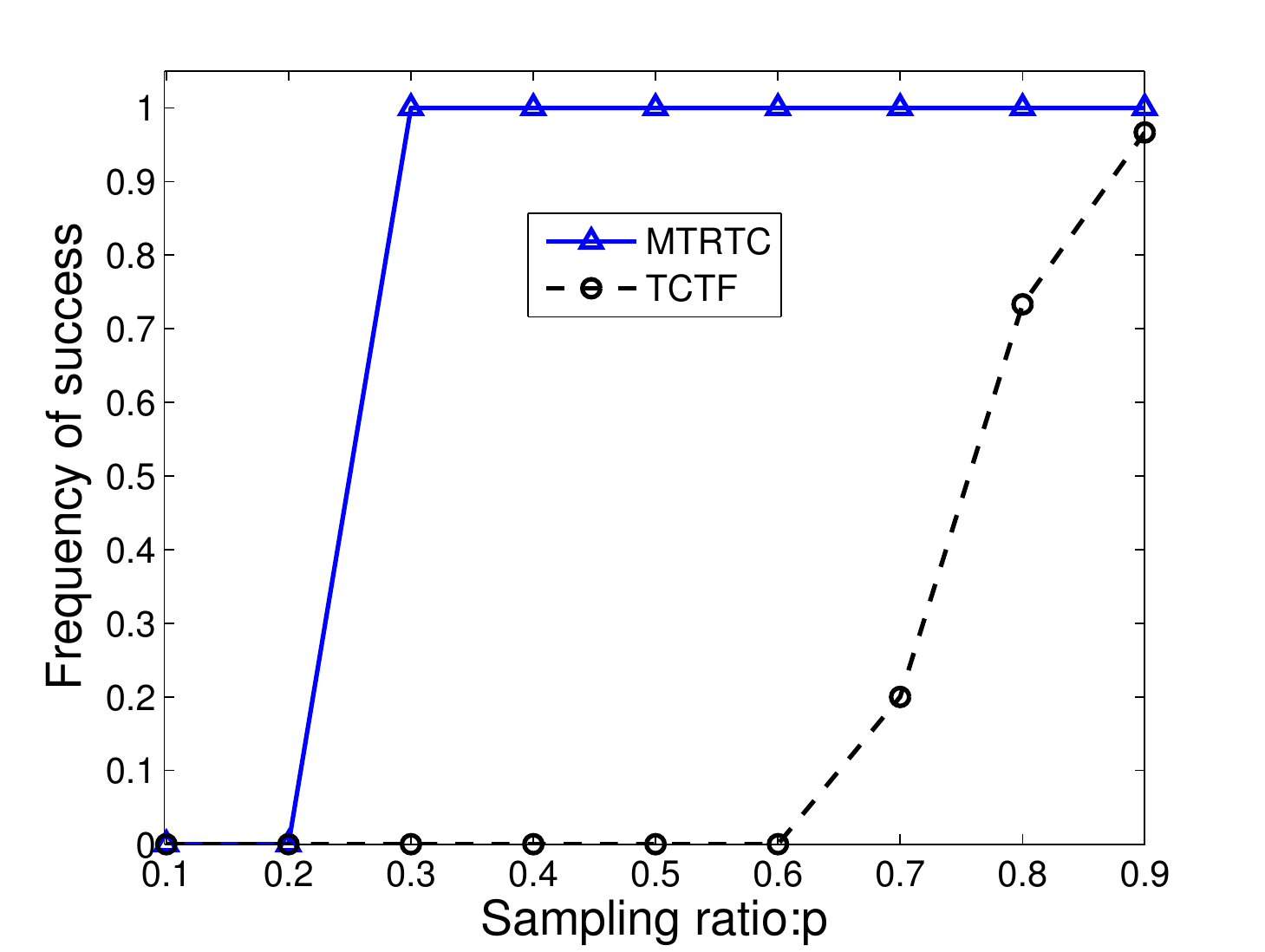}
		\caption{Comparison on frequency of success of different sampling ratios}
	\end{subfigure}
	\begin{subfigure}[t]{0.49\linewidth}
		\centering
		\includegraphics[width=3in]{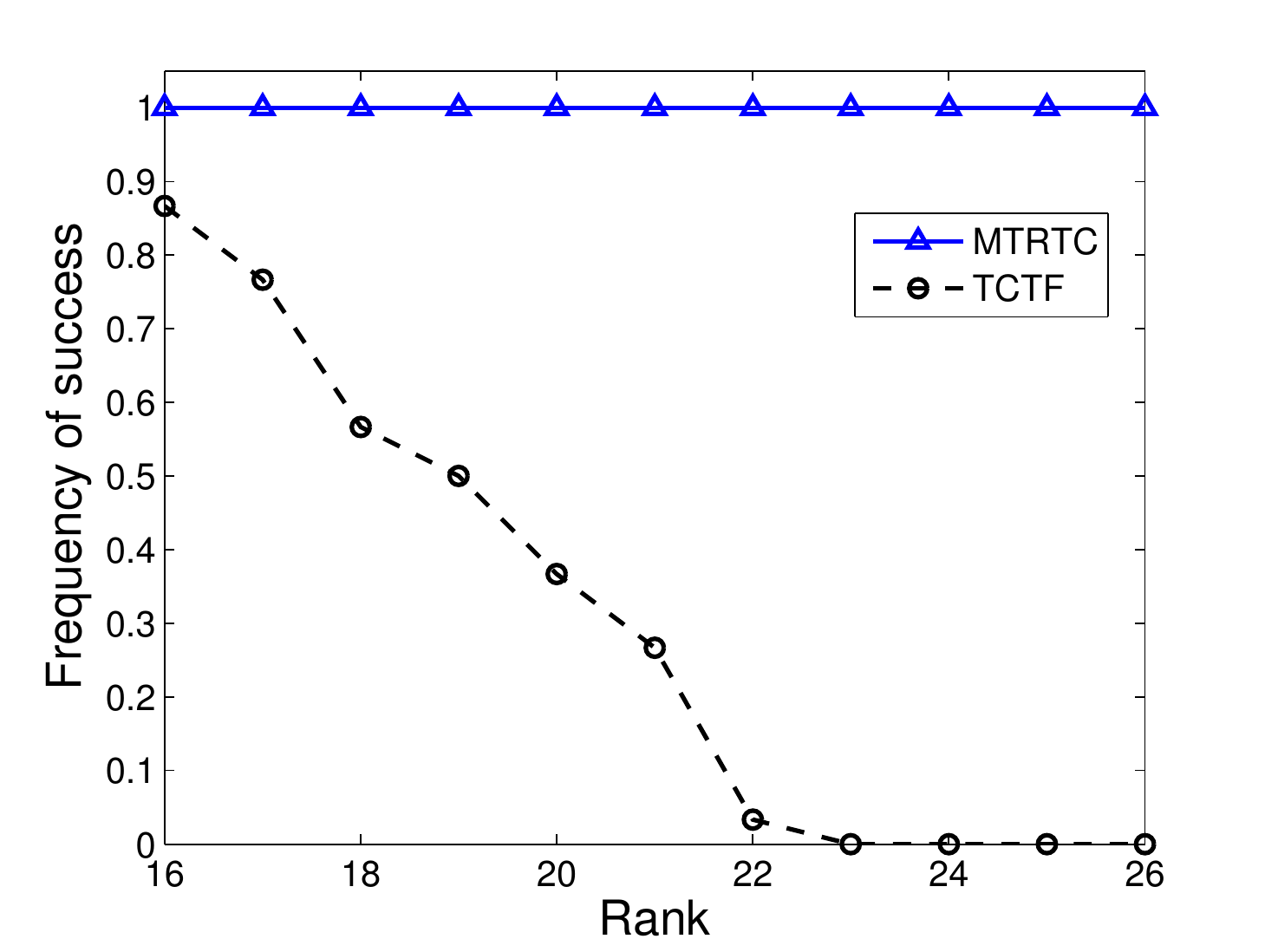}
		\caption{Comparison on frequency of success of different ranks}
	\end{subfigure}
	\centering
	\caption{Comparison on frequency of success obtained by MTRTC and TCTF}
	\label{fig:NS_p_RSE}
\end{figure}

In Figure \ref{fig:NS_p_RSE} (a), the frequency of success of these two methods are reported. Our proposed method MTRTC performs much better than TCTF. We find that the lower the sampling ratio $ p $, the more difficult it is to recover the tensor successfully. Form Figure \ref{fig:NS_p_RSE} (a), it is clear that our method MTRTC can complete the tensor successfully  when the sample rate is bigger than 0.2; while the tensor can not be completed by TCTF when the sample rate is less than 0.6.

On the other hand, we test TCTF and MTRTC for the tested tensors of sampling ratio $p=0.7$ with different ranks. We set the rank $r=r_1=r_2=r_3$ varying from $ 16 $ to $ 26 $ with increment $ 1 $. We set the initialized rank $(r_{u}^{l})^0=r, u \in  {\bf [3]}, l \in {\bf [n_u]}$ in MTRTC and the initial rank $ (r,r,r) $ in TCTF. The frequences of success are reported in Figure \ref{fig:NS_p_RSE} (b).

Figure \ref{fig:NS_p_RSE} (b) indicates that tensor can be completed by MTRTC  for all estimated rank from $(16,16,16)$. With the increase of rank, the success rate of TCTF in restoring tensors gradually decreases. Moreover, TCTF cannot successfully restore tensors when the rank is bigger than 22.

From accuracy and efficiency, we know that MTRTC performs better than TCTF  for all sizes of the sampling ratios and  tensor ranks.

\subsection{Image Simulation}
In this subsection, we apply MTRTC to color image inpainting.
Note that  color images can be expressed as third order tensors. When the tensor data is of low rank, or numerical low rank, the image inpainting problem can be modeled as a tensor completion problem. We use the Berkeley Segmentation database \cite{MFTM01} to evaluate our method for image inpainting. It has a total of $ 200 $ color images, of size $ 321 \times 481 \times 3 $. In these experiments, we compare our results with those from the state-of-the-art methods (TMac, TCTF).

In the test, all $ 200 $ images are chosen from the Berkeley Segmentation database. For each chosen image, we randomly sample by sampling ratio $p=0.7$.  We set the initial multi-tubal rank $ (r_{u}^{l})^0=2, u \in  {\bf [2]}, \,(r_{3}^{l})^0=30, l \in {\bf [n_u]} $ in MTRTC,  the initial tubal rank $ (30,30,30) $ in TCTF and the initial Tucker rank $ (30,30,3) $ in TMac. In experiments, the maximum iteration number is set to be 300 and the termination precision $ \varepsilon $ is set to be 1e-5.

\begin{figure}[htbp]
 \centering
 \begin{subfigure}[t]{1\linewidth}
  \centering
  \includegraphics[width=6in]{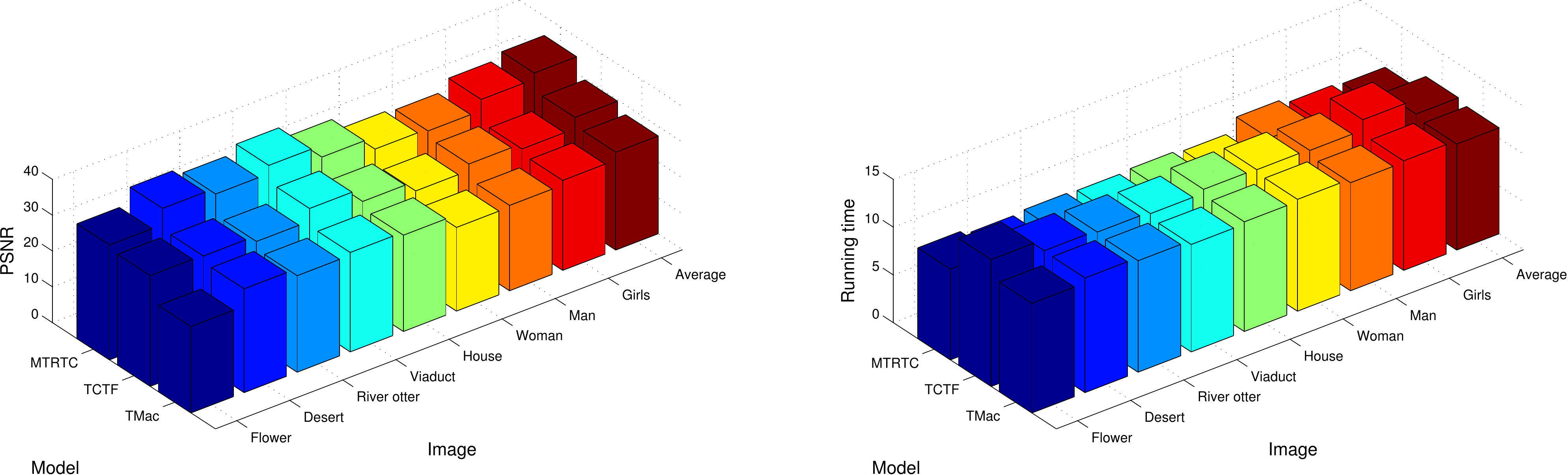}
 \end{subfigure}
\iffalse
 \begin{subfigure}[t]{0.49\linewidth}
  \centering
  \includegraphics[width=3in]{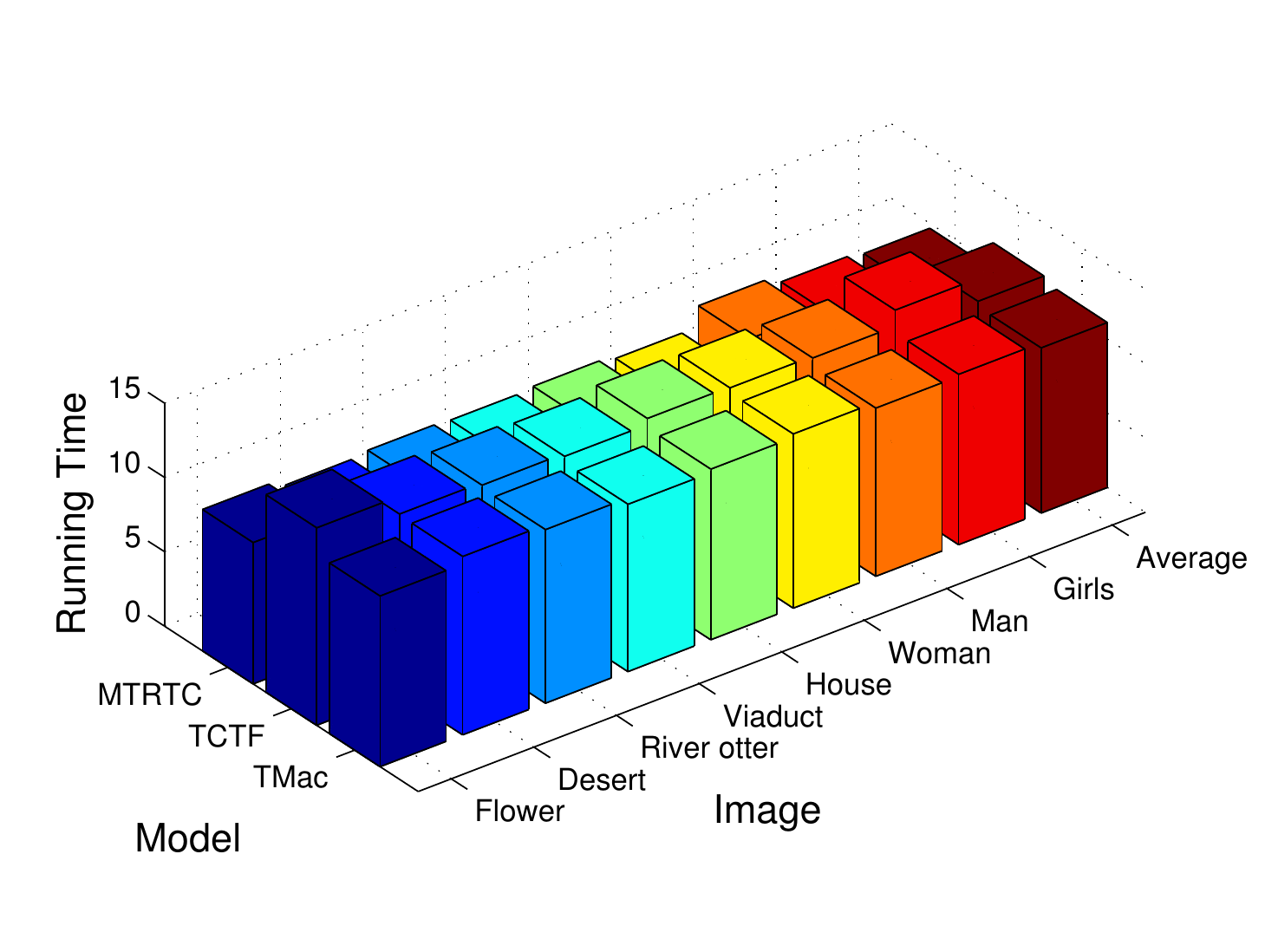}
 \end{subfigure}
\fi
 \centering
 \caption{ Comparison on the PSNR  and the running time by MTRTC, TCTF and TMac}
 \label{fig:image_bar}
\end{figure}

\begin{table*}
\caption{Comparison of the PSNR, the RSE  and the running time by MTRTC, TCTF and TMac}\label{tab:image}
\centering
		\begin{tabular}{|c|ccc|ccc|ccc|}
		\hline
			{\multirow{2}{*}{\diagbox{Image}{Method}}}&\multicolumn{3}{|c|}{MTRTC} &\multicolumn{3}{c|}{TCTF}&\multicolumn{3}{c|}{TMac}
			\\\cline{2-10}
&  PSNR      & RSE     &time
			&PSNR         & RSE         &time
			&PSNR         & RSE         &time
			\\   \hline
Flower & \textbf{32.07 } & \textbf{0.079 } & 9.51  & 30.95  & 0.090  & 13.28  & 24.09  & 0.199  & 11.47  \\
Desert & \textbf{36.61 } & \textbf{0.031 } & 9.02  & 30.72  & 0.060  & 12.08  & 28.96  & 0.074  & 12.00  \\
River otter & \textbf{34.95 } & \textbf{0.052 } & 9.23  & 29.08  & 0.102  & 11.89  & 26.95  & 0.131  & 11.67  \\
Viaduct & \textbf{37.23 } & \textbf{0.031 } & 9.11  & 32.63  & 0.053  & 11.69  & 27.74  & 0.093  & 11.24  \\
House & \textbf{33.88 } & \textbf{0.038 } & 9.26  & 28.91  & 0.067  & 12.11  & 26.89  & 0.084  & 11.48  \\
Man   & \textbf{30.41 } & \textbf{0.056 } & 9.00  & 26.15  & 0.092  & 12.03  & 23.37  & 0.127  & 11.70  \\
Human & \textbf{29.80 } & \textbf{0.064 } & 10.41  & 27.97  & 0.079  & 11.91  & 23.84  & 0.127  & 11.31  \\
Girl  & \textbf{32.92 } & \textbf{0.049 } & 9.88  & 26.49  & 0.103  & 12.99  & 25.12  & 0.121  & 11.46  \\
Average & \textbf{34.60 } & \textbf{0.043 } & 9.65  & 29.59  & 0.079  & 11.46  & 27.31  & 0.101  & 11.09  \\
			\hline
		\end{tabular}

\end{table*}

\begin{figure}[htbp]
	\centering
	\begin{subfigure}[b]{1\linewidth}
%		\subfloat[]{
			\begin{subfigure}[b]{0.19\linewidth}
			\centering
			\includegraphics[width=\linewidth]{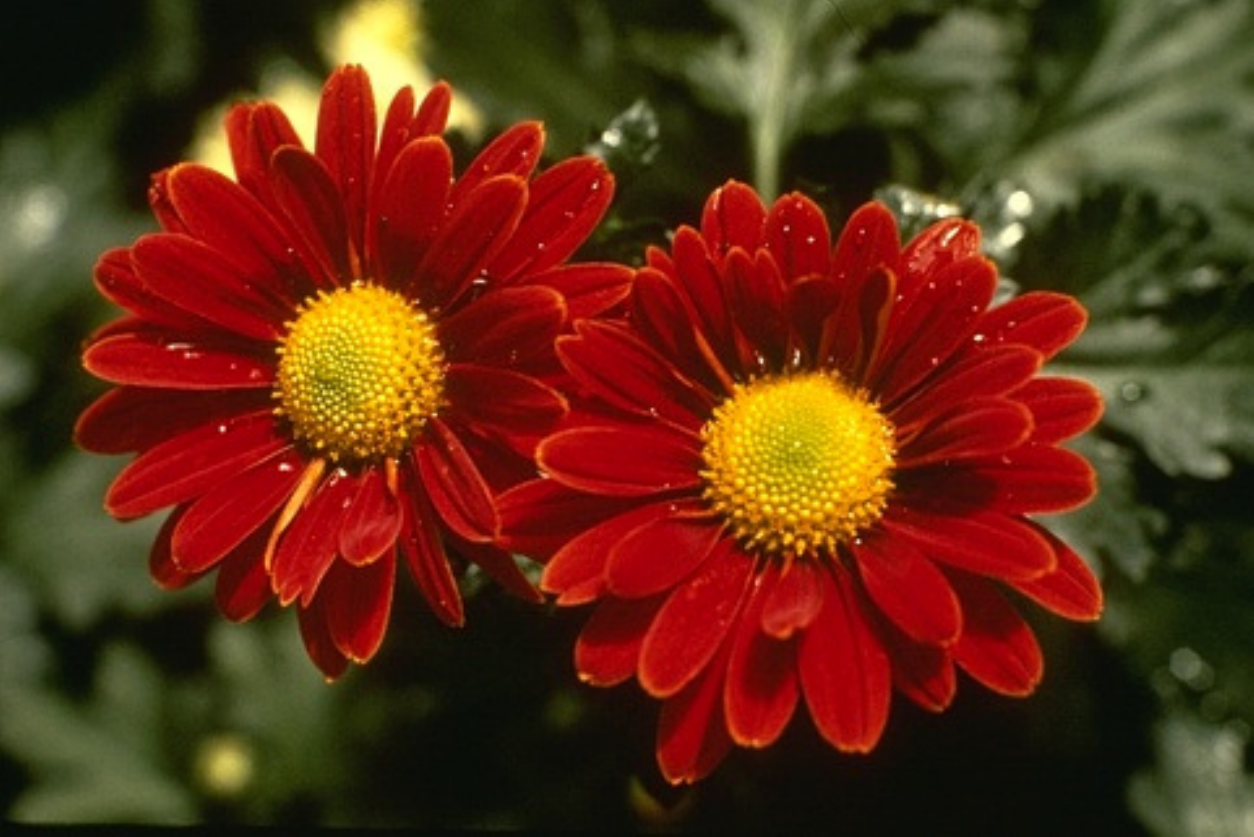}\vspace{0pt}
			\includegraphics[width=\linewidth]{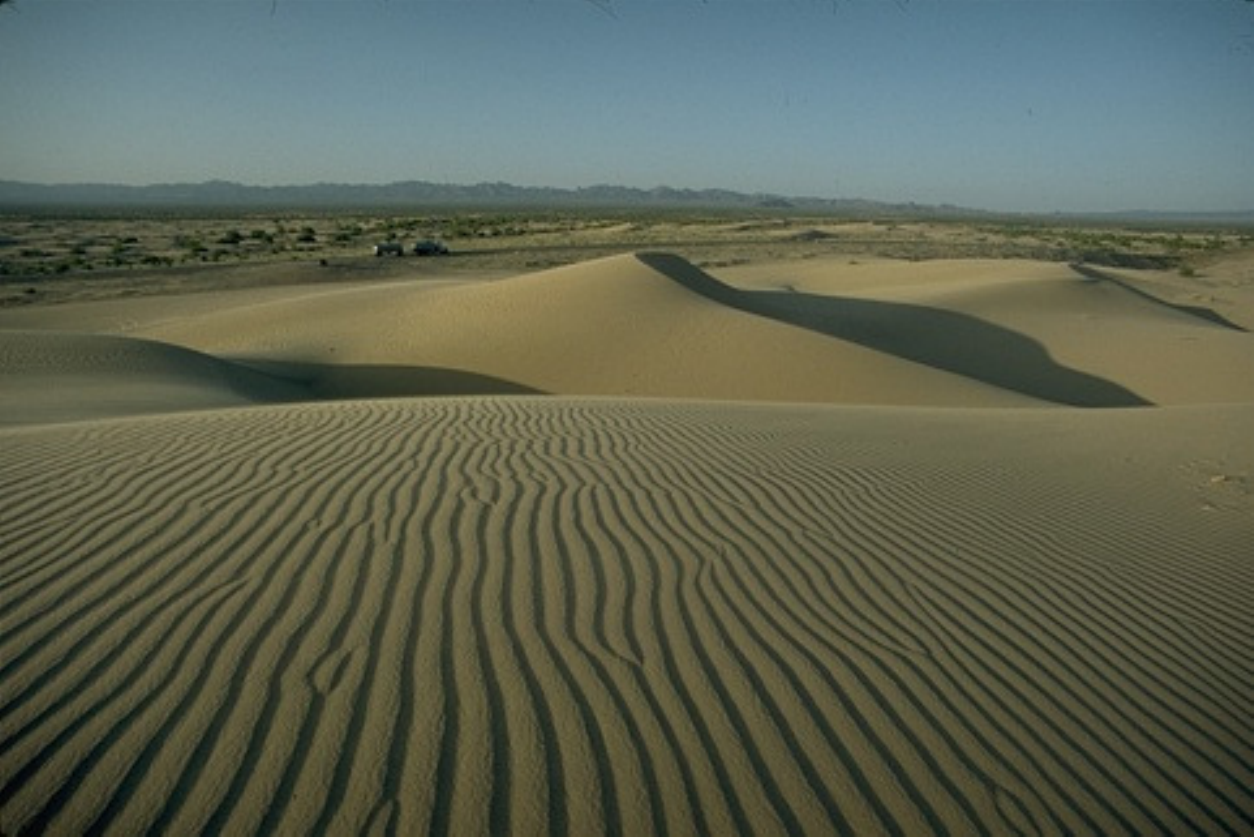}\vspace{0pt}
			\includegraphics[width=\linewidth]{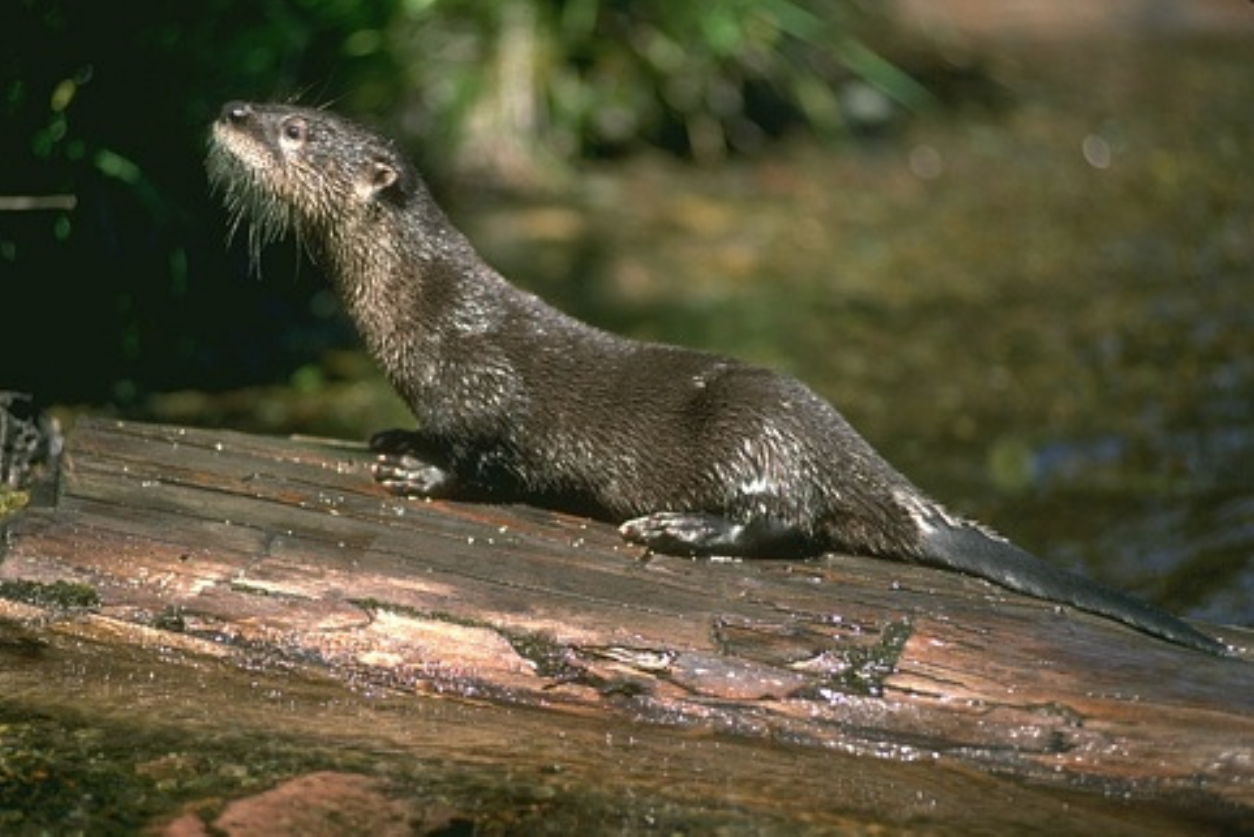}\vspace{0pt}
			\includegraphics[width=\linewidth]{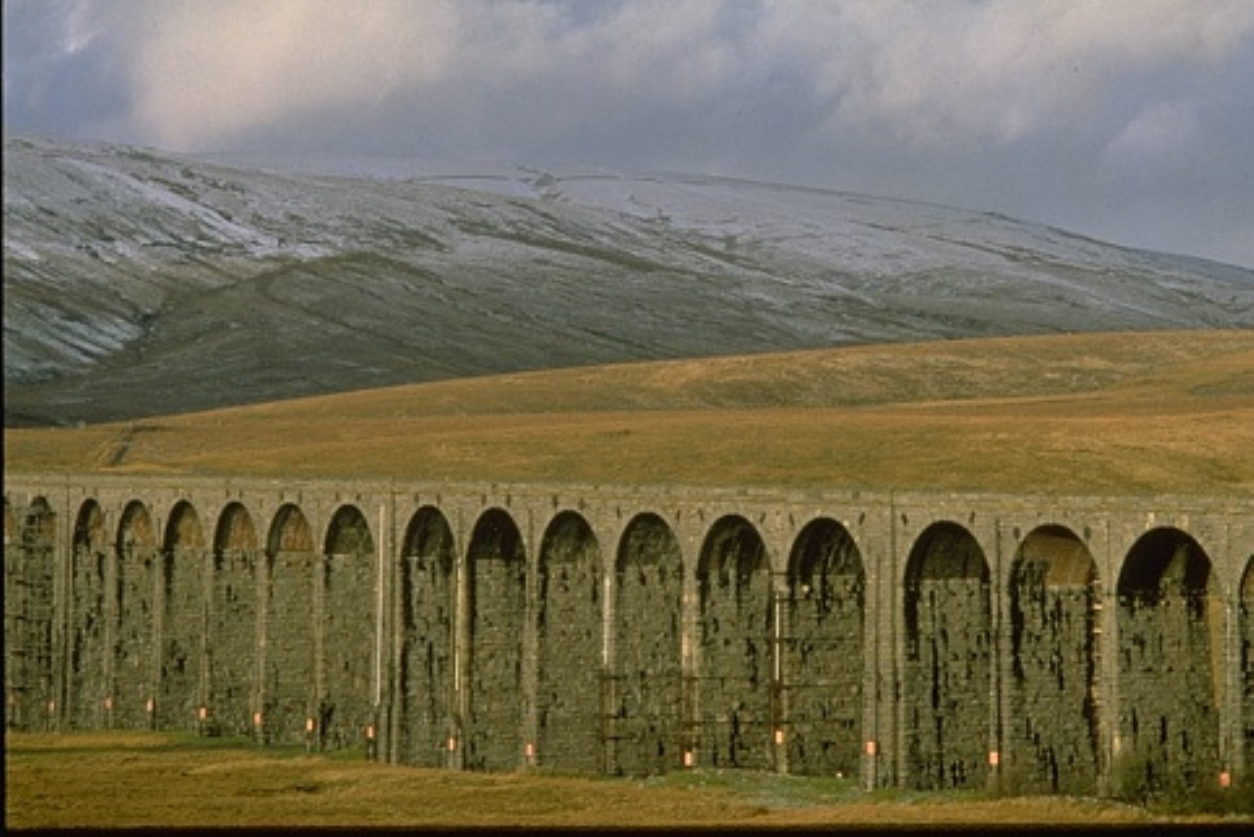}\vspace{0pt}
			\includegraphics[width=\linewidth]{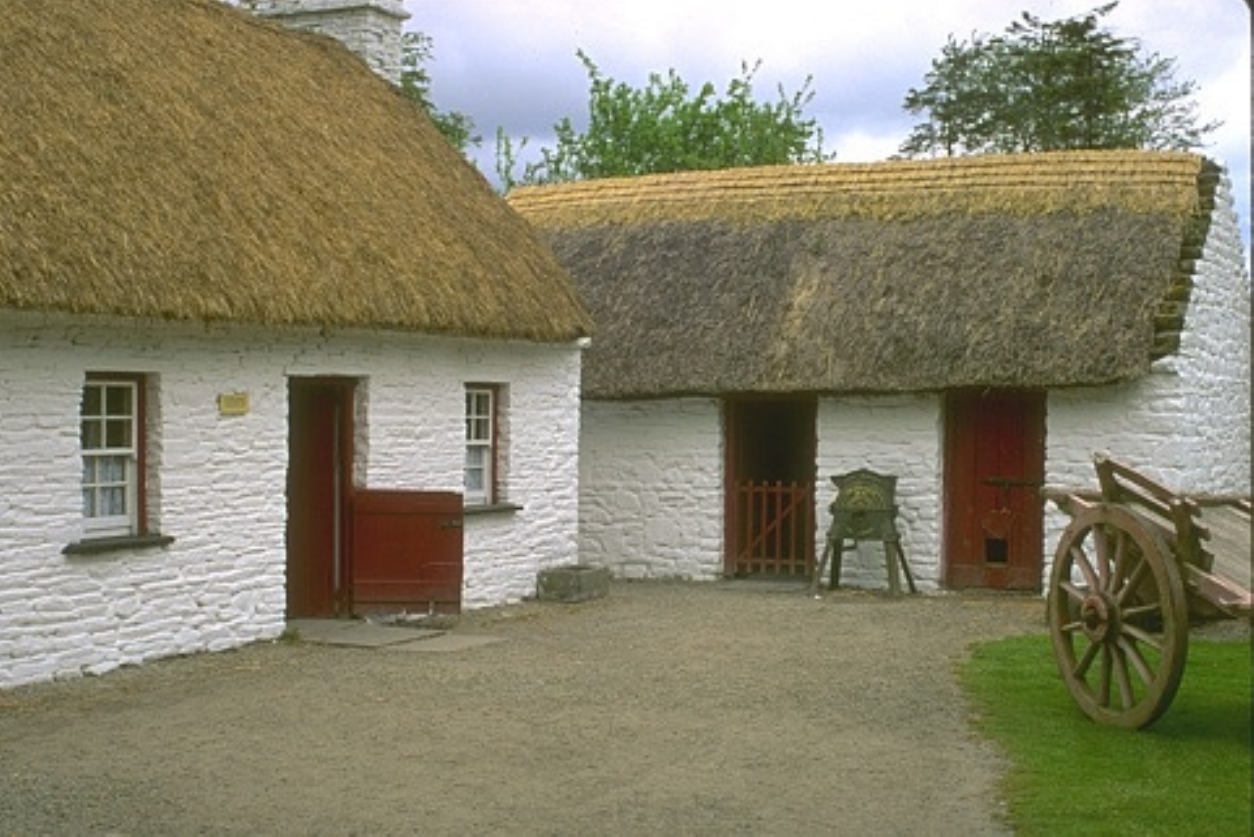}\vspace{0pt}
			\includegraphics[width=\linewidth]{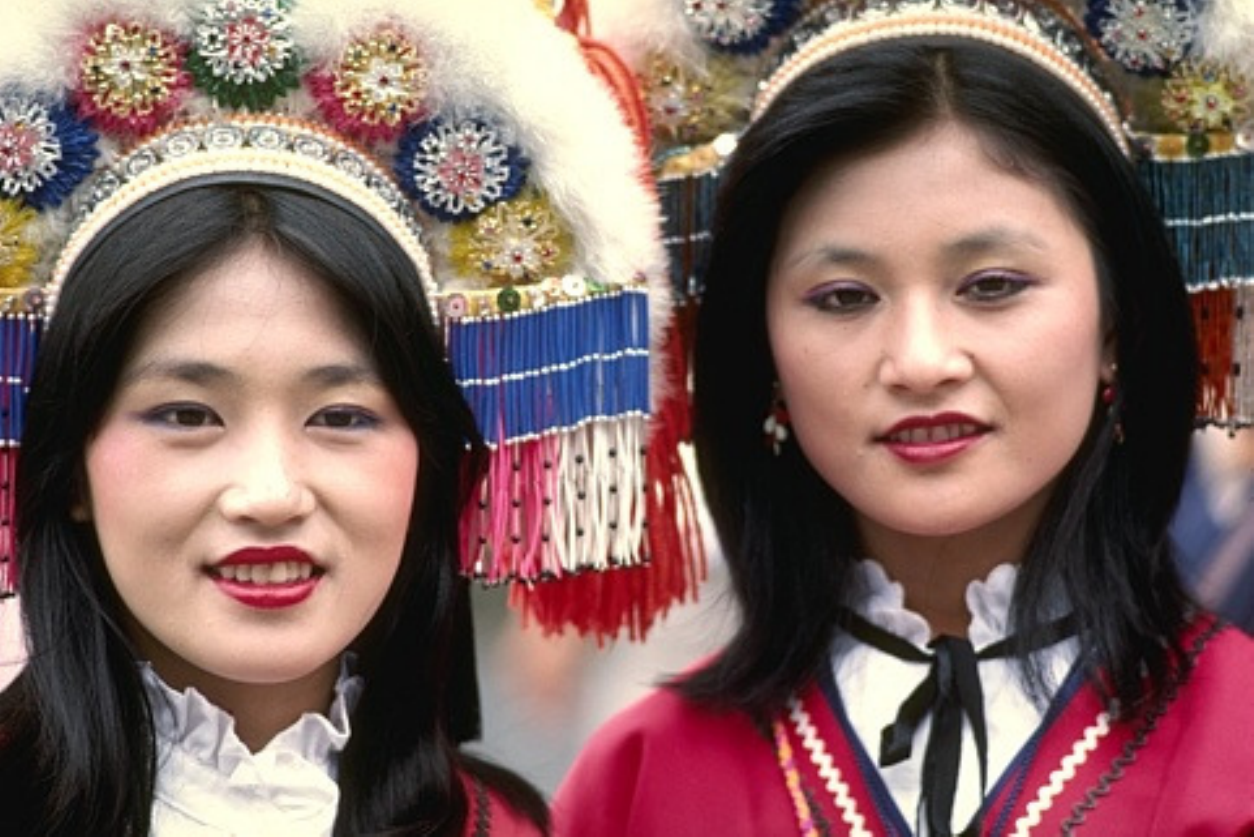}\vspace{0pt}
			\includegraphics[width=\linewidth]{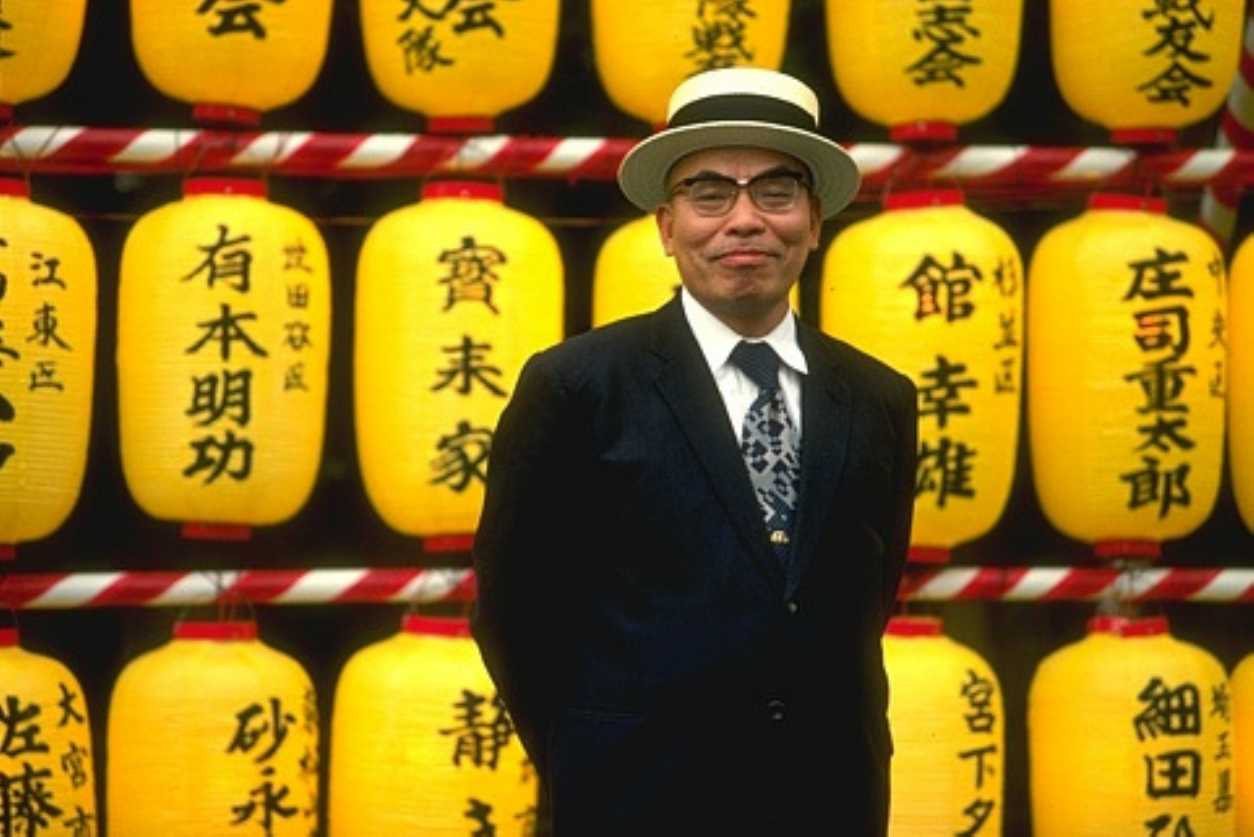}\vspace{0pt}
			\includegraphics[width=\linewidth]{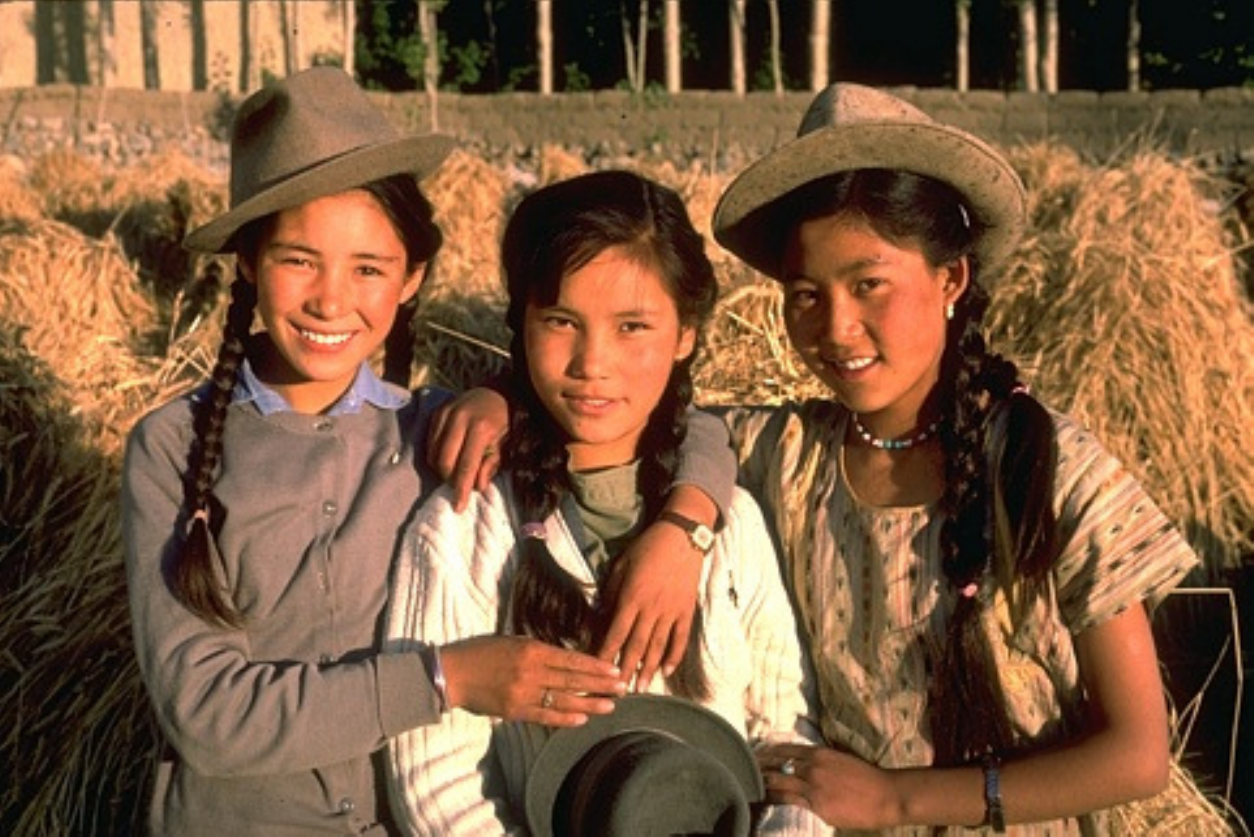}
			\caption{Original}
		    \end{subfigure}
%	\subfloat[Observation]{    	
	\begin{subfigure}[b]{0.19\linewidth}
		\centering
		\includegraphics[width=\linewidth]{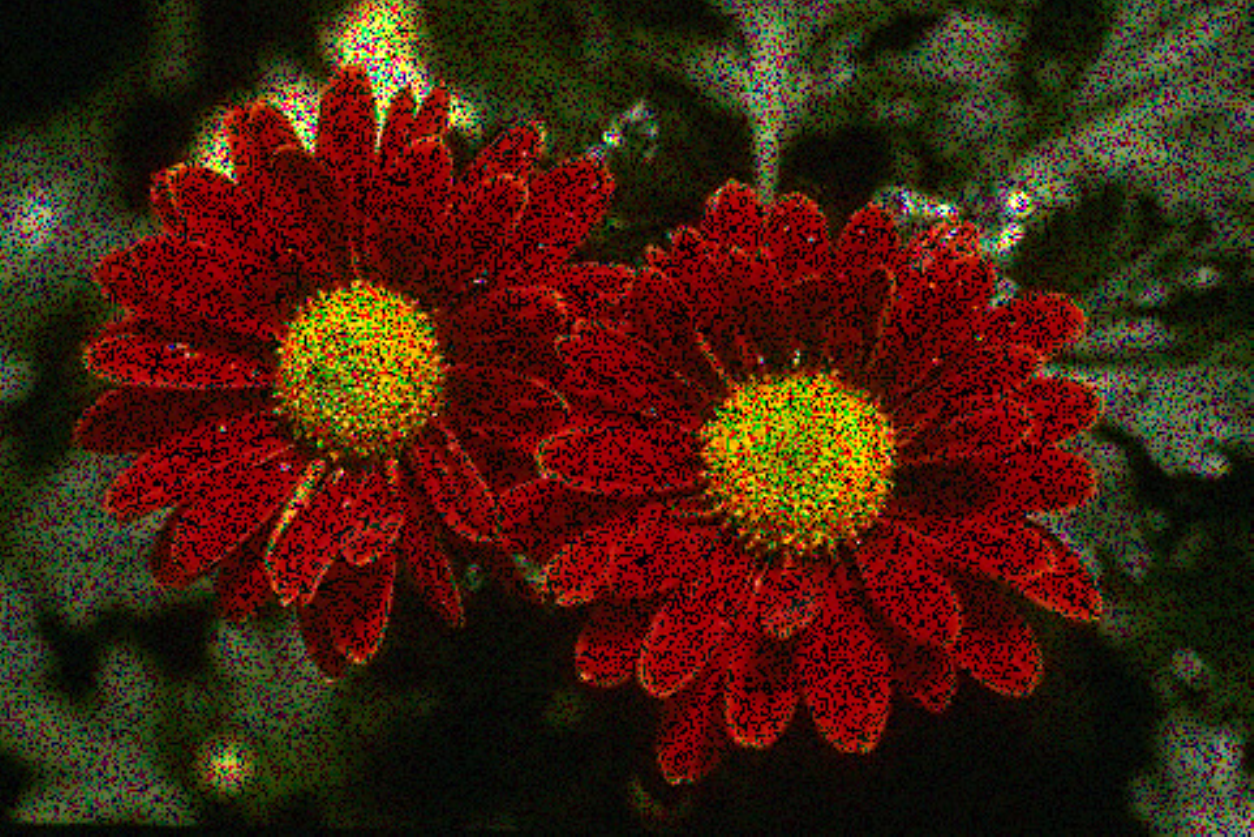}\vspace{0pt}
		\includegraphics[width=\linewidth]{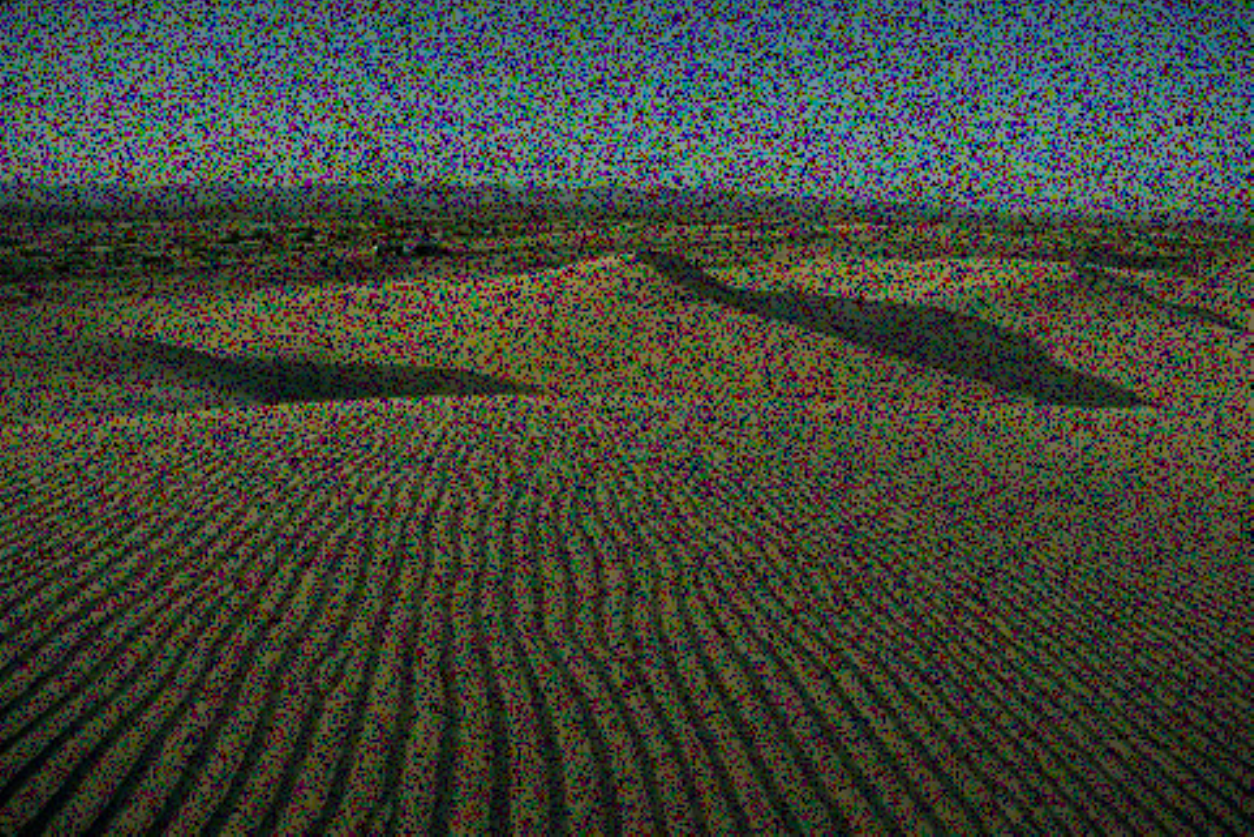}\vspace{0pt}
		\includegraphics[width=\linewidth]{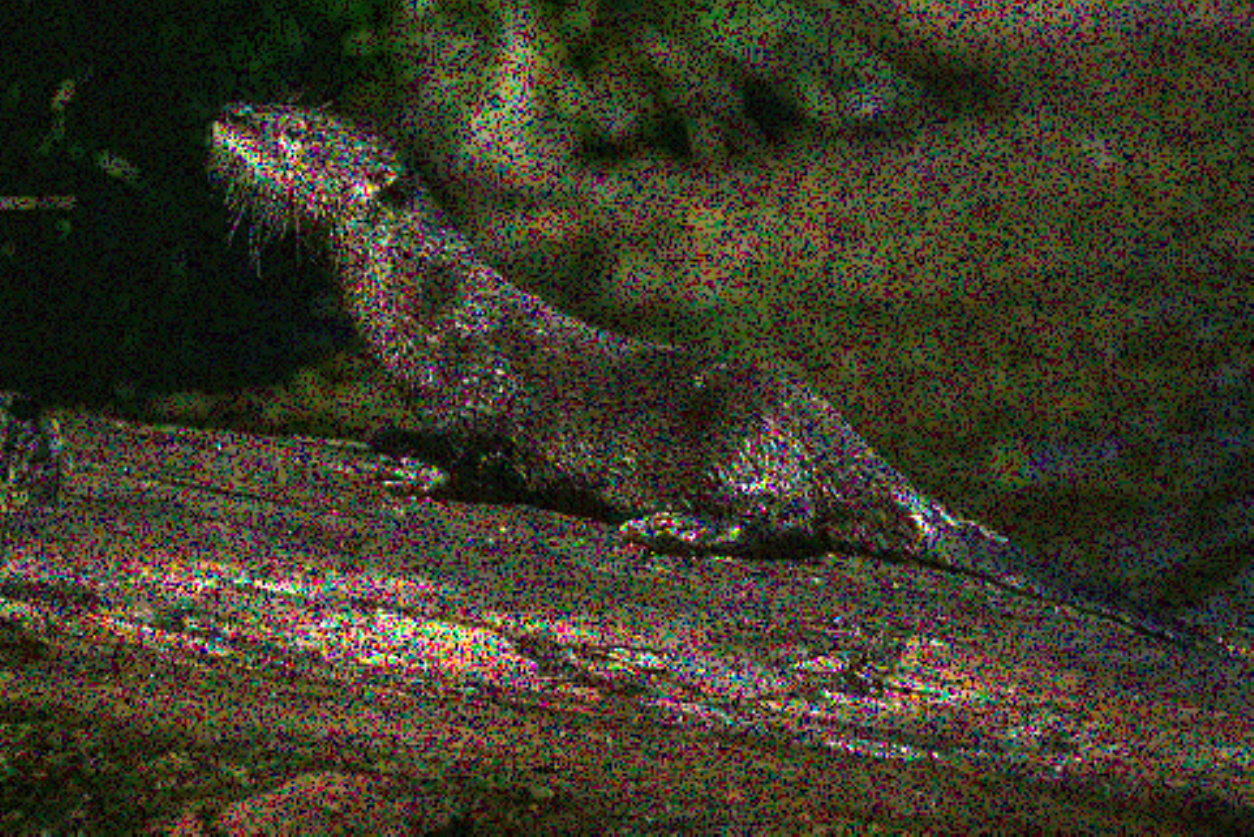}\vspace{0pt}
		\includegraphics[width=\linewidth]{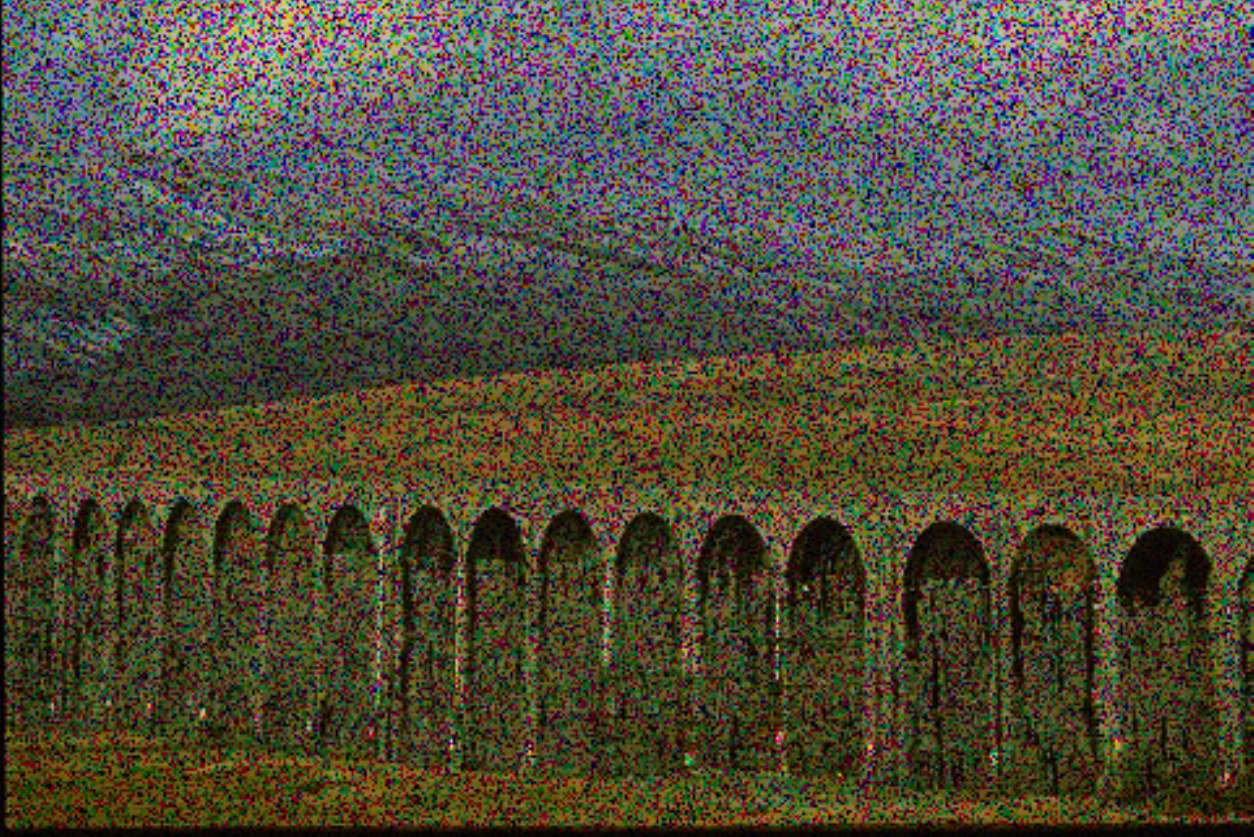}\vspace{0pt}
		\includegraphics[width=\linewidth]{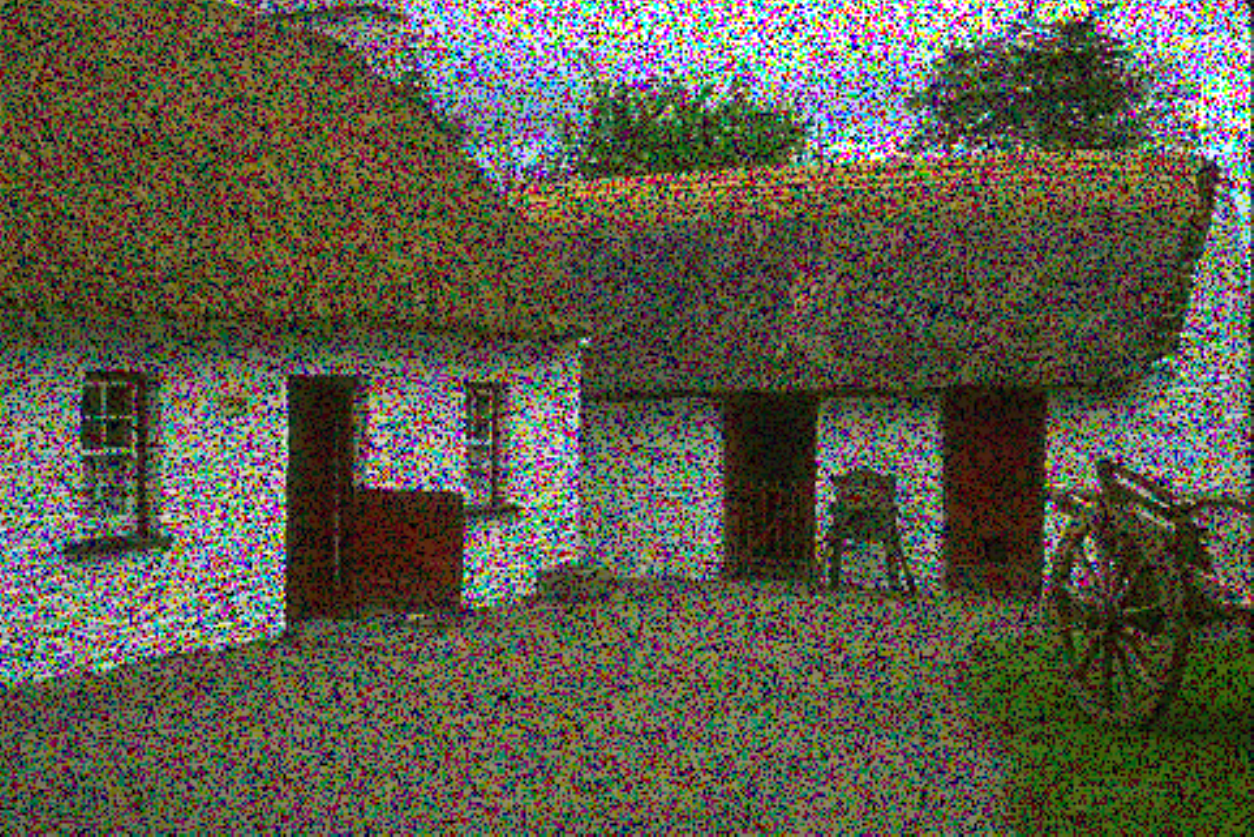}\vspace{0pt}
		\includegraphics[width=\linewidth]{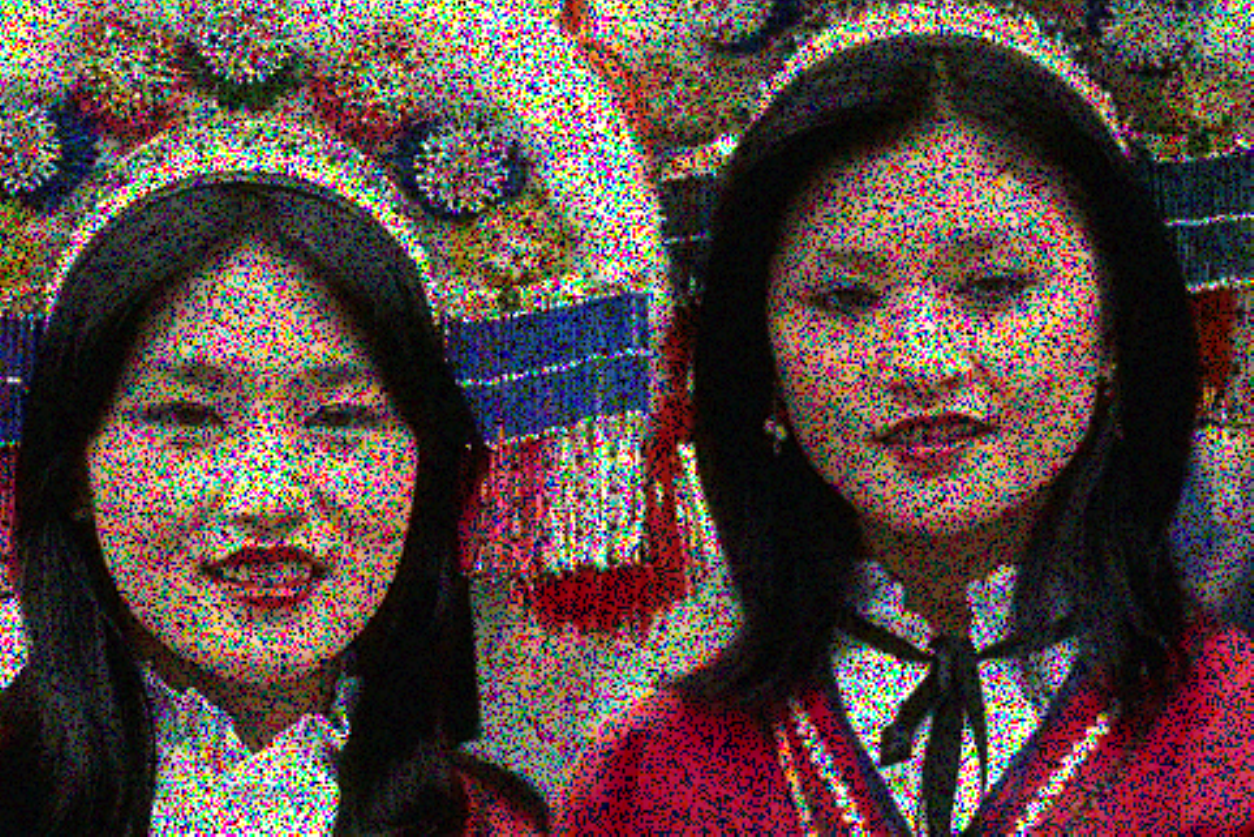}\vspace{0pt}
		\includegraphics[width=\linewidth]{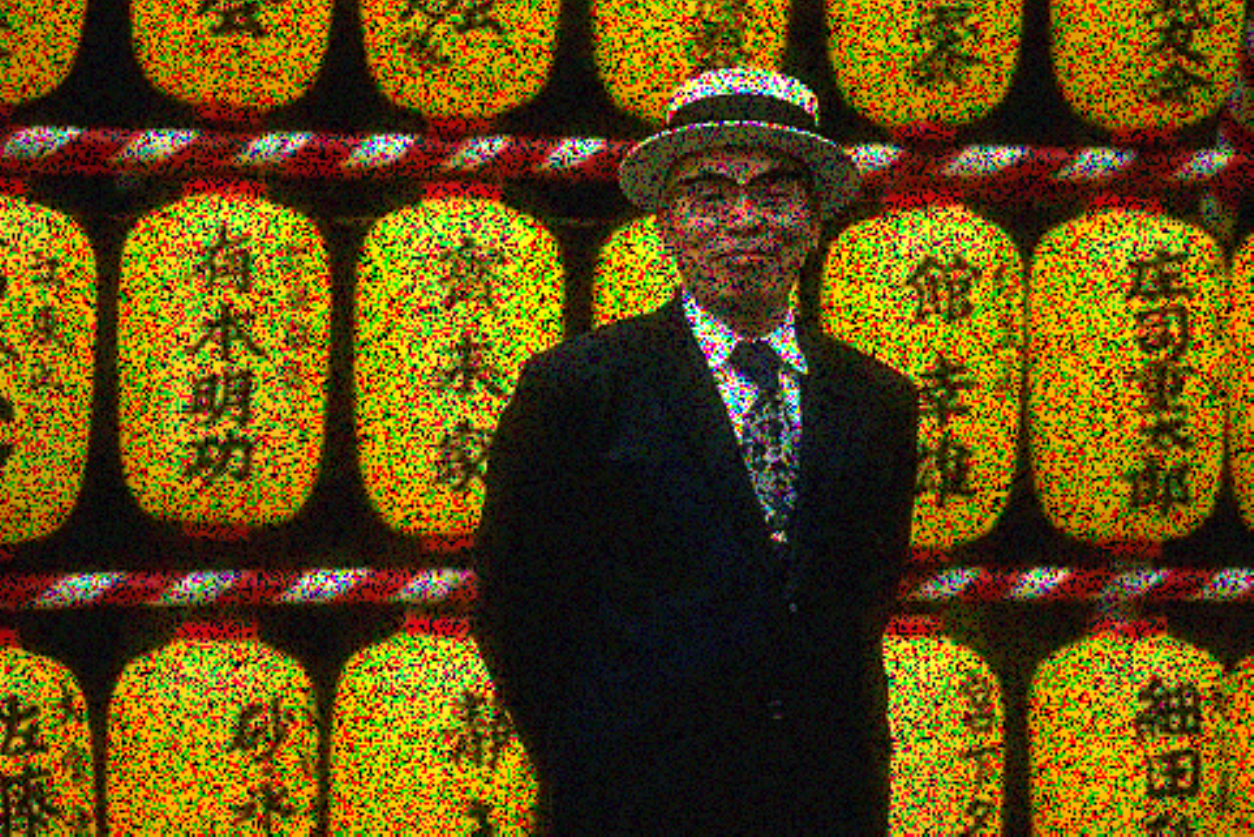}\vspace{0pt}
		\includegraphics[width=\linewidth]{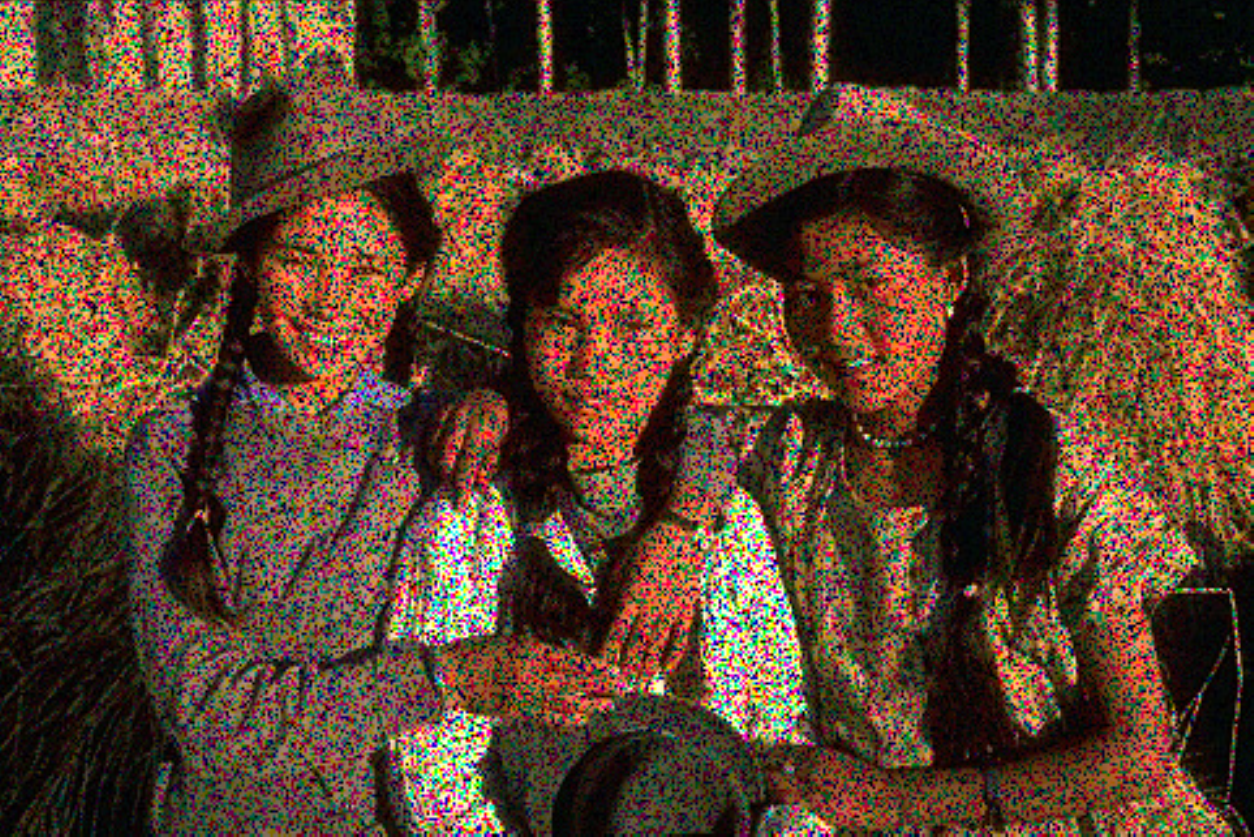}
		\caption{Observation}
	\end{subfigure}
%\subfloat[TRTF]{
	\begin{subfigure}[b]{0.19\linewidth}
	\centering
	\includegraphics[width=\linewidth]{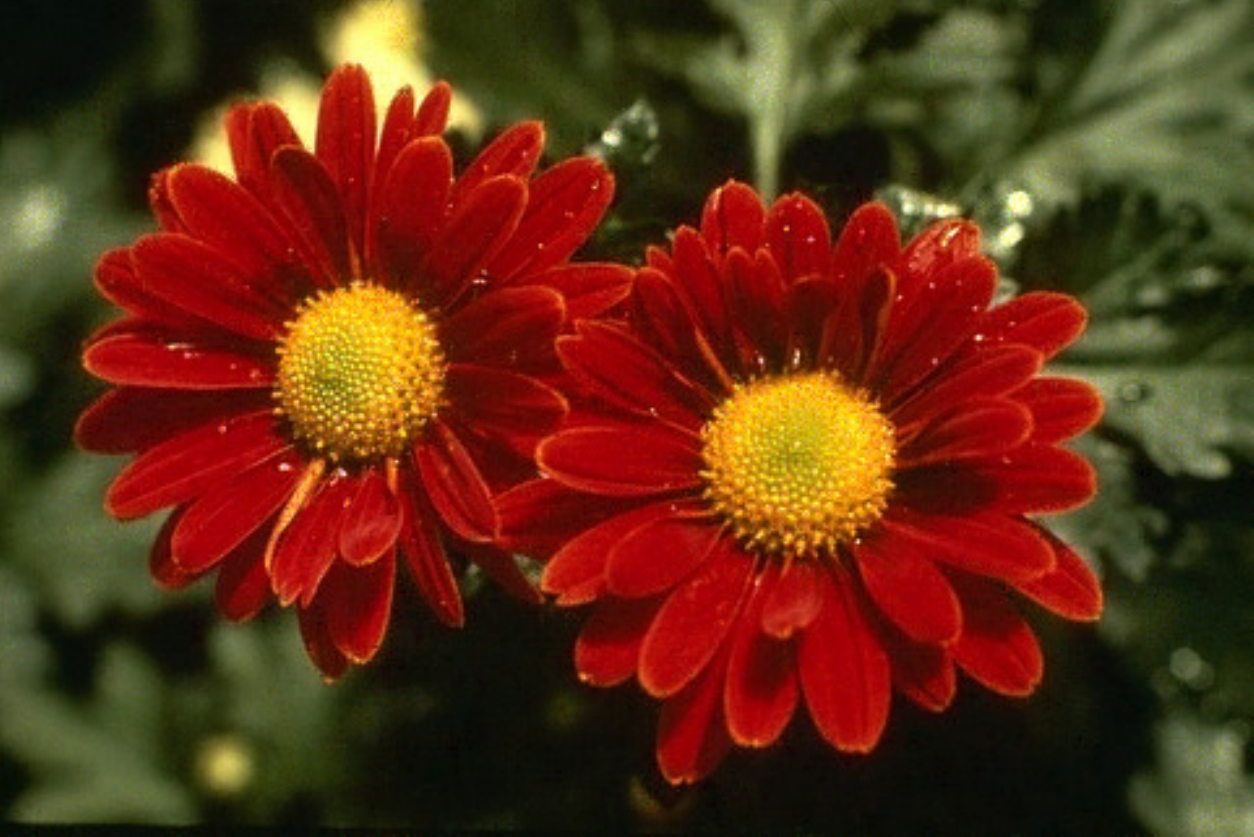}\vspace{0pt}
	\includegraphics[width=\linewidth]{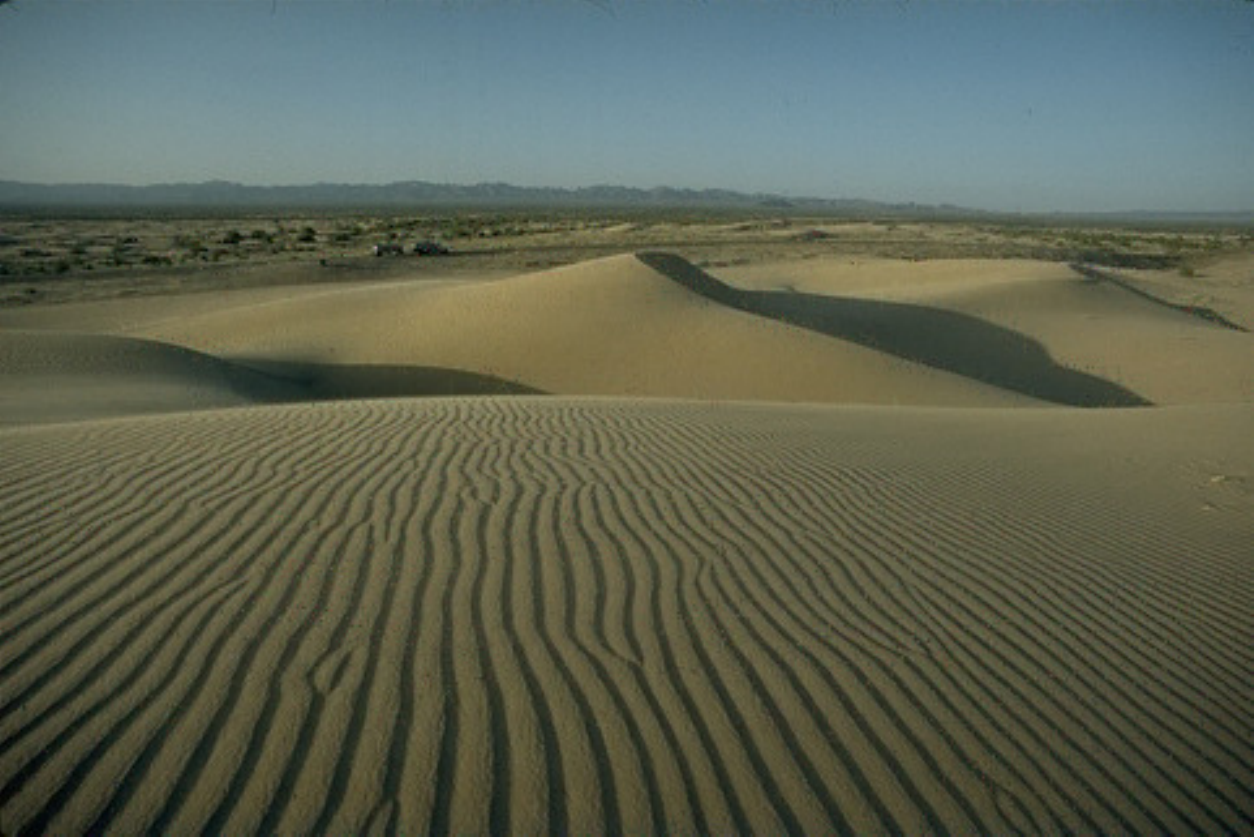}\vspace{0pt}
	\includegraphics[width=\linewidth]{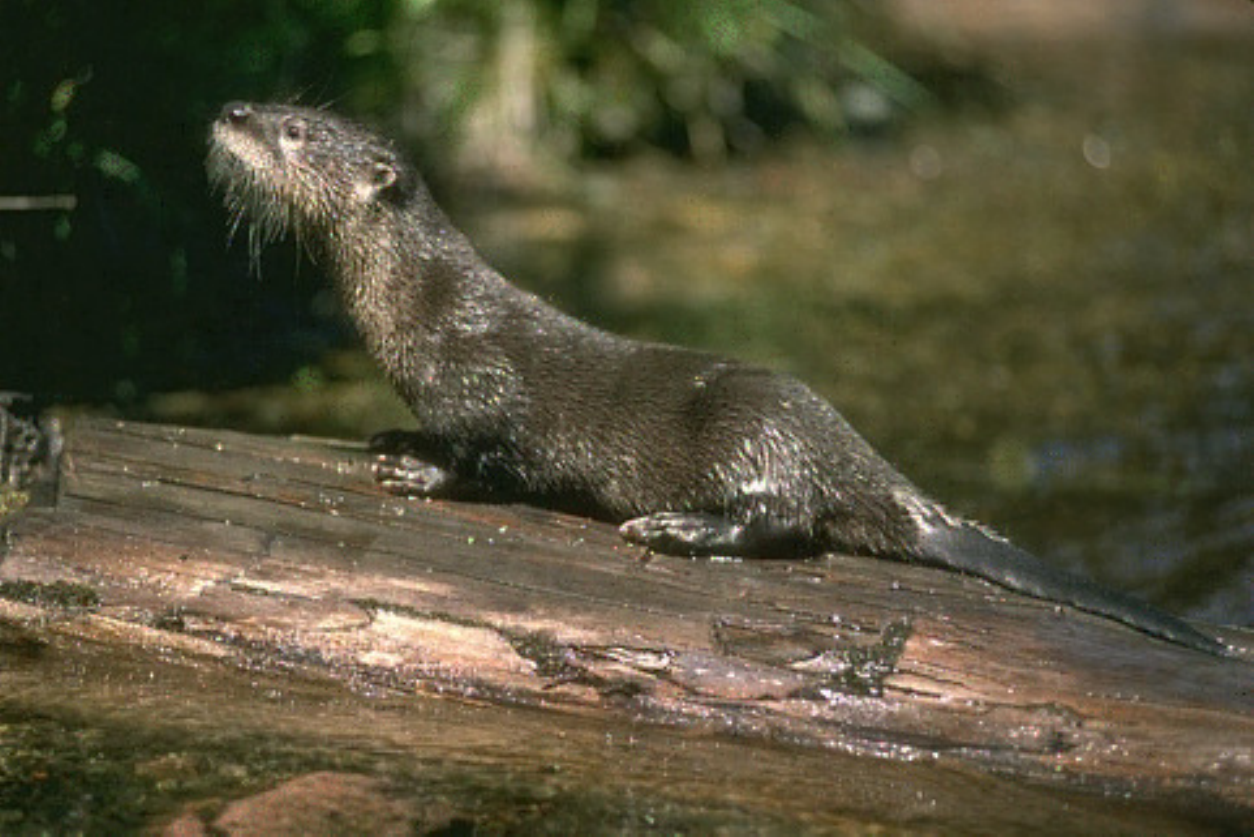}\vspace{0pt}
	\includegraphics[width=\linewidth]{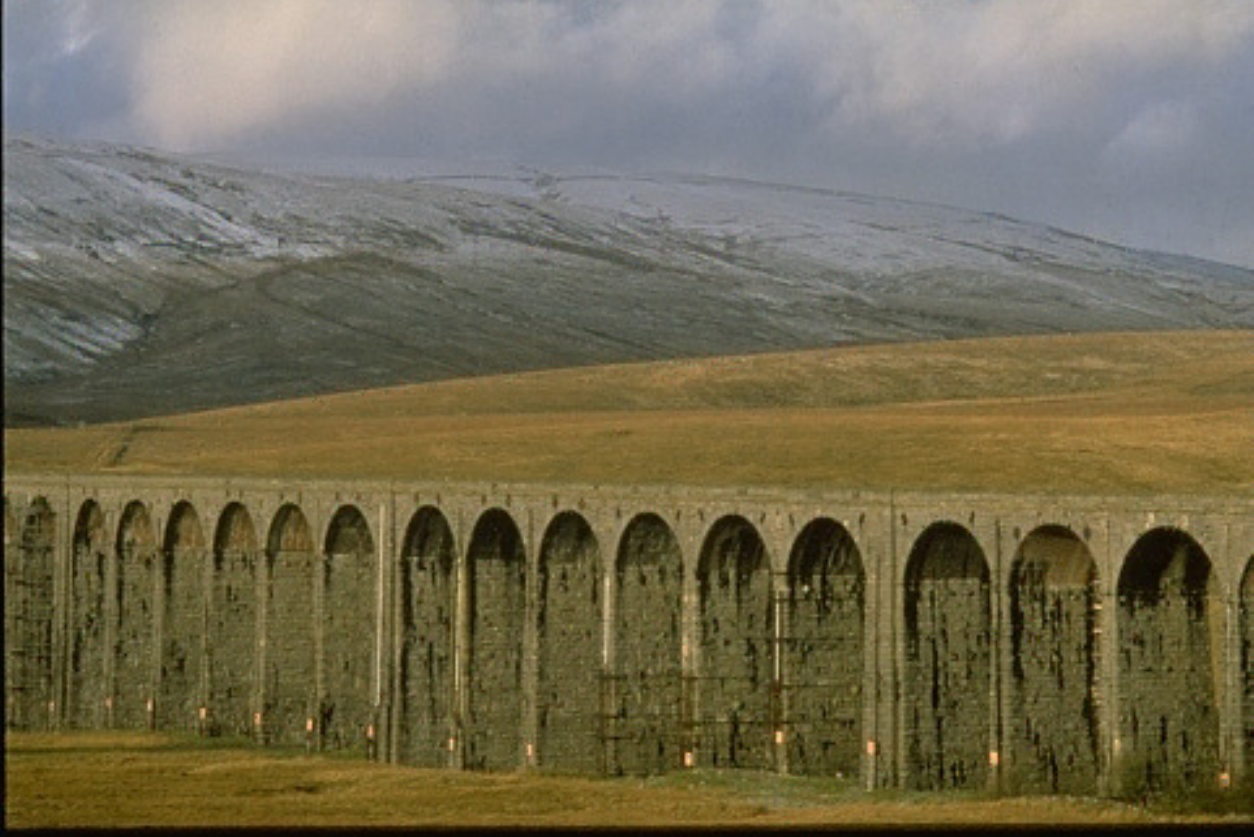}\vspace{0pt}
	\includegraphics[width=\linewidth]{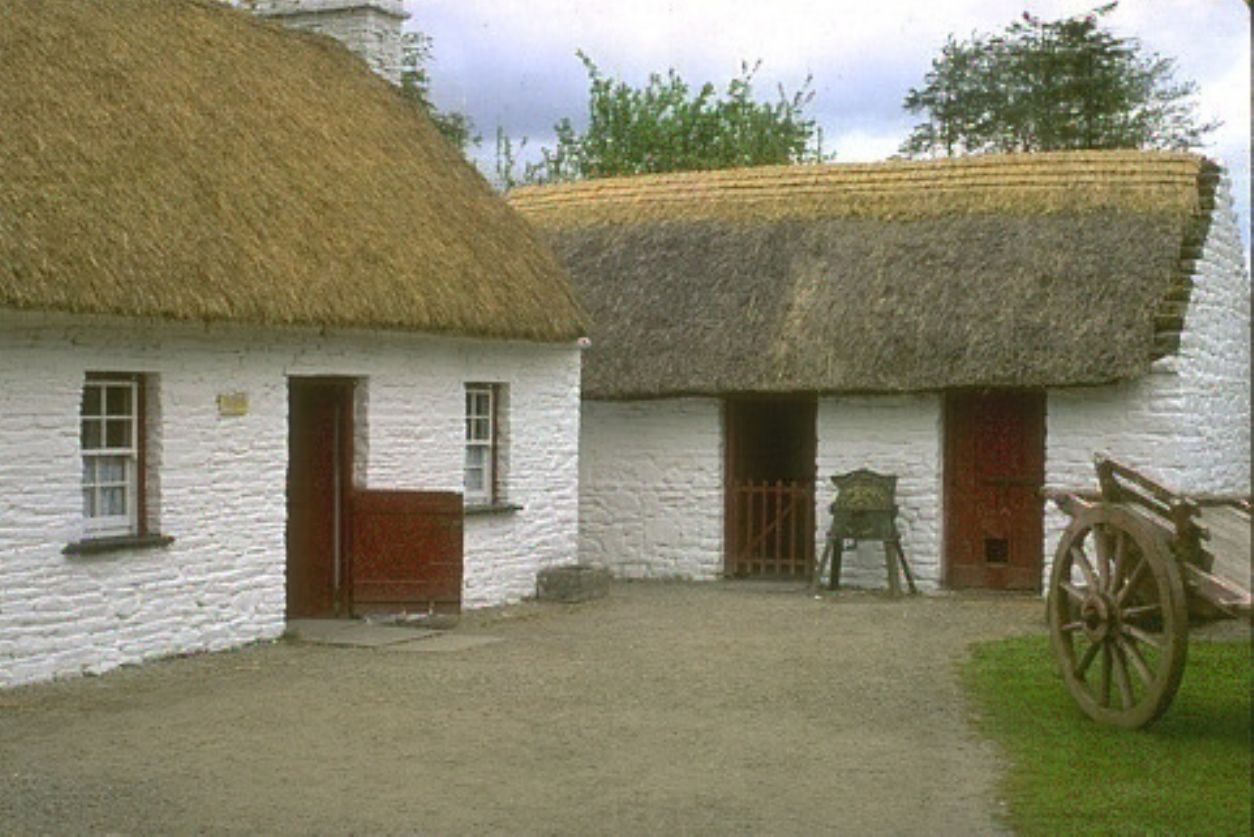}\vspace{0pt}
	\includegraphics[width=\linewidth]{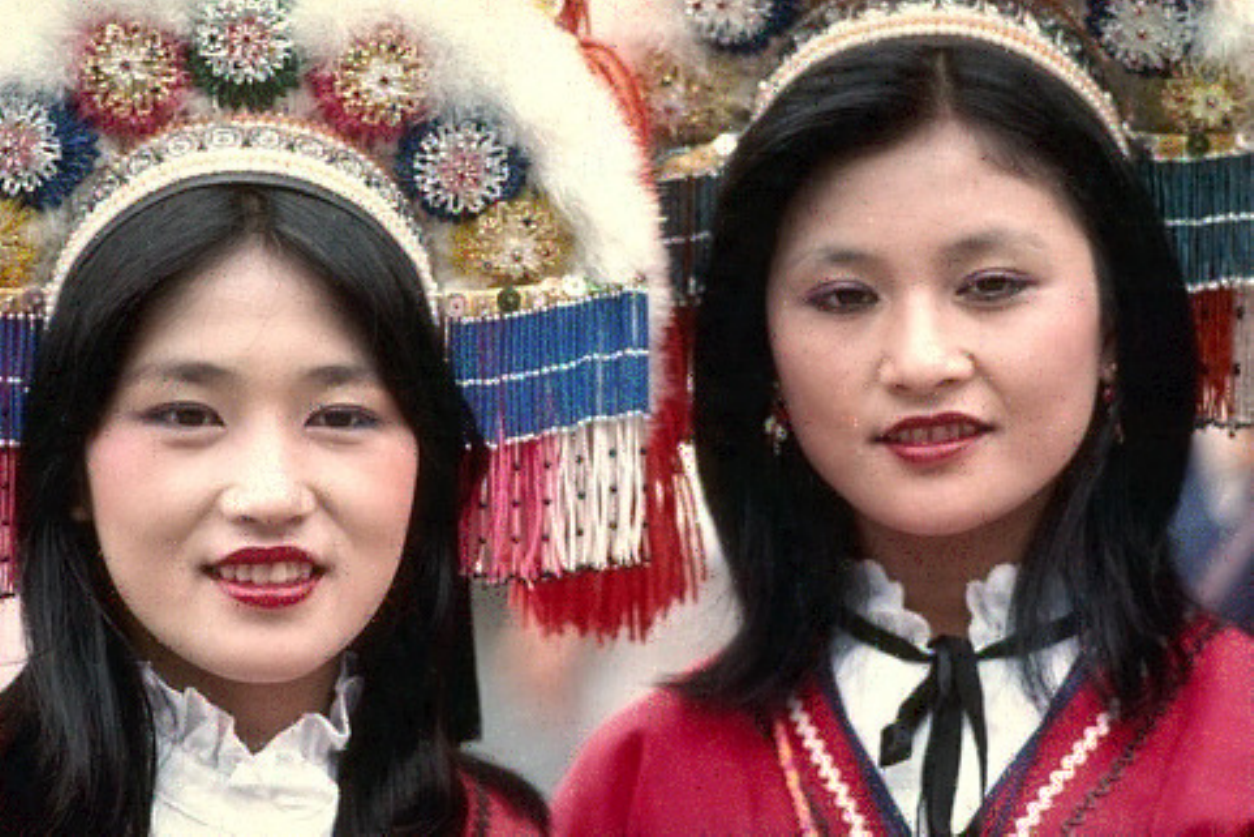}\vspace{0pt}
	\includegraphics[width=\linewidth]{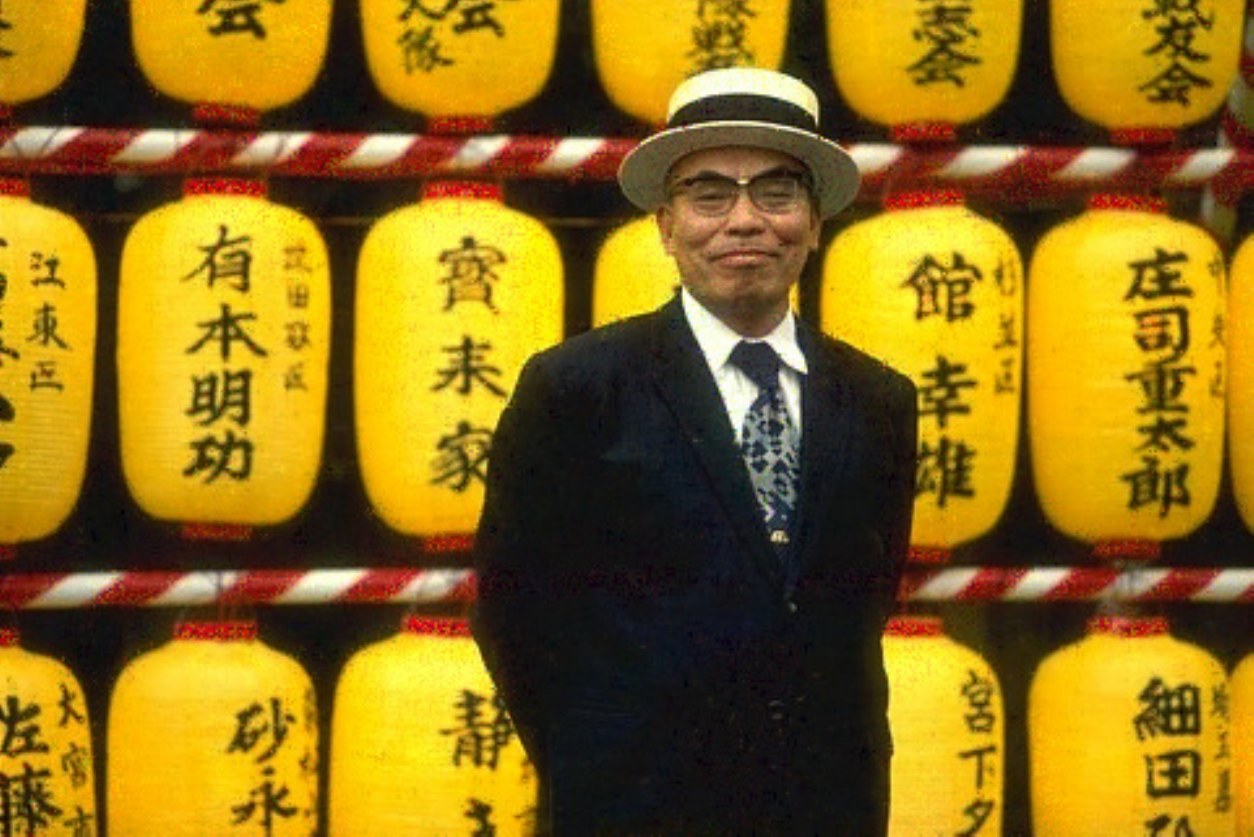}\vspace{0pt}
	\includegraphics[width=\linewidth]{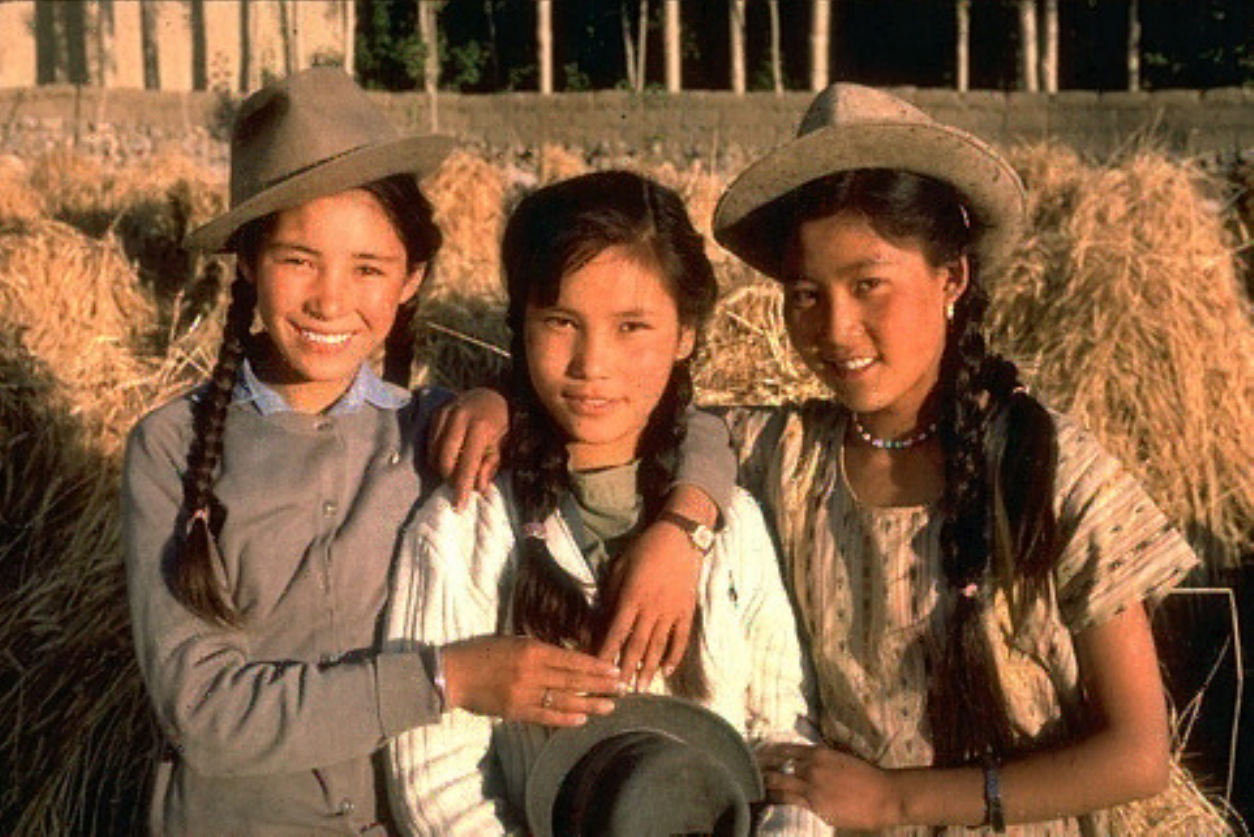}
	\caption{MTRTC}
\end{subfigure}
%\subfloat[TCTF]{
	\begin{subfigure}[b]{0.19\linewidth}
	\centering
	\includegraphics[width=\linewidth]{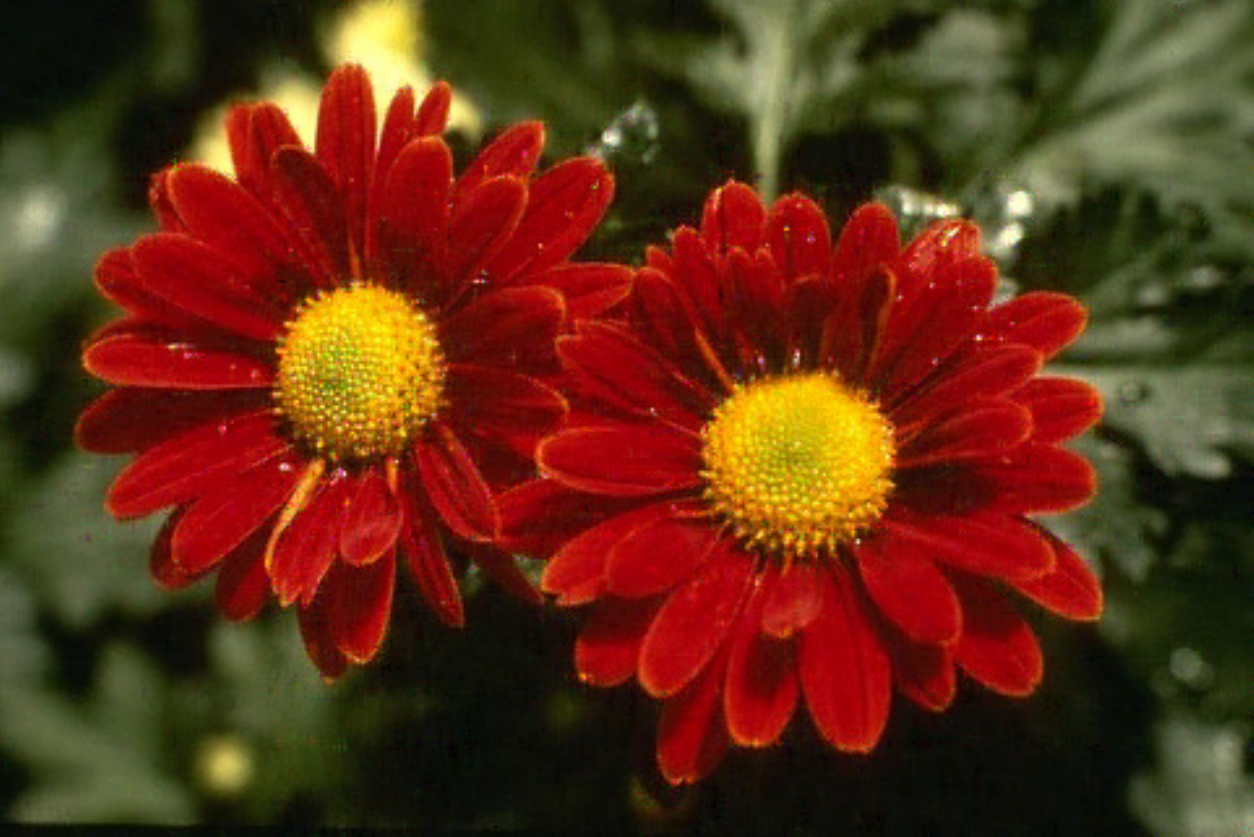}\vspace{0pt}
	\includegraphics[width=\linewidth]{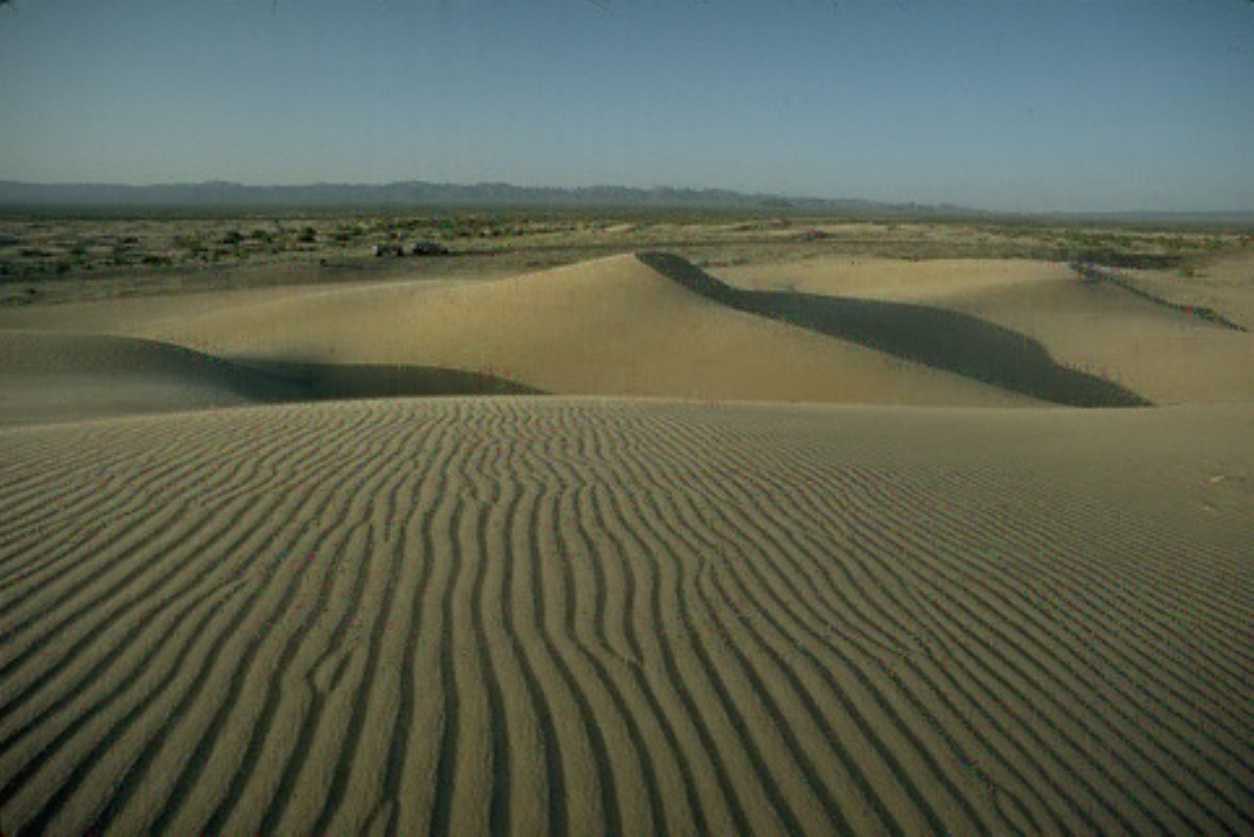}\vspace{0pt}
	\includegraphics[width=\linewidth]{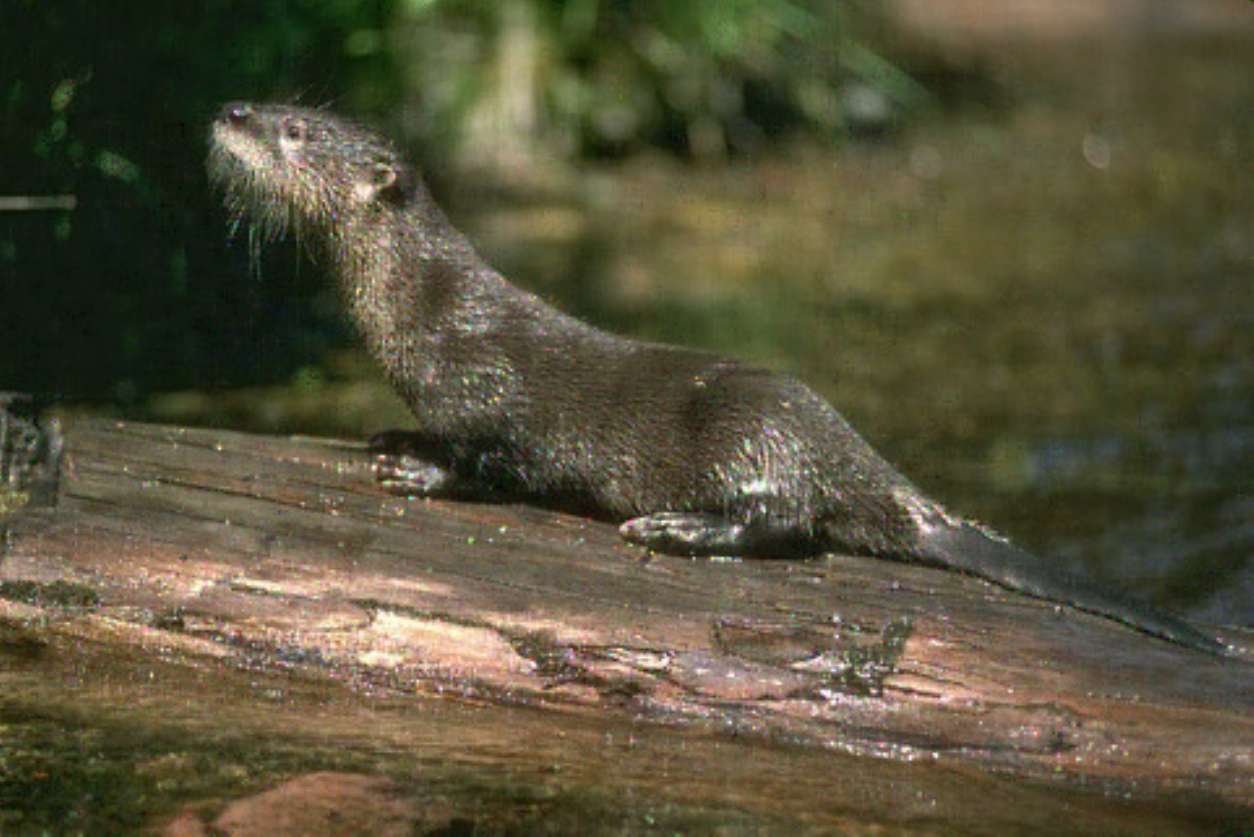}\vspace{0pt}
	\includegraphics[width=\linewidth]{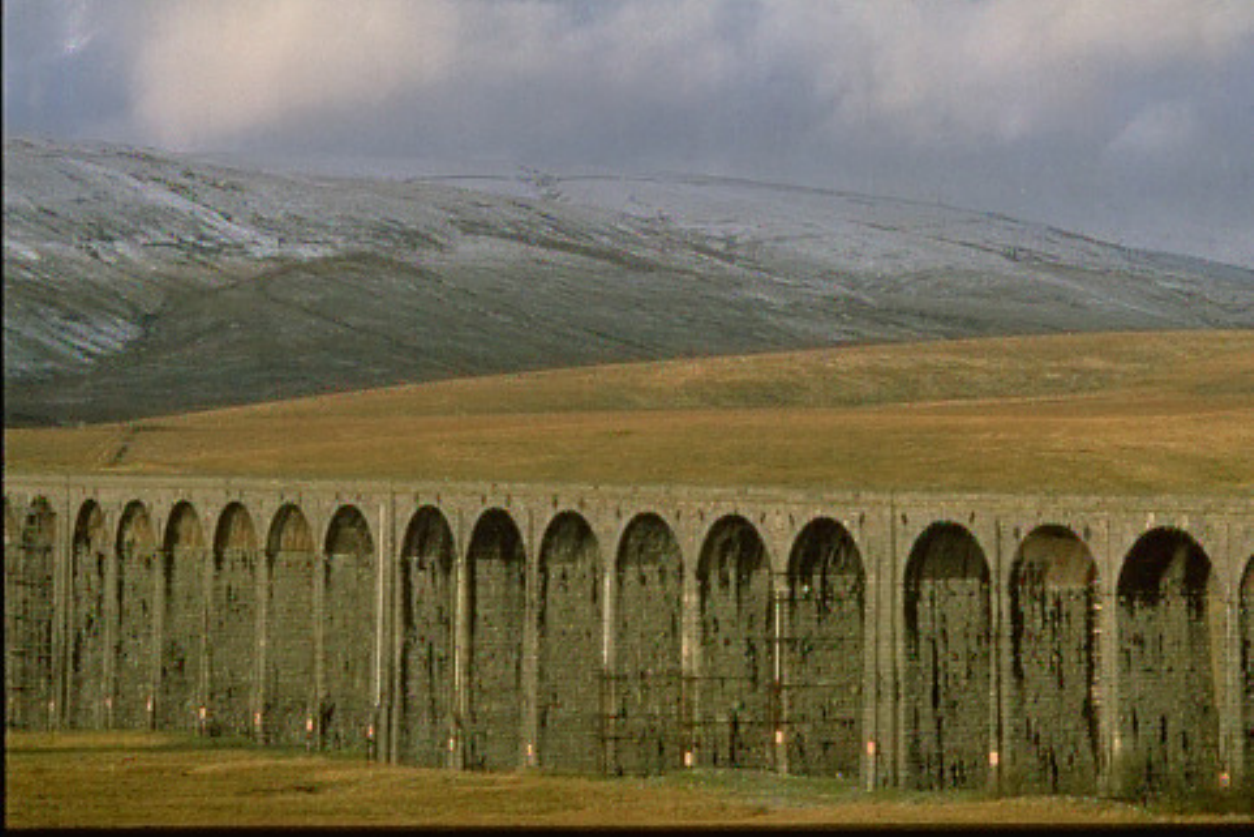}\vspace{0pt}
	\includegraphics[width=\linewidth]{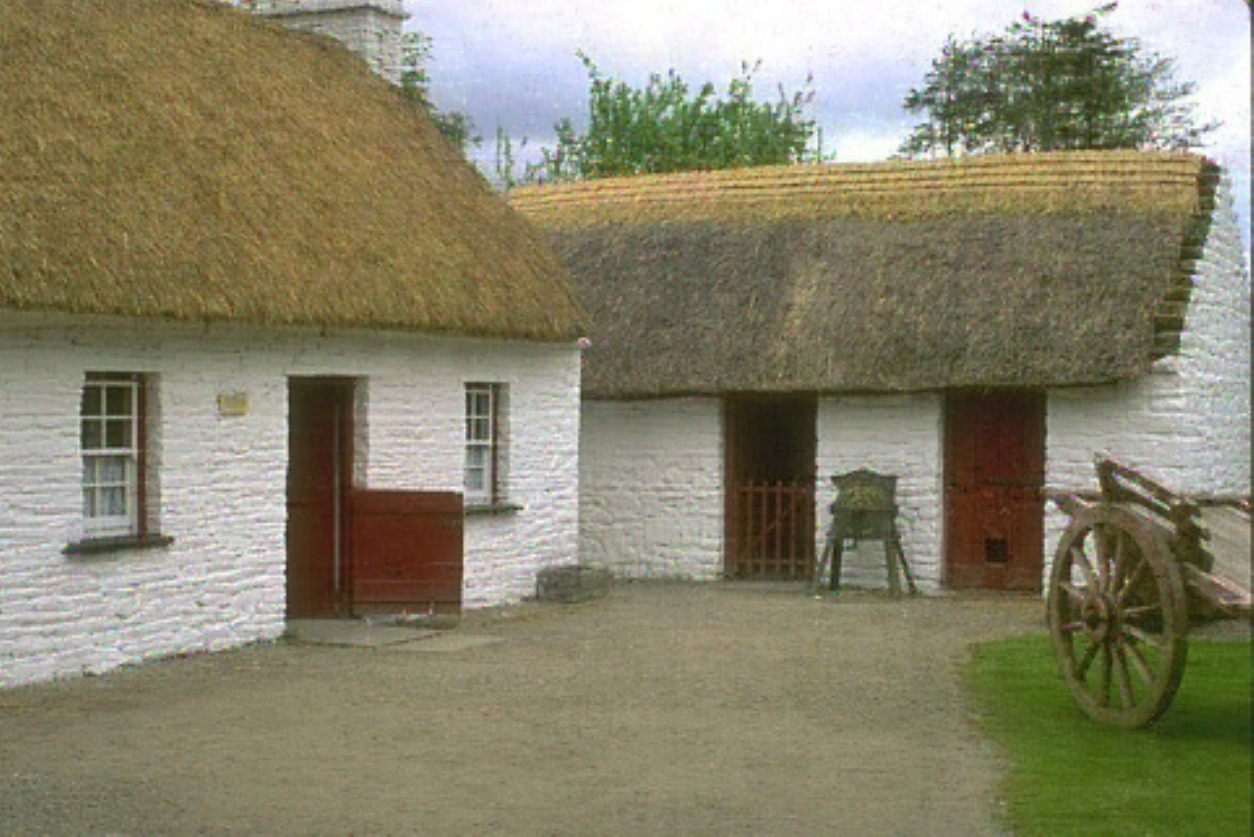}\vspace{0pt}
	\includegraphics[width=\linewidth]{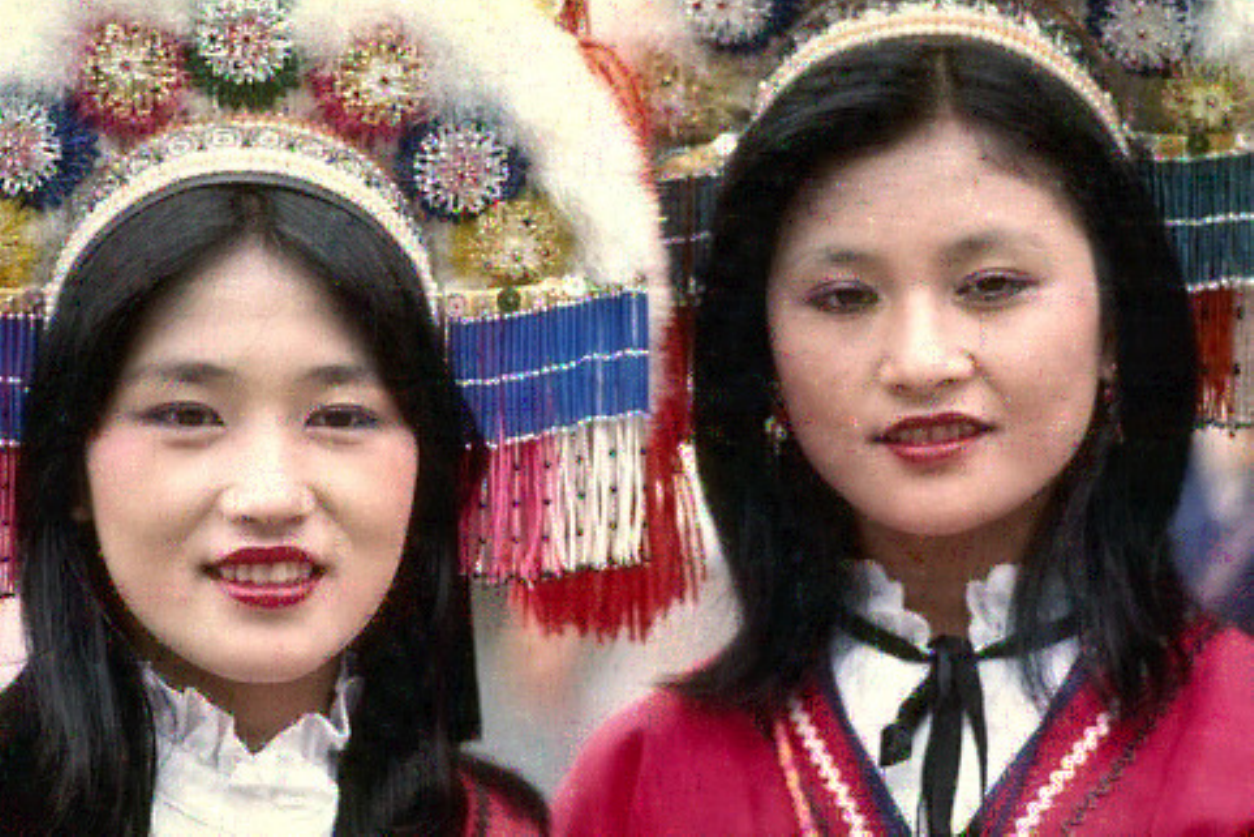}\vspace{0pt}
	\includegraphics[width=\linewidth]{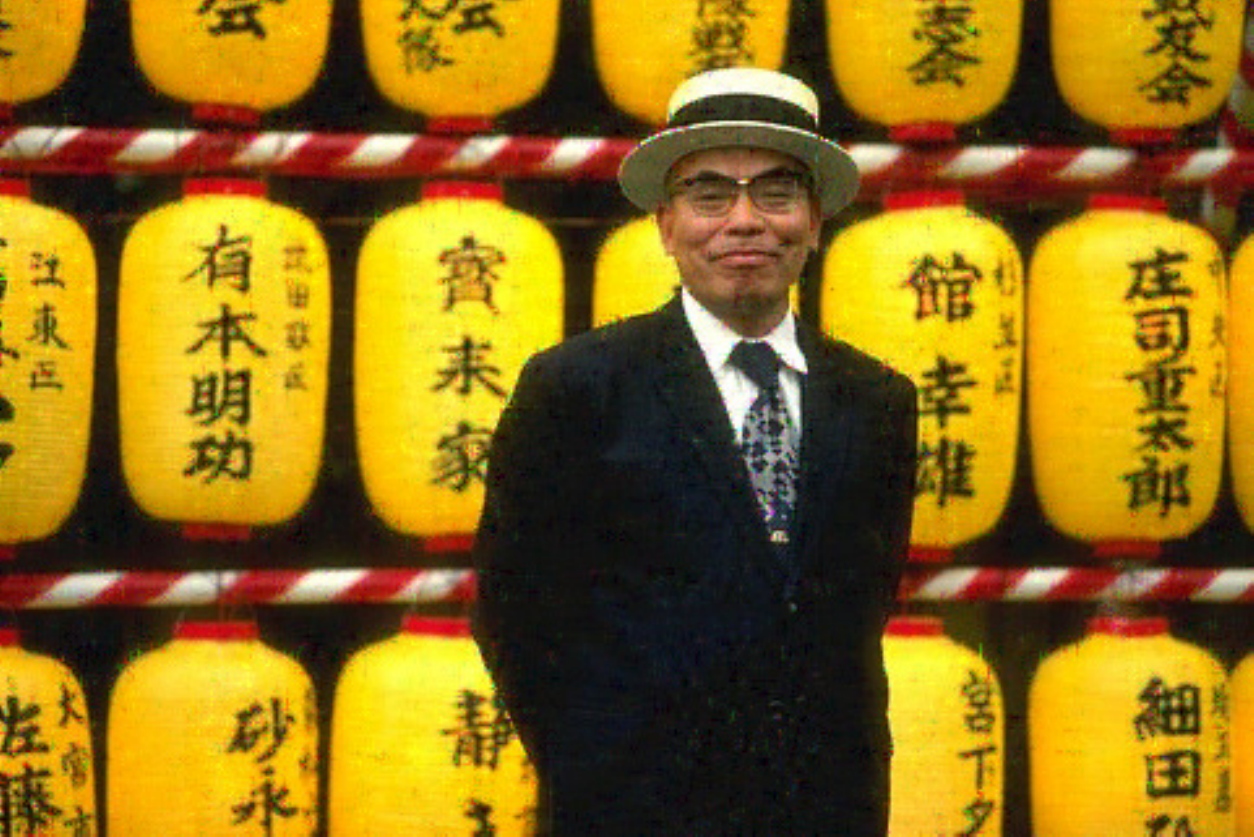}\vspace{0pt}
	\includegraphics[width=\linewidth]{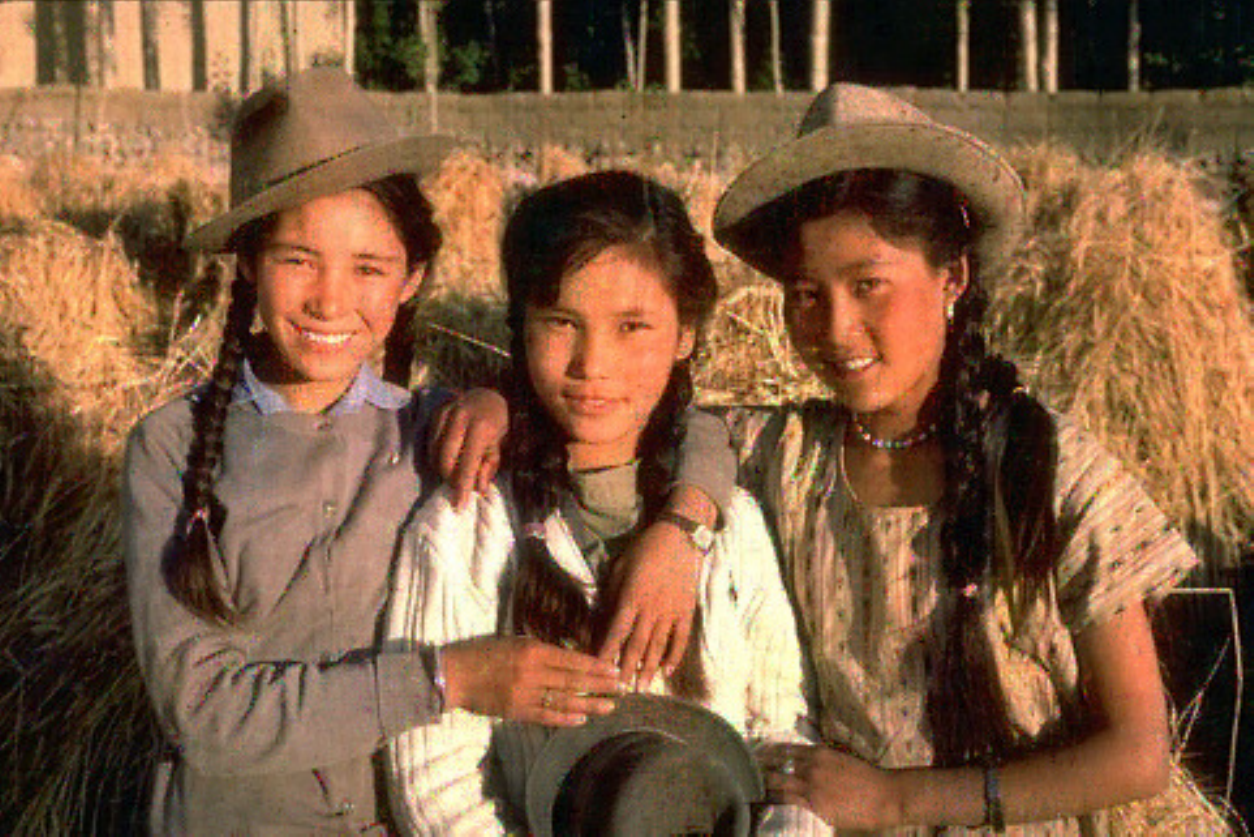}
	\caption{TCTF}
    \end{subfigure}
%\subfloat[TMac]{
	\begin{subfigure}[b]{0.19\linewidth}
	\centering
	\includegraphics[width=\linewidth]{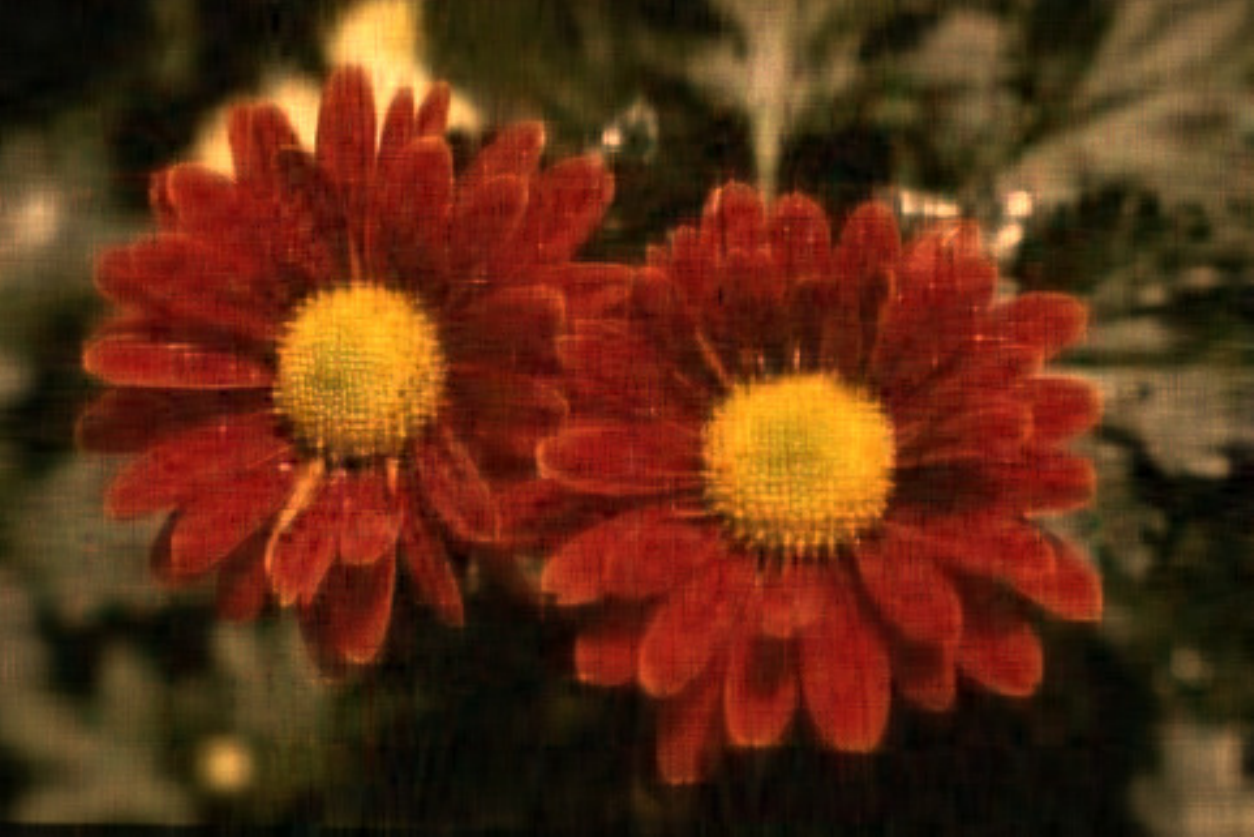}\vspace{0pt}
	\includegraphics[width=\linewidth]{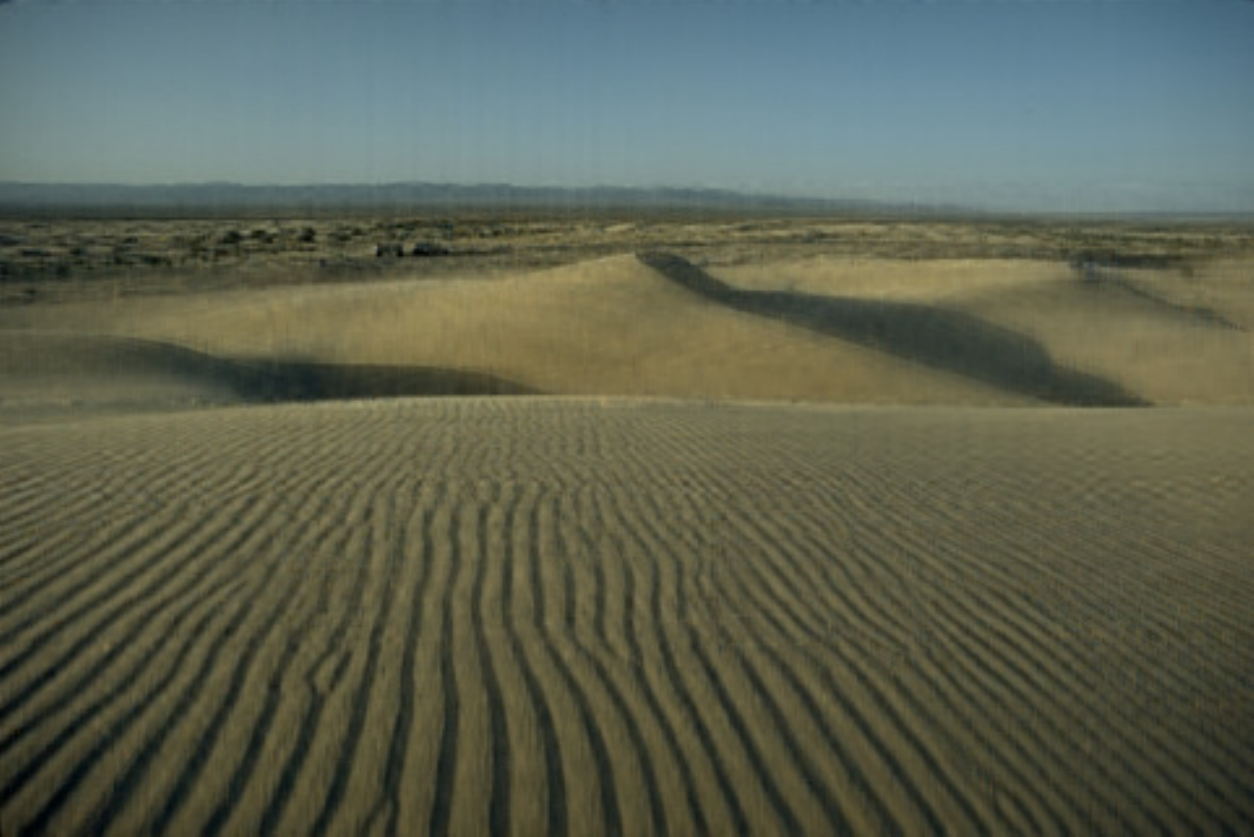}\vspace{0pt}
	\includegraphics[width=\linewidth]{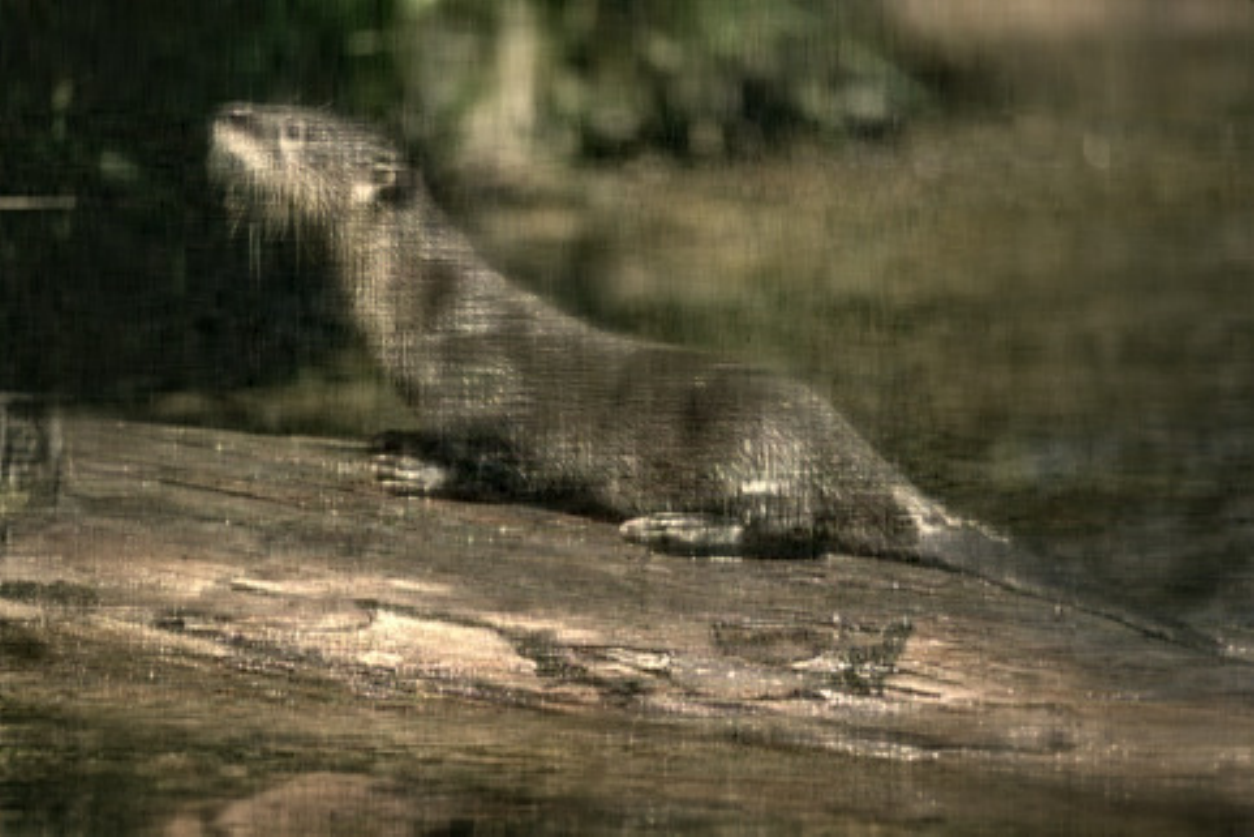}\vspace{0pt}
	\includegraphics[width=\linewidth]{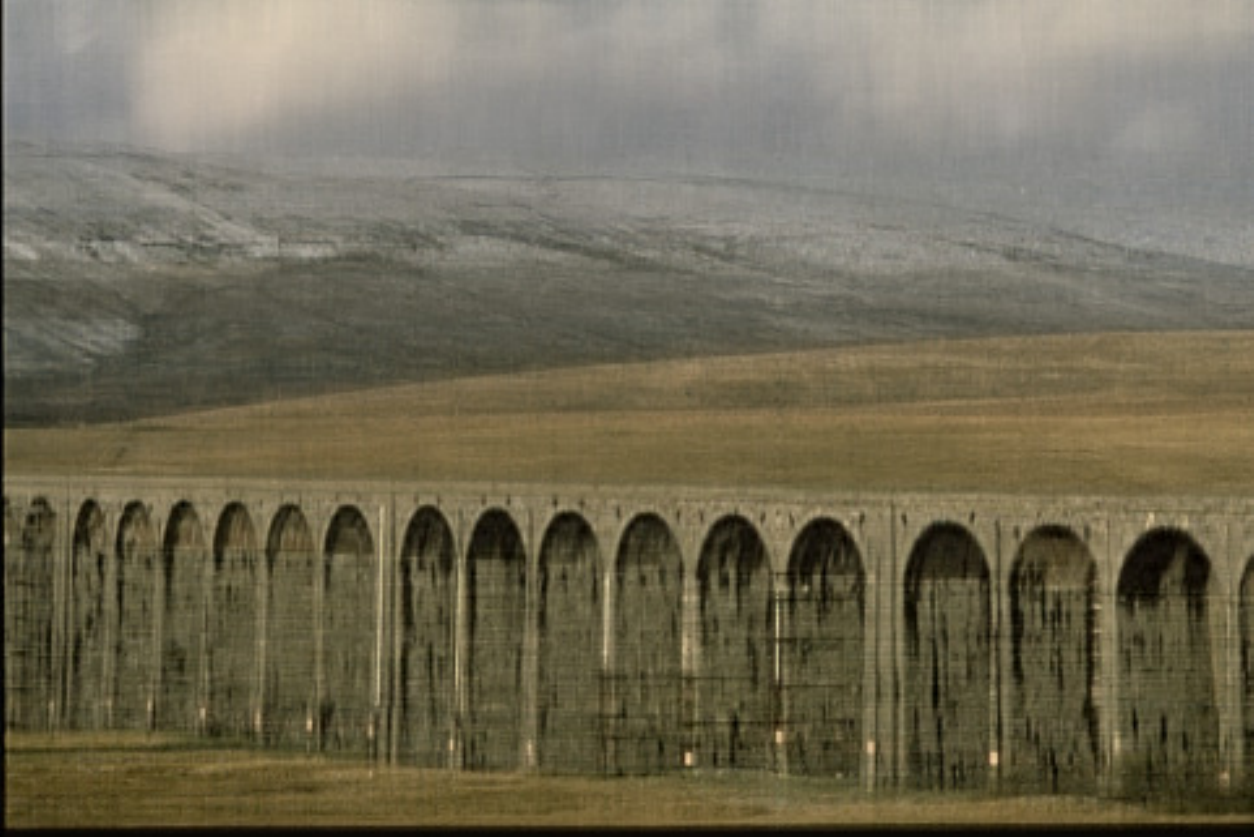}\vspace{0pt}
	\includegraphics[width=\linewidth]{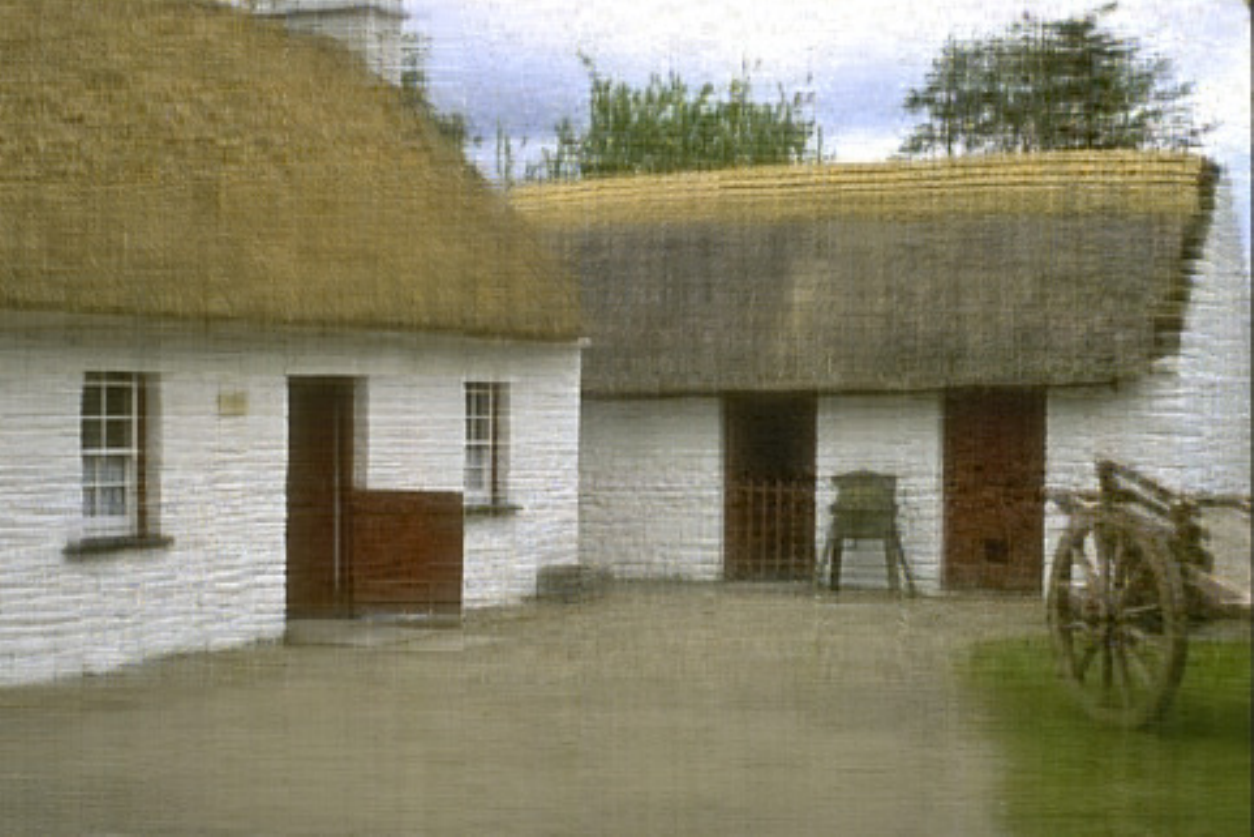}\vspace{0pt}
	\includegraphics[width=\linewidth]{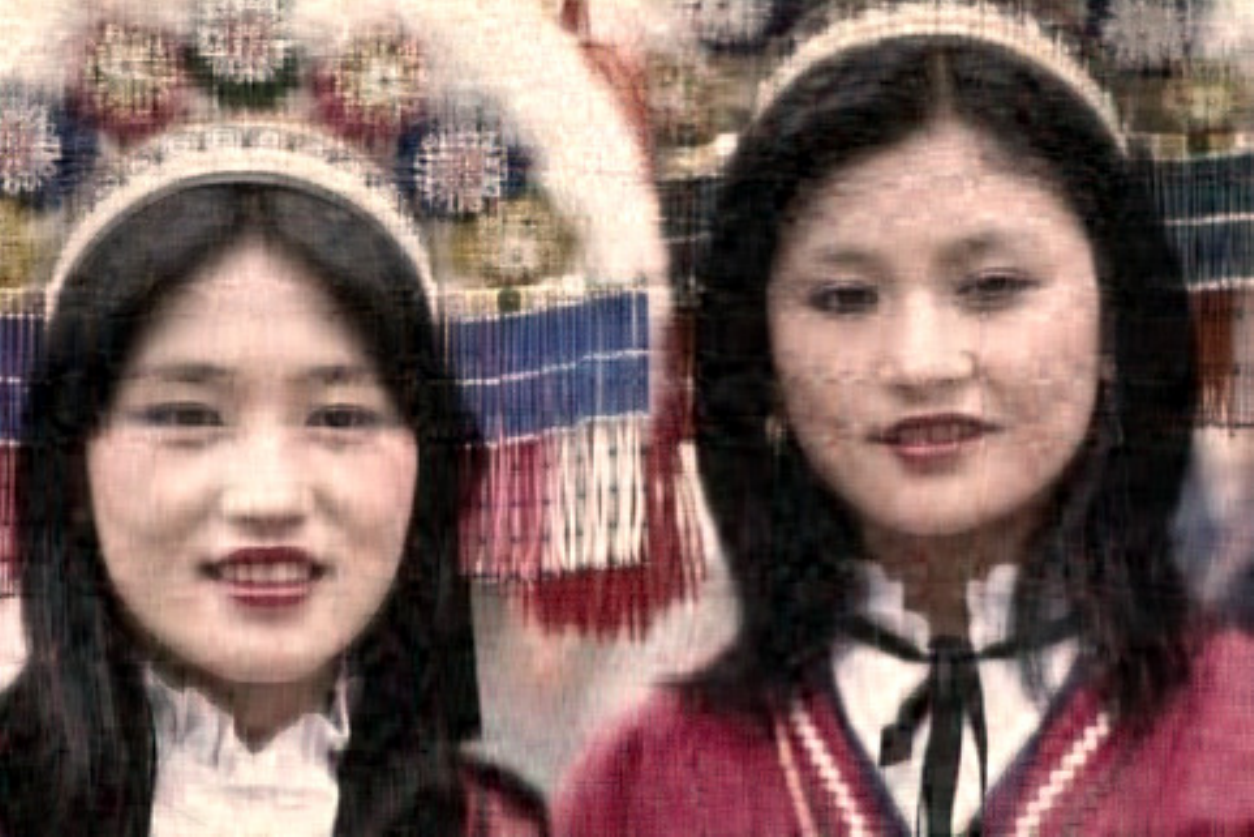}\vspace{0pt}
	\includegraphics[width=\linewidth]{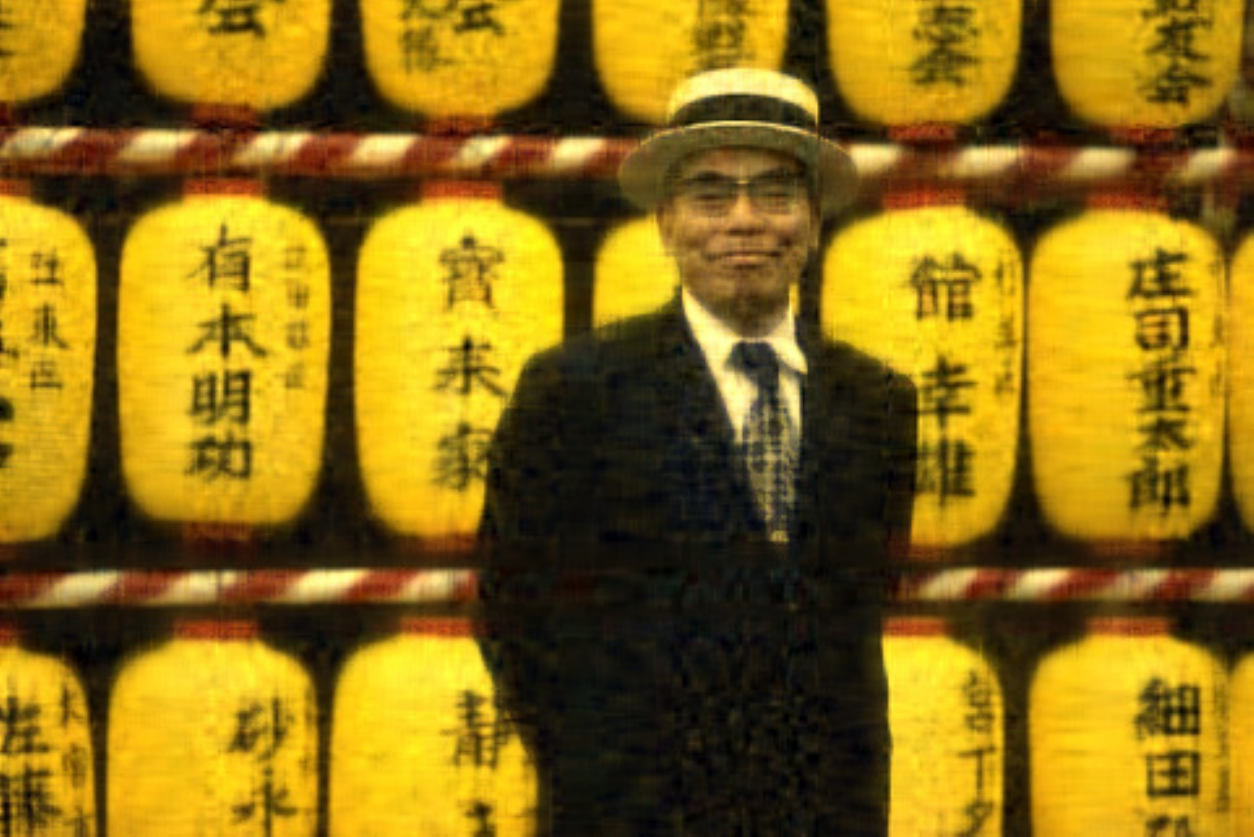}\vspace{0pt}
	\includegraphics[width=\linewidth]{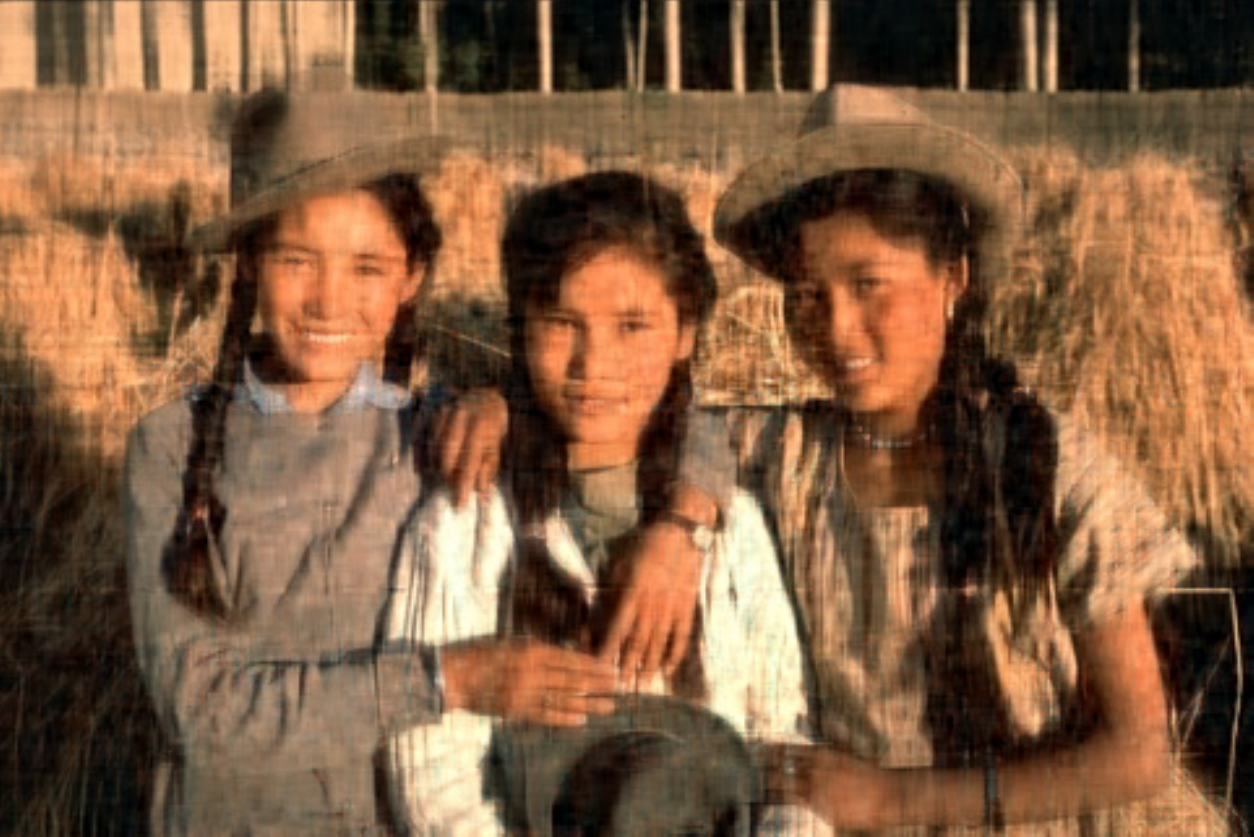}
	\caption{TMac}
    \end{subfigure}

	\end{subfigure}
	\vfill
	\caption{Recovery performance comparison on the $8$ images by MTRTC, TCTF and TMac }
	\label{fig:image_inpainting}
\end{figure}

\par We present the image inpainting  results of the eight tested images in Table \ref{tab:image}, Figure  \ref{fig:image_bar} and Figure \ref{fig:image_inpainting}, in which ``Average'' denotes the average inpainting  results of all 200 images. ``Average'' indicates that MTRTC outperforms TCTF and TMac. As stated in \cite{KBHH13,ZEAHK14}, TMac expands the tensor data directly into matrices and applies matrix nuclear norm to approximate matrix rank, which may destroy multi-data structures and cause performance degradation. Based on tensor factorization, TCTF and MTRTC avoid the loss of tensor structure information \cite{KBHH13,ZEAHK14}, thus obtain better inpainting results. Although TCTF requires less time in each iteration, it takes more iterations to converge, see the running time in Table \ref{tab:image}.
Furthermore, MTRTC takes account of all the modes,  which is more comprehensive to preserve all low rank structure of tensor data. %From the reported results,  MTRTC outperforms the existing methods for all considered images.
From Figure \ref{fig:image_bar},  MTRTC is the fastest one, which needs about 2/3 times running time of TCTF and TMAC.

\begin{figure}[htbp]
	\centering
	\begin{subfigure}[t]{\linewidth}
		\centering
		\includegraphics[width=6in]{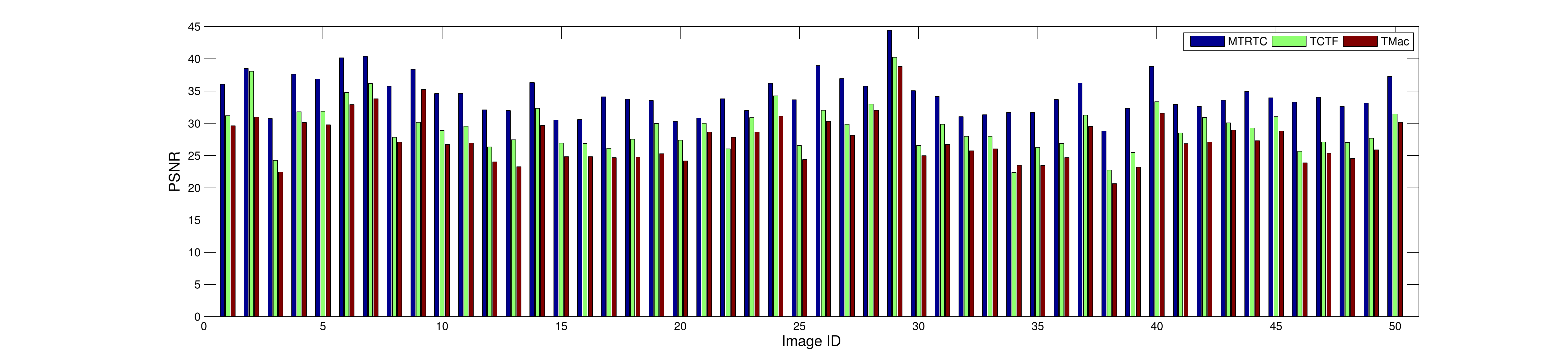}
		\caption{Comparison of the PSNR values on $50$ images}
	\end{subfigure}

	\begin{subfigure}[t]{\linewidth}
		\centering
		\includegraphics[width=6in]{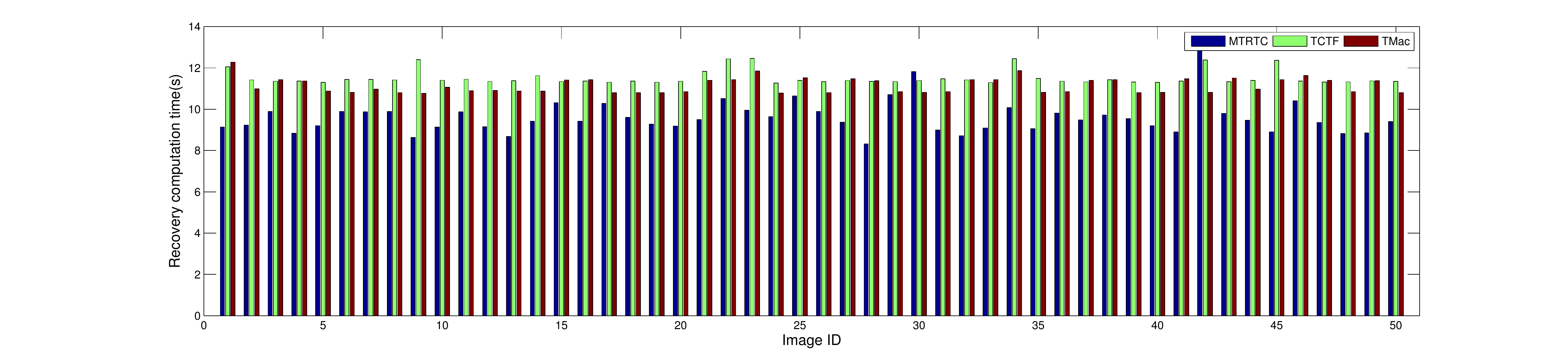}
		\caption{Comparison of the running time on $50$ images}
	\end{subfigure}
	\centering
	\caption{Comparison of the PSNR and the running time on $50$ images}
	\label{fig:allimage_bar}
\end{figure}

In Figure \ref{fig:allimage_bar}, we report the PSNR values and the running time of all methods on the first 50 images. MTRTC performs the best with at least 1.2 times improvement upon the PSNR metric on all 50 images, verifying its advantages and robustness. From Figure \ref{fig:allimage_bar} (b),  MTRTC is much faster than other compared methods. In conclusion, it not only achieves the best
inpainting results but also runs within least running time.

For further comparison, we also  recover  images of the deterministically masked images by grids, leaves and letters, respectively. In experiments, the maximum iteration number is set to be 500 and the termination precision $ \varepsilon $ is set to be $1e$-5. Clearly, the masked images  are no-mean-sampling. The results are displayed in  Figure \ref{fig:building} and Table \ref{tab:building}, which show that TCTF and MTRTC have better performance than TMac. Furthermore, the effect of MTRTC is much better than that of TCTF. Table \ref{tab:building} reports all numerical results of three methods. We can assert that MTRTC is the best one in MTRTC, TCTF and TMac.
\begin{figure*}[htbp]
	\centering	
		\begin{subfigure}[b]{0.16\linewidth}
	\includegraphics[width=\linewidth]{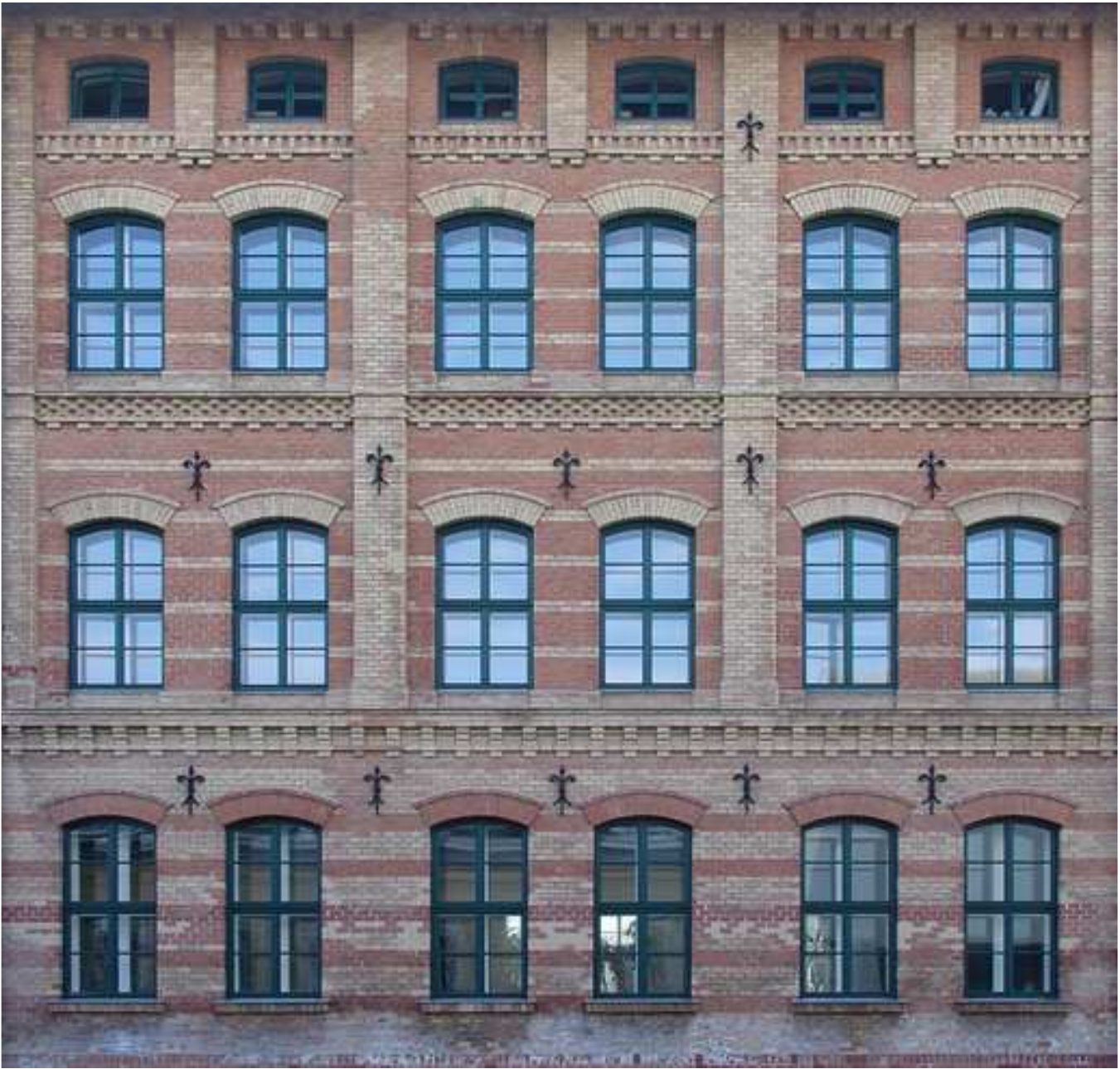}	
\end{subfigure}
   	\begin{subfigure}[b]{0.16\linewidth}
	\includegraphics[width=\linewidth]{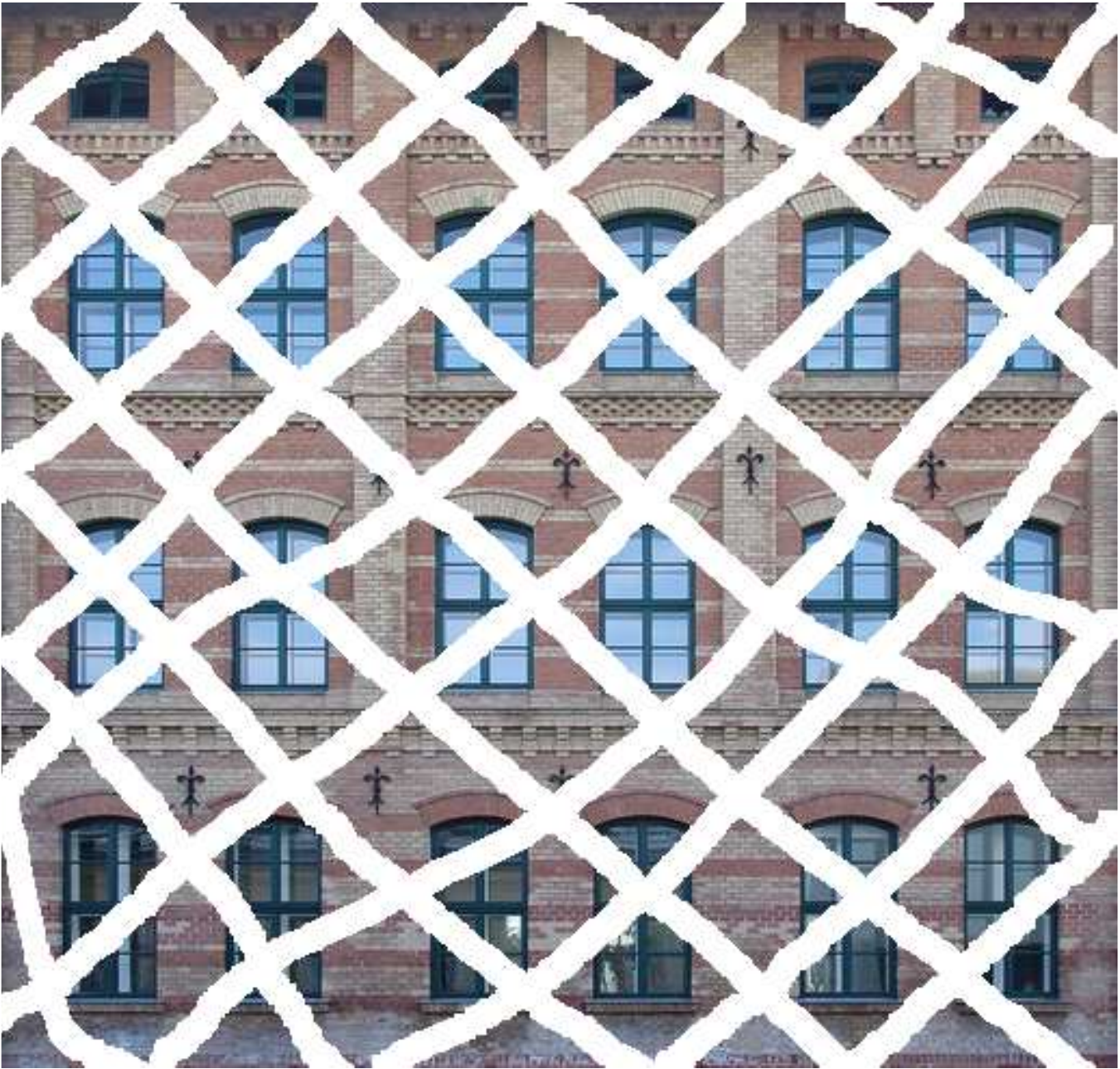}
\end{subfigure}
			\begin{subfigure}[b]{0.16\linewidth}
	\includegraphics[width=\linewidth]{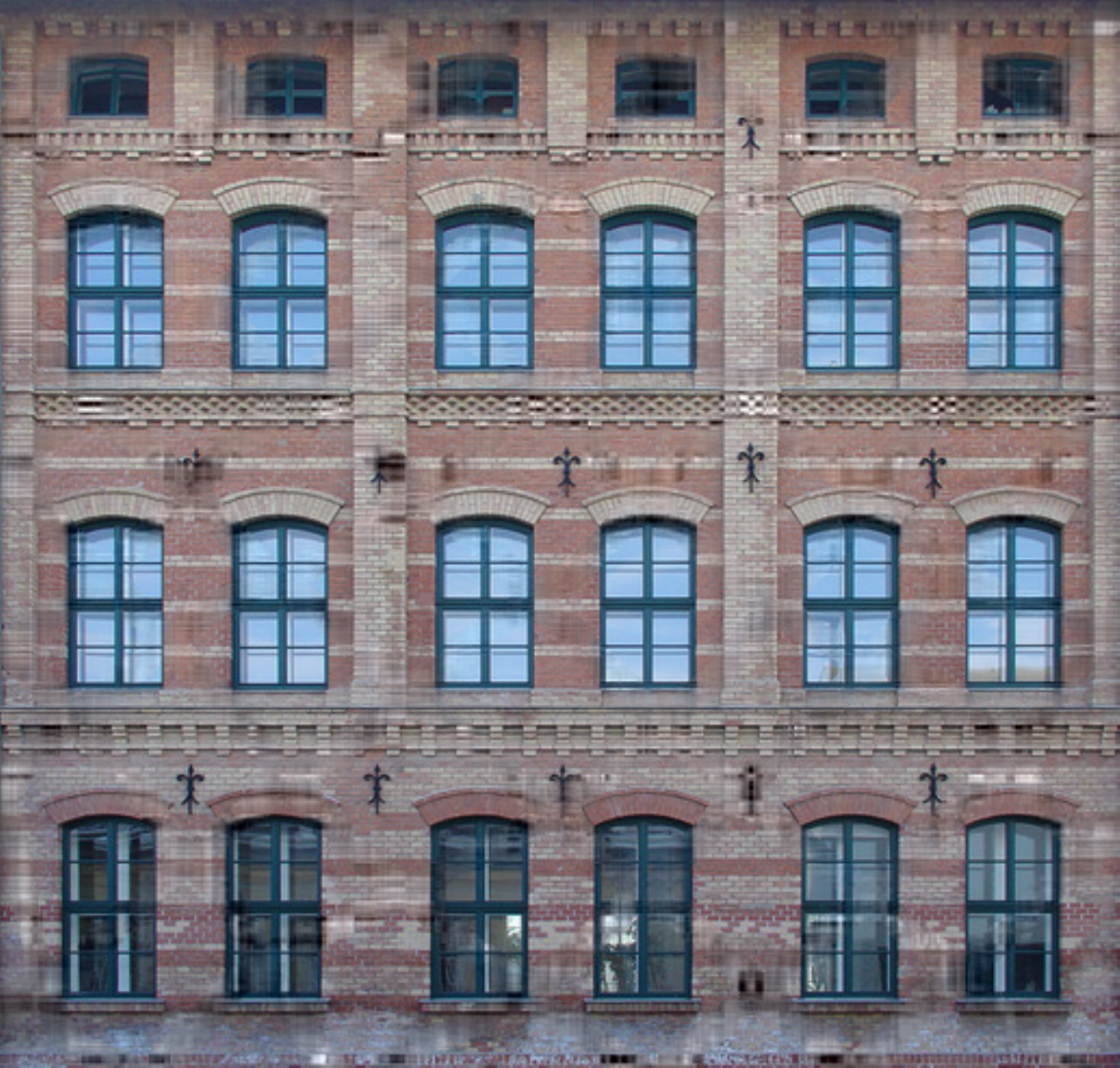}
\end{subfigure}
	\begin{subfigure}[b]{0.16\linewidth}
	\includegraphics[width=\linewidth]{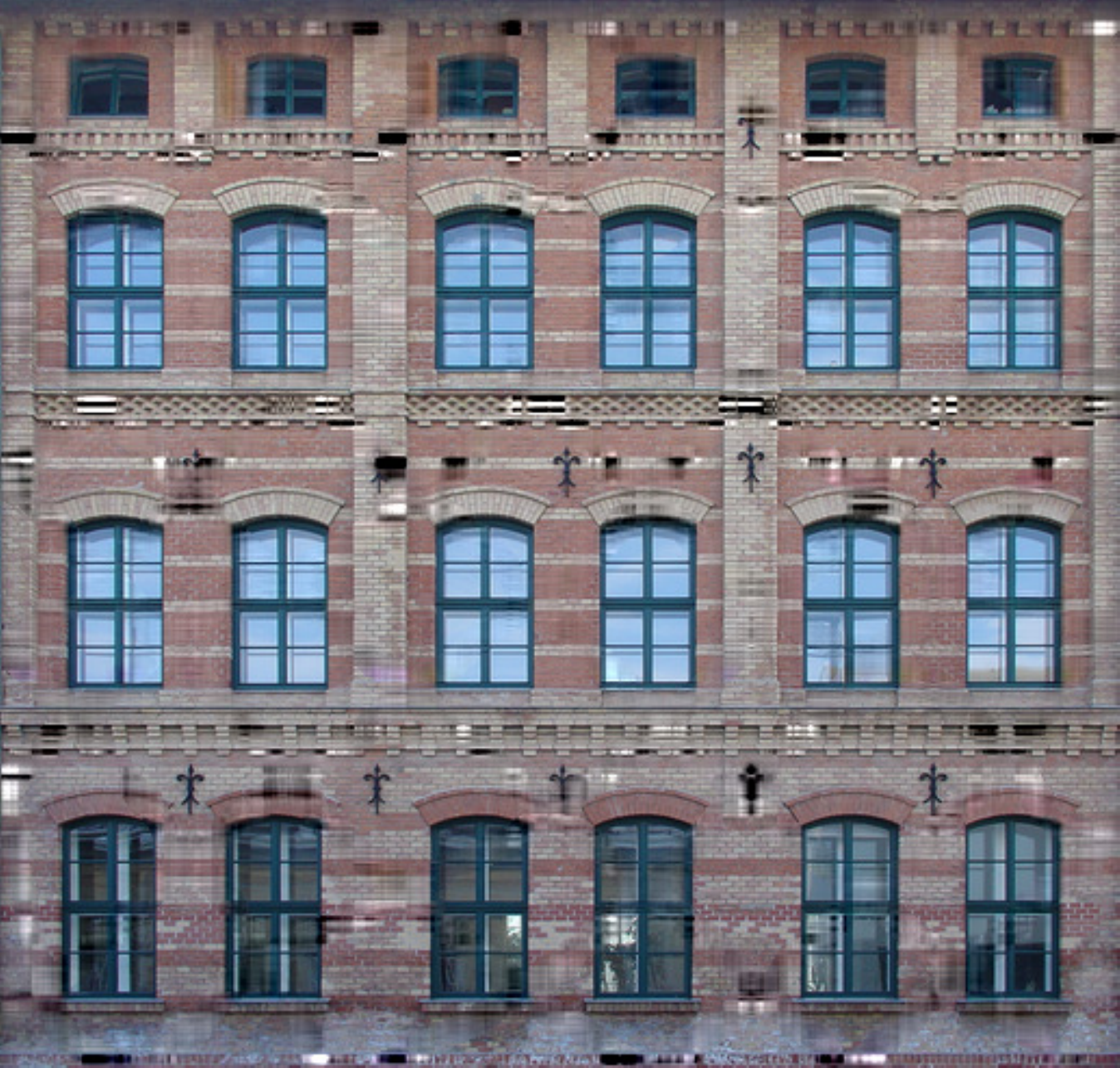}
\end{subfigure}
	\begin{subfigure}[b]{0.16\linewidth}
	\includegraphics[width=\linewidth]{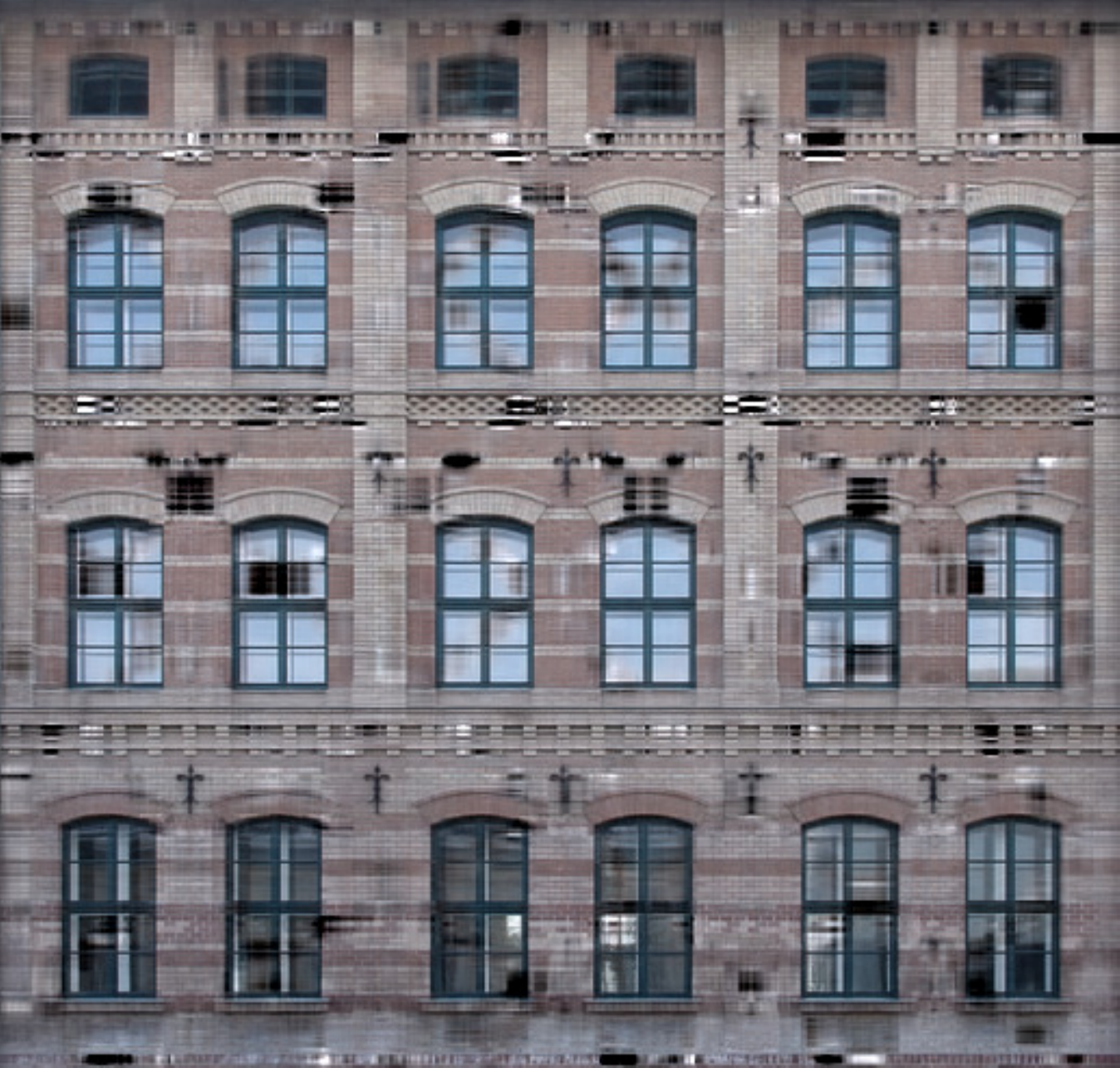}		
		
         \end{subfigure}\\

            \begin{subfigure}[b]{0.16\linewidth}
						\includegraphics[width=\linewidth]{picture/buding1}
					\end{subfigure}
		\begin{subfigure}[b]{0.16\linewidth}
						\includegraphics[width=\linewidth]{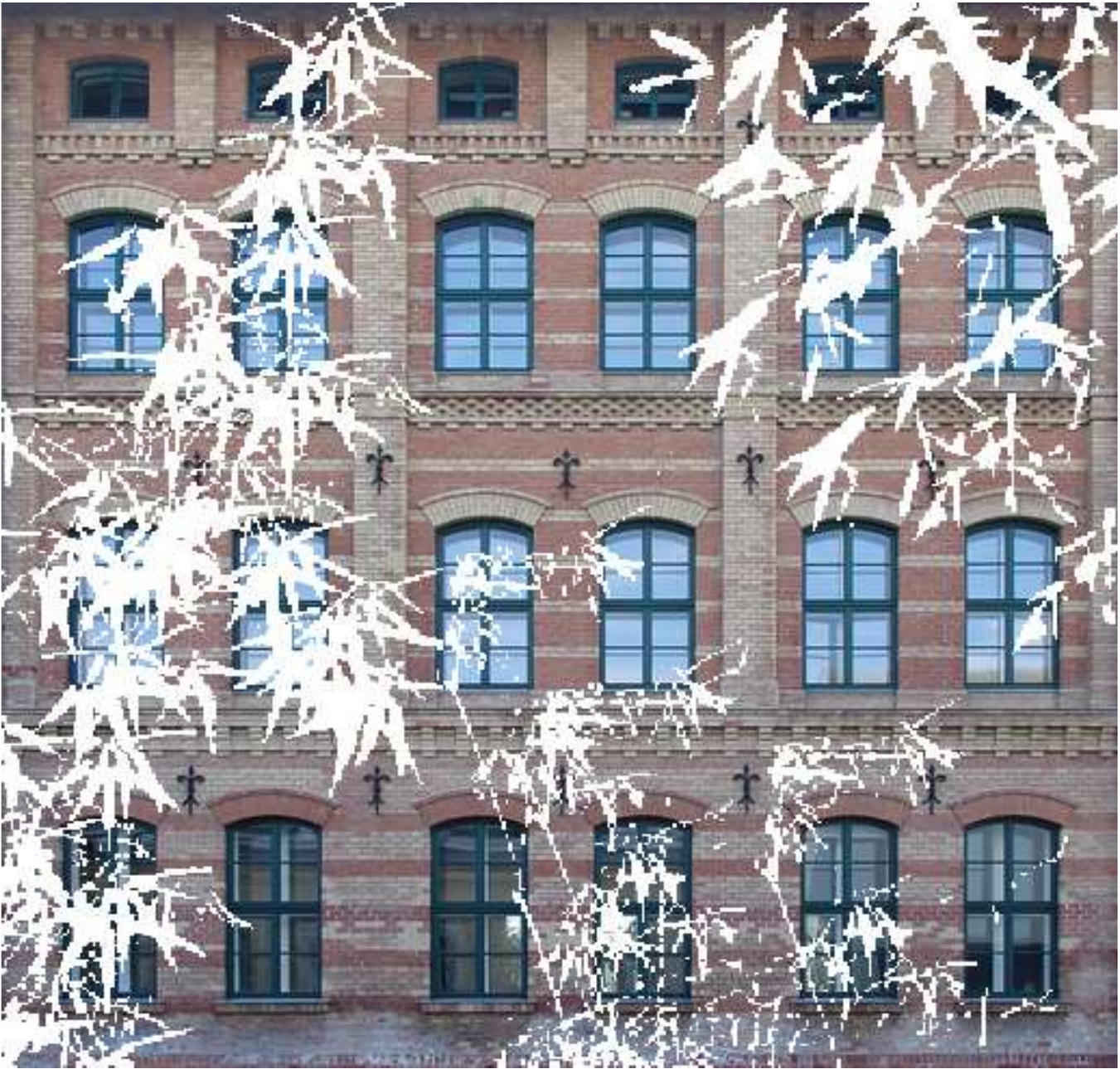}
			
		\end{subfigure}
		\begin{subfigure}[b]{0.16\linewidth}
						\includegraphics[width=\linewidth]{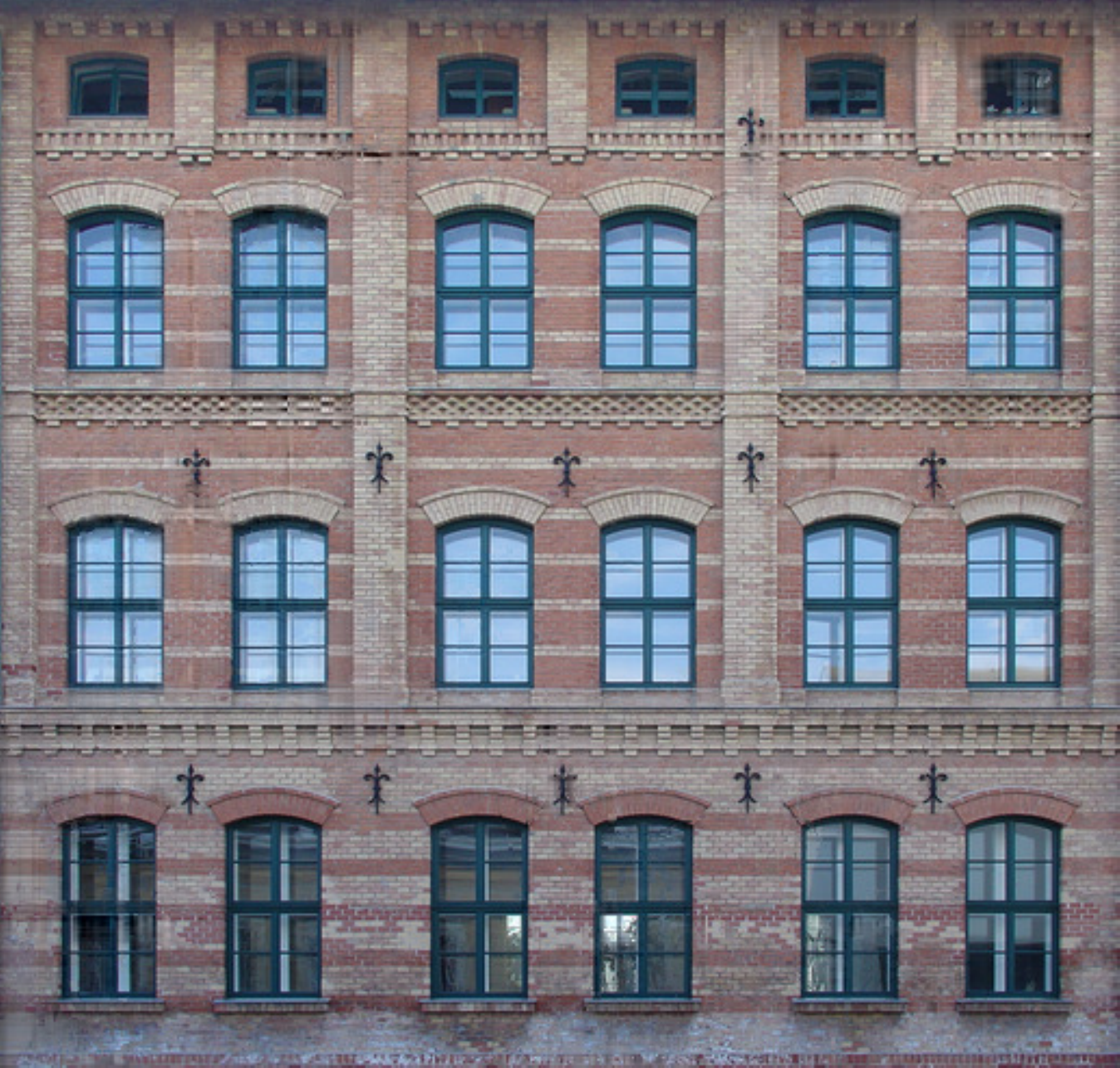}			
		\end{subfigure}    	
		\begin{subfigure}[b]{0.16\linewidth}
						\includegraphics[width=\linewidth]{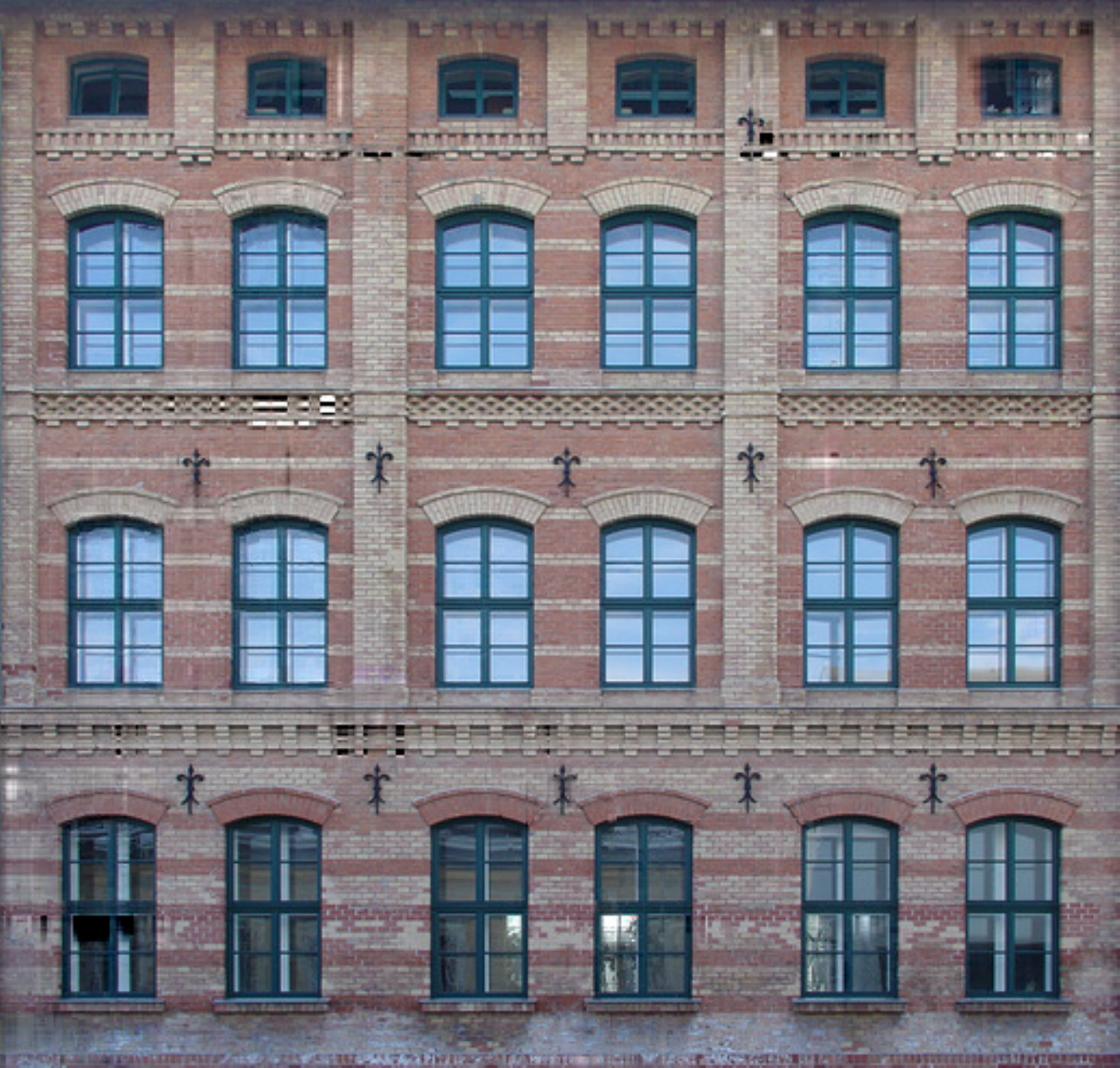}
			
		\end{subfigure}
		\begin{subfigure}[b]{0.16\linewidth}
						\includegraphics[width=\linewidth]{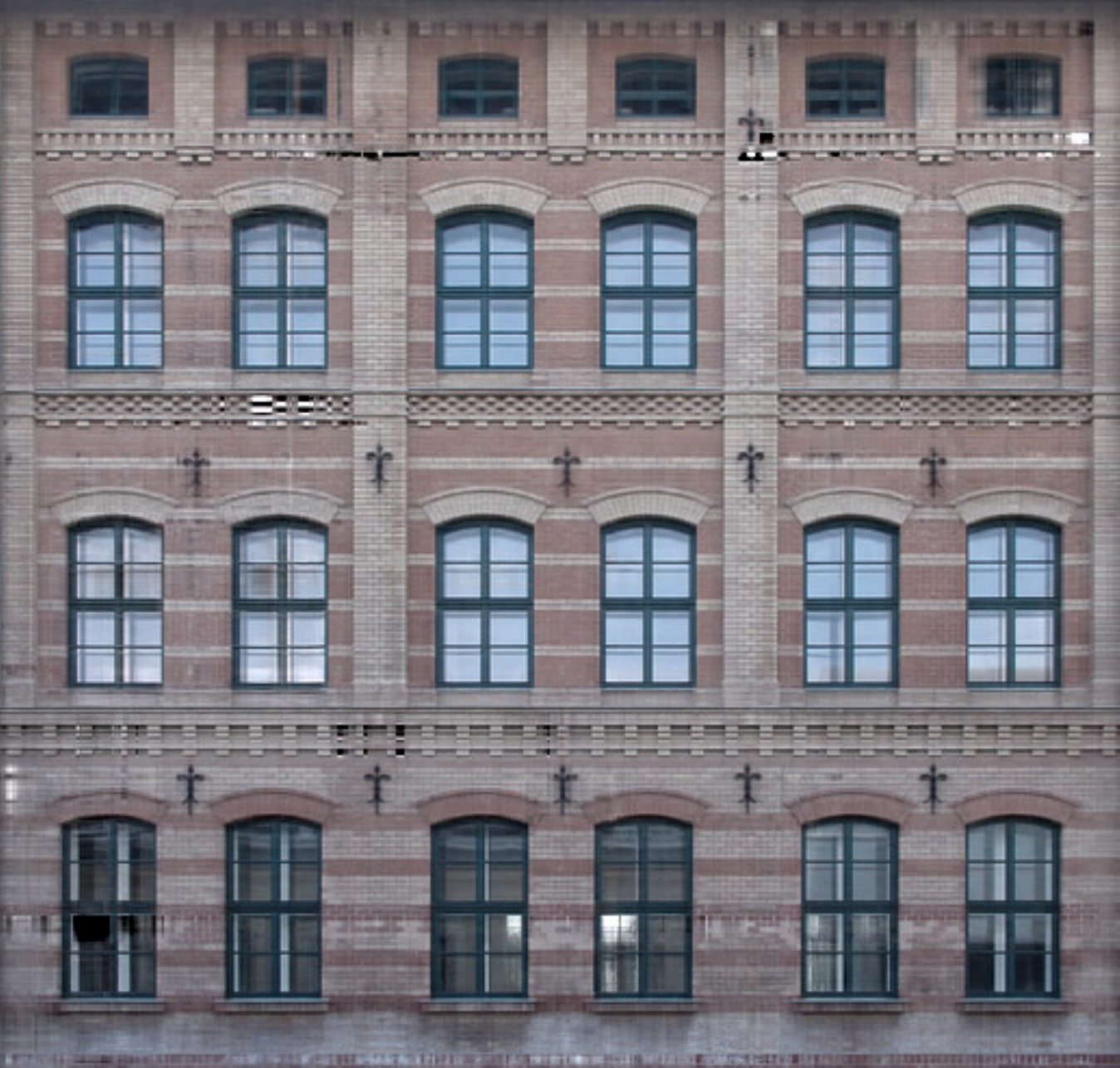}		
		\end{subfigure}
\\
       	\begin{subfigure}[b]{0.16\linewidth}
						\includegraphics[width=\linewidth]{picture/buding1}
			\caption{Original}
		\end{subfigure}
		\begin{subfigure}[b]{0.16\linewidth}
						\includegraphics[width=\linewidth]{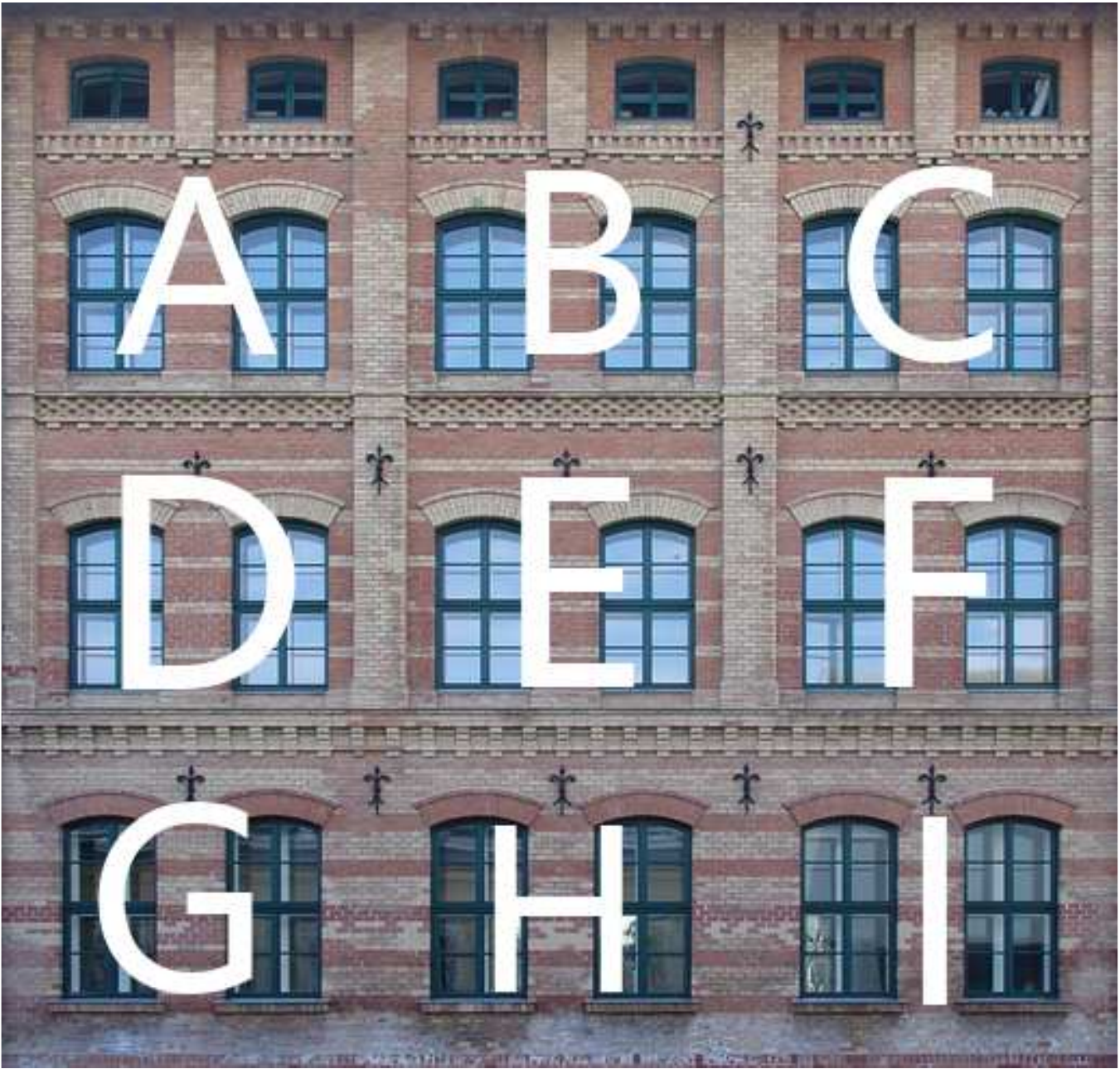}
			\caption{Observation}
		\end{subfigure}
		\begin{subfigure}[b]{0.16\linewidth}
						\includegraphics[width=\linewidth]{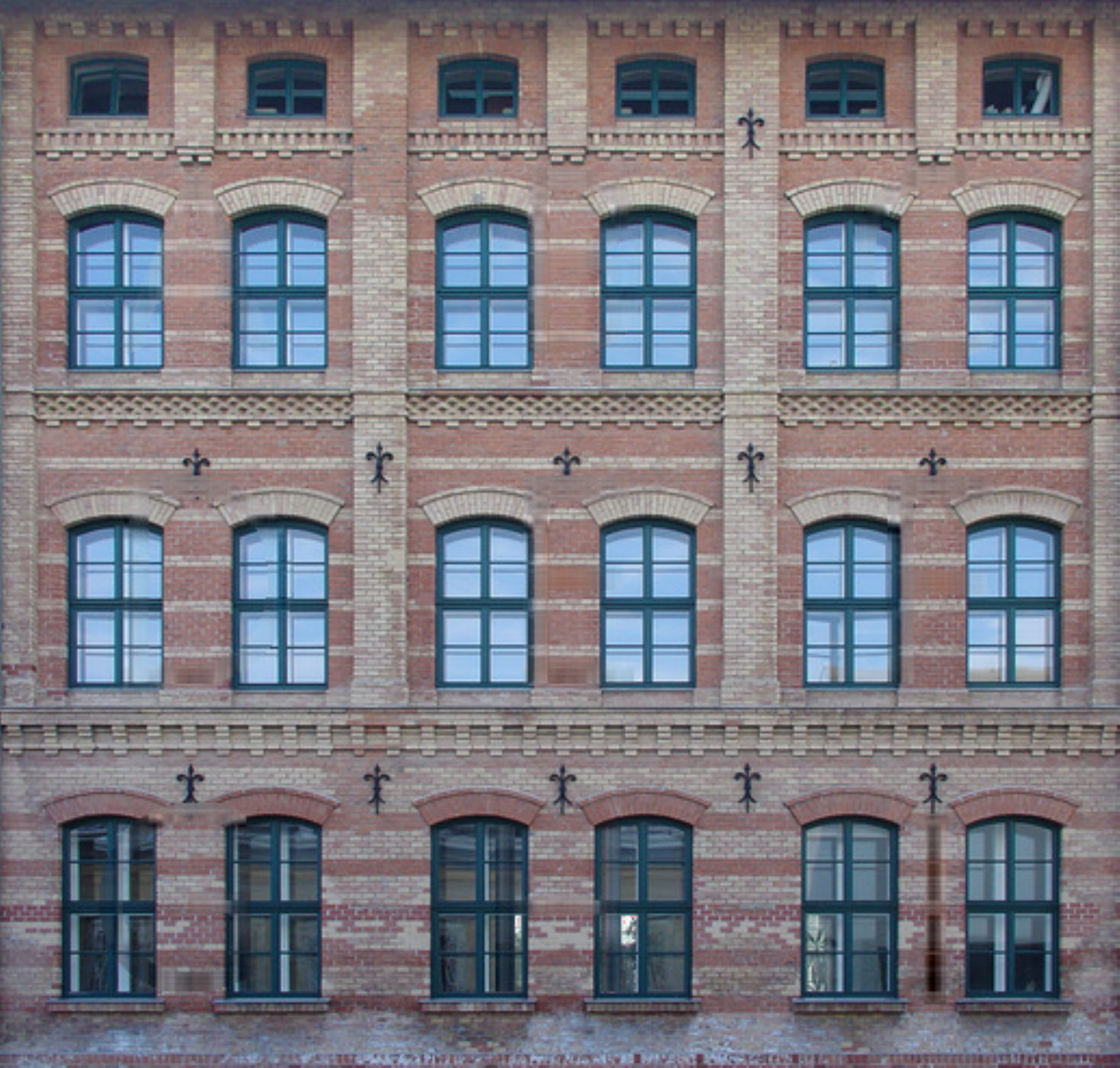}
			\caption{MTRTC}
		\end{subfigure}    	
		\begin{subfigure}[b]{0.16\linewidth}
						\includegraphics[width=\linewidth]{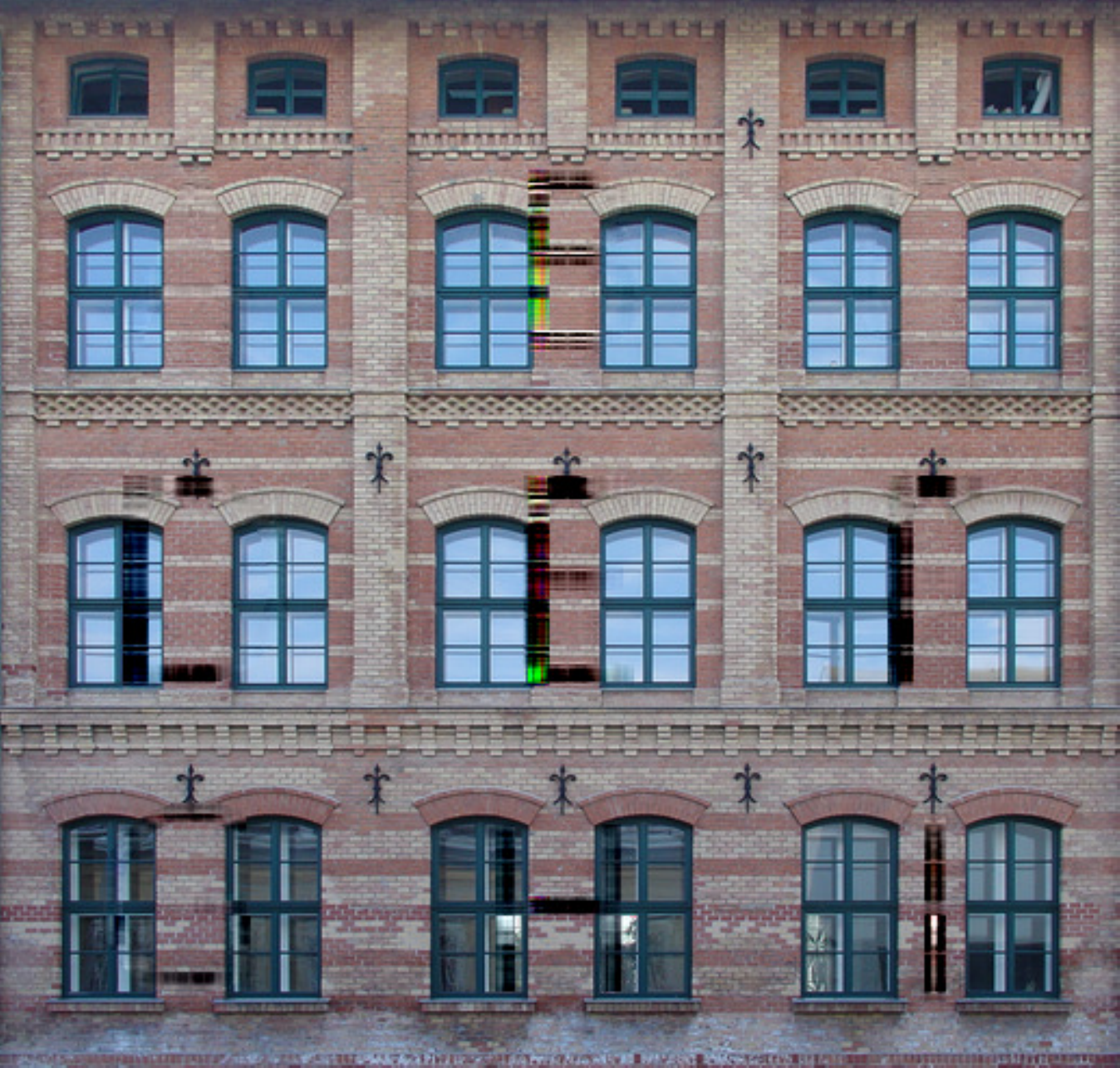}
			\caption{TCTF}
		\end{subfigure}
		\begin{subfigure}[b]{0.16\linewidth}
					\includegraphics[width=\linewidth]{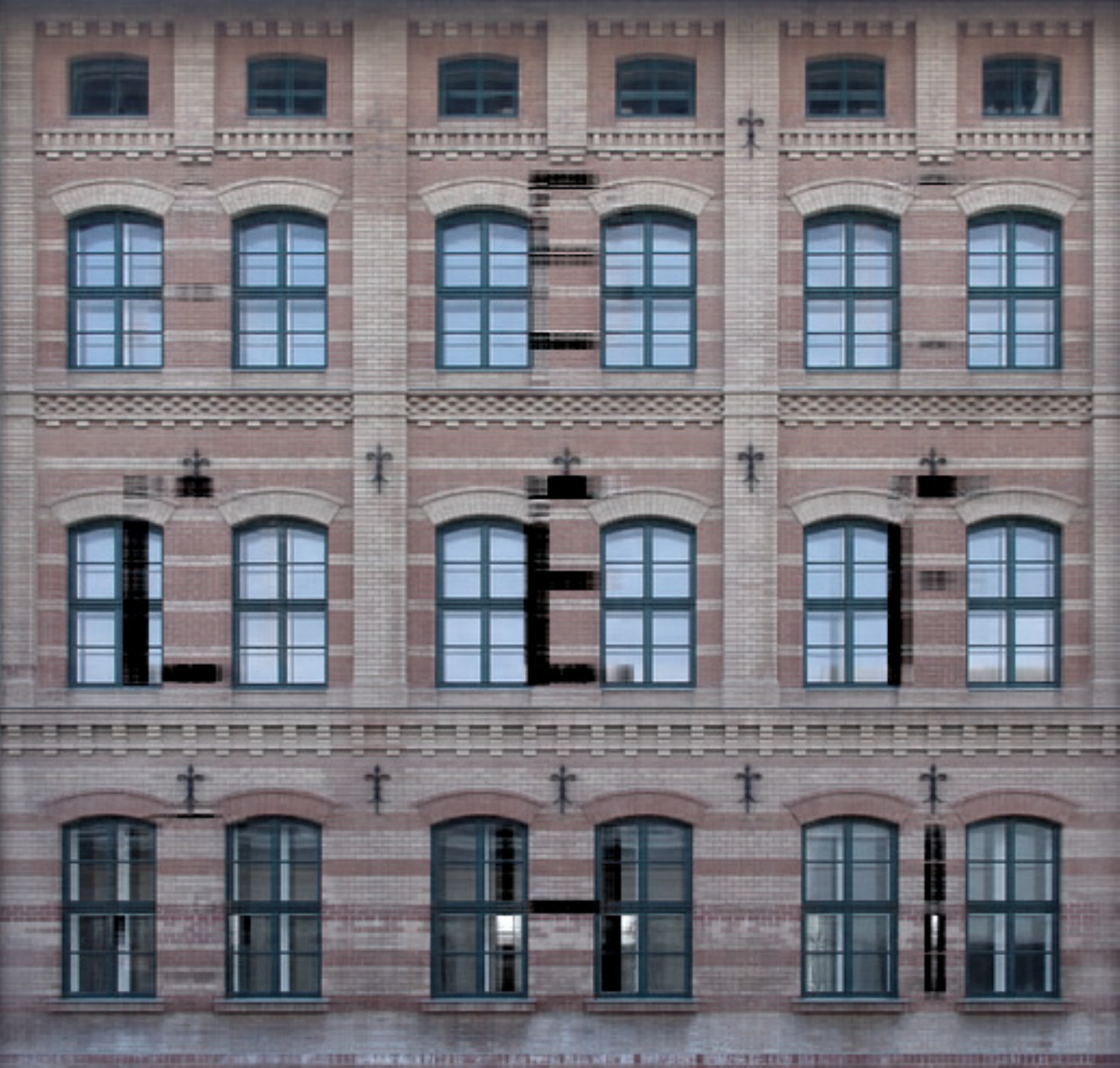}
			\caption{TMac}
		\end{subfigure}
\centering
	\caption{ Recovery performance comparison on the three masked images}
	\label{fig:building}	
\end{figure*}

\begin{table}
\caption{Comparison on the PSNR and the RSE by MTRTC, TCTF and TMac}
\label{tab:building}
\centering
		\begin{tabular}{|c|cc|cc|cc|}
		\hline
			{\multirow{2}{*}{\diagbox{Mask}{Method}}}&\multicolumn{2}{c|}{MTRTC}&\multicolumn{2}{c|}{TCTF} & \multicolumn{2}{c|}{TMac}
			\\\cline{2-7} &  PSNR     & RSE
			&PSNR         & RSE
			&PSNR         & RSE
			\\   \hline
			Grid & {\bf 25.26 }&  {\bf 0.1048 } &     22.57 &     0.1429  &     20.31 &    0.1854   \\
			Leaves & {\bf 30.16 } & {\bf 0.0596 }  &  28.69  &    0.0706  &   25.91  &   0.0972 \\
			Letters & {\bf 30.82 } & {\bf 0.0553 } &   23.00  &     0.1359  &  21.70 & 0.1579 \\
			\hline
		\end{tabular}
\end{table}

\iffalse
\subsection{MRI Volume Dataset}
The resolution of the MRI volume dataset\footnote{\url{http://www.bic.mni.mcgill.ca/ServicesBrainWeb/HomePage}} is of size $ 217 \times 181 $ with 181 slices and we pick the first $ 100 $ slices. In NTD, TMac , LTRTC and T-LTRTC, we set the initial Tucker rank to be $ \left( 40,40,20\right)  $, the initial tubal rank $ \left( 40,\cdots,40\right) $ in TCTF. We set the maximum iteration steps of all algorithms to $ 500 $ steps. It can be seen from Figure \ref{fig:image_MRI} that the TCTF restored the most noisy picture, so its restoration effect is the worst. NTD and TMac are not ideal for recovery where the pixels differ greatly. The pictures recovered by T-LTRTC \textcolor{red}{and LTRTC} are relatively good. At the same time, in order to see more clearly the effect of various algorithms to restore pictures, we also list the pictures of various algorithms to restore pictures minus the original pictures. \textcolor{red}{For better visualization, we add 0.5 to the pixel.} We can clearly see that the picture corresponding to T-LTRTC and LTRTC have almost \textcolor{red}{no white spots???no outline of the original image}, indicating the \textcolor{red}{least???best} recovery effect.

From the histogram \ref{fig:bar_MRI}, we can know more clearly that the four methods of restoring the picture effect are T-LTRTC, LTRTC, TMac, NTD, TCTF. LTRTC's PSNR value is improved by 35.47\%, 19.67\%, and 176.15\% on the basis of NTD, TMac, and TCTF.
\fi

\subsection{Video Simulation}
We evaluate our proposed methods MTRTC and ST-MTRTC on the widely
used YUV Video Sequences\footnote{\url{http://trace.eas.asu.edu/yuv/}}. Each sequence contains at least 150 frames and we pick the first $ 60 $ frames. In the experiments, we test our proposed methods and other methods on three videos with $ 144 \times 176 $ pixels. %The frame sizes of the first two videos both are $ 144 \times 176 $ pixels and that of the last one is $ 288\times 352 $ pixels.
We test the videos with random missing data of sampling ratio $p=0.3$.  We set the initial multi-tubal rank $ (r_{u}^{l})^0=10, u \in  {\bf [2]}, (r_{3}^{l})^0=60, l \in {\bf [n_u]} $ in MTRTC and ST-MTRTC,  the initial tubal rank $ (30,30,30) $ in TCTF and the initial Tucker rank $ (60,60,10) $ in TMac. In experiments, the maximum iteration number is set to be 800 and the termination precision $ \varepsilon $ is set to be 1e-5.

The data between two adjacent frames of the video usually have not drastic change.
To detect such stability, we calculate the data pairs of the corresponding positions between two adjacent frames. The difference for two adjacent frames of the video slots  ($ k $ and $ k+1 $) is defined as
\begin{equation*}
frame(i,j,k)=| { C}_3^k(i,j)-{ C}_3^{k+1}(i,j) |.
\end{equation*}
The smaller the $ frame(i, j, k) $ is, the more stable the data  between two adjacent frames of the video at frame $ k $ is. By computing the normalized difference values between two adjacent frames, we measure the stability between two adjacent frames of the video at frame $ k $ as
\begin{small}
$$
\Delta gap(i, j, k)=\frac{|{ C}_3^k(i,j)-{ C}_3^{k+1}(i,j)|}{\max\limits _{1 \leqslant i \leqslant n_1,1 \leqslant j \leqslant n_2, 1 \leqslant k \leqslant n_3-1}|{ C}_3^k(i,j)-{ C}_3^{k+1}(i,j)|}.
$$
\end{small}
Here $ \max\limits _{1 \leqslant i \leqslant n_1,1 \leqslant j \leqslant n_2, 1 \leqslant k \leqslant n_3-1}|{ C}_3^k(i,j)-{ C}_3^{k+1}(i,j)| $ means the maximal gap between any two adjacent frames of the video.
We plot the CDF of $ \Delta frame(i, j, k) $ in Figure \ref{fig:CDF-video}. The X-axis represents the normalized difference values between two adjacent frames slots, i.e., $ \Delta frame(i, j, k) $. The Y-axis represents the cumulative probability. We can see that the value $ \Delta frame(i, j, k)<0.6 $ is more than 80\%. These results indicate that the temporal stability exists in the real video data. Hence we apply ST-MTRTC  in video inpainting with Toeplitz matrix being a temporal constrained matrix $H$. Furthermore, $ \beta_{1} = \beta_{2} = 0 $, which mean that  $F$ and $G$ are zero matrices.
\begin{figure}[htbp]
	\centering
	\begin{subfigure}[b]{1\linewidth}
		%		\subfloat[hall]{
		\begin{subfigure}[b]{0.325\linewidth}
			\centering
			\includegraphics[width=\linewidth]{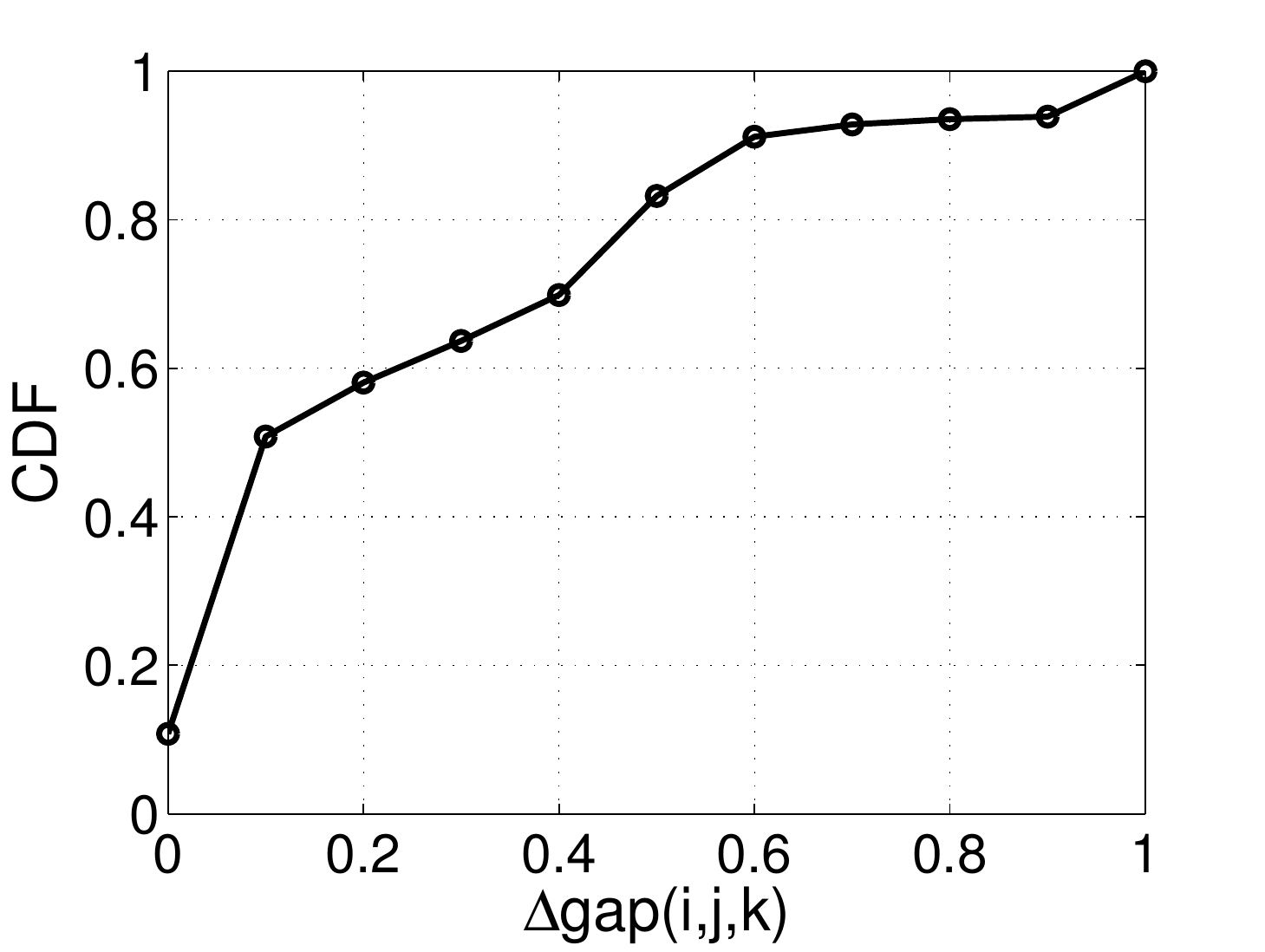}
			\caption{Mother}
		\end{subfigure}
		%		\subfloat[akiyo]{    	
		\begin{subfigure}[b]{0.325\linewidth}
			\centering
			\includegraphics[width=\linewidth]{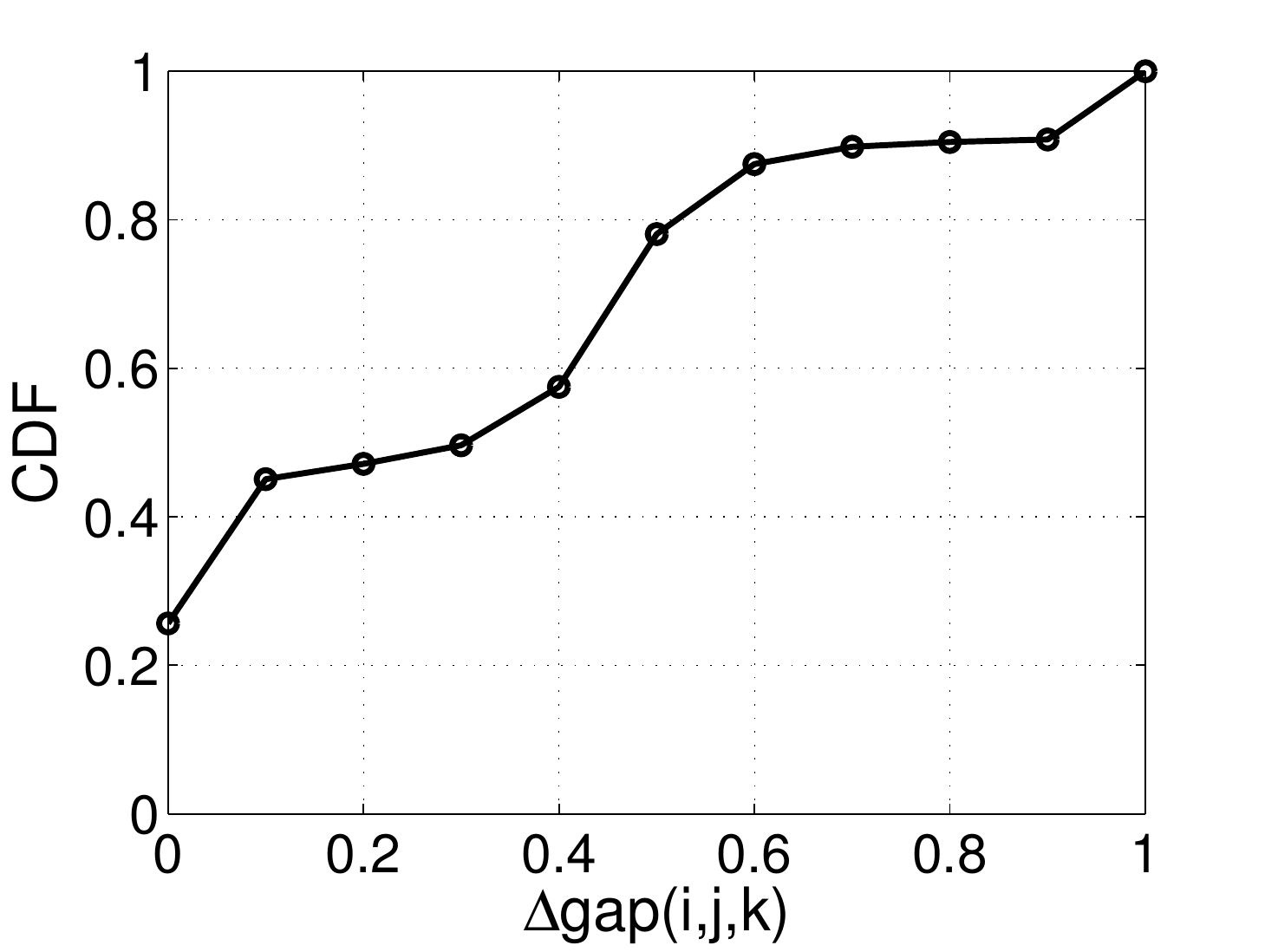}
			\caption{Container}
		\end{subfigure}
		%		\subfloat[waterfall]{
		\begin{subfigure}[b]{0.325\linewidth}
			\centering
			\includegraphics[width=\linewidth]{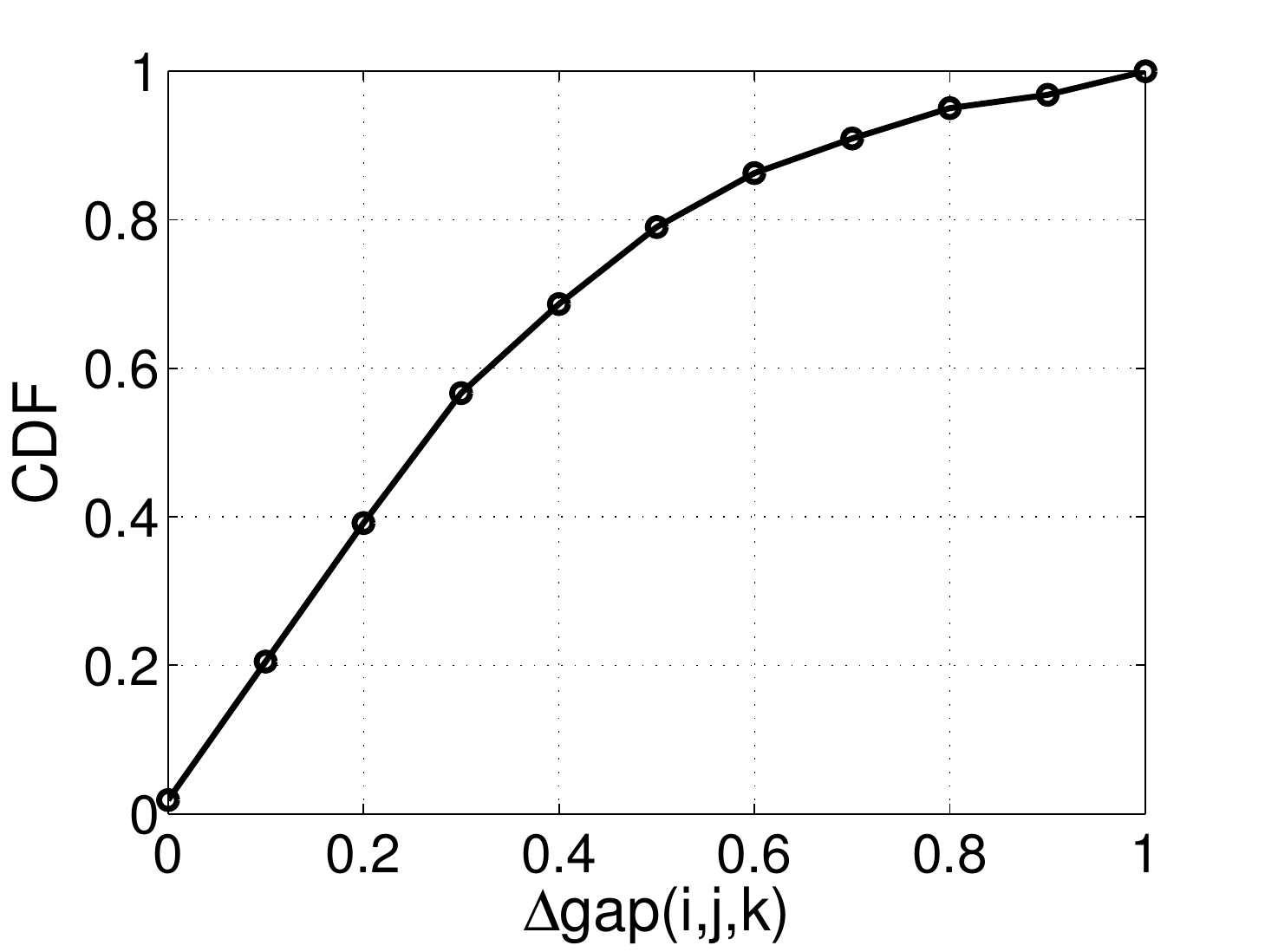}
			\caption{Bridge}
		\end{subfigure}
	\end{subfigure}
	\vfill
	\caption{An empirical study of three sets of real video data}
	\label{fig:CDF-video}
\end{figure}

\begin{figure}[htbp]
	\centering
	\begin{subfigure}[b]{1\linewidth}
%		\subfloat[Original]{
			\begin{subfigure}[b]{0.161\linewidth}
				\centering
				\includegraphics[width=\linewidth]{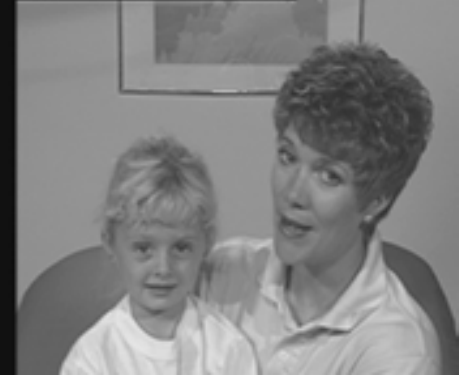}\vspace{0pt}
				\includegraphics[width=\linewidth]{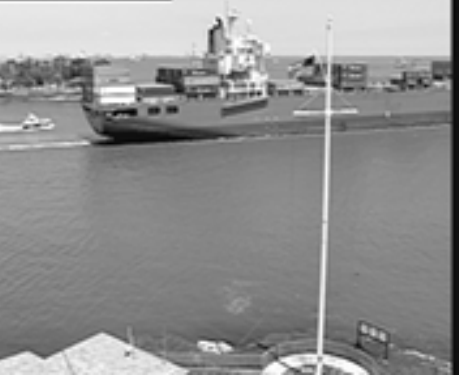}\vspace{0pt}
				\includegraphics[width=\linewidth]{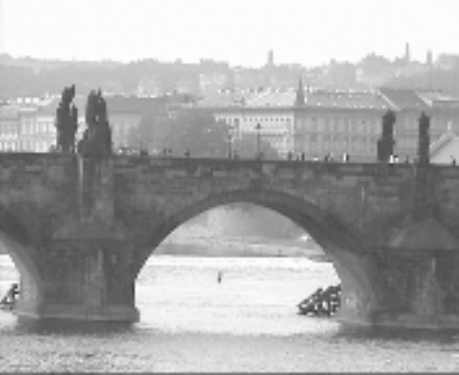}
				\caption{\tiny{Original}}
		\end{subfigure}
%		\subfloat[Observation]{    	
			\begin{subfigure}[b]{0.161\linewidth}
				\centering
				\includegraphics[width=\linewidth]{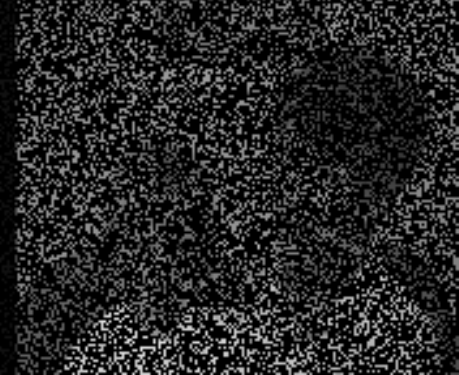}\vspace{0pt}
				\includegraphics[width=\linewidth]{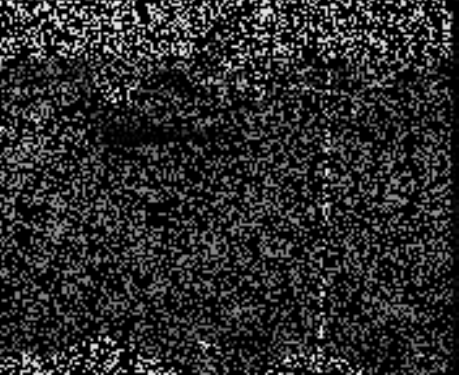}\vspace{0pt}
				\includegraphics[width=\linewidth]{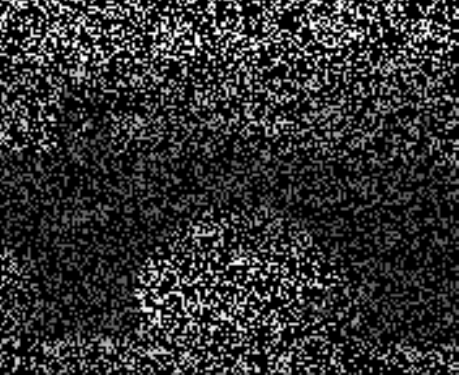}
				\caption{\tiny{Observation}}
		\end{subfigure}
%		\subfloat[TRTF]{
			\begin{subfigure}[b]{0.161\linewidth}
				\centering
				\includegraphics[width=\linewidth]{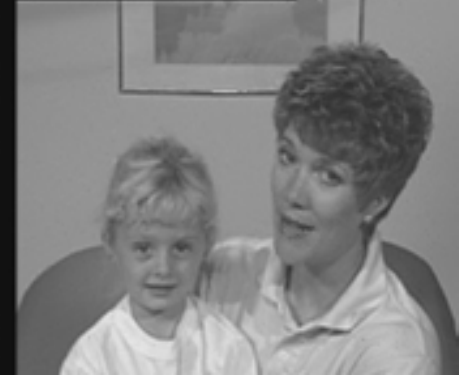}\vspace{0pt}
				\includegraphics[width=\linewidth]{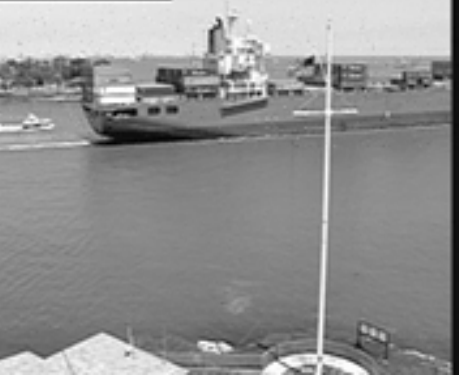}\vspace{0pt}
				\includegraphics[width=\linewidth]{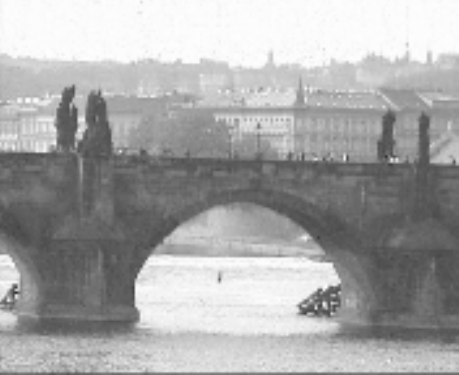}
				\caption{\tiny{MTRTC}}
		\end{subfigure}
%			\subfloat[ST-TRTF]{
		\begin{subfigure}[b]{0.161\linewidth}
			\centering
			\includegraphics[width=\linewidth]{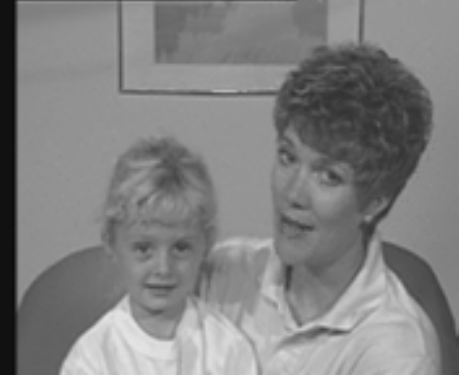}\vspace{0pt}
			\includegraphics[width=\linewidth]{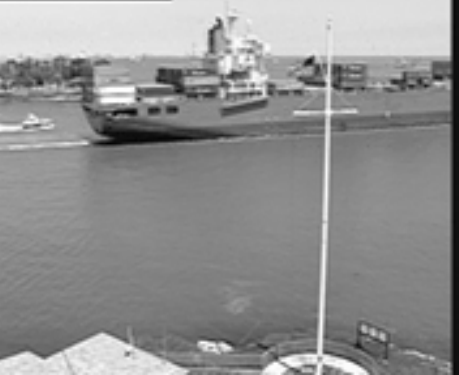}\vspace{0pt}
			\includegraphics[width=\linewidth]{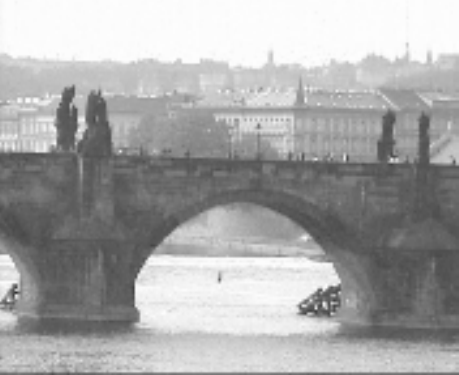}
			\caption{\tiny{ST-MTRTC}}
	    \end{subfigure}
%		\subfloat[TCTF]{
			\begin{subfigure}[b]{0.161\linewidth}
				\centering
				\includegraphics[width=\linewidth]{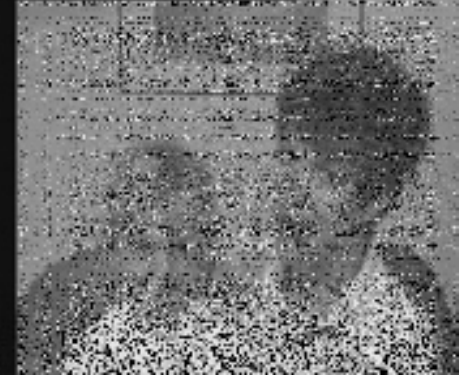}\vspace{0pt}
				\includegraphics[width=\linewidth]{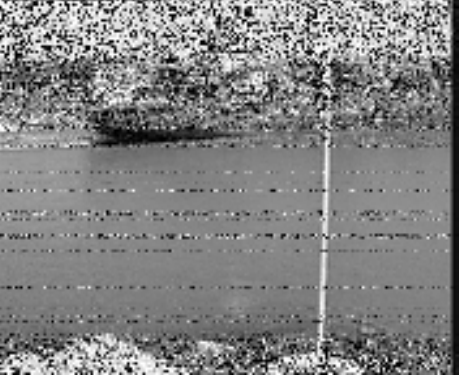}\vspace{0pt}
				\includegraphics[width=\linewidth]{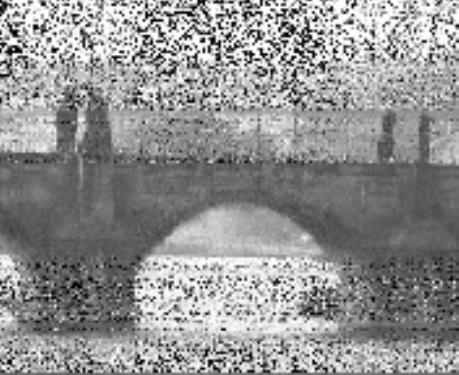}
				\caption{\tiny{TCTF}}
		\end{subfigure}
%		\subfloat[TMac]{
			\begin{subfigure}[b]{0.161\linewidth}
				\centering
				\includegraphics[width=\linewidth]{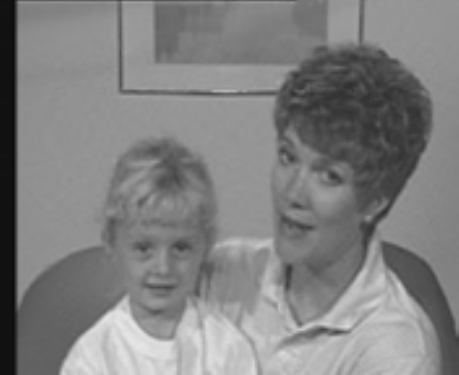}\vspace{0pt}
				\includegraphics[width=\linewidth]{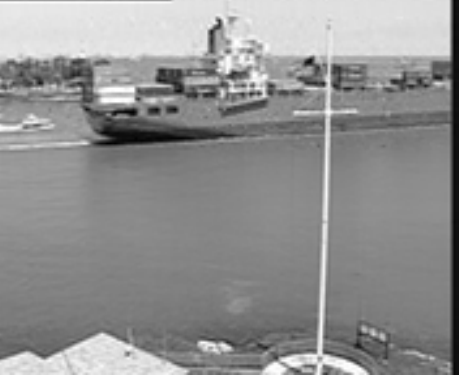}\vspace{0pt}
				\includegraphics[width=\linewidth]{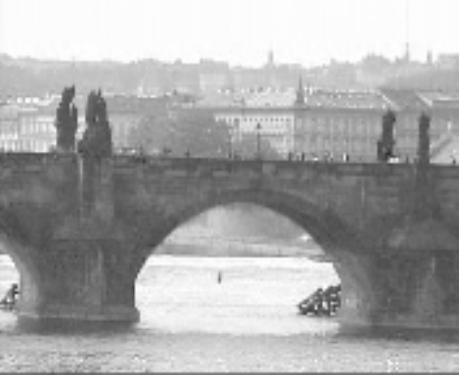}
				\caption{\tiny{TMac}}
			\end{subfigure}
		
	\end{subfigure}
	\caption{Recovery performance comparison on the three videos}
	\label{fig:test-video}
\end{figure}

{\scriptsize
\begin{table*}
	\centering
	\caption{Comparison on the PSNR, the RSE and the running time on the three videos}
\begin{tabular}{|l|ccc|ccc|ccc|}
			\hline
			{\multirow{2}{*}{\diagbox{\scriptsize Method}{\scriptsize Video}}} & \multicolumn{3}{c|}{ Mother} &\multicolumn{3}{c|}{ Container} & \multicolumn{3}{c|}{Bridge}
			\\\cline{2-10}
            & {\scriptsize PSNR }     &{\scriptsize RSE}     &  time
			&{\scriptsize PSNR  }       &{\scriptsize RSE}         & time
			&{\scriptsize PSNR}         &{\scriptsize RSE}         &time
			   \\  \hline		
{MTRTC} & 37.02 &0.024 & \textbf{34.14} & 40.53 & 0.016 & \textbf{50.46} & 34.79& 0.026 & 36.74  \\
{ST-MTRTC} & \textbf{37.79} & \textbf{0.022} & 46.33 & \textbf{42.58} & \textbf{0.012} & 60.63 & \textbf{35.55} & \textbf{0.024} & \textbf{33.37 } \\
{TCTF}  & 14.19  & 0.338  & 94.59  & 13.11  & 0.367  & 95.82  & 11.93  & 0.357  & 93.45  \\
{TMac}  & 35.92  & 0.028  & 39.11  & 34.45  & 0.032  & 77.77  & 33.88  & 0.028  & 38.84  \\
			\hline
		\end{tabular}
	\label{tab:test-video}
\end{table*}}

\par  Figure \ref{fig:test-video} shows the 18th frame of the three videos. Table \ref{tab:test-video} displays  the numerical results, which show that  MTRTC performs better than TCTF and TMac on PSNR and RSE. Especially for the container video, PSNR of MTRTC has increased by $ 209.53\% $ and $ 26.36\% $ over TCTF and TMac, respectively. On consumed time, MTRTC also takes the least time to recover the three videos among all algorithms. %For the third video with larger pixels, the consumed time of MTRTC is reduced by $ 257\% $ and $ 182\% $ over TCTF and TMac, respectively.

Numerical results displayed in Table \ref{tab:test-video} show that ST-MTRTC performs better than MTRTC on PSNR and RSE. The consumed time of ST-MTRTC is similar to MTRTC. Even in container video, the PSNR returned by ST-MTRTC has increased by $ 5.06\% $ over MTRTC. These results indicate that the temporal stability exists in the real video data, which improves the performance of MTRTC.

\iffalse
Now we select container video to compare the efficiency at different sample rate.  From Figure \ref{fig:vedio-akiyo},  the RSE and PSNR of MTRTC are better than that of TCTF and TMac.  Clearly, ST-MTRTC achieves the best on PSNR and RSE in these four methods. The running time of ST-MTRTC and MTRTC and the changing trend of running time with sample rate are basically the same. Although the consumed time of ST-MTRTC increased when the sample rate is less than or equal to $ 0.25 $,  the accuracy of ST-MTRTC is the best. Furthermore,  the running time of ST-MTRTC is the shortest one among all algorithms  when the sample rate is larger than $ 0.25$. All these results demonstrate that ST-MTRTC is the best one.

\begin{figure}[htbp]
	\centering
	\begin{subfigure}[b]{1\linewidth}
		%		\subfloat[RSE]{
		\begin{subfigure}[b]{0.325\linewidth}
			\centering
			\includegraphics[width=\linewidth]{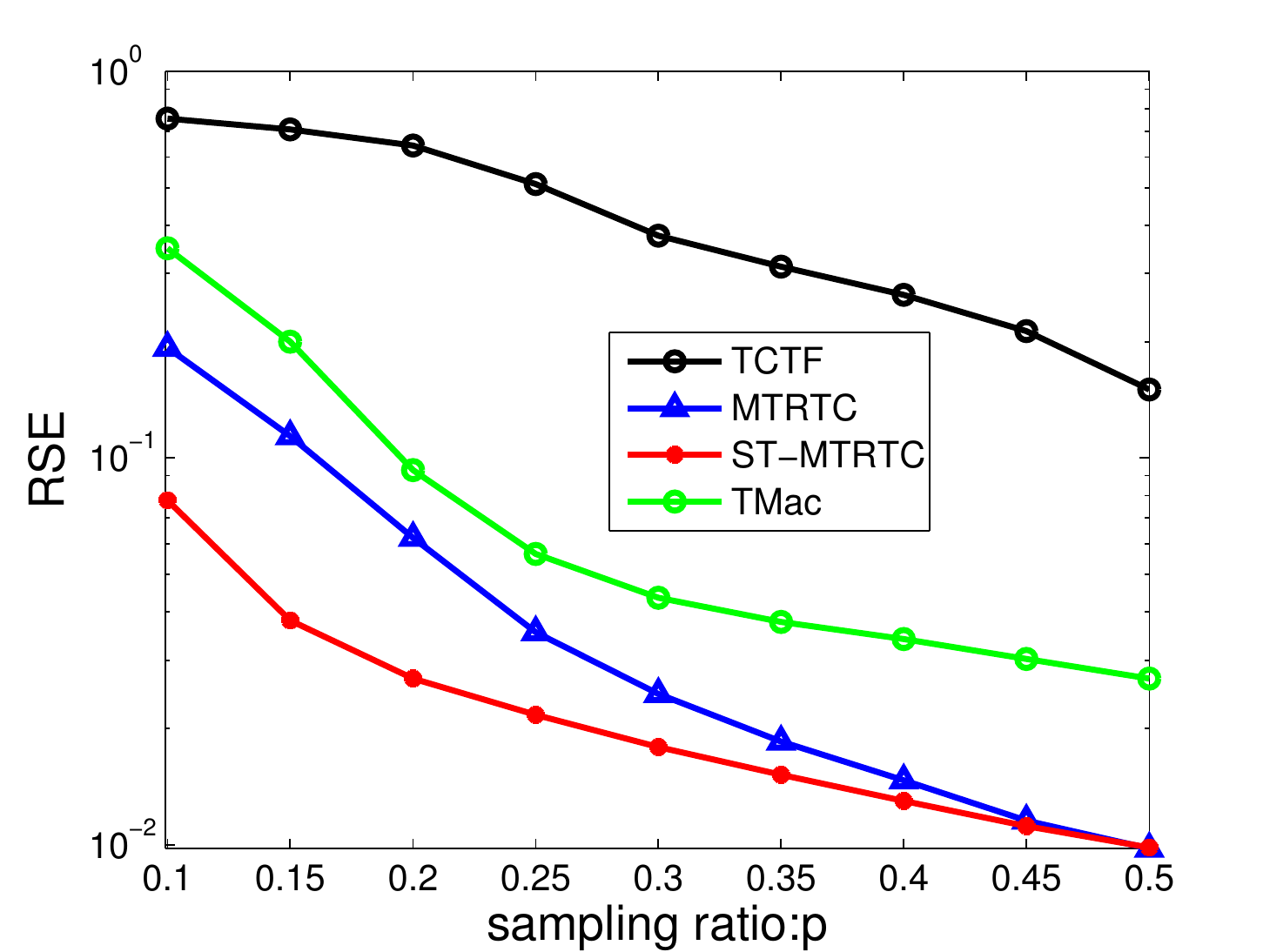}
			\caption{RSE}
		\end{subfigure}
		%		\subfloat[PSNR]{    	
		\begin{subfigure}[b]{0.325\linewidth}
			\centering
			\includegraphics[width=\linewidth]{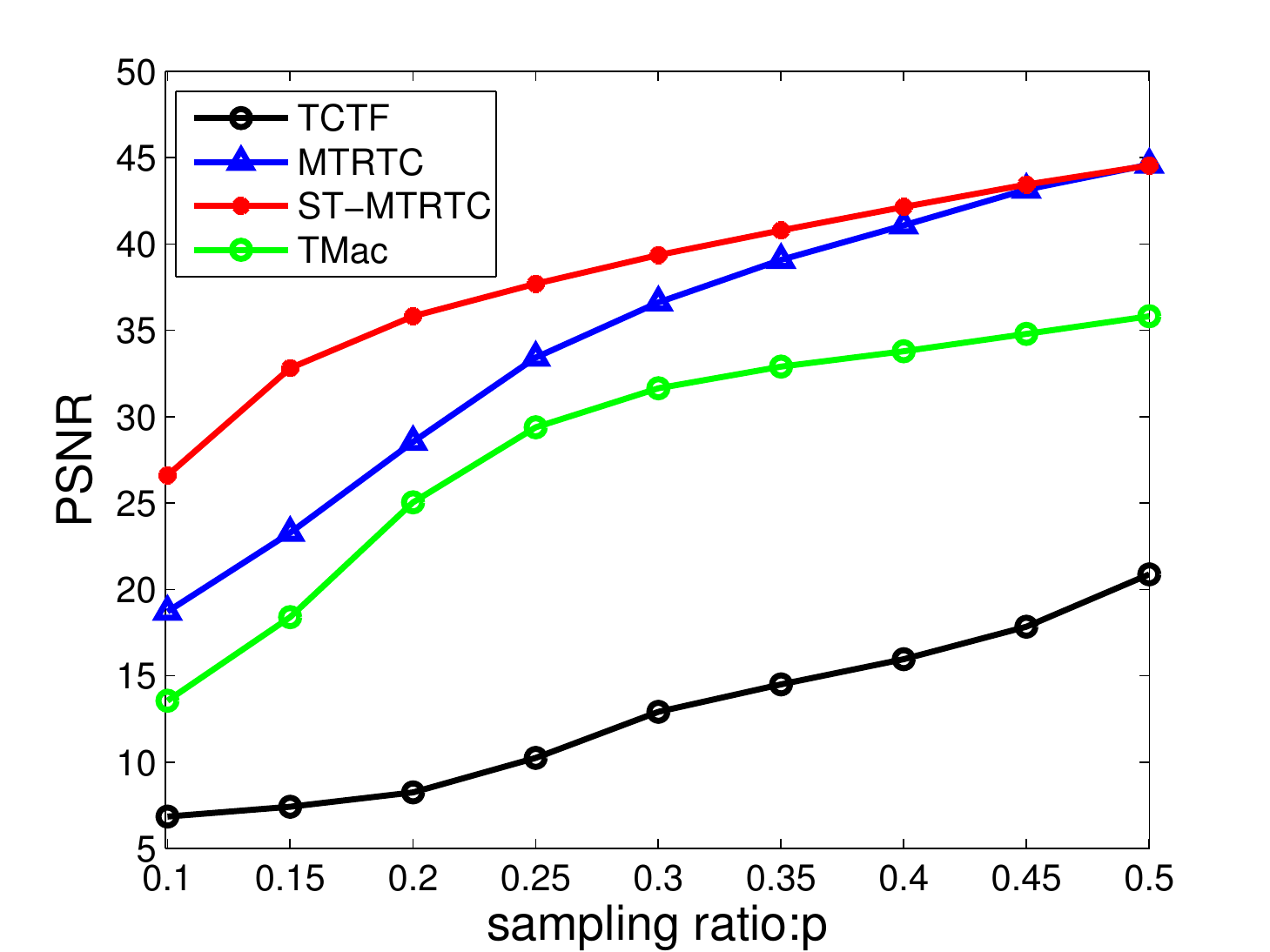}
			\caption{PSNR}
		\end{subfigure}
		%		\subfloat[time]{
		\begin{subfigure}[b]{0.325\linewidth}
			\centering
			\includegraphics[width=\linewidth]{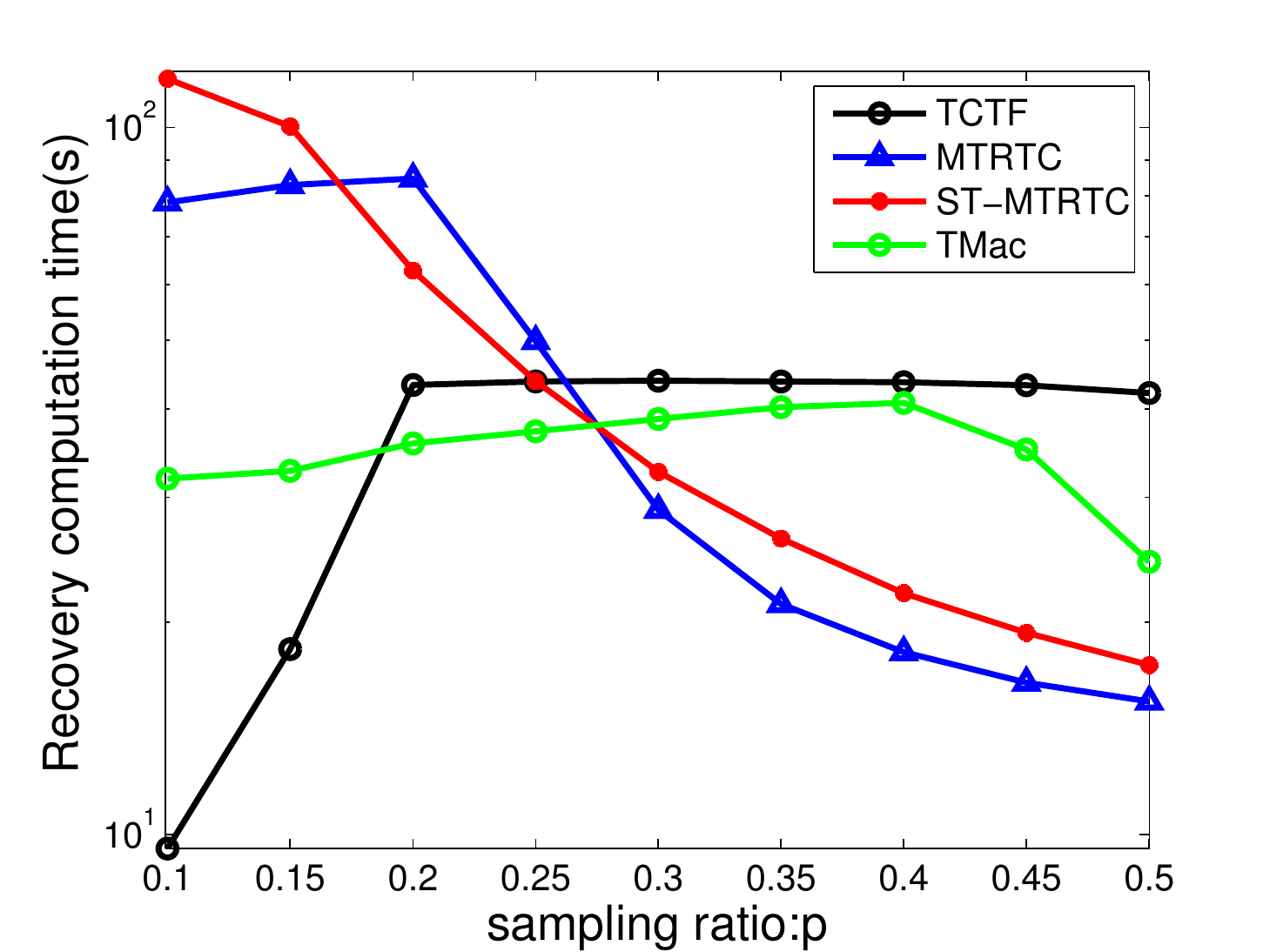}
			\caption{time}
		\end{subfigure}
	\end{subfigure}
	\vfill
	\caption{Comparison between algorithms for the different sampling ratio p.}
	\label{fig:vedio-akiyo}
\end{figure}
\fi
\subsection{Internet Traffic Simulation}

\begin{minipage}[b]{0.3\linewidth}
	\includegraphics[width=1\linewidth]{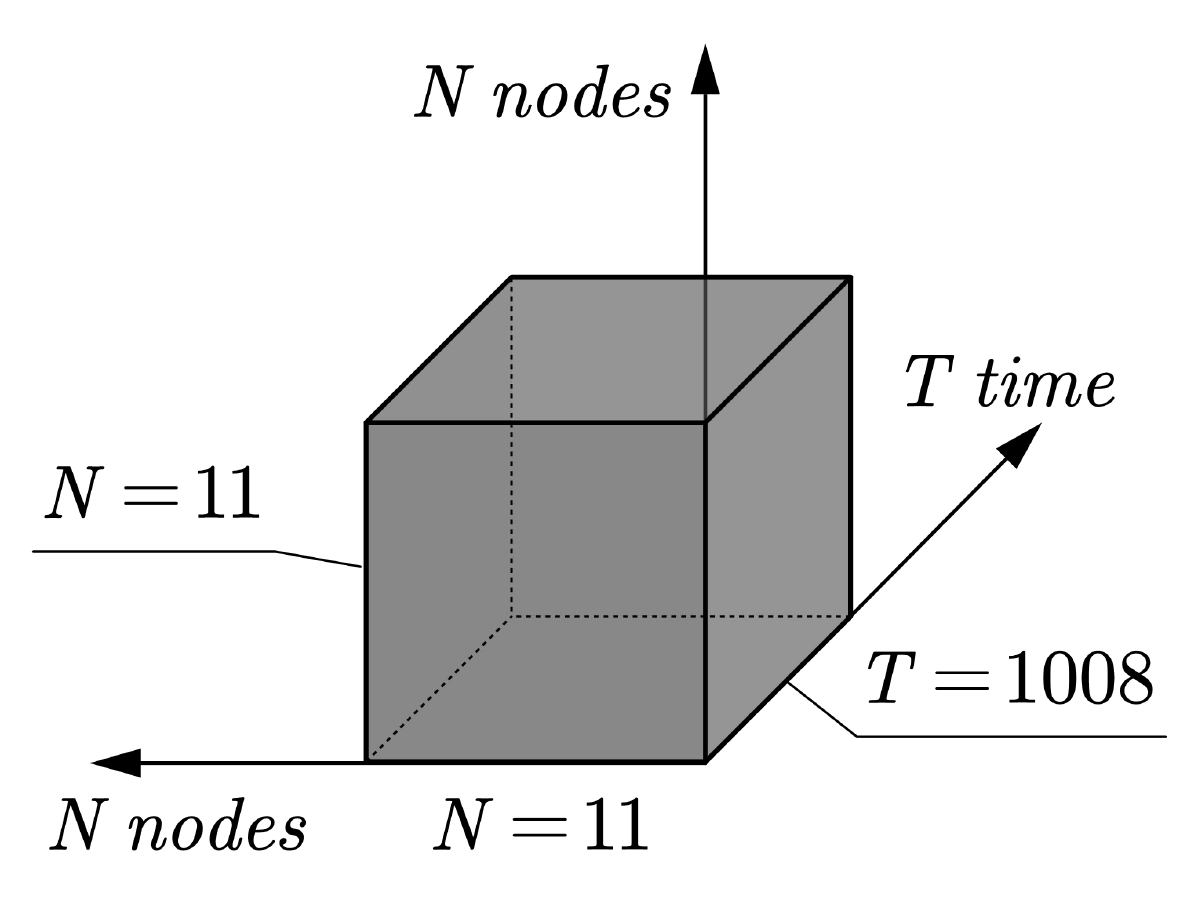}
	%\caption{Traffic tensor model.}
\end{minipage}
\hfill
\begin{minipage}[b]{0.65\linewidth}
We model the traffic data as a third order tensor $ \mathcal{M} \in {\mathbb R^{D \times T \times O}} $. Here $ O $ corresponds to the number of OD pairs with $ O = N \times N $ ($ N $ is the number of nodes in the network), and there are $ D $ days to consider with each day having $ T $ time slots.
\end{minipage}
	
We uses Abilene trace data \cite{TAODC} as an example to illustrate this model. The traffic data are collected between 144 OD pairs in 168 days, and the measurements are made every 5 minutes which corresponds to 288 time slots every day. We use a complete one week traffic data. Therefore, the trace data can be modeled as a third order tensor $ \mathcal{M} \in {\mathbb R^{7 \times 288 \times 144}} $. We use the normalized mean absolute error (NMAE) in the missing values as a metric of the recovered data. The NMAE is defined as follows
$$
\mathrm{NMAE}=\frac{\sum_{(i,j,k) \notin \Omega}\left|\M_{ijk}-\hat{\C}_{ijk}\right|}{\sum_{(i,j,k) \notin \Omega}\left|\M_{i jk}\right|}.
$$
%We set the initial multi-tubal rank $ (r_{u}^{l})^0=10, u \in  {\bf [2]}, (r_{3}^{l})^0=60, l \in {\bf [n_u]} $ in MTRTC and ST-MTRTC,  the initial tubal rank $ (1,\,30) $ in TCTF and the initial tucker rank $ (60,60,10) $ in TMac. We set the maximum iteration steps of all algorithms to 800 steps and the termination precision is set to be 1e-5.

\iffalse
We model the traffic data as a 3-way tensor $ \mathcal{M} \in {\mathbb R^{N \times N \times T}} $. Here `$ N $' corresponds to $ N $ nodes, and we use a complete one week traffic data  with $ 10 $-minute sampling, i.e., $ T=\frac{60}{10} \times 24 \times 7 = 1008 $ time intervals.

\begin{minipage}[b]{0.3\linewidth}
	\includegraphics[width=1\linewidth]{picture/tensor}
	%\caption{Traffic tensor model.}
\end{minipage}
\hfill
\begin{minipage}[b]{0.65\linewidth}
	Two weeks of data are selected randomly in the public traffic trace data Abilene \cite{TAODC}. For the Abilene trace, $ N=11$ and $T =1008 $.
\end{minipage}
\fi
\begin{figure}[htbp]
	\centering
	\begin{subfigure}[b]{1\linewidth}
		%		\subfloat[RSE]{
		\begin{subfigure}[b]{0.493\linewidth}
			\centering
			\includegraphics[width=\linewidth]{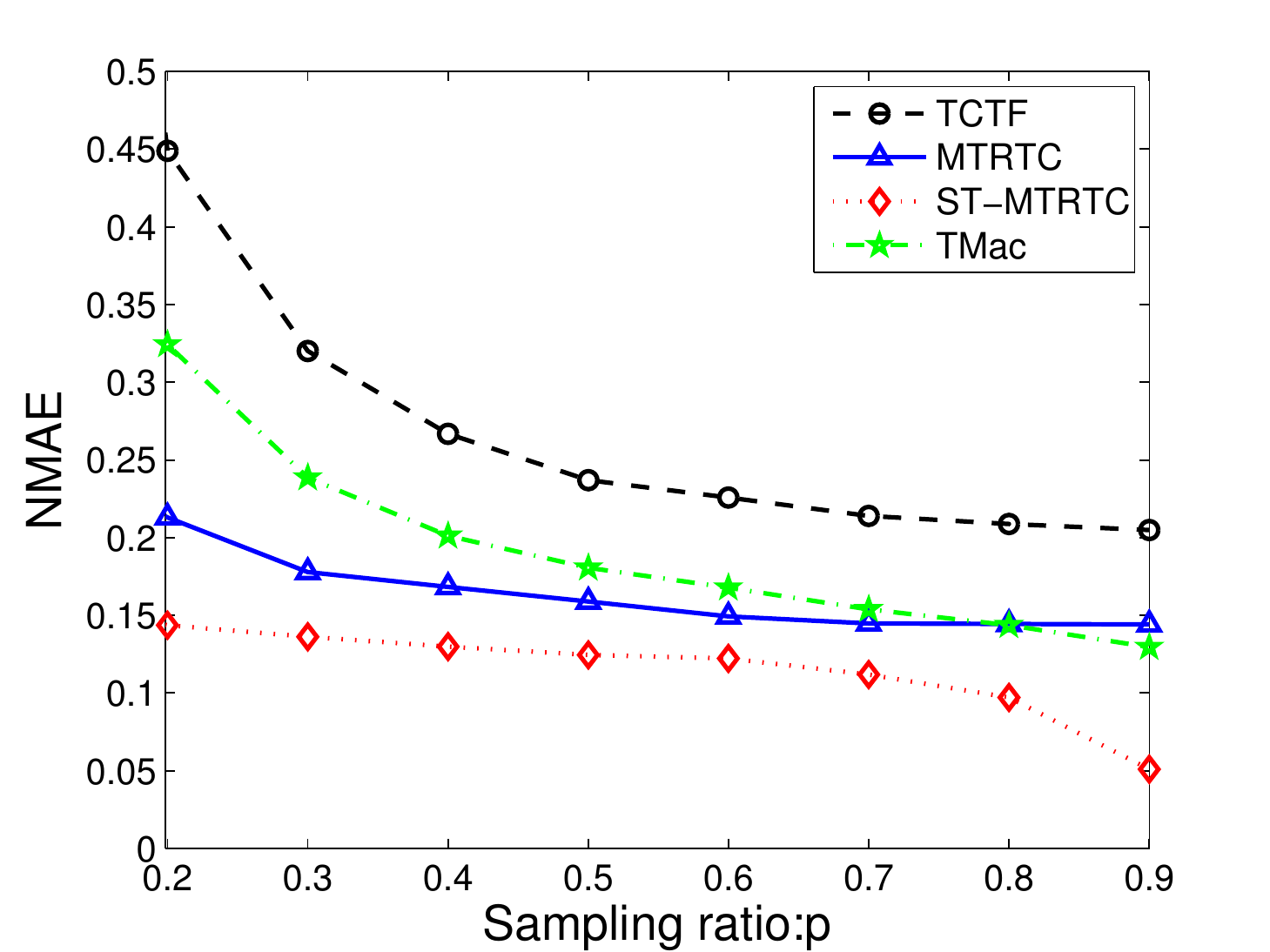}
			\caption{The first week of data}
		\end{subfigure}
		%		\subfloat[Computation time]{    	
		\begin{subfigure}[b]{0.493\linewidth}
			\centering
			\includegraphics[width=\linewidth]{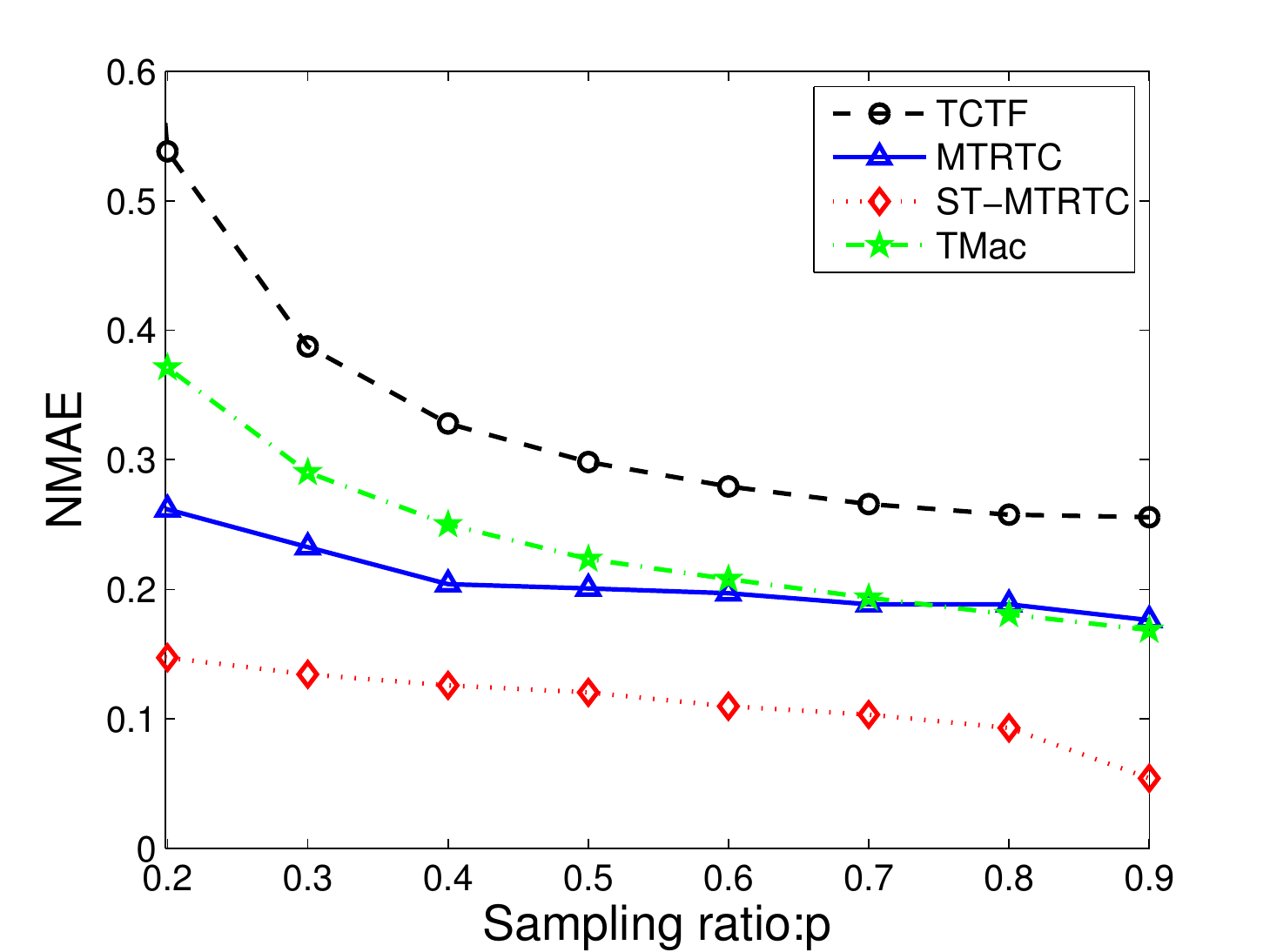}
			\caption{The second week of data}
		\end{subfigure}
	\end{subfigure}
	\vfill
	\caption{Comparison on the NMAE by four methods of  different sampling ratios}
	\label{fig:traffic}
\end{figure}

Figure \ref{fig:traffic} shows the recovered results in Abilene dataset by four algorithms. The X-axis represents the sample rate of data, and the Y-axis represents  RSE. As the sample rate increases, the RSE value gradually decreases.
Among the four methods, ST-MTRTC has the best recovery effect. Note that  ST-MTRTC can still recover lost data with very low error  even if the sample rate is very low. Furthermore,
 MTRTC lags behind ST-MTRTC, which means a spatio-temporal structure in the network traffic data works well.

For further comparison, we illustrate the recovered data for the 139th OD pair of Abilene data. To this end,  we select the first 144 data per day. As shown in Figure \ref{fig:trafcom}, some of data recovered by TCTF and TMac are far from the original data  when the sample rate is lower than $ p = 0.6 $.  However, the data recovered by  ST-MTRTC  fits the original data well. That is,  ST-MTRTC can recover the data of low sample rate with high accuracy.
 Although the accuracy of the TCTF and TMac methods  raise with the increasing of sample rate, ST-MTRTC also outperforms TCTF and TMac. These results indicate that ST-MTRTC is the best method to recover internet traffic data.

 \begin{figure}[htbp]
\centering
 \begin{subfigure}[b]{0.45\linewidth}
   \centering
   \includegraphics[width=1\linewidth]{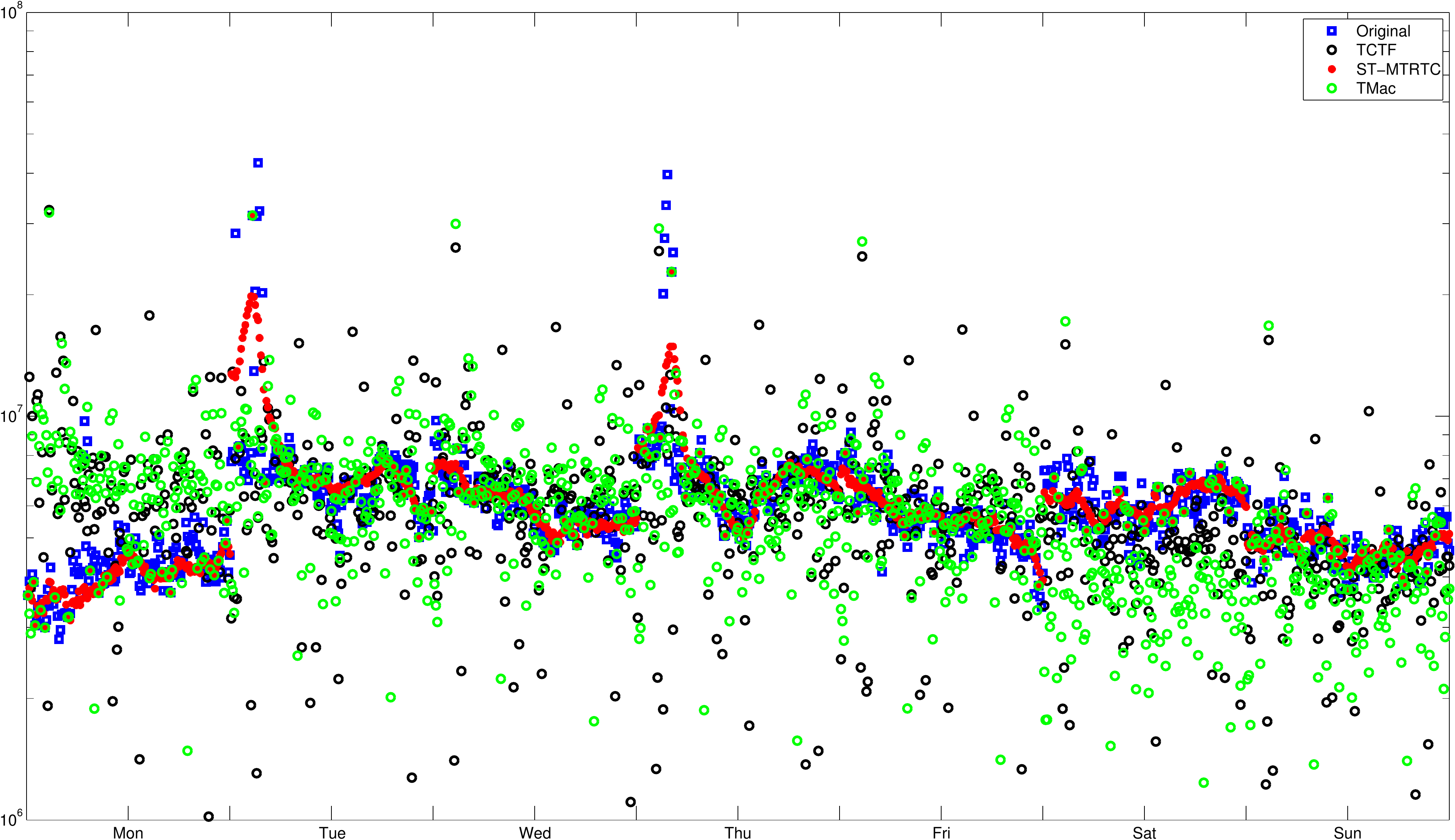}
 \caption{Sampling ratio $p=0.2$}
 \end{subfigure}
 \begin{subfigure}[b]{0.45\linewidth}
   \centering
   \includegraphics[width=1\linewidth]{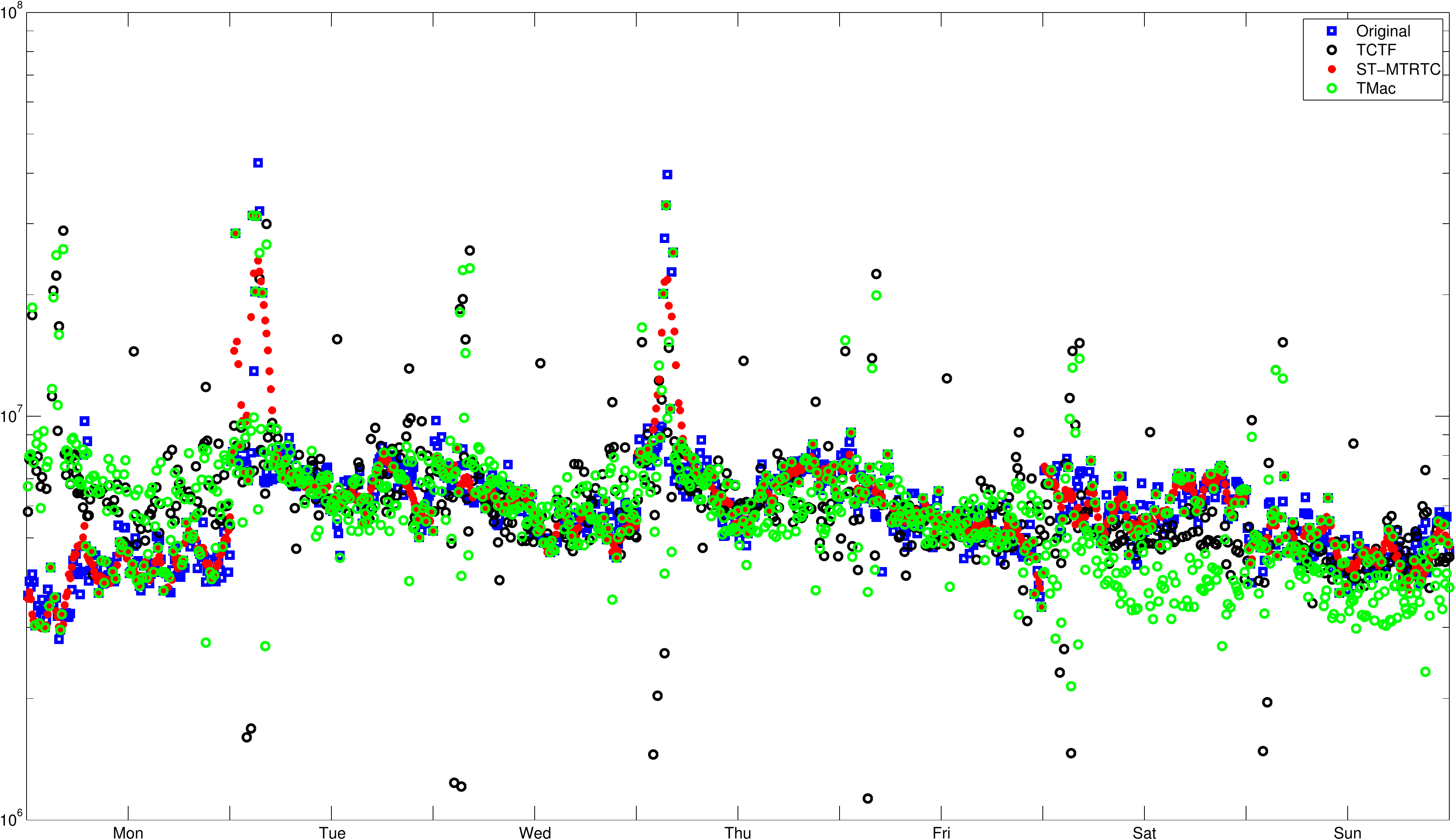}
 \caption{Sampling ratio $p=0.4$}
    \end{subfigure}

 \begin{subfigure}[b]{0.45\linewidth}
  \centering
  \includegraphics[width=1\linewidth]{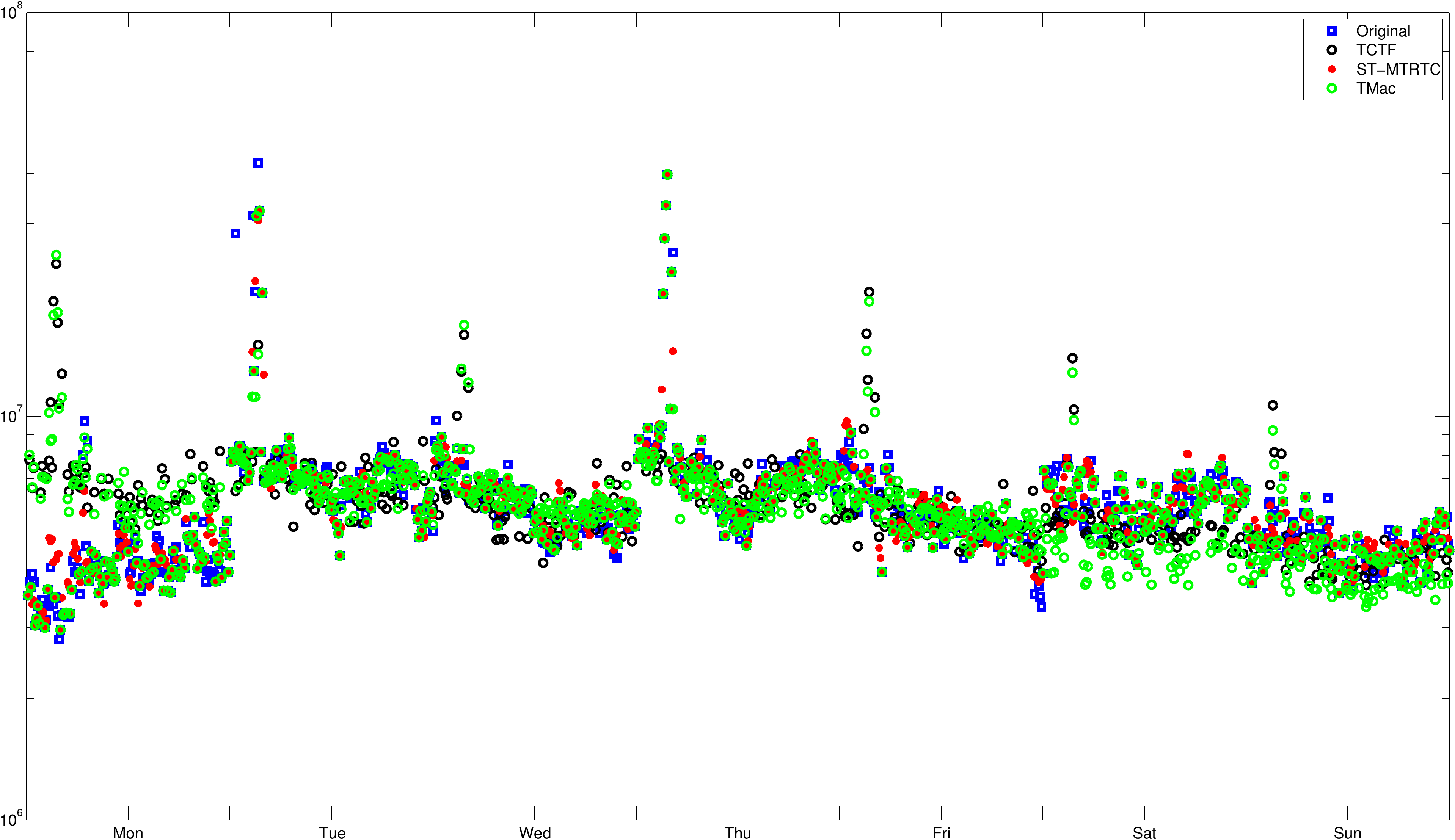}
  \caption{Sampling ratio $p=0.6$}
 \end{subfigure}
 \begin{subfigure}[b]{0.45\linewidth}
 \centering
 \includegraphics[width=1\linewidth]{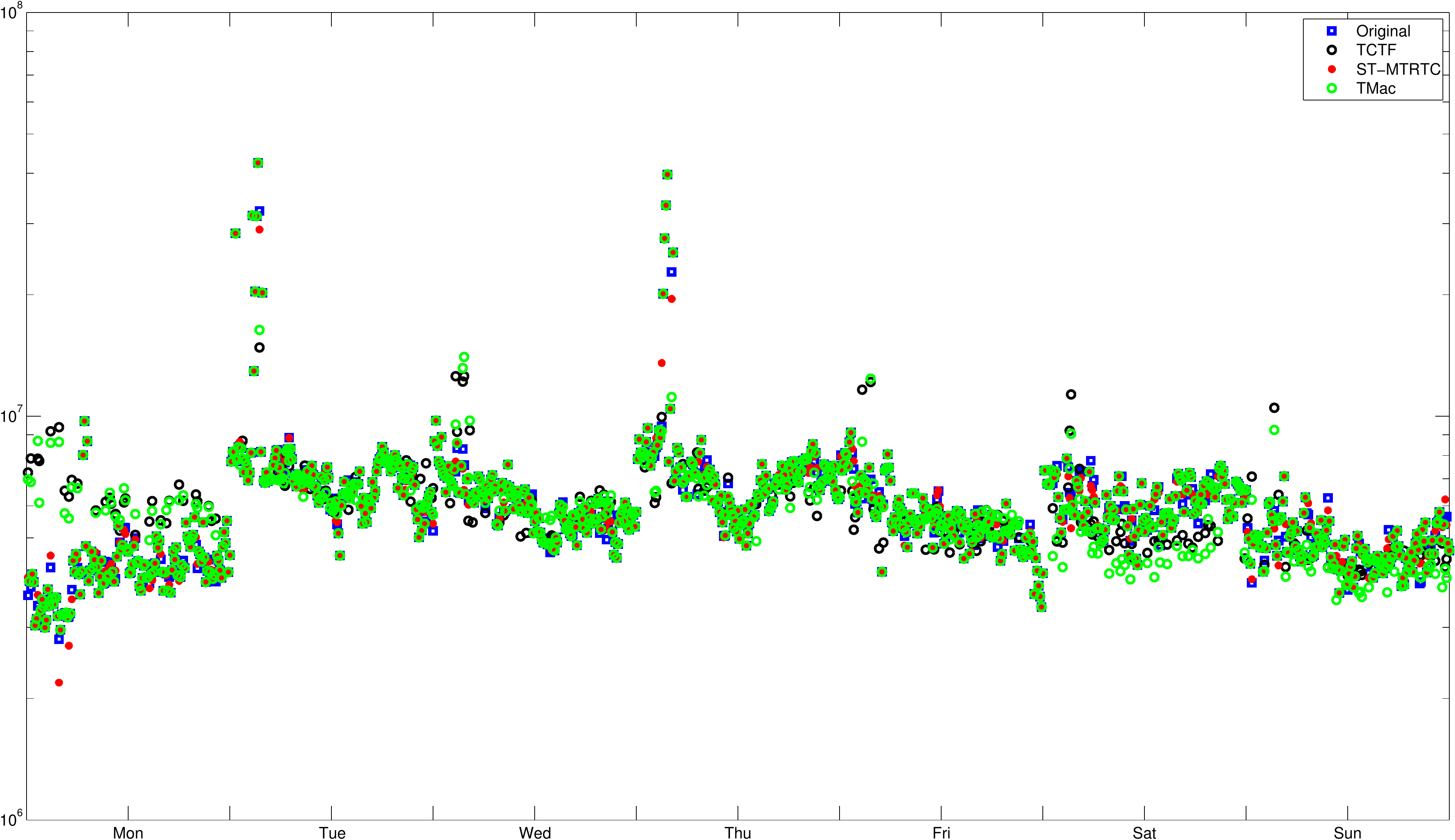}
 \caption{Sampling ratio $p=0.8$}
    \end{subfigure}
    \centering
    \caption{Recovery performance comparison of different sampling ratios}
    \label{fig:trafcom}
\end{figure}

\section{Conclusion}
In this paper, we extended tubal rank to multi-tubal rank and then established a relationship between multi-tubal rank and Tucker rank.  The tubal rank focuses on one mode of the tensor, while  multi-tubal rank considers all three modes of the tensor together. Based on multi-tubal rank, we established a new tensor completion model and applied a tensor factorization based method for solving the established problem. In addition, we applied spatio-temporal characteristics to the video inpainting and internet traffic simulation to modify the established model as a novel one. A modified tensor factorization based method was presented to solve such data completion problem, which got better performance without increasing the computational cost. Experimental results showed that the performance of our proposed methods were significantly better than existing methods in the literature.

%\bibliographystyle{IEEEtran}
%\bibliography{IEEEabrv,mylib}

\end{document}